\def\dfr#1#2{\displaystyle{\frac{#1}{#2}}}
\renewcommand{\vec}[1]{\mbox{\boldmath \small $#1$}}
\newcommand{\smallvec}[1]{\mbox{\boldmath \scriptsize $#1$}}
\newtheorem{theorem}{Theorem}[section]
\newtheorem{lemma}{Lemma}[section]
\newtheorem{remark}{Remark}[section]
\newtheorem{corollary}{Corollary}[section]
\newtheorem{example}{Example}[section]
\numberwithin{equation}{section}
\numberwithin{figure}{section}
\numberwithin{table}{section}
\begin{document}

\begin{spacing}{1.27}

\title[Admissible State and PCP Schemes for RMHD]
{Admissible state and physical constraints preserving schemes for relativistic magnetohydrodynamic equations}
\author{Kailiang Wu}
\address{School of Mathematical Sciences\\
Peking University\\
Beijing 100871, P.R. China}\email{wukl@pku.edu.cn}
\author{Huazhong Tang}
\address{HEDPS, CAPT \& LMAM, School of Mathematical Sciences\\
Peking University\\
Beijing 100871, P.R. China}\email{hztang@math.pku.edu.cn}
\begin{abstract}{\small
This paper first studies  the admissible state set $\mathcal G$ of relativistic magnetohydrodynamics (RMHD). It paves a way for developing physical-constraints-preserving (PCP) schemes for the RMHD equations with the solutions in $\mathcal G$.
To overcome the  difficulties arising from
the extremely strong nonlinearities {and} no explicit formulas of the primitive variables and the flux vectors with respect to the conservative vector,
two equivalent forms of $\mathcal G$ with explicit constraints on the conservative vector
are skillfully discovered.
The first is derived by analyzing  roots of several polynomials and transferring successively them,
and further used
to prove the convexity of $\mathcal G$ with the aid of semi-positive definiteness of the second fundamental form of a hypersurface. While the second is derived based on the convexity, and then used to show
 the orthogonal invariance of  $\mathcal G$.
The Lax-Friedrichs (LxF) splitting property does not hold
generally {for the nonzero magnetic field}, but by a constructive inequality and pivotal techniques,
we discover  the generalized LxF splitting properties, combining
the convex combination of some LxF splitting terms with a discrete divergence-free condition of the magnetic
field.

Based on the above analyses,
several one- and two-dimensional PCP schemes are then studied.
In the 1D case, a first-order accurate LxF type scheme is
first proved to be PCP under the Courant-Friedrichs-Lewy (CFL) condition, and then
the high-order accurate PCP schemes are proposed via a PCP limiter.
In the 2D case,
 the discrete divergence-free condition and  PCP property are analyzed for a
 first-order accurate LxF type scheme, and two sufficient conditions are derived for high-order accurate PCP schemes.
  Our analysis reveals in theory  for the first time that the discrete divergence-free condition is closely connected with the PCP property.
  Several numerical examples demonstrate the theoretical findings and the performance of  numerical schemes.
}
\end{abstract}



\maketitle

\section{Introduction}

The paper is concerned with establishing mathematical properties on the admissible
state set and developing physical-constraints-preserving (PCP) numerical schemes {(which preserve the positivity of the density and pressure, and the bound of the fluid velocity)} for special relativistic magnetohydrodynamics (RMHD).
The $d$-dimensional governing equations of the special RMHDs is a first-order quasilinear hyperbolic system, see e.g. \cite{Friedrichs1974}, and
in the laboratory frame, it can be written in the divergence form
\begin{equation}\label{eq:RMHD1D}
\frac{{\partial \vec U}}{{\partial t}} + \sum\limits_{i = 1}^d {\frac{{\partial {\vec F_i}(\vec U)}}{{\partial x_i}}}  = \vec 0,
\end{equation}
together with the divergence-free condition on the magnetic field
$\vec B=(B_1,B_2,B_3)$, i.e.
\begin{equation}\label{eq:2D:BxBy0}
\sum\limits_{i = 1}^d \frac{\partial B_i } {\partial x_i} =0,
\end{equation}
where $d=1$, or 2 or 3,
and $\vec U$ and $\vec F_i(\vec U)$ denote the conservative vector and the flux in the $x_i$-direction,  respectively, defined by
\begin{align*}
&
\vec U = \big( D,\vec m,\vec B,E \big)^{\top}, \\
&
\vec F_i(\vec U) = \bigg( D v_i,  v_i \vec m  -  B_i \Big( W^{-2} \vec B + (\vec v \cdot \vec B) \vec v \Big)  + p_{tot}  \vec e_i, v_i \vec B - B_i \vec v  ,m_i \bigg)^{\top},\quad i=1,\cdots,d,
\end{align*}
with the mass density $D = \rho W$, the momentum density (row) vector $\vec m = \rho h{W^2}\vec v + |\vec B{|^2} \vec v - (\vec v \cdot \vec B)\vec B$, the energy density $E=\rho h W^2 - p_{tot} +|\vec B|^2$, and
the row vector $\vec e_i$ denoting the $i$-th row of the unit matrix of size 3.
Here $\rho$ is the rest-mass density, the row vector $\vec v=(v_1,v_2,v_3)$ denotes
the fluid velocity,  $p_{tot}$ is the total pressure containing the gas pressure $p$ and  magnetic pressure $p_m:=\frac12 \left(W^{-2} |\vec B|^2 +(\vec v \cdot \vec B)^2 \right)$,
$W=1/\sqrt{1- v^2}$ is the Lorentz factor with $v:=\left(v_1^2+v_2^2+v_3^2\right)^{1/2}$,
 $h$ is the specific enthalpy defined by
$$h = 1 + e + \frac{p}{\rho},$$
with units in which the speed of light $c$ is equal to one, and $e$ is the specific internal energy. It can be seen that the conservative variables
$\vec m$ and $E$ depend on the magnetic field $\vec B$  nonlinearly.

The system \eqref{eq:RMHD1D} takes into account the relativistic description for the dynamics of electrically-conducting
fluid (plasma) at nearly speed of light in vacuum in the presence of magnetic fields. The relativistic magneto-fluid flow appears in investigating numerous astrophysical phenomena from stellar to galactic scales, e.g. core collapse super-novae, coalescing neutron stars, X-ray binaries, active galactic nuclei, formation of black holes, super-luminal jets and gamma-ray bursts, etc.
However, due to relativistic effect, especially the appearance of the Lorentz factor, the system \eqref{eq:RMHD1D}   involves strong
nonlinearity, making its analytic treatment extremely difficult. A primary and powerful approach to improve our understanding of the physical
mechanisms in the RMHDs is through numerical simulations. In comparison with the non-relativistic MHD case, the numerical difficulties are coming from strongly nonlinear coupling between the RMHD equations \eqref{eq:RMHD1D}, which leads to no explicit expression of the primitive {variable vector} $\vec V=(\rho,\vec v,\vec B,p)^{\top} $ and the flux $\vec F_i$ in terms of $\vec U$, and some physical constraints such as $\rho>0$, $p >0$, and $v < c=1$, etc.

Since nearly the 2000s, the numerical study of the RMHDs has attracted
considerable attention, and various modern shock-capturing methods have been developed for the RMHD equations, e.g. the Godunov-type scheme based on  the linear Riemann solver \cite{GodunovRMHD}, the total variation diminishing scheme \cite{Balsara2001},
the third-order accurate central-type scheme based on two-speed approximate Riemann solver \cite{Zanna:2003}, the high-order kinetic flux-splitting method \cite{Qamar2005}, the HLLC (Harten-Lax-van Leer-Contact) type schemes \cite{Honkkila:2007,Kim2014,MignoneHLLCRMHD},
the adaptive methods with mesh refinement \cite{Anderson:2006,Host:2008},
the adaptive moving mesh method \cite{HeTang2012RMHD},
the locally divergence-free Runge-Kutta discontinuous Galerkin (RKDG) method and
exactly divergence-free central RKDG method with the weighted essentially non-oscillatory limiters \cite{ZhaoTang2016},
the ADER (Arbitrary high order schemes using DERivatives) DG method \cite{Zanotti2015}, and the ADER-WENO type schemes with subluminal reconstruction \cite{BalsaraKim2016}, etc. The readers are also referred to the
early review articles \cite{font2008,Marti2015}.

To our best knowledge, up to now, no work  shows in theory that
those existing numerical methods for RMHDs can preserve the positivity of the rest-mass density and the pressure and the bounds of the fluid velocity
at the same time,
although those schemes have been used to simulate some RMHD flows successfully.
There exists a large and long-standing risk of failure when a numerical scheme is applied to the RMHD problems with large Lorentz factor, low density or pressure, or strong discontinuity. This is
because as soon as the negative density or pressure, or the superluminal fluid velocity may be obtained, the eigenvalues of the Jacobian matrix become
imaginary, such that the discrete problem is ill-posed.
It is of great significance to develop
high-order accurate  {physical-constraints}-preserving (PCP) numerical schemes, in the
sense of that the solution of numerical scheme always
belongs to the (physical) {\em admissible state set}
\begin{equation}\label{eq:RMHD:definitionG}
{\mathcal G} := \left\{ \left. \vec U=(D,\vec m,\vec B,E)^{\top}\in \mathbb R^{8}~ \right| ~\rho(\vec U)>0,\ p(\vec U)>0,\ v(\vec U) <c = 1  \right\}.
\end{equation}
Because the functions $\rho(\vec U)$, $p(\vec U)$, and $v(\vec U)$ in \eqref{eq:RMHD:definitionG}, and $\vec F_i(\vec U)$ are highly nonlinear and cannot be expressed explicitly in $\vec U$,
 it is extremely hard to check  whether a given state $\vec U$ is admissible,
 or whether the numerical scheme is PCP.
For this reason, developing the
 PCP schemes for the RMHDs is highly challenging. It is still an unsolved problem.
In fact, it is also a blank in developing the positivity-preserving scheme with strictly and completely theoretical proof for the (non-relativistic) ideal compressible MHD\footnote{
Although several efforts were made to enforce positivity of
the reconstructed or DG polynomial solutions based on the assumption that
the cell average values calculated by the numerical schemes are admissible,
 see e.g. \cite{Balsara2012,cheng,BalsaraKim2016}. However,
 there is no any rigorous proof,  especially in the multi-dimensional case, to genuinely show
 that those schemes can  preserve the positivity.
 In fact, the two-dimensional first-order accurate scheme 
 is still possibly not PCP, see  Example 3.1 in this paper.
}.
Studying the intrinsic mathematical properties of the admissible state set $\mathcal G$ may open a window for
such unsolved problem, see e.g.
the recent works \cite{WuTang2015,WuTang2016} on the PCP schemes for the
RHDs.

Besides three physical constraints \eqref{eq:RMHD:definitionG}
on the admissible state $\vec U$, another difficulty for the RMHD system \eqref{eq:RMHD1D} comes from
the divergence-free condition \eqref{eq:2D:BxBy0}.
Numerically preserving such condition 
is very non-trivial (for $d\ge 2$) but important for the robustness of numerical scheme,
and has to be respected.
In physics, numerically incorrect magnetic field topologies may lead to nonphysical plasma transport orthogonal to the magnetic field, see e.g. \cite{Brackbill1980}.
The condition \eqref{eq:2D:BxBy0} is also very crucial for the stability of induction equation
\cite{Yang2016}. The existing numerical experiments in the non-relativistic MHD case have also indicated
that violating the divergence-free condition of magnetic field
 may lead to numerical instability and nonphysical or inadmissible solutions \cite{Brackbill1980,Balsara2004,Rossmanith2006,Balsara2012}.
 Up to now, several numerical treatments have been proposed to reduce such risk, see e.g. \cite{Evans1988,Balsara2004,Li2005,Balsara2009,Li2011} and  references therein.
However, it is still unknown in theory why violating the divergence-free condition of magnetic field
does more easily cause inadmissible solution.

The aim of the paper is to do the first attempt in studying {the properties of} $\mathcal G$ and the
PCP numerical schemes for the special RMHD equations \eqref{eq:RMHD1D}.
The main contributions are outlined as follows:
\begin{enumerate}[\hspace{0em}$\bullet$]
\item[(1)] {\bf Deriving the first equivalent form of $\mathcal G$} by analyzing of polynomial roots and transferring successively. The  constraints in this equivalent form of $\mathcal G$ depend explicitly on the value of $\vec U$
so that the judgment of the admissible state becomes direct and it is useful to
    develop the PCP limiter for the high-order accurate PCP schemes for the RMHDs.
\item[(2)] {\bf Proving the convexity of $\mathcal G$}. The convexity of $\mathcal G$ seems natural from the physical point of view and is critical for studying the PCP schemes. However, its proof is non-trivial and suffers from the difficulty arising from the strongly nonlinear constraints. The key point is to utilize the semi-positive definiteness of the second fundamental form of a hypersurface, which is discovered to have a proper parametric equation.
\item[(3)] {\bf Discovery of the second equivalent form of $\mathcal G$} based on its convexity. This equivalent form of $\mathcal G$ is simple and beautiful with
the  constraints depending  linearly  on  $\vec U$ and plays a pivotal role in verifying  the PCP property of the numerical scheme for the RMHDs. Moreover, it also implies the orthogonal invariance of $\mathcal G$.

\item[(4)] {\bf Establishment of the generalized LxF splitting properties of $\mathcal G$}. An analytic counterexample shows that $\mathcal G$ does not have the LxF splitting property in general.
Luckily, we discover an alternative, the so-called generalized LxF splitting property, which is coupling
the  convex combination of some LxF splitting terms with a ``discrete divergence-free'' condition for the magnetic
field. Since the generalized LxF splitting properties  involve lots of states with strongly coupling condition,
their discovery and  proofs are extremely technical and become  the most highlighted point of this paper.
\item[(5)] {\bf Close connection between the discrete divergence-free condition and PCP property is revealed in theory for the first time}.
Analytic example indicates that first-order accurate LxF type scheme  violating the  divergence-free condition may produce inadmissible solution.
Our theoretical analysis clearly shows the importance of  discrete divergence-free condition in proving the PCP properties of numerical schemes.
\item[(6)] {\bf Theoretical analysis on several 1D and 2D PCP schemes}.
The 1D first-order accurate LxF type scheme is
 proved to be PCP under the CFL condition and the PCP limiter is developed to
propose the 1D high-order accurate PCP schemes.
 The  discrete divergence-free condition and  PCP property are analyzed for the 2D first-order accurate LxF type scheme, and
  two sufficient conditions are derived for the 2D high-order accurate PCP schemes.
 Several numerical examples are given to demonstrate the theoretical analyses and the performance of
  numerical schemes.
\end{enumerate}

The rest of the paper is organized as follows. Section \ref{sec:eqDef} {derives the two} equivalent definitions of $\mathcal G$,
 proves its convexity, and establishes the generalized LxF splitting properties under the ``discrete divergence-free'' condition. They play pivotal roles in analyzing the PCP property of the numerical methods based on the LxF type flux for the RMHD equations \eqref{eq:RMHD1D}, see  Section \ref{sec:app},
where the PCP properties of the 1D and 2D first-order accurate LxF schemes are proved,  the  PCP limiting procedure and
 the high-order accurate PCP schemes for the 1D RMHD equations \eqref{eq:RMHD1D} are presented,
and  sufficient conditions for the 2D  high-order accurate PCP schemes
are also proposed.  Section \ref{sec:examples} conducts several numerical experiments to demonstrate
the theoretical analyses and the performance of the proposed schemes.
Section \ref{sec:con} concludes the paper with several remarks.

\section{Properties of the admissible state set}\label{sec:eqDef}

Throughout the paper, the equation of state (EOS) will be restricted to the $\Gamma$-law
\begin{equation}\label{eq:EOS}
p = (\Gamma-1) \rho e,
\end{equation}
where the adiabatic index $\Gamma \in (1,2]$. The restriction of $\Gamma\le 2$ is required for the compressibility assumptions \cite{Cissoko1992} and the causality in the theory of relativity (the sound speed does not exceed the speed of light {$c=1$}).
All results in this paper can be extended to the
general EOS case by the similar discussion presented in \cite{WuTang2016}.

\begin{lemma}\label{theo:RMHD:condition}
	The admissible state $\vec U=(D,\vec m,\vec B,E)^{\top} \in {\mathcal G}$ must satisfy
	\begin{equation}\label{eq:GconToG2}
	D>0,\quad  q(\vec U):= E-\sqrt{D^2+|\vec m|^2}>0.
	\end{equation}
\end{lemma}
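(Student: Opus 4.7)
The first bound $D > 0$ is immediate: $D = \rho W$ with $\rho > 0$ and $W = (1-v^2)^{-1/2} \geq 1$.

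For $q(\vec U) > 0$, the plan is to prove $E > 0$ and $E^2 > D^2 + |\vec m|^2$ separately; taking square roots then gives $E > \sqrt{D^2 + |\vec m|^2}$. Positivity of $E$ follows from the decomposition $E = (\rho h W^2 - p) + (|\vec B|^2 - p_m)$. Since $W \geq 1$ and $h = 1 + e + p/\rho$, the first summand is at least $\rho h - p = \rho(1+e) > 0$. For the second, the Cauchy--Schwarz inequality $(\vec v \cdot \vec B)^2 \leq v^2 |\vec B|^2$ yields
\begin{equation*}
|\vec B|^2 - p_m = \tfrac12\bigl(|\vec B|^2(1+v^2) - (\vec v \cdot \vec B)^2\bigr) \geq \tfrac12|\vec B|^2 \geq 0.
\end{equation*}

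The main algebraic obstacle is $E^2 > D^2 + |\vec m|^2$: a naive expansion of $|\vec m|^2$ and $E^2$ in the primitive variables produces many terms of indeterminate sign. I would regroup the calculation using the scalars $|b|^2 := W^{-2}|\vec B|^2 + (\vec v \cdot \vec B)^2$ (so $2p_m = |b|^2$) and the magnetically-enhanced enthalpy $h^\ast := h + |b|^2/\rho$, suggested by the covariant form of the RMHD stress-energy tensor. In these variables the cross terms collapse, and one obtains the decomposition
\begin{equation*}
E^2 - D^2 - |\vec m|^2 = W^2 \bigl[\rho^2(h^{\ast 2} - 1) - 2\rho h^\ast p_{tot}\bigr] + p_{tot}^2 + 2p\, W^2 (\vec v \cdot \vec B)^2,
\end{equation*}
whose last two terms are manifestly nonnegative.

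It remains to check that the bracket is positive. Using the $\Gamma$-law identity $\rho(h - 1) = \Gamma p/(\Gamma - 1)$, the bracket further splits as $\rho p [h(2-\Gamma)+\Gamma]/(\Gamma-1) + |b|^2(\rho h - 2p)$. Both summands are strictly positive: the restriction $\Gamma \in (1, 2]$ with $h \geq 1$ gives $h(2-\Gamma)+\Gamma > 0$, while $\rho h - 2p = \rho + p(2-\Gamma)/(\Gamma-1) > 0$. Combining with $E > 0$ yields $q(\vec U) > 0$. The hard part is the regrouping in the middle step; without the substitutions $|b|^2$ and $h^\ast$, the positivity of the whole sum is obscured by partial cancellations among terms of mixed sign.
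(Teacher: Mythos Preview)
Your proof is correct and takes a genuinely different route from the paper. The paper proceeds by brute-force expansion of $E^2 - D^2 - |\vec m|^2$ in the primitive variables, then invokes the previously established RHD inequality $(\rho h W^2 - p)^2 > |\rho h W^2 \vec v|^2 + (\rho W)^2$ from \cite{WuTang2015} as a black box to handle the non-magnetic part, followed by several lines of ad hoc regrouping to arrive at $(\rho h - 2p)|\vec B|^2 \geq 0$. Your argument instead exploits the covariant quantities $|b|^2$ and $h^\ast$, which absorb the cross terms and yield the clean identity
\[
E^2 - D^2 - |\vec m|^2 = W^2\bigl[\rho^2(h^{\ast 2}-1) - 2\rho h^\ast p_{tot}\bigr] + p_{tot}^2 + 2p\,W^2(\vec v\cdot\vec B)^2.
\]
This is self-contained (no appeal to the RHD result), and the subsequent factorization of the bracket via the $\Gamma$-law is more transparent than the paper's chain of inequalities. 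One minor slip: you say ``both summands are strictly positive'', but the second term $|b|^2(\rho h - 2p)$ is only nonnegative (it vanishes when $\vec B = \vec 0$); this is harmless since the first summand $\rho p[h(2-\Gamma)+\Gamma]/(\Gamma-1)$ is already strictly positive, so the bracket and hence $E^2 - D^2 - |\vec m|^2$ are strictly positive as claimed.
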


\begin{proof}
	Under three conditions of ${\mathcal G}$ in \eqref{eq:RMHD:definitionG},
	i.e., $\rho(\vec U)>0$, $p(\vec U)>0$, and $v(\vec U) <1$,
	one has
\begin{align*}
	&
	D = \rho W > 0, \\
	&
	E = \rho h{W^2} - p + \frac{{1 + {v^2}}}{2}|\vec B{|^2} - \frac{1}{2}{(\vec v \cdot \vec B)^2}  \ge \rho h{W^2} - p > \rho h -p = \rho+\frac{p}{\Gamma-1}>0,
\end{align*}	
	and
\begin{equation*}
	\begin{split}
	&
	{E^2} - \left( {{D^2} + {{\left| {\vec{m}} \right|}^2}} \right) \\
	&=  {\left[ {\rho h{W ^2} - p + \frac{{1 + {v^2}}}{2}{{\left| {\vec{B}} \right|}^2} - \frac{1}{2}{{({\vec{v}} \cdot {\vec{B}})}^2}} \right]^2} - {(\rho W )^2} - {\left| {(\rho h{W ^2} + {{\left| {\vec{B}} \right|}^2}){\vec{v}} - ({\vec{v}} \cdot {\vec{B}}){\vec{B}}} \right|^2} \\
	&= \left[
	{\left( {\rho h{W ^2} - p} \right)^2} - {(\rho W )^2} - {(\rho h{W ^2} + {\left| {\vec{B}} \right|^2})^2}{v^2}
	\right]
	+ {\left[ {\frac{{1 + {v^2}}}{2}{{\left| {\vec{B}} \right|}^2} - \frac{1}{2}{{({\vec{v}} \cdot {\vec{B}})}^2}} \right]^2}
	\\
	&\quad
	+ (\rho h{W ^2} - p)\left[ {(1 + {v^2}){{\left| {\vec{B}} \right|}^2} - {{({\vec{v}} \cdot {\vec{B}})}^2}} \right]
	- {({\vec{v}} \cdot {\vec{B}})^2}{\left| {\vec{B}} \right|^2} + 2(\rho h{W ^2} + {\left| {\vec{B}} \right|^2}){({\vec{v}} \cdot {\vec{B}})^2}.
	\end{split}
\end{equation*}
	The first term at the right hand side of the above identity
	should be larger than
	$$
	- (2\rho h{W ^2}{\left| {\vec{B}} \right|^2} + {\left| {\vec{B}} \right|^4}){v^2},
	$$
	because of  the inequality
	\begin{equation*}
	\big(\rho h{W^2} - p\big)^2 > |\rho h{W^2}\vec v|^2 + (\rho W)^2,
	\end{equation*}
	which has been proved  in \cite{WuTang2015}.
	Thus, one has
\begin{equation*}
	\begin{split}
	{E^2} - \left( {{D^2} + {{\left| {\vec{m}} \right|}^2}} \right)
	&> 2\rho h{W ^2}\left[ {{{({\vec{v}} \cdot {\vec{B}})}^2} - {{\left| {\vec{B}} \right|}^2}{v^2}} \right] + (\rho h{W ^2} - p)\left[ {(1 + {v^2}){{\left| {\vec{B}} \right|}^2} - {{({\vec{v}} \cdot {\vec{B}})}^2}} \right]
	\\
	&\quad   + {\left| {\vec{B}} \right|^2}{({\vec{v}} \cdot {\vec{B}})^2} + {\left[ {\frac{{1 + {v^2}}}{2}{{\left| {\vec{B}} \right|}^2} - \frac{1}{2}{{({\vec{v}} \cdot {\vec{B}})}^2}} \right]^2} - {\left| {\vec{B}} \right|^4}{v^2}
	\\
	&=  \rho h{W ^2}\left[ {{{\left| {\vec{B}} \right|}^2} - {v^2}{{\left| {\vec{B}} \right|}^2} + {{({\vec{v}} \cdot {\vec{B}})}^2}} \right] - p\left[ {(1 + {v^2}){{\left| {\vec{B}} \right|}^2} - {{({\vec{v}} \cdot {\vec{B}})}^2}} \right]
	\\
	&\quad
	+ {\left| {\vec{B}} \right|^2}\left[ {{{({\vec{v}} \cdot {\vec{B}})}^2} - {{\left| {\vec{B}} \right|}^2}{v^2}} \right] + {\left[ {\frac{{{{\left| {\vec{B}} \right|}^2}}}{2} + \frac{{{v^2}{{\left| {\vec{B}} \right|}^2} - {{({\vec{v}} \cdot {\vec{B}})}^2}}}{2}} \right]^2}
	\\
	&\ge
	(\rho h - p(1 + {v^2})){\left| {\vec{B}} \right|^2} + {\left[ {\frac{{{{\left| {\vec{B}} \right|}^2}}}{2} - \frac{{{v^2}{{\left| {\vec{B}} \right|}^2} - {{({\vec{v}} \cdot {\vec{B}})}^2}}}{2}} \right]^2}
	\\
	& \ge (\rho h - 2p){\left| {\vec{B}} \right|^2} = \left( \rho + \frac{2-\Gamma}{\Gamma-1} p \right){\left| {\vec{B}} \right|^2} \ge 0,
	\end{split}
\end{equation*}
	which along with $E>0$ yield
	$q(\vec U)=E-\sqrt{D^2 + |\vec m|^2}>0$.
	The proof is completed. \end{proof}

If the magnetic field $\vec B$ is zero,  then
 \eqref{eq:GconToG2} is also sufficient for $\vec U\in\mathcal G$, see \cite{WuTang2015},
and  $q(\vec U)$ is   a  concave function in terms of  $\vec U$.
Those results have played  pivotal roles in the analysis and constructions of the PCP schemes for the RHDs \cite{WuTang2015}. Unfortunately,
 \eqref{eq:GconToG2}  is only necessary (not sufficient) for $\vec U\in \mathcal G$ if $\vec B\neq \vec 0$.
In spite of this,   \eqref{eq:GconToG2}  is still important and essential in the coming analysis.

Since there is no explicit expression of {$\rho(\vec U), p(\vec U)$, and $\vec v(\vec U)$ for the RMHDs, the value of $\vec V$ should be derived from given $\vec U$} by solving
some  nonlinear algebraic equation, see e.g. \cite{Balsara2001,Zanna:2003,GodunovRMHD,MignoneHLLCRMHD,Newman,Noble}.
The present paper considers the nonlinear algebraic equation  used in \cite{MignoneHLLCRMHD}
\begin{equation}\label{eq:RMHD:fU(xi)}
f_{ \smallvec U}(\xi ) := \xi  - \frac{{\Gamma  - 1}}{\Gamma }\left( {\frac{\xi }{{{{W}^2}}} - \frac{D}{ W}} \right) + {\left| \vec B \right|^2} - \frac{1}{2}\left[ {\frac{{{{\left| \vec B \right|}^2}}}{{{{W}^2}}} + \frac{{{{(\vec m \cdot \vec B)}^2}}}{{{\xi ^2}}}} \right] - E = 0,
\end{equation}
for the unknown $\xi\in \mathbb R^+$, where
 the Lorentz factor $W$   has been expressed as a function
 of $\xi$  by
\begin{equation}\label{eq:Wxi}
W(\xi) = \left(  {\xi^{-2}} {{(\xi + {|\vec B|^2})}^{-2}}
f_{\Omega}(\xi)\right)^{ - {1}/{2} },
\end{equation}
with
\begin{equation}\label{eq:Wxi-zzzzz}
f_{\Omega}(\xi):={\xi^2}{{(\xi + {|\vec B|^2})}^2} - \left[ {\xi^2}{|\vec m|^2} + (2\xi+{|\vec B|^2})  {{(\vec m \cdot \vec B)}^2} \right].
\end{equation}
It is reasonable to find the solution of  \eqref{eq:RMHD:fU(xi)} within the
interval
\begin{equation}\label{eq:Wxi-zzzzz000000000000}
\Omega_f:=\mathbb{R}^+ \cap
\left\{ {\left. \xi \right| f_{\Omega}(\xi) >0} \right\},
\end{equation}
 otherwise,
$f_{\Omega}(\xi)\leq 0$ such that  $W(\xi)$ takes the value of  0 or  the imaginary number.
If denote  the solution of the equation \eqref{eq:RMHD:fU(xi)}  by $\xi_*=\xi_*(\vec U)$,
then $\xi_*=\rho(\vec U) h(\vec U)  W^2(\xi_*)=\rho(\vec U) h(\vec U) /\left(1-v^2(\vec U)\right)$,
 and the values of the primitive variables $\rho(\vec U)$, $p(\vec U)$, and $v(\vec U)$ in \eqref{eq:RMHD:definitionG} can be calculated by
 \begin{align}\label{eq:RMHD:getv}
 &
 \vec v(\vec U) = \left( {\vec m + \xi_* ^{ - 1}(\vec m \cdot \vec B) \vec B} \right)/(\xi_*+ {|\vec B|^2}),\\
 &
 \label{eq:RMHD:getrho}
 \rho (\vec U) = \frac{D}{{W(\xi_*)}},\\
 \label{eq:RMHD:getp}
 &
 p(\vec U) = \frac{{\Gamma  - 1}}{{\Gamma W^2(\xi_*)}}\big( {{\xi_*} - D W (\xi_*)} \big).
 \end{align}
The above procedure  clearly shows the strong nonlinearity of the functions
 {$\vec v(\vec U)$, $\rho(\vec U)$}, and $p(\vec U)$,
 and   the  difficulty in verifying whether  $\vec U$ is in the admissible state set
$\mathcal G$.
To overcome such  difficulty, 
  two equivalent definitions of the admissible state set $\mathcal G$
  will be given in the following.
The  first  is very suitable to check whether a given state $\vec U$  is admissible and construct
the PCP limiter {for the development of  high-order accurate PCP schemes for 1D} RMHD equations,
while the second is very effective in verifying the PCP property of a numerical scheme.
Moreover, the convexity  of $\mathcal G$ will also be analyzed.

\subsection{First equivalent definition}\label{sec:first}

This subsection introduces  the first equivalent definition of the  admissible state set $\mathcal G$.

\begin{theorem}[First equivalent definition]\label{theo:RMHD:CYconditionFINAL2}
The admissible state set~${\mathcal G}$ is equivalent to the following set
\begin{align}
{\mathcal G}_0 := \left\{   \vec U=(D,\vec m,\vec B,E)^{\top} \big|  D>0,q(\vec U)>0, {\Psi} (\vec U) > 0 \right\},
\label{eq:RMHD:definitionG2}
\end{align}
where
$$
 {\Psi} (\vec U) := \Big( \Phi(\vec U)-2\big(|\vec B|^2-E\big) \Big) \sqrt{\Phi(\vec U)+|\vec B|^2-E} - \sqrt{ \frac{27}{2} \bigg( D^2|\vec B|^2+(\vec m \cdot \vec B)^2 \bigg)},
$$
with ${\Phi(\vec U):}= \sqrt{({|\vec B|^2} - E)^2 + 3({E^2} - {D^2} - |\vec m|^2)}$.
\end{theorem}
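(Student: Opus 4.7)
\medskip

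\noindent\textbf{Proof plan.} My approach is to establish the two inclusions $\mathcal G \subseteq \mathcal G_0$ and $\mathcal G_0 \subseteq \mathcal G$. Lemma~\ref{theo:RMHD:condition} already delivers the necessity of $D>0$ and $q(\vec U)>0$ in both directions, so the entire content of the theorem reduces to the claim that, within the open cone $\{D>0,\ q(\vec U)>0\}$, the compound admissibility condition $p(\vec U)>0$ and $v(\vec U)<1$ is equivalent to $\Psi(\vec U)>0$. The strategic idea is to characterize the boundary $\partial\mathcal G\setminus\{D=0\}$ as an explicit algebraic hypersurface and recognize it as $\{\Psi=0\}$. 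Since any interior admissible state has a finite positive root $\xi_*$ of $f_{\vec U}(\xi)=0$, and $v=1$ corresponds to $\xi_*\to\infty$, the only finite way to reach $\partial\mathcal G$ from inside $\{D>0,\ q>0\}$ is $p(\vec U)\to0^+$. So I need to describe the locus $\{p=0\}$ and identify it with $\{\Psi=0\}$.

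\medskip

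\noindent On $\{p=0\}$ the relation $\xi_*=DW(\xi_*)$ holds in tandem with $f_{\vec U}(\xi_*)=0$. Substituting $1/W^2(\xi_*)=D^2/\xi_*^2$ into $f_{\vec U}(\xi_*)=0$ kills the gas-pressure term and, after multiplication by $2\xi_*^2$, collapses to the cubic identity
\begin{equation*}
g(\xi_*):=2\xi_*^3 + 2(|\vec B|^2-E)\xi_*^2 - D^2|\vec B|^2 - (\vec m\cdot\vec B)^2 = 0,
\end{equation*}
while the definition of $W$ yields the companion identity $f_\Omega(\xi_*)=D^2(\xi_*+|\vec B|^2)^2$. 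Eliminating $(\vec m\cdot\vec B)^2$ between these two polynomial relations and cancelling the common factor $\xi_*^2$ reduces to the quadratic
\begin{equation*}
3\xi_*^2 - 4(E-|\vec B|^2)\xi_* + \bigl[(|\vec B|^2-E)^2-(E^2-D^2-|\vec m|^2)\bigr]=0,
\end{equation*}
whose discriminant is exactly $4\Phi(\vec U)^2$, giving roots $\xi_\pm = [2(E-|\vec B|^2)\pm\Phi(\vec U)]/3$. Substituting the physically relevant branch $\xi_+$ back into $g(\xi_+)=0$ turns, with $a:=\Phi$ and $b:=|\vec B|^2-E$, into the clean identity
\begin{equation*}
(a-2b)^2(a+b)=a^3-3a^2b+4b^3=\tfrac{27}{2}\bigl(D^2|\vec B|^2+(\vec m\cdot\vec B)^2\bigr),
\end{equation*}
which is precisely $\Psi(\vec U)=0$. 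Thus $\{p=0\}\cap\{D>0,\ q>0\}$ coincides (away from degeneracies) with the algebraic surface $\{\Psi=0\}$.

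\medskip

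\noindent To close the argument, I would invoke continuity and path-connectedness: $\Psi$ is continuous on the cone $\{D>0,\ q>0\}$ and vanishes precisely on the $p=0$ hypersurface. At the easily-verified purely hydrodynamic rest state $(\rho>0,\ \vec v=\vec 0,\ \vec B=\vec 0,\ p>0)$, direct substitution shows $\Psi>0$. Hence $\Psi$ must remain strictly positive on the connected interior $\mathcal G$ and non-positive on the other side of $\{\Psi=0\}$ inside the cone, yielding $\mathcal G=\mathcal G_0$.

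\medskip

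\noindent\textbf{Main obstacles.} The delicate technical points I anticipate are: (i) verifying that the discriminant $\Phi^2=(|\vec B|^2-E)^2+3(E^2-D^2-|\vec m|^2)$ is nonnegative throughout the working cone, which uses $E^2-D^2-|\vec m|^2=q(\vec U)\cdot(E+\sqrt{D^2+|\vec m|^2})>0$ from Lemma~\ref{theo:RMHD:condition}; (ii) justifying that the physically correct branch is $\xi_+$ (rather than $\xi_-$), which should follow because $\xi_*=\rho h W^2\ge DW\ge D$ is the large branch; and (iii) the sign-globalization step, which must exclude the possibility that $\Psi>0$ picks the non-admissible side of the cubic surface. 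Step (i) is a direct computation; step (iii) is the conceptually subtle one, but reduces to the observation that the cone $\{D>0,\ q>0\}$ is convex and the surface $\{\Psi=0\}$ separates the rest state (where $\Psi>0$ is transparent) from clearly non-admissible regions (e.g., with large $|\vec B|^2$).
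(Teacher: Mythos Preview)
Your algebra is correct and in fact coincides with the paper's: your cubic $g$ is $2f_3$, your companion identity is $f_4(\xi_*)=0$, your reduction to a quadratic is exactly $f_2(\xi_*)=0$, and your $\xi_+$ is the root the paper calls $\xi_{2,R}$. So the identification of $\Psi=0$ as the image of the $p=0$ surface is right.

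The genuine gap is logical, not algebraic. Your derivation establishes only one inclusion: states with $p=0$, $\rho>0$, $|\vec v|<1$ satisfy $\Psi=0$. You have \emph{not} shown the reverse, that every $\vec U$ in the cone $\{D>0,\,q>0\}$ with $\Psi(\vec U)=0$ arises from such a state. Without this, the connectedness argument for $\mathcal G\subseteq\mathcal G_0$ already fails: nothing rules out an extra sheet of $\{\Psi=0\}$ lying \emph{inside} $\mathcal G$, across which $\Psi$ could change sign despite being positive at the rest state. The same obstruction blocks the reverse inclusion: knowing that a straight segment from a putative non-admissible point with $\Psi>0$ to the rest state crosses $\partial\mathcal G\cap C\subseteq\{\Psi=0\}$ does not force $\Psi\le 0$ at the starting point, since $\Psi$ may simply dip to zero and come back. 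Your ``obstacle (iii)'' is therefore not merely conceptually subtle; as stated, the argument does not close.

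The paper avoids the topology entirely by proving a chain of two-sided \emph{strict} inequalities rather than identifying a zero locus. It shows, step by step, that under $D>0$ and $q(\vec U)>0$ the condition $f_4(\xi_*)>0$ is equivalent to $\xi_*>\xi_4$, then to $f_{\vec U}(\xi_4)<0$, then to $f_3(\xi_4)<0$, then to $\xi_4<\xi_3$, then to $f_4(\xi_3)>0$, then to $f_2(\xi_3)<0$, and finally to $\xi_{2,R}>0$ together with $f_3(\xi_{2,R})>0$, which is $\Psi(\vec U)>0$. Each transfer uses the uniqueness of the relevant positive root and monotonicity of the polynomials (the paper's Lemmas~\ref{theo:RMHD:CYcondition}--\ref{lemma:RMHD:f2}); no connectedness is needed. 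Your branch-selection worry (choosing $\xi_+$ over $\xi_-$) is exactly the content of the paper's step~(6), which requires a short contradiction argument and is not as immediate as ``$\xi_*\ge D$ is the large one''. If you want to salvage your approach, you would need to run your polynomial identities backward to prove $\Psi(\vec U)=0\Rightarrow p(\vec U)=0$ within the cone; doing so carefully essentially reconstructs the paper's chain of equivalences.
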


\begin{proof}
The proof of Theorem \ref{theo:RMHD:CYconditionFINAL2} is very technical,
and will be  built on  several lemmas given behind it.

\begin{enumerate}
  \item[(1)]  Lemma \ref{theo:RMHD:CYcondition} tells us that three constraints
  $\rho>0$, $p>0$, and $v<1$ in ${\mathcal G}$
 can be equivalently replaced with
  \begin{align}
\label{eq:G:xi(U)}
 \xi_* (\vec U)  >0,\  f_4 ( \xi_* (\vec U)   ) > 0,\ D>0, \ q (\vec U) > 0,
\end{align}
      where the existence and uniqueness of $\xi_* (\vec U)$ have been required, and
      $ f_4(\xi)$ is a
      quartic polynomial
      defined by
\begin{equation}\label{eq:f4:xi}
         f_4(\xi):=  f_\Omega(\xi)  -{D^2}{({\xi} + {|\vec B|^2})^2}.
\end{equation}
For $\xi \in \Omega_f$,  $f_4(\xi)=(W(\xi))^{-2} (\xi^2-D^2(W(\xi))^2)\left(\xi+|\vec B|^2\right)^2$.
The subsequent task is to prove the equivalence between
first two conditions in \eqref{eq:G:xi(U)} and the third one in ${\mathcal G}_0$
under \eqref{eq:GconToG2}.

  \item[(2)]  Lemma \ref{theo:Omegaf} shows that $\Omega_f$ is an open interval and can be equivalently expressed as  $\Omega_f  = \left( \xi_\Omega,+\infty\right)$, where $\xi_\Omega=\xi_\Omega(\vec U)$ denotes the biggest nonnegative root of $f_{\Omega}(\xi)$ in \eqref{eq:Wxi-zzzzz}.
  \item[(3)] Lemma \ref{lemma:RMHD:f4} shows that the polynomial $f_4(\xi)$ 
  has {unique} positive root in $\Omega_f$, denoted by $\xi_4$, and
  first two constraints in \eqref{eq:G:xi(U)}
 are equivalently replaced with  $\xi_*> \xi_4$, that is to say,  \eqref{eq:G:xi(U)} is equivalent to
\begin{align}
 \label{eq:RMHD:defGcapG1:2}
\xi_*(\vec U) > \xi_4(\vec U) ,  \ D>0,\ q(\vec U) > 0.
\end{align}
  \item[(4)]
      Lemma \ref{theo:RMHD:fUincrease} states that the function $f_{\smallvec U} (\xi)$ defined in \eqref{eq:RMHD:fU(xi)} is strictly monotone increasing in $\Omega_f$, and $\mathop {\lim }\limits_{\xi \to +\infty } f_{\smallvec U} (\xi) = +\infty$.
Hence the first inequality in
  \eqref{eq:RMHD:defGcapG1:2}
  holds if and only if
\begin{equation*}
 f_{ \smallvec U}(\xi_4 )=\xi_4  - \frac{D^2|\vec B|^2+(\vec m \cdot \vec B)^2}{2 \xi_4^2} + {\left| \vec B \right|^2} - E<0=f_{\smallvec U} (\xi_*).
\end{equation*}
{Here} we have used that $\xi_4 \in \Omega_f$ and 
$
\xi_4 = D W(\xi_4)$ {for the left equal sign}.

If defining a cubic polynomial ${f_3}(\xi) $  by
\begin{equation}\label{eq:f3:xi}
{f_3}(\xi) := {\xi^3} + \big({|\vec B|^2} - E\big){\xi^2} - \frac{{{|\vec B|^2}{D^2} + {{(\vec m \cdot \vec B)}^2}}}{2},
\end{equation}
then ${f_3}(\xi_4)=\xi_4^2f_{ \smallvec U}(\xi_4 )$ and
\eqref{eq:RMHD:defGcapG1:2} is equivalent to
\begin{align}
 \label{eq:RMHD:defGcapG1:3}
f_3(\xi_4( \vec U) )<0 ,\ D>0,\ q(\vec U) > 0. 
\end{align}

\item[(5)]
Let us  reduce the degree of polynomial in the constraints by transferring  successively
the lower order constraint on the root of a high degree polynomial into the higher order constraint on the root of
a low degree polynomial.
Lemma \ref{lemma:RMHD:f3} shows that the polynomial $f_3(\xi)$ has {unique} positive root, denoted by $\xi_3$. The continuity of $f_3(\xi)$
implies that for any $\xi>0$, one has
    \begin{equation}\label{eq:RMHD:f3xi3}
f_3(\xi)<0 ~ \Leftrightarrow ~ \xi <\xi_3,\qquad \mbox{or} \ f_3(\xi)>0 ~ \Leftrightarrow ~\xi >\xi_3.
\end{equation}
Thus the first inequality in \eqref{eq:RMHD:defGcapG1:3} is equivalent to
\begin{equation}\label{eq:xi4<xi3}
\xi_4( \vec U)  < \xi_3( \vec U).
\end{equation}
Lemma \ref{lemma:RMHD:f4}  yields
$$ f_4(\xi)>0 ~ \Leftrightarrow ~ \xi_4 < \xi,$$
for any $\xi>0$.
Therefore, \eqref{eq:xi4<xi3} is equivalent to
\begin{equation}\label{eq:RMHD:f4(xi3)posi}
f_4(\xi_3( \vec U) )>0.
\end{equation}
If defining a quadratic polynomial $f_2(\xi)$ by
\begin{equation}\label{eq:f2:xi}
f_2(\xi) := 3\xi^2+4 \big(|\vec B|^2 -E \big) \xi +|\vec B|^4+D^2+|\vec m|^2 -2 |\vec B|^2 E,
\end{equation}
then one gets
\begin{equation*}
\begin{split}
f_4(\xi_3 ) &= \xi_3^2(\xi_3 + |\vec B|^2)^2 - \left[ D^2 (\xi_3 + |\vec B|^2)^2 + \xi_3^2 |\vec m|^2 + \big(2\xi_3+ |\vec B|^2 \big)(\vec m \cdot \vec B)^2 \right]\\
& = \xi_3^2(\xi_3 + |\vec B|^2)^2 - \xi_3^2 (D^2 + |\vec m|^2) -\left[ D^2 |\vec B|^2 + (\vec m \cdot \vec B)^2 \right] \big(2\xi_3+ |\vec B|^2 \big)
\\
&= \xi_3^2(\xi_3 + |\vec B|^2)^2 - \xi_3^2 (D^2 + |\vec m|^2) - 2\big(\xi_3^3+(|\vec B|^2-E)\xi_3^2 \big) \big(2\xi_3+ |\vec B|^2 \big)
\\
&=-\xi_3^2 f_2(\xi_3).
\end{split}
\end{equation*}
Here the identity $f_3(\xi_3)=0$ has been used in the third equal sign.
Hence, \eqref{eq:RMHD:f4(xi3)posi} becomes
\begin{equation}\label{eq:RMHD:f2(xi3)neg}
f_2(\xi_3( \vec U) )<0.
\end{equation}
\item[(6)] Lemma \ref{lemma:RMHD:f2} tells us that the polynomial $f_2(\xi)$ has two real roots,
denoted by $\xi_{2,L}$ and $\xi_{2,R}$ with $\xi_{2,L}<\xi_{2,R}$.
Because the graph of $f_2(\xi)$ opens upward,
 \eqref{eq:RMHD:f2(xi3)neg} is equivalent to
\begin{equation}\label{eq:RMHD:xi2xi3}
\xi_{2,L}( \vec U) < \xi_3( \vec U)< \xi_{2,R}( \vec U),
\end{equation}
which implies
\begin{equation}\label{eq:RMHD:f3xi2}
  \xi_{2,R}( \vec U) >0, \quad f_3(  \xi_{2,R}( \vec U) )>0,
\end{equation}
because of \eqref{eq:RMHD:f3xi3} and $\xi_3>0$.
Conversely, one can show that  \eqref{eq:RMHD:f3xi2} also implies \eqref{eq:RMHD:xi2xi3}, thus they are equivalent to each other. In fact, if
\eqref{eq:RMHD:f3xi2} holds, one has $\xi_3 < \xi_{2,R}$ by using \eqref{eq:RMHD:f3xi3}. Assume that \eqref{eq:RMHD:f3xi2} holds but
\eqref{eq:RMHD:xi2xi3} do not holds, then $\xi_{2,R}>\xi_{2,L} \ge \xi_3$.
By using {\em Vieta's formula} for the quadratic polynomial
that relate the coefficients of a polynomial to sums and products of its roots, $\xi_{2,M}:=\frac12 (\xi_{2,L}+\xi_{2,R}) = - \frac{2}{3} (|\vec B|^2-E) > \xi_3>0$.
Due to \eqref{eq:RMHD:f3xi3}, one has
\begin{equation}\label{eq:f3xi2M}
f_3(  \xi_{2,M} )>0.
\end{equation}
On the other hand, because
$$
f'_3(\xi) = 3 \xi^2 + 2\big(|\vec B|^2 - E \big) \xi = 3 \xi (\xi - \xi_{2,M}),
$$
the function $f_3(\xi)$ is strictly monotone decreasing in
the interval $(0, \xi_{2,M})$, and thus
$$f_3(  \xi_{2,M}) < f_3(0)
=  - \frac{{{|\vec B|^2}{D^2} + {{(\vec m \cdot \vec B)}^2}}}{2}  \le 0,$$
which leads to a contradiction with \eqref{eq:f3xi2M}.
Therefore  \eqref{eq:RMHD:xi2xi3} is equivalent to
 \eqref{eq:RMHD:f3xi2} under \eqref{eq:GconToG2}.
\end{enumerate}

Because
$$
\xi_{2,R}  =  \frac{ \Phi(\vec U) - 2 \big(|\vec B|^2-E \big) }{3} ,
$$
two inequalities in \eqref{eq:RMHD:f3xi2}
 become
\begin{equation}\label{eq:RMHD:exConstraint}
\begin{aligned}%
&\Phi(\vec U) - 2 \big(|\vec B|^2-E \big) > 0,\\
&\big( \Phi(\vec U) -2(|\vec B|^2-E) \big)^2 \big(\Phi(\vec U)+(|\vec B|^2-E)\big) >  \frac{27}{2} \big( D^2|\vec B|^2+(\vec m \cdot \vec B)^2 \big),
\end{aligned}
\end{equation}
which are equivalent to ${\Psi}(\vec U)>0$ by noting that
$$\Phi(\vec U)+(|\vec B|^2-E)> \big||\vec B|^2-E\big| +(|\vec B|^2-E) \ge 0,$$
under $q(\vec U)>0$.
The proof is completed.
\end{proof}

The rest of this subsection gives all lemmas used in the proof of Theorem
\ref{theo:RMHD:CYconditionFINAL2} and two remarks on Theorem
\ref{theo:RMHD:CYconditionFINAL2} as well as a corollary.

\begin{lemma}\label{theo:RMHD:CYcondition}
$\vec U=(D,\vec m,\vec B,E)^{\top} \in {\mathcal G}$ if and only if
$f_{\smallvec U}(\xi)$ has unique {zero $\xi_*(\vec U)$ in $ \Omega_f$ and satisfies}
\begin{equation}\label{eq:fourC}
D>0,~q(\vec U) > 0,~ \xi_*(\vec U) >0,~  f_4 ( \xi_*(\vec U)  ) > 0,
\end{equation}
{where
 $f_4(\xi)$} is a
quartic
polynomial defined in \eqref{eq:f4:xi}.
\end{lemma}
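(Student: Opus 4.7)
The plan is to exploit the algebraic inversion of the map from primitive to conservative variables, showing that $\xi:=\rho h W^2$ is the unique zero of $f_{\vec U}$ in $\Omega_f$ when $\vec U\in\mathcal{G}$, and that the three physical constraints in \eqref{eq:RMHD:definitionG} can be rephrased purely in terms of $D$, $\xi_*$, and $f_4(\xi_*)$. The identity I would establish first is the factorization
\begin{equation*}
f_4(\xi) \;=\; W(\xi)^{-2}\bigl(\xi^2 - D^2 W(\xi)^2\bigr)(\xi + |\vec B|^2)^2,\qquad \xi\in\Omega_f,
\end{equation*}
which is a direct substitution of \eqref{eq:Wxi}--\eqref{eq:Wxi-zzzzz} into \eqref{eq:f4:xi}. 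Since $\xi+|\vec B|^2>0$ and $W(\xi)>0$ on $\Omega_f$, this supplies the crucial equivalence
\begin{equation*}
f_4(\xi_*)>0 \;\Longleftrightarrow\; \xi_*>DW(\xi_*) \;\Longleftrightarrow\; p(\vec U)>0,
\end{equation*}
where the last step uses \eqref{eq:RMHD:getp} and $\Gamma>1$.

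For the forward implication, I would start from $\vec U\in\mathcal{G}$ and invoke Lemma \ref{theo:RMHD:condition} for $D>0$ and $q(\vec U)>0$. Setting $\xi_*:=\rho h W^2>0$, a direct computation using the defining expressions for $E$ and $\vec m$, together with the $\Gamma$-law reformulation $\rho h = \rho + \Gamma p/(\Gamma-1)$ and the identity $\vec m\cdot\vec B = \xi_*(\vec v\cdot\vec B)$ (obtained by dotting the $\vec m$-definition with $\vec B$), yields $f_{\vec U}(\xi_*)=0$. Because $v<1$ makes $W$ real and positive, the same computation gives $W^2=\xi_*^2(\xi_*+|\vec B|^2)^2/f_\Omega(\xi_*)$, forcing $f_\Omega(\xi_*)>0$, i.e., $\xi_*\in\Omega_f$. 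Uniqueness of $\xi_*$ in $\Omega_f$ is supplied by the strict monotonicity of $f_{\vec U}$ (Lemma \ref{theo:RMHD:fUincrease}), and $f_4(\xi_*)>0$ follows from $p(\vec U)>0$ via the key identity.

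For the converse, I would assume the four inequalities plus a unique zero $\xi_*\in\Omega_f$, and define $W$, $\vec v$, $\rho$, $p$ through \eqref{eq:Wxi} and \eqref{eq:RMHD:getv}--\eqref{eq:RMHD:getp}. Since $\xi_*\in\Omega_f$, $W(\xi_*)$ is real and positive; computing $v^2$ from \eqref{eq:RMHD:getv} then gives $v^2 = 1-W(\xi_*)^{-2}<1$. Positivity of $\rho$ is immediate from $D>0$, while positivity of $p$ is the key identity read in reverse. The final consistency check --- that these primitive quantities indeed reconstitute the given $\vec U$ --- follows because $\vec m = (\xi_*+|\vec B|^2)\vec v - (\vec v\cdot\vec B)\vec B$ is a straightforward rearrangement of \eqref{eq:RMHD:getv}, and $E$ is recovered directly from $f_{\vec U}(\xi_*)=0$.

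The main obstacle is the initial factorization identity for $f_4$ and, in the converse direction, the bookkeeping needed to verify that the inversion formulas \eqref{eq:RMHD:getv}--\eqref{eq:RMHD:getp} are mutually consistent with \eqref{eq:RMHD:fU(xi)}; both require carefully untangling the nonlinear coupling among $\xi$, $W$, and $\vec v$ created by the term $\vec v\cdot\vec B$ appearing in $p_m$. Once the factorization of $f_4$ is secured, the remaining implications reduce to short sign arguments on $\Omega_f$.
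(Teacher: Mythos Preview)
Your proposal is correct and follows essentially the same route as the paper: both directions hinge on Lemma~\ref{theo:RMHD:condition} for $D>0$ and $q(\vec U)>0$, the factorization $f_4(\xi)=W(\xi)^{-2}\bigl(\xi^2-D^2W(\xi)^2\bigr)(\xi+|\vec B|^2)^2$ on $\Omega_f$ (stated in the paper just after \eqref{eq:f4:xi}) to translate $f_4(\xi_*)>0$ into $p(\vec U)>0$ via \eqref{eq:RMHD:getp}, and the monotonicity of $f_{\vec U}$ from Lemma~\ref{theo:RMHD:fUincrease} for uniqueness. Your write-up is in fact slightly more explicit than the paper's in verifying that $\xi_*=\rho h W^2$ actually lies in $\Omega_f$ and solves $f_{\vec U}=0$, and in checking consistency of the inversion formulas in the converse direction.
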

\begin{proof}
(i). Assume $\vec U \in {\mathcal G}$.
Lemma \ref{theo:RMHD:condition} shows that the first two inequalities in \eqref{eq:fourC} hold.
Because $\rho(\vec U)>0,p(\vec U)>0$, and $v(\vec U)<1$, one has
    $$\xi_*  =\rho hW^2 = \frac{\rho(\vec U) h (\vec U)}{1-v^2(\vec U)} = \frac{\rho(\vec U) + \frac{ \Gamma }{\Gamma -1} p (\vec U)}{1-v^2(\vec U)} >0. $$
On the other hand,  because of \eqref{eq:RMHD:getp},
and the facts that $\Gamma>1$ and  $v<1$, one has
$\xi_* > D W(\xi_*) $,
which implies $f_4 ( \xi_*  ) > 0$.

(ii). Assume that four inequalities in \eqref{eq:fourC} hold.
Because of \eqref{eq:f4:xi} and  $D>0, \xi_*>0 $,
one has
$$ f_\Omega ( \xi_*  )  >f_\Omega(\xi_*)  -{D^2}{({\xi_*} + {|\vec B|^2})^2} = f_4 ( \xi_*  )>0 ,$$
which implies
$$
W^{-2}=1 - v^2(\vec U) = \frac{  f_\Omega ( \xi_* ) } {\xi^2_*(\xi_* + |\vec B|^2)^2} > 0.
$$
Thus $v(\vec U)<1$ and $W(\xi_*)\ge 1$.
Thanks to \eqref{eq:RMHD:getrho} and $D>0$, one has $\rho(\vec U) = D/W(\xi_*) > 0$.
Using \eqref{eq:RMHD:getp} and $\Gamma>1$ gives
\begin{equation*}
\begin{split}
p(\vec U) &= \frac{{\Gamma  - 1}}{{\Gamma W(\xi_*)}}\left( {\frac{\xi_*}{W (\xi_*)} - D } \right)
=  \frac{{\Gamma  - 1}}{{\Gamma W(\xi_*)}}\left( {\frac{\xi_*}{W (\xi_*)} + D } \right)^{-1} \Big( \xi^2_* W^{-2} (\xi_*) - D^2 \Big)\\
& =
\frac{{\Gamma  - 1}}{{\Gamma W(\xi_*)}}\left( {\frac{\xi_*}{W (\xi_*)} + D } \right)^{-1} \frac{ f_4 ( \xi_*  ) } { (\xi_* + |\vec B|^2)^2 } >0.
\end{split}
\end{equation*}
The proof is completed.
\end{proof}

\begin{lemma}\label{theo:Omegaf}
For any $\vec U=(D,\vec m,\vec B,E)^{\top}\in   \mathbb R^{8}$,
the set $\Omega_f$  in \eqref{eq:Wxi-zzzzz000000000000} is an open interval and can be expressed as
\begin{equation}\label{eq:RMHD:Omegafnew}
\Omega_f  = 
( \xi_\Omega,+\infty ),
\end{equation}
where $\xi_\Omega=\xi_\Omega(\vec U)$ is the biggest nonnegative root of $f_{\Omega}(\xi)$.
\end{lemma}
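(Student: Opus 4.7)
My plan is to rewrite $f_\Omega(\xi)$ on $\xi>0$ in a form whose sign is controlled by a manifestly monotone function. For $\xi>0$ both $\xi$ and $\xi+|\vec B|^2$ are strictly positive, so dividing by $\xi^2(\xi+|\vec B|^2)^2$ preserves sign and gives
\begin{equation*}
f_\Omega(\xi) > 0 \quad \Longleftrightarrow \quad g(\xi) := \frac{|\vec m|^2}{(\xi+|\vec B|^2)^2} + \frac{(2\xi+|\vec B|^2)(\vec m\cdot\vec B)^2}{\xi^2(\xi+|\vec B|^2)^2} < 1.
\end{equation*}
This already shows that $\Omega_f$ is open, since $g$ is continuous on $(0,\infty)$ and the inequality is strict. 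It remains to establish that $\{\xi>0:g(\xi)<1\}$ is a right half-line of the desired form.

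The main step is to show that $g$ is strictly monotone decreasing on $(0,\infty)$. The first summand is evidently decreasing in $\xi$, so I would concentrate on $h(\xi):=(2\xi+|\vec B|^2)/[\xi^2(\xi+|\vec B|^2)^2]$. A direct differentiation, after factoring $2\xi(\xi+|\vec B|^2)$ out of the numerator, collapses to
\begin{equation*}
h'(\xi) \;=\; \frac{-\,2\bigl(3\xi^2+3|\vec B|^2\xi+|\vec B|^4\bigr)}{\xi^3(\xi+|\vec B|^2)^3},
\end{equation*}
which is negative for every $\xi>0$. Hence $h$, and therefore $g$, is strictly decreasing on $(0,\infty)$. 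This is essentially the only nontrivial computation and is the step I expect to be the main obstacle, since one must recognize the right factorization rather than expand everything into a mess of quartics.

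With monotonicity in hand, together with $\lim_{\xi\to+\infty}g(\xi)=0$, the set $\{\xi>0:g(\xi)<1\}$ must be of the form $(\xi_\Omega,+\infty)$ with $\xi_\Omega\ge 0$. I would conclude by identifying $\xi_\Omega$ with the biggest nonnegative root of $f_\Omega$ via a short case split on the behaviour of $g$ near $0^+$: if $g(0^+)>1$, continuity produces a unique positive $\xi_\Omega$ with $g(\xi_\Omega)=1$, i.e.\ $f_\Omega(\xi_\Omega)=0$, and no larger root exists because $f_\Omega>0$ on $(\xi_\Omega,\infty)$; if $g(0^+)\le 1$, then $\Omega_f=(0,\infty)$, and an inspection of $f_\Omega(0)=-|\vec B|^2(\vec m\cdot\vec B)^2$ together with the intermediate value theorem forces $f_\Omega(0)=0$ (otherwise a positive root would appear, contradicting $f_\Omega>0$ on $(0,\infty)$), so $\xi_\Omega=0$ is again the biggest nonnegative root. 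In every case $\Omega_f=(\xi_\Omega,+\infty)$ with $\xi_\Omega$ as described.
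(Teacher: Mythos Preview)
Your proof is correct and follows a route that differs from the paper's in a useful way. The paper splits into the cases $\vec m\cdot\vec B=0$ and $\vec m\cdot\vec B\neq 0$; in the second case it factors $f_\Omega(\xi)=\xi^2 g_\Omega(\xi)$ with
\[
g_\Omega(\xi)=\Bigl(1-\tfrac{(\vec m\cdot\vec B)^2}{|\vec B|^2\xi^2}\Bigr)(\xi+|\vec B|^2)^2-\Bigl(|\vec m|^2-\tfrac{(\vec m\cdot\vec B)^2}{|\vec B|^2}\Bigr),
\]
and observes that on $[\,|\vec m\cdot\vec B|/|\vec B|,\infty)$ the first term is a product of two nonnegative increasing functions. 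Your choice to divide by $\xi^2(\xi+|\vec B|^2)^2$ instead of just $\xi^2$ produces a function $g$ that is monotone on all of $(0,\infty)$, and the derivative of $h$ factors cleanly, so you avoid the preliminary case split and the restriction to a subinterval. What the paper's factorization buys is that no differentiation is needed; what yours buys is a uniform argument with a single computation.

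One small point worth tightening: strict monotonicity of $g$ fails in the degenerate case $\vec m=\vec 0$, where $g\equiv 0$. This does not affect the conclusion (then $g<1$ everywhere and $\xi_\Omega=0$), but you should either say ``nonincreasing'' and note that the crossing is still unique, or dispose of $\vec m=\vec 0$ in one line at the start. Similarly, your closing IVT argument for the case $g(0^+)\le 1$ is correct but slightly indirect; you could simply observe that $g(0^+)$ finite forces $\vec m\cdot\vec B=0$, whence $f_\Omega(0)=-|\vec B|^2(\vec m\cdot\vec B)^2=0$ immediately.
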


\begin{proof}
If $\vec m \cdot \vec B =0$, then \eqref{eq:Wxi-zzzzz} gives
$f_{\Omega}(\xi)=\xi^2(\xi +|\vec B|^2+|\vec m|)(\xi +|\vec B|^2-|\vec m|)$, whose biggest nonnegative root is $\xi_\Omega =\max \{0,|\vec m|-|\vec B|^2\}$. {Thus \eqref{eq:RMHD:Omegafnew} holds.}

Assume that $\vec m \cdot \vec B \neq 0$ and $\xi>0$.
In this case, $|\vec B| \neq 0$ such that
the expression of $f_{\Omega}(\xi)$ in \eqref{eq:Wxi-zzzzz} is
reformulated as follows
\begin{equation}\label{eq:RMHD:fOmega}
\begin{split}
f_{\Omega}(\xi) &=
{\xi^2}{{(\xi + {|\vec B|^2})}^2} - \left[ {\xi^2}{|\vec m|^2} + \frac{(\vec m \cdot \vec B)^2}{|\vec B|^2} \left( 2 \xi |\vec B|^2 + |\vec B|^4\right) \right]
\\
&=
{\xi^2}{{(\xi + {|\vec B|^2})}^2} -   {\xi^2}\left( |\vec m|^2 -\frac{(\vec m \cdot \vec B)^2}{|\vec B|^2} \right)
- \frac{(\vec m \cdot \vec B)^2}{|\vec B|^2} \left(  \xi + |\vec B|^2 \right)^2.
\end{split}
\end{equation}
Define
\begin{equation}\label{eq:gOmega}
g_{\Omega}(\xi) : =
\left(  1 - \frac{(\vec m \cdot \vec B)^2}{\xi^2|\vec B|^2} \right)  (\xi + {|\vec B|^2})^2 - \left( |\vec m|^2 -\frac{(\vec m \cdot \vec B)^2}{|\vec B|^2} \right),
\end{equation}
which implies $f_{\Omega}(\xi)=\xi^2 g_{\Omega}(\xi)$
and
$g_{\Omega}(\xi)\le 0$ for $0 < \xi\le
{\left|\vec m \cdot \vec B \right|}/{\left|\vec B\right|}=:\zeta_0$.
It is also easy to verify that  $g_{\Omega}(\xi)$ satisfies
$$
g_{\Omega}\left( \zeta_0 \right) \le 0,\quad
\mathop {\lim }\limits_{\xi \to +\infty } g_{\Omega}(\xi) = + \infty,
$$
and   is also strictly monotone increasing
in the interval $\left[ \zeta_0, +\infty \right)$,
because the first term at the right hand side of \eqref{eq:gOmega}
is a product of two nonnegative and strictly monotone
increasing functions in $\left[ \zeta_0, +\infty \right)$.
The {\em intermediate value theorem} shows that
$g_{\Omega}(\xi)$ has {unique} positive root $\xi_\Omega(\vec U)$ in  $\left[\zeta_0, +\infty \right)$,
 which is the biggest positive root
of {$f_{\Omega}(\xi)$
because} of the relationship
$f_{\Omega}  (\xi) = \xi^2 g_{\Omega}(\xi)$.
Therefore, the domain $\Omega_f$
can be equivalently replaced with \eqref{eq:RMHD:Omegafnew}.
The proof is completed.
\end{proof}

\begin{lemma}\label{lemma:RMHD:f4}
	If  $\vec U=(D,\vec m,\vec B,E)^{\top}\in \mathbb R^{8}$ with $D>0$,
	then the quartic polynomial $f_4(\xi)$ defined in \eqref{eq:f4:xi} has {unique} positive root $\xi_4$, satisfying $\xi_4 >\xi_\Omega $.
	Moreover, $f_4(\xi)>0$ is equivalent to $\xi_4 < \xi$ for any $\xi \in \mathbb{R}^+$.
\end{lemma}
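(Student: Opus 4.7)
My plan is to treat $f_4$ as a quartic polynomial in $\xi$ and apply Descartes' rule of signs directly. First I would expand \eqref{eq:f4:xi} to obtain the explicit form
\begin{equation*}
f_4(\xi) = \xi^4 + 2|\vec B|^2\xi^3 + \big(|\vec B|^4 - D^2 - |\vec m|^2\big)\xi^2 - 2\big(D^2|\vec B|^2 + (\vec m \cdot \vec B)^2\big)\xi - |\vec B|^2\big(D^2|\vec B|^2 + (\vec m \cdot \vec B)^2\big),
\end{equation*}
so that the leading coefficient equals $1$ and the constant term is $\le 0$ (strictly negative when $|\vec B|\neq 0$). Hence $f_4(\xi) \to +\infty$ as $\xi \to +\infty$ while $f_4(0) \le 0$, and the intermediate value theorem yields at least one positive root.

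For uniqueness I would inspect the nonzero coefficient sequence. Apart from the $\xi^2$ coefficient, whose sign is indeterminate, the coefficients of $\xi^4$ and $\xi^3$ are nonnegative and those of $\xi^1$ and $\xi^0$ are nonpositive. A short case analysis---based on the sign of $|\vec B|^4 - D^2 - |\vec m|^2$, and on whether $|\vec B|=0$ (which forces the $\xi^3$, $\xi^1$, and $\xi^0$ coefficients to vanish simultaneously since then $\vec m\cdot\vec B = 0$)---shows that the sequence of nonzero coefficients has exactly one sign variation in every configuration. By Descartes' rule of signs, $f_4$ therefore has exactly one positive real root counted with multiplicity; this root must be simple, and we call it $\xi_4$. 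Consequently $f_4<0$ on $(0,\xi_4)$ and $f_4>0$ on $(\xi_4,+\infty)$, which is precisely the equivalence $f_4(\xi)>0 \Leftrightarrow \xi_4<\xi$ for $\xi\in\mathbb{R}^+$.

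Finally, to locate $\xi_4$ relative to $\xi_\Omega$, I would use the defining identity $f_\Omega(\xi_\Omega)=0$ together with \eqref{eq:f4:xi} to deduce
\begin{equation*}
f_4(\xi_\Omega) = -D^2(\xi_\Omega + |\vec B|^2)^2 \le 0.
\end{equation*}
If $\xi_\Omega=0$ and $|\vec B|=0$, then $\xi_4>0=\xi_\Omega$ is immediate; otherwise $\xi_\Omega + |\vec B|^2>0$ combined with $D>0$ forces $f_4(\xi_\Omega)<0$, which places $\xi_\Omega$ in $(0,\xi_4)$ by the sign structure established above. The main obstacle I anticipate is the bookkeeping of the Descartes sign sequence across the several degenerate subcases; the cleanest safeguard is to verify the extreme case $|\vec B|=0$ by hand, where $f_4(\xi)=\xi^2(\xi^2 - D^2 - |\vec m|^2)$ has unique positive root $\sqrt{D^2+|\vec m|^2}$, and this plainly exceeds $\xi_\Omega = |\vec m|$.
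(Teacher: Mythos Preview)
Your argument is correct and takes a genuinely different route from the paper. The paper does not expand $f_4$ or invoke Descartes' rule; instead, for $|\vec B|\neq 0$ it factors $f_4(\xi)=\xi^2 g_4(\xi)$ with
\[
g_4(\xi)=\Bigl(1-\xi^{-2}\xi_0^2\Bigr)(\xi+|\vec B|^2)^2-\Bigl(|\vec m|^2-\tfrac{(\vec m\cdot\vec B)^2}{|\vec B|^2}\Bigr),\qquad \xi_0^2=D^2+\tfrac{(\vec m\cdot\vec B)^2}{|\vec B|^2},
\]
checks $g_4\le 0$ on $(0,\xi_0]$, and observes that on $[\xi_0,\infty)$ the first term is a product of two nonnegative increasing functions, so $g_4$ is strictly increasing there; the intermediate value theorem then gives a unique positive root. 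To compare with $\xi_\Omega$ the paper evaluates $f_\Omega(\xi_4)=D^2(\xi_4+|\vec B|^2)^2>0$ and appeals to Lemma~\ref{theo:Omegaf}, while you evaluate the dual quantity $f_4(\xi_\Omega)=-D^2(\xi_\Omega+|\vec B|^2)^2<0$; these are logically equivalent moves. Your Descartes argument is more elementary and dispatches the uniqueness in one stroke without the monotonicity analysis, whereas the paper's factorization $g_4$ parallels the function $g_\Omega$ from Lemma~\ref{theo:Omegaf} and so fits more uniformly with the surrounding development. One minor phrasing point: when $\xi_\Omega=0$ but $|\vec B|\neq 0$ you conclude ``$\xi_\Omega\in(0,\xi_4)$'', which is literally false, though the intended inequality $\xi_\Omega<\xi_4$ is of course immediate since $\xi_4>0$.
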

\begin{proof}
	If $|\vec B| =0$, then  $f_4(\xi)=\xi^2(\xi^2-(D^2+|\vec m|^2))$ has {unique} positive root
	$\xi_4(\vec U)=\sqrt{D^2+|\vec m|^2}$, which satisfies $\xi_4(\vec U)> |\vec m|=\xi_\Omega$.
	If $|\vec B| \neq 0$, then
	$f_4(\xi)$  is rewritten as follows
	\begin{align*}
	f_4(\xi)
	= {\xi^2} g_4 (\xi), \quad \xi>0,
	\end{align*}
	where the rational polynomial
	\begin{equation}\label{eq:g4:xi}
	g_4 (\xi) :=
	\left(  1 - {\xi^{-2}} \xi_0^2
	\right)  (\xi + {|\vec B|^2})^2 - \left( |\vec m|^2 -\frac{(\vec m \cdot \vec B)^2}{|\vec B|^2} \right),
	\end{equation}
	with
	$$
	\xi_0:=\sqrt{ D^2 + {\left(\vec m \cdot \vec B \right)^2}/{\left|\vec B\right|^2} }.
	$$
	Obviously, if $\xi \in \left(0,\xi_0\right)$, then one has
	$$
	g_4 (\xi) < - \left( |\vec m|^2 -\frac{(\vec m \cdot \vec B)^2}{|\vec B|^2} \right) \le 0.
	$$
	Thus,  the positive zero of $g_4 (\xi)$ may  be in the interval $\left[ \xi_0, +\infty \right)$.
	The existence of  the positive zero of $g_4 (\xi)$ is verified as follows.
	It is easy to get that
	$$
	g_4 \left( \xi_0 \right) \le 0,\quad
	\mathop {\lim }\limits_{\xi \to +\infty } g_4(\xi) = + \infty.
	$$
	On the other hand, the function $g_4(\xi)$ is strictly monotone increasing
	in the interval $\left[ \xi_0, +\infty \right)$, because
	the first term at the right hand side of \eqref{eq:g4:xi} is a product of two positive and strictly monotone increasing functions in
	$\left[ \xi_0, +\infty \right)$.
	The {\em intermediate value theorem} shows
	that $g_4(\xi)$ has {unique} positive root in
	$\left[ \xi_0, +\infty \right)$, equivalently,
	$f_4(\xi)$ has {unique} positive root $\xi_4$. It satisfies
	$$
	f_\Omega (\xi_4 ) =  f_4(\xi_4) + D^2 (\xi_4 + |\vec B|^2)^2 = D^2 (\xi_4 + |\vec B|^2)^2 >0,
	$$
	which implies $\xi_4 \in \Omega_f$.
	Using Lemma \ref{theo:Omegaf} completes the proof.
	\end{proof}

\begin{lemma}\label{theo:RMHD:fUincrease}
For any $\vec U=(D,\vec m,\vec B,E)^{\top}\in \mathbb R^{8}$ with $D>0$, the function $f_{ \smallvec U}(\xi ) $ defined in \eqref{eq:RMHD:fU(xi)} is strictly monotone increasing
in the interval $\Omega_f  = \left( \xi_\Omega,+\infty  \right)$, and $\mathop {\lim }\limits_{\xi \to +\infty } f_{\smallvec U} (\xi) = +\infty$.
\end{lemma}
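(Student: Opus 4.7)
The plan is to establish monotonicity by directly computing $f'_{\smallvec U}(\xi)$, using a pivotal algebraic identity to obtain the crucial cancellation, and to handle the limit by extracting the leading-order asymptotics as $\xi\to+\infty$.

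First I would set $\sigma(\xi):=W^{-2}(\xi) = f_\Omega(\xi)/(\xi^2(\xi+|\vec B|^2)^2)$ and establish three preliminary facts on $\Omega_f$: (i) $\sigma>0$, immediate from $f_\Omega>0$ on $\Omega_f$ by Lemma~\ref{theo:Omegaf}; (ii) $\sigma$ is strictly increasing, via the quotient-rule calculation
\[
\sigma'(\xi) = \frac{2\bigl[\xi^3|\vec m|^2 + (\vec m\cdot\vec B)^2(3\xi^2 + 3\xi|\vec B|^2 + |\vec B|^4)\bigr]}{\xi^3(\xi+|\vec B|^2)^3} > 0;
\]
and (iii) $\sigma<1$, obtained by noting that $\sigma(\xi)\to 1$ as $\xi\to+\infty$ (the lower-order corrections in $f_\Omega$ being negligible compared to $\xi^2(\xi+|\vec B|^2)^2$) and combining with (ii).

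The core of the argument is to differentiate $f_{\smallvec U}(\xi) = \xi + |\vec B|^2 - E - p(\xi) - p_m(\xi)$ term by term, where $p(\xi) = \frac{\Gamma-1}{\Gamma}(\xi\sigma - D\sqrt{\sigma})$ and $p_m(\xi) = \tfrac{1}{2}(|\vec B|^2\sigma + (\vec m\cdot\vec B)^2/\xi^2)$. Using $\tfrac{d}{d\xi}\sqrt{\sigma} = W\sigma'/2$, I would collect
\[
f'_{\smallvec U}(\xi) = 1 - \tfrac{\Gamma-1}{\Gamma}\sigma + \tfrac{(\vec m\cdot\vec B)^2}{\xi^3} + \tfrac{(\Gamma-1)DW}{2\Gamma}\sigma' - \sigma'\Bigl[\tfrac{\Gamma-1}{\Gamma}\xi + \tfrac{|\vec B|^2}{2}\Bigr].
\]
The pivotal identity, which I would verify by reducing the formulas for $\sigma$ and $\sigma'$ over the common denominator $\xi^3(\xi+|\vec B|^2)^3$ and simplifying, is
\[
(\xi+|\vec B|^2)\sigma'(\xi) = 2\bigl(1-\sigma(\xi)\bigr) + \tfrac{2(\vec m\cdot\vec B)^2}{\xi^3}.
\]
Since $\Gamma\in(1,2]$ gives $\frac{\Gamma-1}{\Gamma}\le \tfrac{1}{2}$, the bracketed quantity above is bounded by $\tfrac{1}{2}\sigma'(\xi+|\vec B|^2)$, which by the identity equals $(1-\sigma) + (\vec m\cdot\vec B)^2/\xi^3$. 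Substituting this bound produces the clean lower bound
\[
f'_{\smallvec U}(\xi) \ge \tfrac{\sigma}{\Gamma} + \tfrac{(\Gamma-1)DW}{2\Gamma}\sigma' > 0,
\]
since $\sigma,\sigma',D,W>0$ and $\Gamma>1$.

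For the limit, $\sigma\to 1$ as $\xi\to+\infty$ yields $p(\xi)\sim \frac{\Gamma-1}{\Gamma}(\xi - D)$ and $p_m(\xi)\to |\vec B|^2/2$, so $f_{\smallvec U}(\xi) \sim \xi/\Gamma + (\Gamma-1)D/\Gamma + |\vec B|^2/2 - E \to +\infty$. The hard part will be guessing and verifying the pivotal identity: the negative contribution from $\sigma'$ balances the positive terms only because of this exact algebraic relation, and the hypothesis $\Gamma\le 2$ enters precisely to make the elementary bound $\frac{\Gamma-1}{\Gamma}\xi + \frac{|\vec B|^2}{2}\le \frac{1}{2}(\xi+|\vec B|^2)$ available so that the identity can be exploited.
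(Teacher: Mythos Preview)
Your proof is correct and follows essentially the same strategy as the paper: differentiate $f_{\smallvec U}$, exploit $\Gamma\le 2$ to bound a coefficient, and use the same algebraic identity (which the paper applies implicitly in the line where $2\varXi_\xi-(W^{-2}-2\xi W^{-3}W')$ collapses to $W^{-2}$) to obtain a positive lower bound on the derivative. Your organization via $\sigma=W^{-2}$ is slightly cleaner: by applying $\tfrac{\Gamma-1}{\Gamma}\le\tfrac12$ directly to the coefficient $\tfrac{\Gamma-1}{\Gamma}\xi+\tfrac{|\vec B|^2}{2}$ (rather than to $\varXi_\xi$ as the paper does), you avoid the paper's separate argument that $\varXi_\xi>0$, which the paper needs in order to justify $\tfrac{\Gamma}{\Gamma-1}\varXi_\xi\ge 2\varXi_\xi$. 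Note also that your preliminary fact (iii), $\sigma<1$, is not actually used in the monotonicity argument (only $\sigma>0$ and $\sigma'\ge 0$ are), though the limit $\sigma\to 1$ is of course needed for the asymptotics.
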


\begin{proof}
From \eqref{eq:RMHD:fU(xi)} and \eqref{eq:Wxi},  the derivatives of $f_{ \smallvec U}(\xi ) $ and $W(\xi)$ with respect to $\xi $ is calculated as follows
\begin{equation}\label{eq:dfUdxi}
{f'_{\smallvec U}}(\xi ) =      
\varXi_\xi
- \frac{{\Gamma  - 1}}{\Gamma }\left( {\frac{1}{{{W^2}}} - \frac{{2\xi }}{{{W^3}}}W'(\xi ) + \frac{D}{{{W^2}}}W'(\xi )} \right),
\end{equation}
and
\[
W'(\xi ) =  - {W^3}\frac{{{{(\vec m \cdot \vec B)}^2}(3{\xi^2} + 3\xi{|\vec B|^2} + {|\vec B|^4}) + {|\vec m|^2}{\xi^3}}}{{{\xi^3}{{(\xi + {|\vec B|^2})}^3}}},
\]
where
$$
\varXi_\xi :=
{1 + \frac{{{{\left|\vec  B \right|}^2}}}{{{W^3}}}W'(\xi )
+ \frac{{{{(\vec m \cdot \vec B)}^2}}}{{{\xi^3}}}}.
$$

Let us prove that $\varXi_\xi >0$ for any $\vec B \in {\mathbb {R}}^3$ and $\xi \in \Omega_f$.
If $\vec B = \vec 0$, then  $\varXi_\xi =1>0$.
Assume that  $\vec B \neq \vec0$ and thus \eqref{eq:RMHD:fOmega} is available.
Using \eqref{eq:RMHD:fOmega} and $f_\Omega(\xi)>0$ gives
 $|\vec B|^2 \xi^2 - ( \vec m \cdot \vec B)^2 >0 $ and
$$
  (\xi + {|\vec B|^2})^2 > \frac{ {\xi^2}\left( |\vec m|^2 |\vec B|^2 -(\vec m \cdot \vec B)^2 \right) }
  { \left|\vec B \right|^2 \xi^2 - ( \vec m \cdot \vec B)^2 } .
$$
 It follows that
\begin{equation*}
\begin{split}
\varXi_\xi &
= \frac{ (\xi + |\vec B|^2)^3 - \big( |\vec m|^2 |\vec B|^2 - (\vec m \cdot \vec B)^2  \big) } { (\xi + |\vec B|^2)^3 }
\\
&
>  \frac{ (\xi + |\vec B|^2) \frac{ {\xi^2}\left( \left|\vec m \right|^2 \left|\vec B\right|^2 -(\vec m \cdot \vec B)^2 \right) }
  { \left|\vec B \right|^2 \xi^2 - ( \vec m \cdot \vec B)^2 } - \big( |\vec m|^2 |\vec B|^2 - (\vec m \cdot \vec B)^2  \big) } { (\xi + |\vec B|^2)^3 }
\\
&
= \frac{ (\xi^3+(\vec m \cdot \vec B)^2) \big(  \left|\vec m \right|^2 \left|\vec B\right|^2 -(\vec m \cdot \vec B)^2 \big) }
       { (\xi + |\vec B|^2)^3 \big( \left|\vec B \right|^2 \xi^2 - ( \vec m \cdot \vec B)^2 \big)} \ge 0.
\end{split}
\end{equation*}

Because  $\Gamma \in (1,2]$,  $\frac{\Gamma}{\Gamma-1} \ge 2$.
Noting that $W'(\xi )\leq 0$ for $\xi\in \Omega_f$
and  using \eqref{eq:dfUdxi} {give}
\begin{equation*}
\begin{split}
\frac{\Gamma }{{\Gamma  - 1}}{f'_{\smallvec U}}(\xi )
&
 \ge 2 \varXi_\xi - \left( {\frac{1}{{{W^2}}} - \frac{{2\xi }}{{{W^3}}}W'(\xi ) + \frac{D}{{{W^2}}}W'(\xi )} \right)
 \ge 2\varXi_\xi
 - \left( \frac{1}{{{W^2}}} - \frac{{2\xi }}{{{W^3}}}W'(\xi )  \right)
 \\
 &
=2\left[ 1+ \frac{(\vec m \cdot \vec B)^2}{\xi^3} - \frac{{{{(\vec m \cdot \vec B)}^2}(3{\xi^2} + 3\xi{|\vec B|^2} + {|\vec B|^4}) + {|\vec m|^2}{\xi^3}}}{{{\xi^3}{{(\xi + {|\vec B|^2})}^2}}}    \right] -\frac{1}{W^2}
\\
&
= \frac{2}{W^2} -\frac{1}{W^2} = \frac{f_\Omega(\xi)}{\xi^2(\xi+|\vec B|^2)^2} > 0,
\end{split}
\end{equation*}
which implies ${f'_{\smallvec U}}(\xi ) >0$ and ${f_{\smallvec U}}(\xi ) $ is strictly monotone increasing
in the interval $\Omega_f$. Note that
\begin{align*}
{f_{\smallvec U}}(\xi )
> \left(1- \frac{\Gamma-1}{\Gamma W^2} \right) \xi  - \frac{1}{2}\left[ {\frac{{{{\left| \vec B \right|}^2}}}{{{{W}^2}}} + \frac{{{{(\vec m \cdot \vec B)}^2}}}{{{\xi ^2}}}} \right] - E \to + \infty, \quad \mbox{as } \xi \to +\infty,
\end{align*}
where $\mathop {\lim }\limits_{\xi \to +\infty } W(\xi) = 1$ has been used. This implies $\mathop {\lim }\limits_{\xi \to +\infty } f_{\smallvec U} (\xi) = + \infty$ and the proof is  completed.
\end{proof}

\begin{lemma}\label{lemma:RMHD:f3}
If $\vec U=(D,\vec m,\vec B,E)^{\top}\in \mathbb R^{8}$ satisfying
\eqref{eq:GconToG2},
 then
the cubic polynomial $f_3(\xi)$ defined in \eqref{eq:f3:xi} has {unique} positive root $\xi_3 $. 
\end{lemma}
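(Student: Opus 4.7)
The plan is to pin down existence of a positive root of $f_3$ by a boundary-value argument, then establish uniqueness from a case split on the sign of $|\vec B|^2-E$ using the factorization of $f_3'$. First I would observe
\[
f_3(0) = -\tfrac{1}{2}\bigl(|\vec B|^2 D^2 + (\vec m\cdot\vec B)^2\bigr) \le 0,
\]
while $f_3$ is monic so $f_3(\xi)\to +\infty$ as $\xi\to +\infty$. The intermediate value theorem then yields at least one positive root.

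For uniqueness, I would use
\[
f_3'(\xi) = 3\xi^2 + 2(|\vec B|^2 - E)\xi = \xi\bigl(3\xi + 2(|\vec B|^2 - E)\bigr),
\]
and split on the sign of $|\vec B|^2-E$. If $|\vec B|^2\ge E$ then $f_3'(\xi)>0$ on $(0,+\infty)$, so $f_3$ is strictly increasing there and the positive root is unique. If $|\vec B|^2<E$, then $f_3$ is strictly decreasing on $(0,\xi_{2,M})$ and strictly increasing on $(\xi_{2,M},+\infty)$, where $\xi_{2,M}:=\tfrac{2}{3}(E-|\vec B|^2)>0$; since $f_3(0)\le 0$ is the maximum value on $[0,\xi_{2,M}]$, no positive root can lie in $(0,\xi_{2,M}]$, and $f_3$ increases strictly from $f_3(\xi_{2,M})\le f_3(0)\le 0$ to $+\infty$ on $(\xi_{2,M},+\infty)$, yielding exactly one root.

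The main obstacle — and the only place where the admissibility hypotheses bite — is the degenerate situation $f_3(0)=0$. This forces $|\vec B|^2 D^2+(\vec m\cdot\vec B)^2=0$, which together with $D>0$ gives $\vec B=\vec 0$ and $f_3(\xi)=\xi^2(\xi-E)$. Here the only candidate for a positive root is $\xi=E$, and strict positivity is guaranteed exactly by $q(\vec U)>0$, which forces $E>\sqrt{D^2+|\vec m|^2}>0$. So $D>0$ and $q(\vec U)>0$ are used precisely to rule out this otherwise-pathological configuration; in the generic case $\vec B\ne\vec 0$ one has $f_3(0)<0$ and the derivative analysis above suffices on its own.
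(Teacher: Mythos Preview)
Your argument is correct. The existence via the intermediate value theorem, the derivative factorization $f_3'(\xi)=3\xi(\xi-\xi_{2,M})$, and the case split on the sign of $|\vec B|^2-E$ all work as stated, and your handling of the boundary case $f_3(0)=0$ (forcing $\vec B=\vec 0$ via $D>0$, then invoking $q(\vec U)>0$ to get $E>0$) is exactly where the hypotheses enter.

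The paper takes a slightly different route. Instead of analyzing $f_3'$ with a case split, it splits on $\vec B=\vec 0$ versus $\vec B\neq\vec 0$. In the latter case it factors $f_3(\xi)=\xi^2 g_3(\xi)$ for $\xi>0$, where
\[
g_3(\xi)=\xi-\frac{D^2|\vec B|^2+(\vec m\cdot\vec B)^2}{2\xi^2}+|\vec B|^2-E.
\]
Since $D>0$ and $\vec B\neq\vec 0$ force the numerator to be strictly positive, $g_3$ is strictly increasing on $(0,\infty)$ and runs from $-\infty$ to $+\infty$, giving a unique positive zero with no further case analysis. The trade-off: the paper's factorization avoids your case split on the sign of $|\vec B|^2-E$ at the cost of introducing a rational auxiliary function; your approach stays purely polynomial but needs the extra branch. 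Both are equally elementary, and both use the hypotheses in essentially the same place (your degenerate case $f_3(0)=0$ coincides with the paper's $\vec B=\vec 0$ branch).
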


\begin{proof}
If~$\vec B = \vec 0$, then  $f_3(\xi)=\xi^2(\xi-E)$ {with
unique positive root} $\xi_3  =E$.
If $\vec B \neq \vec 0$, then
 $f_3(\xi)$ is rewritten as follows
$$
f_3(\xi)=\xi^2 g_3 (\xi ), \quad \xi>0,
$$
with the rational polynomial
$$
g_3 (\xi ) := \xi - \frac{D^2|\vec B|^2 +(\vec m \cdot \vec B)^2}{2 \xi^2} +|\vec B|^2-E,
$$
which is  strictly {monotone increasing in ${\mathbb{R}}^+$} and satisfies
$$
\mathop {\lim }\limits_{\xi \to 0^+ } g_3(\xi) = - \infty,\quad
\mathop {\lim }\limits_{\xi \to +\infty } g_3(\xi) = + \infty.
$$
According to the {\em intermediate value theorem},  $g_3 (\xi )$ has unique positive root, and thus $f_3(\xi)$ has unique positive root in {${\mathbb{R}}^+$}. The proof is  completed.
\end{proof}

\begin{lemma}\label{lemma:RMHD:f2}
If $q(\vec U)>0$, then the quadratic polynomial $f_2(\xi)$ defined in \eqref{eq:f2:xi}  has two real roots. \end{lemma}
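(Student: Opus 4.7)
The plan is to simply compute the discriminant of the quadratic $f_2(\xi)$ and show it is (strictly) positive under the assumption $q(\vec U)>0$. Writing $f_2(\xi) = a\xi^2 + b\xi + c$ with $a=3$, $b = 4(|\vec B|^2-E)$, $c = |\vec B|^4 + D^2 + |\vec m|^2 - 2|\vec B|^2 E$, the discriminant is
\[
\Delta = b^2 - 4ac = 16(|\vec B|^2-E)^2 - 12\bigl(|\vec B|^4 + D^2 + |\vec m|^2 - 2|\vec B|^2 E\bigr).
\]
A straightforward expansion collects the $|\vec B|^4$, $|\vec B|^2 E$, and $E^2$ terms and regroups them as
\[
\Delta = 4\bigl[(|\vec B|^2-E)^2 + 3(E^2 - D^2 - |\vec m|^2)\bigr] = 4\,\Phi(\vec U)^2,
\]
so $\Delta$ is exactly four times the quantity that already appears under the square root in the definition of $\Phi$.

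Now I would invoke the hypothesis $q(\vec U)>0$, i.e., $E > \sqrt{D^2+|\vec m|^2}$, which forces $E^2 - D^2 - |\vec m|^2 > 0$. Since $(|\vec B|^2-E)^2 \ge 0$ is automatic, the bracket inside $\Delta$ is strictly positive, hence $\Delta > 0$. By the standard quadratic formula $f_2(\xi)$ therefore has two distinct real roots, which in particular proves the claim. (This computation also justifies the very definition of $\Phi(\vec U)$ as a real number and gives the explicit formula $\xi_{2,R} = \tfrac{1}{3}\bigl(\Phi(\vec U) - 2(|\vec B|^2-E)\bigr)$ used in the proof of Theorem~\ref{theo:RMHD:CYconditionFINAL2}.)

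There is no serious obstacle: the entire argument is a one-line algebraic identity plus a sign check. The only thing to be careful about is the regrouping of terms so that the discriminant matches $4\Phi^2$ exactly; that is purely mechanical. Everything else follows from $q(\vec U)>0$.
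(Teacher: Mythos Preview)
Your proof is correct and follows essentially the same approach as the paper: compute the discriminant of $f_2$, simplify it to $4(|\vec B|^2-E)^2 + 12(E^2-D^2-|\vec m|^2)$, and use $q(\vec U)>0$ to conclude it is strictly positive. Your additional observation that $\Delta = 4\Phi(\vec U)^2$ is a nice touch that the paper does not make explicit but is consistent with it.
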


\begin{proof}
Because the discriminant of the quadratic polynomial $f_2(\xi)$ is
\begin{equation*}
\begin{split}
\Delta &= 16 \big(|\vec B|^2 -E \big)^2 - 12 \big(|\vec B|^4+D^2+|\vec m|^2 -2 |\vec B|^2 E \big)
\\
&= 4 (|\vec B|^2-E)^2 + 12( E^2-(D^2+|\vec m|^2))   \\
& \ge  12q(\vec U) \left( E+\sqrt{D^2+|\vec m|^2} \right)
\ge 12q^2(\vec U)  > 0 ,
\end{split}
\end{equation*}
the function $f_2(\xi)$  has two real roots.
\end{proof}

\begin{remark}\label{rem:rmhd:equal}
	Using \eqref{eq:RMHD:exConstraint} and some algebraic manipulations, one can verify that the constraint ${\Psi}(\vec U) >0$ is equivalent to two constraints $\hat q(\vec U)>0$ and $\tilde q(\vec U)>0$, where
\begin{align*}
		&\hat q(\vec U):= \Phi(\vec U) -  2 \big(|\vec B|^2-E \big) = \sqrt{ \left( E-|\vec B|^2 \right)^2+3\left( E^2-D^2-|\vec m|^2 \right) } + 2 \left( E-|\vec B|^2 \right), \\
		&\tilde q(\vec U):= \Phi^6(\vec U) - \bigg( \left( E-|\vec B|^2 \right)^3 + \frac{27}{2} \left( {|\vec B|^2D^2 + |\vec m \cdot \vec B|^2} \right) - 9 \left( E^2-D^2-|\vec m|^2 \right) \left( E-|\vec B|^2 \right)  \bigg)^2.
\end{align*}
	Moreover, ${\Psi}(\vec U) =0$ if and only if $\hat q(\vec U)\ge0$ and $\tilde q(\vec U)=0$.
\end{remark}

\begin{remark}
	The first equivalent definition of $\mathcal G$ is very important in following  aspects:
	\begin{itemize}
		\item to guide the initial guess in numerically solving the
		nonlinear algebraic equation \eqref{eq:RMHD:fU(xi)},
		because the proof of Theorem \ref{theo:RMHD:CYconditionFINAL2}
		has shown that $\xi_*>\xi_4$ for $\vec U \in {\mathcal G}$, where $\xi_4$ is discussed in
		Lemma \ref{lemma:RMHD:f4},
		and
		\begin{equation*}
        \begin{split}
			\Gamma E - \xi_*(\vec U) & = \Gamma \left(\rho h W^2 - p + \frac{1+v^2}{2} |\vec B|^2 - \frac{1}{2} (\vec v \cdot \vec B)^2 \right) - \rho h W^2 \\
			& \ge  \Gamma \left(\rho h W^2 - p \right) - \rho h W^2 \ge (\Gamma - 1) \rho h - \Gamma p = (\Gamma -1) \rho > 0.
		\end{split}
        \end{equation*}
		\item to develop the {PCP} limiter  and  high-order accurate PCP schemes for  the {1D} RMHD equations  \eqref{eq:RMHD1D}, see Section \ref{sec:High1D}.
		\item to prove the convexity of $\mathcal G$, see Section \ref{sec:convexity},
		and the scaling invariance.
	\end{itemize}
\end{remark}

\begin{corollary}[Scaling invariance] \label{lem:RMHD:scaling}
	If the state $\vec U=(D,\vec m,\vec B,E)^{\top}\in {\mathcal G}_0$, then for any $\lambda \in {\mathbb{R}}^+$,
	the state
	$\vec U_\lambda := ( \lambda D, \lambda \vec m, \sqrt{\lambda}  \vec B, \lambda  E)^{\top} \in {\mathcal G}_0$.
\end{corollary}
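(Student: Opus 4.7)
The plan is to prove the corollary by a direct substitution, observing that each of the three defining quantities of $\mathcal G_0$ (namely $D$, $q(\vec U)$, and $\Psi(\vec U)$) scales homogeneously under the map $\vec U \mapsto \vec U_\lambda$, so positivity is preserved. The key observation is the specific power chosen on $\vec B$: because $|\vec B|^2$ scales by $\lambda$ (not $\sqrt{\lambda}$), the magnetic field contribution transforms consistently with the other conservative quantities throughout.

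First I would handle the two easy constraints. Clearly $\lambda D > 0$, and
\[
q(\vec U_\lambda) = \lambda E - \sqrt{\lambda^2 D^2 + \lambda^2 |\vec m|^2} = \lambda\, q(\vec U) > 0.
\]
Next I would compute $\Phi(\vec U_\lambda)$. Using $|\vec B_\lambda|^2 = \lambda |\vec B|^2$ one gets $(|\vec B_\lambda|^2 - E_\lambda)^2 = \lambda^2 (|\vec B|^2 - E)^2$ and $E_\lambda^2 - D_\lambda^2 - |\vec m_\lambda|^2 = \lambda^2 (E^2 - D^2 - |\vec m|^2)$, so $\Phi(\vec U_\lambda) = \lambda \Phi(\vec U)$.

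With this homogeneity in hand, the calculation for $\Psi$ is routine: the factor $\Phi(\vec U_\lambda) - 2(|\vec B_\lambda|^2 - E_\lambda)$ scales by $\lambda$, the factor $\sqrt{\Phi(\vec U_\lambda) + |\vec B_\lambda|^2 - E_\lambda}$ scales by $\sqrt{\lambda}$, and since $D_\lambda^2 |\vec B_\lambda|^2 = \lambda^3 D^2 |\vec B|^2$ and $(\vec m_\lambda \cdot \vec B_\lambda)^2 = \lambda^3 (\vec m \cdot \vec B)^2$, the subtracted square-root term scales by $\lambda^{3/2}$ as well. Therefore
\[
\Psi(\vec U_\lambda) = \lambda^{3/2}\, \Psi(\vec U) > 0,
\]
and all three constraints defining $\mathcal G_0$ are verified for $\vec U_\lambda$.

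There is no real obstacle here; the only thing to notice is that the unusual half-power on $\vec B$ in the definition of $\vec U_\lambda$ is precisely what makes every term in $\Psi$ scale by the same power $\lambda^{3/2}$ (and every term in $\Phi$ by $\lambda$). If instead $\vec B$ had been scaled by $\lambda$, the terms $|\vec B|^2$ and $(\vec m \cdot \vec B)^2$ would scale by $\lambda^2$ and $\lambda^4$ respectively, breaking the homogeneity of $\Psi$. Thus the corollary really encodes the particular weighted homogeneity of $\mathcal G_0$, and the proof is a one-screen verification.
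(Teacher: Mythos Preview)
Your proof is correct and takes essentially the same approach as the paper: both verify directly that $q(\vec U_\lambda)=\lambda\,q(\vec U)$ and $\Psi(\vec U_\lambda)=\lambda^{3/2}\,\Psi(\vec U)$, with the paper stating these identities without the intermediate steps you supply.
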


\begin{proof}
	It can be verified that
	$q( \vec U_\lambda  ) = \lambda q(\vec U) >0$ and
	${\Psi}( \vec U_\lambda  ) = \lambda^{3/2} {\Psi} (\vec U) >0$.
	The proof is completed.
\end{proof}

\subsection{Convexity}\label{sec:convexity}

This section will prove the convexity of  admissible state set {$\mathcal G_0=\mathcal G$ for} the RMHDs.
It will play a pivotal role in  analyzing the PCP property of numerical schemes.

\begin{theorem}\label{theo:RMHD:convex}
The admissible state set ${\mathcal G}_0$ is a convex set.
\end{theorem}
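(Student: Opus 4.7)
The plan is to show convexity by decomposing $\mathcal{G}_0$ into the intersection of three superlevel sets and handling each separately. The constraint $D > 0$ is an open half-space, hence convex. The constraint $q(\vec{U}) = E - \sqrt{D^2 + |\vec{m}|^2} > 0$ also defines a convex set, since $\sqrt{D^2+|\vec{m}|^2}$ is a Euclidean norm and thus convex in $\vec{U}$, making $q$ concave and its positive superlevel set convex. It therefore remains to show that adding the constraint $\Psi(\vec{U}) > 0$ preserves convexity.

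Attempting to prove quasi-concavity of $\Psi$ directly is unattractive, since $\Psi$ involves nested square roots and couples $(D,\vec{m},\vec{B},E)$ nonlinearly. Instead I would argue geometrically. By the chain of reductions in Theorem \ref{theo:RMHD:CYconditionFINAL2}, inside $\{D>0,\ q>0\}$ the condition $\Psi(\vec{U})>0$ is equivalent to $p(\vec{U})>0$, so the nontrivial part of $\partial\mathcal{G}_0$ is the $7$-dimensional hypersurface $\Sigma := \{p(\vec{U})=0\}\subset \mathbb{R}^8$. Setting $p=0$ forces $h=1$, and $\Sigma$ admits the explicit parametrization
\begin{equation*}
\vec{U}(\rho,\vec{v},\vec{B}) = \bigl(\rho W,\ (\rho W^2 + |\vec{B}|^2)\vec{v} - (\vec{v}\cdot\vec{B})\vec{B},\ \vec{B},\ \rho W^2 + \tfrac{1+v^2}{2}|\vec{B}|^2 - \tfrac{1}{2}(\vec{v}\cdot\vec{B})^2\bigr)^{\top},
\end{equation*}
with $(\rho,\vec{v},\vec{B})\in \mathbb{R}^+\times\{v<1\}\times\mathbb{R}^3$ and $W=(1-v^2)^{-1/2}$. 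A connected open region with smooth boundary is convex if and only if the second fundamental form of its boundary is positive semi-definite with respect to the outward normal, so convexity of $\mathcal{G}_0$ follows once this is verified on $\Sigma$.

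The main obstacle will be the positivity of $II(X,Y)=\vec{n}\cdot D^2\vec{U}(X,Y)$, where $\vec{n}$ is the outward unit normal (pointing into $\{p<0\}$), obtainable either as the common orthogonal complement of the seven tangent vectors $\partial_\rho \vec{U}$, $\partial_{v_j}\vec{U}$, $\partial_{B_j}\vec{U}$, or, up to a positive factor, as the gradient of $p$ extracted from the algebraic equation \eqref{eq:RMHD:fU(xi)}. The $7\times 7$ matrix $II$ is complicated by the nonlinearity of $W$ in the $\vec{v}$-block and by the $(\vec{v}\cdot\vec{B})$ cross terms coupling the $\vec{v}$- and $\vec{B}$-blocks. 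I would reduce the burden by (i) invoking the rotational symmetry under simultaneous rotations of $\vec{v}$ and $\vec{B}$, which leaves the parametrization equivariant, to align $\vec{v}$ with one coordinate axis and cut $\vec{B}$ down to at most two nonzero components, and (ii) performing a Schur-complement decomposition that isolates the pure-$\rho$ direction (in which $\vec{U}$ depends only linearly) before treating the coupled $(\vec{v},\vec{B})$ blocks. After these reductions I expect positivity to boil down to a handful of explicit inequalities in $W\ge 1$ and the remaining $\vec{B}$-components, verifiable by direct computation; combined with the convexity of $\{D>0\}$ and $\{q>0\}$, this yields the theorem.
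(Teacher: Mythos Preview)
Your approach is essentially the paper's: parametrize the boundary hypersurface $\Sigma=\{p=0\}$ by $(\rho,\vec v,\vec B)$ and verify positive semi-definiteness of the second fundamental form. Two points are worth flagging.

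First, the computation is cleaner than you anticipate. After solving the seven orthogonality conditions the paper obtains the normal explicitly as $\vec n=\bigl(-\sqrt{1-v^2},\,-\vec v,\,-(1-v^2)\vec B-(\vec v\cdot\vec B)\vec v,\,1\bigr)^{\top}$, and then finds that the $7\times 7$ matrix is already block-diagonal, $\vec\Pi=\mathrm{diag}\{0,\vec\Pi_1,\vec\Pi_2\}$, in the $(\rho,\vec v,\vec B)$ coordinates: the $\vec v$--$\vec B$ cross terms you worry about all cancel, and $\vec\Pi_1=\rho W^4\bigl((1-v^2)\vec I_3+\vec v^{\top}\vec v\bigr)+|\vec B|^2\vec I_3-\vec B^{\top}\vec B$ and $\vec\Pi_2=(1-v^2)\vec I_3+\vec v^{\top}\vec v$ are immediately seen to be PSD. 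So neither your rotational reduction nor a Schur complement is needed.

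Second, and more substantively, the paper does not stop at the second fundamental form. It separately proves the global inequality $(\tilde{\vec U}-\vec U)\cdot\vec n>0$ for every $\tilde{\vec U}\in\mathcal G$ and every $\vec U\in\Sigma$, via a nontrivial estimate that splits into a $\rho$-part, a $p$-part, and a purely magnetic part $\widetilde{\mathcal F}_0\ge 0$. This is the step that places $\mathcal G_0$ on the concave side of $\Sigma$; merely orienting $\vec n$ at one point, as your outline suggests, does not by itself give this for a non-compact hypersurface, and the paper invokes Jonker's theorem together with this inequality rather than the compact-boundary ``II PSD $\Leftrightarrow$ convex'' characterization you quote. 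This global inequality is also the entire content of the subsequent second equivalent form $\mathcal G_1$, so in the paper's logic it is not an afterthought but the more laborious half of the proof.
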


\begin{proof}
	It is not trivial and cannot be completed by using the convexity definition of the set
	because of the strong nonlinearity of the function ${\Psi} (\vec U)$ used in  \eqref{eq:RMHD:definitionG2}.
	Instead, it will be done
	with the aid of the close connection between the set  convexity in $\mathbb{R}^N$ and the concave-convex character of the region boundary  corresponding to the set, see e.g. \cite{Jonker1975}.
	
 It is easy to show by  the proof of Lemma 2.2 in \cite{WuTang2015} that the set
	$$\mathcal G_2:=\left\{   \vec U=(D,\vec m,\vec B,E)^{\top} \in {\mathbb{R}}^8 \big|  D>0,q(\vec U)>0 \right\},$$
is convex. Therefore, the subsequent task is to prove that
the hypersurface $\mathcal S$ in ${\mathbb{R}}^8$ described by
\begin{equation}\label{eq:RMHD:Constraint3=0}
{\Psi} (\vec U) = 0,
\end{equation}
is convex within the region ${\mathcal G}_2$, and the points in ${\mathcal G}_0 $ are all located in the concave side of the hypersurface $\mathcal S$.
Unfortunately, it is  impractical  to  check
the convexity of the hypersurface $\mathcal S$ by directly using the
highly nonlinear equation \eqref{eq:RMHD:Constraint3=0} via
the theory of geometry.
To overcome this difficulty, we try to
give a parameter equation for the hypersurface $\mathcal S$.

An important discovery is that \eqref{eq:RMHD:Constraint3=0} is equivalent to $p(\vec U)=0$
for $\vec U\in \mathcal G_2$. In fact, on the one hand, it can {be seen} from the proof of Theorem \ref{theo:RMHD:CYconditionFINAL2} that  \eqref{eq:RMHD:Constraint3=0} implies $f_3(  \xi_{2,R}(\vec U) )=0$.  It means that $\xi_3(\vec U) = \xi_{2,R}(\vec U)$ and  satisfies $f_2(\xi_3(\vec U))=0$,
which  yields $f_4(\xi_3(\vec U))=0$. It follows that $\xi_4(\vec U)=\xi_3(\vec U)$
and satisfies  $f_3(\xi_4(\vec U))=0$ and $f_{\smallvec U}  (\xi_4)=0$. This further implies $\xi_*(\vec U)=\xi_4(\vec U)$, and thus one has that $\xi_*=D W(\xi_*)$ and
 $p(\vec U)=0$. On the other hand, if $p(\vec U)=0$, then $h=1+e+p/\rho=1$, and
\begin{equation}\label{eq:RMHD:pU=0}
\begin{cases}
 D = \dfr{\rho }{{\sqrt {1 - { v^2}} }} ,\\
 \vec m = \dfr{{\rho \vec v}}{{1 - {  v^2}}} + {|\vec B|^2} \vec v - (\vec v \cdot \vec B) \vec B, \\
 E  =\rho W^2-p_m+|\vec B|^2= \dfr{\rho }{{1 - { v^2}}} + \dfr{{1 + { v^2}}}{2}{|\vec B|^2} - \dfr{{{{(\vec v \cdot \vec B)}^2}}}{2}.
 \end{cases}
\end{equation}
Thus one has
\begin{align*} 
&
\Phi^2(\vec U)=\left( E-|\vec B|^2 \right)^2+3\left( E^2-D^2-|\vec m|^2 \right)=\left(\rho W^2 + 2 p_m \right)^2,\\
&
\left( E-|\vec B|^2 \right)^3 + \frac{27}{2} \left( {|\vec B|^2D^2 + |\vec m \cdot \vec B|^2} \right) - 9 \left( E^2-D^2-|\vec m|^2 \right) \left( E-|\vec B|^2 \right) = \left(\rho W^2 + 2 p_m \right)^3,
\end{align*}
which imply that $\hat q(\vec U)$ and $\tilde q(\vec U)$
 in Remark \ref{rem:rmhd:equal} satisfy
\begin{align*}
\hat q(\vec U) =\sqrt{ \left( E-|\vec B|^2 \right)^2+3\left( E^2-D^2-|\vec m|^2 \right) } + 2 \left( E-|\vec B|^2 \right) 
= 3\rho W^2 >0.
\end{align*}
and $\tilde q(\vec U)=0$.
The conclusion in Remark \ref{rem:rmhd:equal} yields \eqref{eq:RMHD:Constraint3=0}.

Based on the above {discovery}, the hypersurface $\mathcal S$ defined in \eqref{eq:RMHD:Constraint3=0} can be represented
by the  parametric equations  \eqref{eq:RMHD:pU=0}
 through seven
parameters $\vec {\mathcal V}:=(\rho,\vec v,\vec B)^{\top} $ with $\rho>0, |\vec v|<1 $ and $\vec B \in {\mathbb {R}}^3$.
Obviously, the hypersurface $\mathcal S$ is {not 6-cylindrical}.
Based on the theorem in \cite{Jonker1975},
one only needs to show that its second fundamental form is positive semi-definite, i.e. prove that the matrix
\begin{align*}
\vec \Pi :={\left[ {\sum\limits_{l = 1}^{8} {\frac{{{\partial ^2}{U_{\langle l \rangle}}}}{{\partial {{\mathcal V}_{\langle i \rangle}}\partial {{\mathcal V}_{ \langle j \rangle}}}}{n_l}} } \right]_{7 \times 7}}
\end{align*}
is positive semi-definite, where $U_{\langle l \rangle}$ and ${\mathcal V}_{\langle i \rangle}$ denote the $l$-th component of the vector $\vec U$ and the $i$-th component of the vector $\vec {\mathcal V}$, respectively, and $\vec n:=(n_1,n_2,\cdots,n_8)^{\top}$ represents the inward-pointing (to the region ${\mathcal G}_0 $) normal vector of the hypersurface $\mathcal S$.
Taking partial derivatives of $\vec U$ with respect to ${\mathcal V}_{\langle i \rangle }$ gives
\begin{align*}
& {\partial _\rho } {\vec U} = \left(W,{W^2}{v_1},{W^2}{v_2},{W^2}{v_3},0,0,0,{W^2}\right)^{\top}, \\
 \begin{split}
& {\partial _{{v_1}}} {\vec U} = \Big(\rho {W^3}{v_1},\rho {W^2}(1 + 2{W^2}v_1^2) + B_2^2 + B_3^2,2\rho {W^4}{v_1}{v_2} - {B_1}{B_2},2\rho {W^4}{v_1}{v_3} - {B_1}{B_3}, \\
&\qquad \qquad   0,0,0,{\left| \vec B \right|^2}{v_1} - {B_1}(\vec v \cdot \vec B) + 2\rho {W^4}{v_1}\Big)^{\top},
 \end{split}
\\
 \begin{split}
& {\partial _{{v_2}}} \vec U = \Big(\rho {W^3}{v_2},2\rho {W^4}{v_1}{v_2} - {B_1}{B_2},\rho {W^2}(1 + 2{W^2}v_2^2) + B_1^2 + B_3^2,2\rho {W^4}{v_2}{v_3} -  {B_2}{B_3}, \\
&\qquad \qquad  0,0,0, {\left| \vec B \right|^2}{v_2} - {B_2}(\vec v \cdot \vec B) + 2\rho {W^4}{v_2}\Big)^{\top},
 \end{split}
\\
 \begin{split}
 &{\partial _{{v_3}}}\vec U = \Big(\rho {W^3}{v_3},2\rho {W^4}{v_1}{v_3} - {B_1}{B_3},2\rho {W^4}{v_2}{v_3} - {B_2}{B_3},\rho {W^2}(1 + 2{W^2}v_3^2) + B_1^2 + B_2^2,\\
&\qquad  \qquad   0,0,0,{\left| \vec B \right|^2}{v_3} - {B_3}(\vec v \cdot \vec B) + 2\rho {W^4}{v_3}\Big)^{\top},
 \end{split}
\\
& {\partial _{{B_1}}}\vec U = \left( {0, - {B_2}{v_2} - {B_3}{v_3},2{B_1}{v_2} - {B_2}{v_1},2{B_1}{v_3} - {B_3}{v_1},1,0,0,{B_1}(1 + v^2) - {v_1}(\vec v \cdot \vec B)} \right)^{\top}, \\
& {\partial _{{B_2}}}\vec U = \left( {0,2{B_2}{v_1} - {B_1}{v_2}, - {B_1}{v_1} - {B_3}{v_3},2{B_2}{v_3} - {B_3}{v_2},0,1,0,{B_2}(1 +  v^2) - {v_2}(\vec v \cdot \vec B)} \right)^{\top}, \\
& {\partial _{{B_3}}}\vec U = \left( {0,2{B_3}{v_1} - {B_1}{v_3},2{B_3}{v_2} - {B_2}{v_3}, - {B_1}{v_1} - {B_2}{v_2},0,0,1,{B_3}(1 +  v^2) - {v_3}(\vec v \cdot \vec B)} \right)^{\top}.
\end{align*}
These are seven  tangent vectors of the hypersurface $\mathcal S$ and  generate the local tangent space. Because they are perpendicular to the  normal vector $\vec n$,
their inner products  with $\vec n$ should be equal to zero, and thus  a linear system of seven algebraic equations for $(n_1,n_2,\cdots,n_8)^{\top}$ is formed.
Solving this linear system gives
\begin{equation}\label{eq:RMHD:vecn}
\vec n = {\left( - \sqrt {1 - v^2} , - \vec v, - (1 - { v^2}) \vec B - (\vec v \cdot \vec B) \vec v,1 \right)^{\top}}.
\end{equation}
First, let us  check the positive semi-definiteness of $\vec \Pi$.
Taking the second-order   partial derivatives of $\vec U$ with respect to $\vec {\mathcal V}$, and then calculating their inner products  with $\vec n$ give the expression of the matrix $\vec \Pi$ as follows
\[\vec \Pi  = {\rm diag}\{ 0,\vec \Pi_1, \vec \Pi_2 \},\]
where
\begin{equation*}
\begin{split}
&
 {\vec \Pi _1} = \rho W^4 \left[ {(1 - { v^2}){\vec I_3} + \vec v^{\top} {\vec v}} \right] + {|\vec B|^2}{\vec I_3} - \vec B^{\top}{\vec B}=\rho W^4{\vec \Pi _2} + {|\vec B|^2}{\vec I_3} - \vec B^{\top}{\vec B}, \\
&
 {\vec \Pi _2} = (1 - { v^2}){\vec I_3} + \vec v^{\top}{\vec v}.
\end{split}
\end{equation*}
Here $\vec I_3$ denotes a unit matrix of size 3.
The matrix $\vec v ^{\top}{\vec v}$ has rank of 1 and eigenvalues of $\{0,0,|\vec v|^2\}$, so the eigenvalues of ${\vec \Pi _2} $ are $\{1 - {|\vec v|^2},1 - {|\vec v|^2},1\}$, 
which imply the positive definiteness of ${\vec \Pi _2} $.
Similarly, one can show that the eigenvalues of ${|\vec B|^2}{\vec I_3} - \vec B^{\top}{\vec B}$ are $\{|\vec B|^2,|\vec B|^2,0\}$. Because ${\vec \Pi _1}$ is the sum of a positive definite matrix and a positive semi-definite matrix, it is  positive semi-definite.
In conclusion, $\vec \Pi$ is a positive semi-definite matrix with positive inertia index of 6
so that  the hypersurface $\mathcal S$ described in  \eqref{eq:RMHD:Constraint3=0} is a convex surface in $\mathcal G_2$.

Next, let us  prove that all the points in ${{\mathcal G}_0 = {\mathcal G} }$ are located at the concave side of the hypersurface $\mathcal S$, that is to say, the normal vector $\vec n$ in \eqref{eq:RMHD:vecn} is the inward-pointing vector  to the region ${\mathcal G}_0 $.
For this purpose, we need to show that, for any $\widetilde {\vec U} \in {{\mathcal G}_0 = {\mathcal G} }$ and  $\vec U\in \mathcal S$, it holds that
\begin{equation*}
(\widetilde {\vec U} - \vec U ) \cdot \vec n >0,
\end{equation*}
which is equivalent to
\begin{equation*} 
\widetilde{\mathcal  F} ( \tilde \rho, \tilde p,\tilde {\vec v},\tilde {\vec B}, \vec v,\vec B ) :=\widetilde {\vec U} \cdot \vec n  + p_m >0,
\end{equation*}
because  of \eqref{eq:RMHD:pU=0} and \eqref{eq:RMHD:vecn}.
By defining  $\widetilde{\mathcal  F}_0 ( \tilde {\vec v},\tilde {\vec B}, \vec v,\vec B ) := \widetilde{\mathcal  F} ( 0, 0,\tilde {\vec v},\tilde {\vec B}, \vec v,\vec B )$,
one can infer that
\begin{equation*}
\begin{split}
\widetilde{\mathcal  F}_0 ( \tilde {\vec v},\tilde {\vec B}, \vec v,\vec B )
& =
\left( \left| \vec{\tilde B} \right|^2 \vec {\tilde v} - (\vec {\tilde v} \cdot \vec {\tilde B}) \vec {\tilde B} \right) \cdot (-\vec v) + \left( W^{-2} \vec B + (\vec v \cdot \vec B) \vec v \right) \cdot (- \tilde{ \vec B})
\\
&~~~~ + \frac{(1+ {\tilde v}^2) |\vec {\tilde B}|^2 - (\vec{\tilde v}\cdot \vec{\tilde B})^2 }{2} +
\frac{ ( 1- v^2  ) |\vec B|^2 +(\vec v \cdot \vec B)^2 }{2}
\\
& = \frac{ ( 1- v^2 ) |\vec B -\vec{\tilde B}|^2 }{2} +  \frac{ |\vec v - \vec {\tilde v}|^2  |\vec{ \tilde B}|^2} {2}
\\
&~~~~  + \frac{ (\vec v \cdot \vec B)^2 }{2} - (\vec v \cdot \vec B) (\vec v\cdot \vec  {\tilde B} )
- \frac{(\vec {\tilde v} \cdot \vec {\tilde B})^2}{2} +  (\vec {\tilde v} \cdot \vec {\tilde B})  (\vec v \cdot \vec  {\tilde B} )
\\
&
\ge
\frac{ |\vec v - \vec {\tilde v}|^2 |\vec{ \tilde B}|^2  } {2}
+ \frac{ \left[(\vec v \cdot \vec B) - (\vec v\cdot \vec  {\tilde B} ) \right]^2 }{2}
- \frac{\left[ (\vec {\tilde v} \cdot \vec {\tilde B}) - (\vec v\cdot \vec  {\tilde B} ) \right]^2}{2}
\\
&
=
\frac{  |\vec v - \vec {\tilde v}|^2 |\vec{ \tilde B}|^2} {2}
- \frac{ \big( ( \vec v -\vec {\tilde v}) \cdot   \vec  {\tilde B} \big)^2}{2}
+ \frac{ \left(\vec v \cdot (\vec B- \vec {\tilde B}) \right) ^2 }{2} \ge 0.
\end{split}
\end{equation*}
Thus  for any given~$\vec U$ on the hypersurface $\mathcal S$,
one has
\begin{equation*}
\begin{split}
 \widetilde{\vec U} \cdot \vec n  + p_m
& =\tilde \rho  \tilde W^2( 1-  \vec { \tilde v} \cdot {\vec v} - \tilde W^{-1} W^{-1} ) +
    \tilde p \left ( \frac{\Gamma}{\Gamma-1} \tilde W^2 (1-\vec {\tilde v} \cdot \vec v ) -1    \right) +  \widetilde{\mathcal  F}_0 ( \tilde {\vec v},\tilde {\vec B}, \vec v,\vec B )
    \\
&
\ge \tilde \rho  \tilde W^2 \bigg( 1-  (\tilde v_1,\tilde v_2, \tilde v_3, \tilde W^{-1}) \cdot (v_1,v_2,v_3,W^{-1})  \bigg) +
    \tilde p \left ( 2 \tilde W^2 (1-\vec {\tilde v} \cdot \vec v ) -1    \right)
\\
&\ge
\tilde \rho  \tilde W^2 \bigg( 1-  \left|(\tilde v_1,\tilde v_2, \tilde v_3, \tilde W^{-1}) \right| \left| (v_1,v_2,v_3,W^{-1}) \right| \bigg) +
    \tilde p \left ( 2 \tilde W^2  (1- \left|\vec {\tilde v} \right| \left| \vec v\right| ) -1    \right)
\\
& \ge
    \tilde p \left ( 2 \tilde W^2  (1- \left|\vec {\tilde v} \right|  ) -1    \right)
  =
\frac { \tilde p (1-| \vec {\tilde v}|)} { 1+| \vec {\tilde v}| }
 > 0.
\end{split}
\end{equation*}
The proof is completed.
\end{proof}

\subsection{Second equivalent definition}\label{sec:second}

The convexity of the admissible state set $\mathcal G$ can
give its second equivalent form,
whose  importance lies in that all constraints
are  linear  with respect to $\vec U$ so that
it will be very effective in verifying theoretically
the PCP property of  the numerical schemes for the RMHD equations \eqref{eq:RMHD1D}.

\begin{theorem}[Second equivalent definition]\label{theo:RMHD:CYcondition:VecN}
The admissible state set ${\mathcal G}$ or ${\mathcal G}_0$ is equivalent to the   set
\begin{equation}\label{eq:RMHD:CYcondition:VecNG1}
\begin{split}
{\mathcal G}_1 := \Big\{   \vec U=(D,\vec m,\vec B,E)^{\top} \in \mathbb{R}^8 \big|  D>0, \vec U \cdot
{{\vec n^*}} + {p^*_m} >0, \\
 \mbox{for any {${\vec B^*}, {\vec v^*} \in \mathbb{R}^3$} with  $  |\vec v^*|<1$} \Big\},
\end{split}
\end{equation}
where
\begin{align}\label{eq:RMHD:vecns}
&{\vec n}^* = {\left( - \sqrt {1 - {|\vec v^*|}^2} ,~
	- {\vec v}^*,~ - (1 - {|\vec v^*|}^2) {\vec B}^* - ({\vec v}^* \cdot {\vec B}^*) {\vec v}^*,~1 \right)^{\top}},\\
& p_{m}^*  = \frac{ (1-{|\vec v^*|}^2) |{\vec B}^*|^2 +({\vec v}^* \cdot {\vec B}^*)^2 }{2}. \label{eq:RMHD:vecns2}
\end{align}
Here ${\vec U}^*$ denotes any point on the hypersurface $\mathcal S$, and
	$p_{m}^*=-{\vec U}^*\cdot {\vec n}^*$ and ${\vec n}^*$ are  corresponding  magnetic pressure and  inward-pointing vector  to the region ${\mathcal G}_0 $, respectively.
\end{theorem}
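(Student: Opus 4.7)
The plan is to establish the theorem by proving the two inclusions $\mathcal{G}_0 \subseteq \mathcal{G}_1$ and $\mathcal{G}_1 \subseteq \mathcal{G}_0$ separately, leaning heavily on the convexity result of Theorem \ref{theo:RMHD:convex} and on the parametric description \eqref{eq:RMHD:pU=0} of the boundary hypersurface $\mathcal{S}$.

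For $\mathcal{G}_0 \subseteq \mathcal{G}_1$, I would essentially repackage the concluding computation of the convexity proof. Given $\widetilde{\vec U} \in \mathcal{G}_0$ and any pair $(\vec v^*, \vec B^*)$ with $|\vec v^*| < 1$, fix any $\rho^* > 0$ and form the concrete point $\vec U^* \in \mathcal{S}$ via \eqref{eq:RMHD:pU=0}. Since $\partial_\rho \vec U^*$ is tangent to $\mathcal{S}$ (one of the seven generating tangent vectors in the convexity proof) and hence orthogonal to $\vec n^*$, the inner product $\vec U^* \cdot \vec n^*$ is constant in $\rho^*$; evaluating at $\rho^* = 0$ gives $\vec U^* \cdot \vec n^* = -p_m^*$. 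The strict inequality $(\widetilde{\vec U} - \vec U^*) \cdot \vec n^* > 0$ from the last part of the convexity proof then becomes exactly $\widetilde{\vec U} \cdot \vec n^* + p_m^* > 0$; combined with $D(\widetilde{\vec U}) > 0$ this places $\widetilde{\vec U}$ in $\mathcal{G}_1$.

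For the reverse $\mathcal{G}_1 \subseteq \mathcal{G}_0$, I would argue by contrapositive using the first equivalent form $\mathcal{G}_0 = \{D > 0, q > 0, \Psi > 0\}$ of Theorem \ref{theo:RMHD:CYconditionFINAL2}. The case $D \le 0$ violates the scalar constraint of $\mathcal{G}_1$ directly. If $D > 0$ but $q(\vec U) \le 0$, the explicit choice $\vec B^* = \vec 0$ and $\vec v^* = \vec m/\sqrt{D^2 + |\vec m|^2}$ (or $\vec v^* = \vec 0$ when $\vec m = \vec 0$) has $|\vec v^*| < 1$ because $D > 0$, and a direct substitution collapses to $\vec U \cdot \vec n^* + p_m^* = E - \sqrt{D^2 + |\vec m|^2} = q(\vec U) \le 0$, violating $\mathcal{G}_1$. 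In the remaining case $D > 0$, $q > 0$, $\Psi \le 0$, the state $\vec U$ lies in the open convex set $\mathcal{G}_2$ but outside $\mathcal{G}_0$; I would connect $\vec U$ by a line segment to an arbitrary point of $\mathcal{G}_0$, use the openness of $\{D > 0, q > 0\}$ to conclude that the segment first exits $\overline{\mathcal{G}_0}$ at a point $\vec U^* \in \mathcal{S}$ (rather than on the strata $\{D=0\}$ or $\{q=0\}$), and invoke the supporting hyperplane of the convex set $\overline{\mathcal{G}_0}$ at $\vec U^*$. The parametric structure of $\mathcal{S}$ identifies its inward normal as $\vec n^*(\vec v^*, \vec B^*)$ for some $(\vec v^*, \vec B^*)$ with $|\vec v^*| < 1$, producing $\vec U \cdot \vec n^* + p_m^* \le 0$ and a contradiction.

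The main obstacle I expect is this last case: rigorously ensuring both that the first-exit point lies on the smooth part $\mathcal{S}$ and that the inward normal there falls in the parametric family with the strict bound $|\vec v^*| < 1$, rather than at a limit point where the parametrization degenerates. This should be secured by the openness of $\{D > 0, q > 0\}$ together with the explicit parametrization of $\mathcal{S}$ by $(\rho, \vec v, \vec B)$ with $\rho > 0, |\vec v| < 1$ used in Theorem \ref{theo:RMHD:convex}. Once $\mathcal{G} = \mathcal{G}_1$ is in hand, the orthogonal invariance of $\mathcal{G}$ is a free corollary: simultaneously rotating $(\vec v^*, \vec B^*)$ and the $(\vec m, \vec B)$ components of $\vec U$ by any $R \in O(3)$ leaves $D$, $\vec U \cdot \vec n^*$, and $p_m^*$ invariant, so $\mathcal{G}_1$ is stable under the induced orthogonal action on $\vec U$.
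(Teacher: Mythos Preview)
Your proposal is correct and follows essentially the same route as the paper. The inclusion $\mathcal{G}_0 \subseteq \mathcal{G}_1$ is handled identically by both---it is exactly the inequality $\widetilde{\vec U}\cdot\vec n^* + p_m^* > 0$ established at the end of the convexity proof. For $\mathcal{G}_1 \subseteq \mathcal{G}_0$, the paper argues directly (``the convexity of $\mathcal{S}$ implies $\Psi(\vec U)>0$'', then the explicit choice $\vec B^*=\vec 0$, $\vec v^* = \vec m/\sqrt{D^2+|\vec m|^2}$ forces $q(\vec U)>0$), whereas you argue by contrapositive and spell out the supporting-hyperplane step that the paper leaves implicit; the specific test vector you use for the $q\le 0$ case is the same as the paper's. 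Your concern about the first-exit point landing on $\mathcal{S}$ rather than on $\{D=0\}$ or $\{q=0\}$ is resolved exactly as you suggest: $D$ is linear and $q$ is concave, so both stay positive along the segment between $\vec U\in\mathcal{G}_2$ and any point of $\mathcal{G}_0\subset\mathcal{G}_2$, forcing the exit to occur on $\{\Psi=0\}\cap\mathcal{G}_2=\mathcal{S}$, which the paper has already parametrized by $(\rho,\vec v,\vec B)$ with $|\vec v|<1$.
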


\begin{proof}
Theorem \ref{theo:RMHD:convex} and its proof have shown that ${\mathcal G}_0={\mathcal G} \subseteq {\mathcal G}_1$.
The subsequent task is to prove ${\mathcal G}_1 \subseteq {\mathcal G}_0$. For any $\vec U \in {\mathcal G}_1$, the convexity of the hypersurface $\mathcal S$ in \eqref{eq:RMHD:Constraint3=0} implies the constraint ${\Psi}(\vec U)>0$ in ${\mathcal G}_0$. Thus it needs to prove that  the state $\vec U\in {\mathcal G}_1$
 satisfies  $q(\vec U)>0$.
 If taking the vectors {${\vec B}^*, {\vec v}^*\in \mathbb{R}^3$} as
$${\vec B}^*= \vec 0,\quad  {\vec v}^* = \frac{1}{\sqrt{D^2+|\vec m|^2}} \vec m ,$$
and substituting them into the second inequality in ${\mathcal G}_1$,
one has
\begin{equation*}
\begin{split}
0 & < \vec U \cdot{\vec n}^* +  p_{m}^* = E - \vec m \cdot {\vec v}^* - D \sqrt{1-{|\vec v^*|}^2} \\
& = E - \frac{ |\vec m|^2 } { \sqrt{D^2+|\vec m|^2} } - \frac{ D^2} { \sqrt{D^2+|\vec m|^2} } \\
& = E - \sqrt{D^2+|\vec m|^2} = q(\vec U).
\end{split}
\end{equation*}
The proof is completed.
\end{proof}

\begin{remark}
It is seen that $\vec n^*$ in \eqref{eq:RMHD:vecns} can be rewritten as
$$
\vec n^* = -\sqrt{1-|\vec v^*|^2} \big( 1, u_1^*,u_2^*,u_3^*,b_1^*,b_2^*,b_3^*,u_0^*\big)^{\top},
$$
where $u_\alpha^*$ and $b_\alpha^*$ denote the velocity and magnetic field in 4D space-time, respectively.
\end{remark}

\begin{remark}
Theorems \ref{theo:RMHD:CYconditionFINAL2} and \ref{theo:RMHD:CYcondition:VecN} indicate that
 ${\mathcal G}={\mathcal G}_0={\mathcal G}_1$. Thus they
 will not be deliberately distinguished henceforth.
\end{remark}

Theorem \ref{theo:RMHD:CYcondition:VecN}  implies the following
orthogonal invariance of the admissible state set  ${\mathcal G}_1$.

\begin{corollary}[Orthogonal invariance] \label{lem:RMHD:zhengjiao}
Let $\vec T :={\rm diag}\{1,\vec T_3,\vec T_3,1\}$, where
$\vec T_3$ denotes any  orthogonal matrix  of size 3.
If $\vec U \in{\mathcal G}_1$, then
$\vec T \vec U \in{\mathcal G}_1$.
\end{corollary}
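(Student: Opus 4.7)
The plan is to exploit the second equivalent definition in Theorem \ref{theo:RMHD:CYcondition:VecN}, because its constraints are a family of linear inequalities in $\vec U$ parameterized by the test pair $(\vec v^*,\vec B^*)$, and linear inequalities interact cleanly with the block-orthogonal transformation $\vec T$. Concretely, writing $\vec T\vec U=(D,\vec T_3\vec m,\vec T_3\vec B,E)^{\top}$, the condition $D>0$ is obviously preserved, so the whole task reduces to verifying
\[
(\vec T\vec U)\cdot\vec n^{*}+p_{m}^{*}>0
\quad\text{for every $\vec v^{*},\vec B^{*}\in\mathbb{R}^3$ with $|\vec v^{*}|<1$.}
\]

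First I would "move" the matrix $\vec T_3$ off of $\vec U$ and onto the test parameters via the adjoint. Define
\[
\tilde{\vec v}^{*}:=\vec T_3^{\top}\vec v^{*},\qquad \tilde{\vec B}^{*}:=\vec T_3^{\top}\vec B^{*}.
\]
Since $\vec T_3$ is orthogonal, inner products and Euclidean norms are preserved; in particular $|\tilde{\vec v}^{*}|=|\vec v^{*}|<1$, so $(\tilde{\vec v}^{*},\tilde{\vec B}^{*})$ is again an admissible test pair. Using $\vec T_3\vec m\cdot\vec v^{*}=\vec m\cdot\tilde{\vec v}^{*}$, $\vec T_3\vec B\cdot\vec B^{*}=\vec B\cdot\tilde{\vec B}^{*}$, $\vec T_3\vec B\cdot\vec v^{*}=\vec B\cdot\tilde{\vec v}^{*}$, and $\vec v^{*}\cdot\vec B^{*}=\tilde{\vec v}^{*}\cdot\tilde{\vec B}^{*}$, a short computation then shows
\[
(\vec T\vec U)\cdot\vec n^{*}+p_{m}^{*}=\vec U\cdot\tilde{\vec n}^{*}+\tilde p_{m}^{*},
\]
where $\tilde{\vec n}^{*}$ and $\tilde p_{m}^{*}$ are built from $(\tilde{\vec v}^{*},\tilde{\vec B}^{*})$ by the very same formulas \eqref{eq:RMHD:vecns}--\eqref{eq:RMHD:vecns2}. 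The right-hand side is strictly positive by the hypothesis $\vec U\in\mathcal{G}_1$, which completes the argument.

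The only nontrivial step is the block-by-block reshuffling in the identity above, and it is routine because every term on the left depends on $\vec m$ and $\vec B$ only through bilinear pairings with $\vec v^{*}$ or $\vec B^{*}$, all of which are invariant under $(\vec T_3\,\cdot\,,\,\cdot\,)\leftrightarrow(\,\cdot\,,\vec T_3^{\top}\,\cdot\,)$, while $D$, $E$, $|\vec v^{*}|$, $|\vec B^{*}|$, and $\vec v^{*}\!\cdot\!\vec B^{*}$ are untouched. Hence I do not anticipate any real obstacle: the corollary is essentially the statement that the family of supporting half-spaces defining $\mathcal{G}_1$ is closed under the $O(3)$-action $\vec T_3\mapsto\vec T_3^{\top}$ on its parameter space.
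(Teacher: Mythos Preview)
Your proposal is correct and follows essentially the same approach as the paper: both transfer the orthogonal action from $\vec U$ to the test parameters $(\vec v^*,\vec B^*)$ via the adjoint, then invoke the hypothesis $\vec U\in\mathcal G_1$ for the transformed pair. The paper packages the key identity slightly more globally as $\hat{\vec n}^*=\vec T^{-1}\vec n^*$ and then uses orthogonality of $\vec T^{-1}$ in one line, whereas you verify it block-by-block through the bilinear pairings, but this is the same computation.
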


\begin{proof}
 For any $\vec U=(D,\vec m,\vec B,E)^{\top}\in{\mathcal G}_1$,	
 if denoting $\bar{\vec U}=\vec T \vec U=:(\bar D, \bar {\vec m}, \bar{\vec B}, \bar E)^{\top}$,
 then $\bar D = D >0$.
For any ${\vec B}^*,{\vec v}^* \in  \mathbb{R}^3$ with $|\vec v^*|<1$, if denoting
$ {\hat{\vec B}}^*:=  {\vec B}^* \vec T_3,~{\hat{\vec v}}^*:= {\vec v}^* \vec T_3 $, then
 $|{\hat{\vec  v}}^*|=|{ \vec v}^*|<1$, ${\hat p}_{m}^* =   p_{m}^*$, and ${\hat{\vec n}}^* = \vec T^{-1} {\vec n}^* $.
Using Theorem \ref{theo:RMHD:CYcondition:VecN} for $\vec U \in{\mathcal G}_1$ gives
\begin{align*}
0 & < \vec U \cdot \hat{\vec n}^*+ \hat p_{m}^* = \left ( \vec T^{-1} \bar {\vec U} \right) \cdot \left( \vec T^{-1}  {\vec n}^* \right) +  p_{m}^* = \bar {\vec U} \cdot {\vec n}^* +  p_{m}^*,
\end{align*}
where the orthogonality of $ \vec T^{-1}$ has been used in the last equality.
Hence using Theorem \ref{theo:RMHD:CYcondition:VecN} again
yields $\bar {\vec U} \in{\mathcal G}_1$. The proof is completed.
\end{proof}

\begin{remark}
Corollary \ref{lem:RMHD:zhengjiao}
implies the rotational or symmetric  invariance of the admissible state set $\mathcal G$
if $\vec T_3$ is taken as a rotational or symmetric matrix of size 3.
\end{remark}

\subsection{Generalized Lax-Friedrichs splitting properties} \label{sec:GLFs}

The section utilizes  the second equivalent definition of $\mathcal G$ in Theorem \ref{theo:RMHD:CYcondition:VecN} to present the generalized Lax-Friedrichs (LxF) splitting properties of the admissible state set $\mathcal G$
for the special RMHD equations \eqref{eq:RMHD1D}.

\begin{lemma}[LxF splitting]\label{lemma:2.8}
	If {$\vec B=\vec 0$},  then the special RMHD equations \eqref{eq:RMHD1D} satisfy the {\em LxF splitting property}:
	\begin{align*}
	\mbox{$\vec U \pm \alpha^{-1} \vec F_i (\vec U) \in {\mathcal G}$ for  $\vec U \in {\mathcal G}$},
	\end{align*}
	where   $\alpha \ge \varrho_i$ and $\varrho_i$ denotes a proper upper bound of the spectral radius of the Jacobian matrix $\partial \vec F_i/\partial \vec U$, $i=1,2,3$.
If {$\vec B\neq \vec 0$},	then  the {\em LxF splitting property} does not {always} hold.
\end{lemma}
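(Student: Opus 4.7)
The proof splits naturally into the two halves of the lemma, and I would treat them in order.

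For the case $\vec B = \vec 0$, the whole system degenerates to the relativistic hydrodynamic (RHD) system. Indeed, inspecting the flux formula, the third block $v_i\vec B - B_i\vec v$ vanishes identically, so the magnetic components of $\vec U \pm \alpha^{-1}\vec F_i(\vec U)$ stay zero; the remaining hydrodynamic components of $\vec F_i$ collapse to the classical RHD flux $(Dv_i,\,v_i\vec m + p\vec e_i,\,m_i)^\top$; and the admissibility constraints defining $\mathcal G$ reduce exactly to the RHD constraints $\rho>0$, $p>0$, $v<1$ on $(D,\vec m,E)$. My plan is therefore to invoke verbatim the LxF splitting property already proved for the RHD equations in \cite{WuTang2015}, which gives $\vec U \pm \alpha^{-1}\vec F_i(\vec U)\in\mathcal G$ for $\alpha$ at least the RHD spectral radius; since the RMHD spectral radius coincides with the RHD spectral radius when $\vec B=\vec 0$, the same $\alpha$ works here.

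For the case $\vec B \neq \vec 0$, only one explicit counterexample is needed. I would work within the simple one-parameter family of states at rest, $\vec v=\vec 0$, with a purely transverse magnetic field $\vec B=(0,B,0)$, and small $\rho, p$. In this configuration one has $W=1$, so $D=\rho$, $\vec m=\vec 0$, $E=\rho+\frac{p}{\Gamma-1}+\tfrac12 B^2$, and the $x_1$-flux collapses to $\vec F_1(\vec U)=\bigl(0,\,p_{tot},\,0,\,0,\,0,\,0,\,0,\,0\bigr)^\top$ with $p_{tot}=p+\tfrac12 B^2$. Consequently $\vec U - \alpha^{-1}\vec F_1(\vec U)$ leaves $D$, $\vec B$, and $E$ untouched while shifting the momentum to $-\alpha^{-1}p_{tot}\,\vec e_1$. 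By Lemma 1.1, admissibility of this shifted state requires in particular
\begin{equation*}
q\bigl(\vec U - \alpha^{-1}\vec F_1(\vec U)\bigr) \;=\; E - \sqrt{D^2 + \alpha^{-2}p_{tot}^2} \;>\; 0,
\end{equation*}
and my strategy is to arrange parameters so this scalar inequality is violated. Taking $\rho$ and $p$ small and $B$ large, one has $E\approx \tfrac12 B^2$ and $\alpha^{-1}p_{tot}\approx \tfrac{1}{2\alpha}B^2$; any $\alpha$ strictly below $1$ then forces the radical to exceed $E$, so $q<0$ and Lemma 1.1 excludes the shifted state from $\mathcal G$.

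The main obstacle is calibrating $\alpha$ so that it is genuinely a valid spectral-radius bound $\alpha\ge\varrho_1$. In RMHD the fast magnetosonic speed is bounded above by the speed of light but, away from the ultrarelativistic limit, need not saturate it, so there is room to choose $\alpha=\varrho_1<1$. Concretely, I would first evaluate the fast speed for the chosen static, magnetically dominated state (an elementary computation from the quartic dispersion relation), use the resulting numerical value of $\varrho_1$ to pin down $\alpha$, and then select $(\rho,p,B)$ by a direct arithmetic check that simultaneously keeps the unperturbed $\vec U$ inside $\mathcal G$ (via the criterion of Theorem 2.1) and renders $q(\vec U-\alpha^{-1}\vec F_1(\vec U))$ negative. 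Once a single such tuple is tabulated, Lemma 1.1 closes the argument and establishes the negative part of the claim.
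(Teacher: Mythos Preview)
Your treatment of the case $\vec B=\vec 0$ is correct and matches the paper's.

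For $\vec B\neq\vec 0$, however, your proposed counterexample cannot close. In the static, transversely magnetized configuration $\vec v=\vec 0$, $\vec B=(0,B,0)$, the spectral radius is the perpendicular fast magnetosonic speed $\varrho_1^2=\lambda_f^2=(\Gamma p+B^2)/(\rho h+B^2)$. One computes that $(\Gamma p+B^2)(E^2-D^2)-(\rho h+B^2)\,p_{tot}^2$ is a quadratic in $B^2$ whose three coefficients are all strictly positive for every $\Gamma\in(1,2]$; the constant term is exactly the RHD inequality $\Gamma p\big((\rho h-p)^2-\rho^2\big)>\rho h\,p^2$. Hence $\lambda_f^2(E^2-D^2)>p_{tot}^2$ for every $B$, so for any $\alpha\ge\varrho_1$ one has $E^2>D^2+\alpha^{-2}p_{tot}^2$ and therefore $q(\vec U-\alpha^{-1}\vec F_1(\vec U))>0$. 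The window $\varrho_1\le\alpha<p_{tot}/\sqrt{E^2-D^2}$ you are looking for is always empty: as $\rho,p\to0$ the threshold $p_{tot}/\sqrt{E^2-D^2}$ and $\lambda_f$ both tend to $1$, but $\lambda_f$ remains strictly larger.

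The underlying reason is that $q>0$ is the purely hydrodynamic constraint, and the RHD LxF property essentially protects it under the splitting. What actually fails when $\vec B\neq\vec 0$ is the additional constraint $\Psi(\vec U)>0$ (equivalently $\tilde q(\vec U)>0$ in Remark~\ref{rem:rmhd:equal}) that distinguishes $\mathcal G$ from the larger set $\mathcal G_2$. The paper therefore targets $\tilde q$: it takes $\vec v=(0.5,0,0)$, a longitudinal field $\vec B=(1,0,0)$, $\rho=p=\epsilon$, and works with $\alpha\ge c=1$ so that no spectral-radius computation is needed; letting $\epsilon\to0^+$ and using continuity then forces $\tilde q(\vec U^\pm)<0$.
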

\begin{proof}
	The first part has been proved in \cite{WuTang2015},
	while the second part  is proved by contradiction as follows.
	
	Assume that the LxF splitting property holds for $\vec U\in \mathcal G$ and $\Gamma \in(1,2]$.
	For any $\vec V=(\rho,\vec v,\vec B,p)^{\top} $ satisfying $\rho>0,~p>0$, and $v<1$,  one has
	$$\vec U(\vec V) \pm \alpha^{-1} \vec F_i (\vec U(\vec V)) \in {\mathcal G},\quad \forall \alpha  \ge \varrho_i,~i=1,2,3.$$
Because	the speed of light $c =1$ is a rigorous bound of the spectral radius of $\partial \vec F_i/\partial \vec U$,
one can specially take $\rho=p=\epsilon>0,\vec v=(0.5,0,0), \vec B = (1,0,0)$, $\alpha = 1/\theta$ for $\theta \in (0, 1]$, and $\Gamma=5/3$, such that
	\begin{equation*}
    \begin{split}
	\vec U^\pm(\epsilon,\theta) := &\vec U \pm \alpha^{-1} \vec F_1 (\vec U)
	\\
	=& \left( \frac{\sqrt{3}}{3} (2+\theta) \epsilon,
	\frac{14 \pm 13 \theta}{6} \epsilon \mp \frac{ \theta}{2}
	,0,0,1,0,0, \frac{ 11 \pm 7 \theta }{3} \epsilon + \frac{1}{2}  \right)^{\top} \in {\mathcal G}={\mathcal G}_0.
    \end{split}
	\end{equation*}
	According to Remark \ref{rem:rmhd:equal}, one has $\tilde q(\vec U^\pm(\epsilon,\theta) )>0$, for all $\epsilon>0$
	 and $\theta \in (0, 1]$.
	The continuity of $\tilde q(\vec U)$ with respect to $\vec U$ further implies that for any fixed $\theta$,
	$\vec U^\pm(\epsilon,\theta)$ is also continuous with respect to $\epsilon$. Therefore
	$$
	0 \le \mathop {\lim }\limits_{\epsilon \to 0^+ } \tilde q(\vec U^\pm(\epsilon,\theta) ) = \tilde q(\vec U^\pm(0,\theta) ) = - \frac{27}{64} \theta^2 (\theta^2+4)^2 <0,
	$$
	which leads to the contradiction. Hence the LxF splitting property does not hold for the admissible state set  $\mathcal G$  for the RMHD equations  \eqref{eq:RMHD1D} in general.
\end{proof}



Although the LxF splitting property may not  {hold} {for  the nonzero magnetic field},
we {discover} the generalized LxF splitting properties which are coupling
the  convex combination of some LxF splitting terms with a ``discrete divergence-free'' condition for the magnetic
field vector  $\vec B$.
However, it is extremely difficult  and technical 
because of the ``discrete divergence-free'' condition for the magnetic field $\vec B$
and the strong nonlinearity in the constraints of the admissible state set and $\vec F_i(\vec U)$, etc.
Their breakthrough is {made by} a constructive inequality in the following lemma. 

\begin{lemma}\label{theo:RMHD:LLFsplit}
	If $\vec U \in {\mathcal G}$, then
	for any $\theta \in [-1,1]$ and ${\vec B}^*$, $\vec v^*$ $\in \mathbb{R}^3$ with $|\vec v^*|<1$
	it holds
	\begin{equation}\label{eq:RMHD:LLFsplit}
	\big( \vec U + \theta \vec F_i(\vec U) \big) \cdot \vec n^* +  p_{m}^* + \theta \big( v_{i}^* p_{m}^* - B_i (\vec v^* \cdot \vec B^*)\big)>0,
	\end{equation}
	where $i\in\{1,2,3\}$, and $\vec n^*$ and $p_{m}^*$ are defined in \eqref{eq:RMHD:vecns} and \eqref{eq:RMHD:vecns2}, respectively.
\end{lemma}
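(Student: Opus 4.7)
My plan rests on three observations: affine dependence on $\theta$, explicit computation of the flux pairing with $\vec n^*$, and a sum-of-squares decomposition analogous to the one at the end of the proof of Theorem \ref{theo:RMHD:convex}. First, the left-hand side of \eqref{eq:RMHD:LLFsplit} is affine in $\theta$, and its $\theta=0$ value is $\vec U\cdot\vec n^*+p_m^*$, which is strictly positive by Theorem \ref{theo:RMHD:CYcondition:VecN} applied to $\vec U\in\mathcal G=\mathcal G_1$. Since every $\theta\in[-1,1]$ is a convex combination of $\pm 1$, it suffices to prove the two endpoint inequalities
\[
\mathcal I^\pm := (\vec U\pm\vec F_i(\vec U))\cdot\vec n^* + p_m^* \pm\bigl(v_i^* p_m^* - B_i(\vec v^*\cdot\vec B^*)\bigr) > 0.
\]

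Next I would compute $\vec F_i(\vec U)\cdot\vec n^*$ directly from the definitions of $\vec F_i$ and $\vec n^*$. Collecting the coefficients of $v_i$, $B_i$, $m_i$ and $p_{tot}$ and using $\vec U\cdot\vec n^*$ as a bookkeeping shorthand yields an expression of the schematic form
\[
\vec F_i(\vec U)\cdot\vec n^* = v_i\bigl(\vec U\cdot\vec n^* - E\bigr) - v_i^* p_{tot} + B_i\,\mathcal C(\vec v,\vec B,\vec v^*,\vec B^*) + m_i,
\]
for an explicit scalar $\mathcal C$ that is linear in $\vec B$ and multilinear in $(\vec v,\vec v^*,\vec B^*)$. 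Plugging this into $\mathcal I^\pm$ and replacing $\vec m$, $E$ and $p_{tot}=p+p_m$ by their primitive-variable expressions decomposes $\mathcal I^\pm$ into three structurally distinct pieces: a thermal part proportional to $\rho W^2$, a pressure part proportional to $p$, and a purely electromagnetic remainder depending only on $(\vec v,\vec B,\vec v^*,\vec B^*)$.

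The thermal and pressure parts I would then bound exactly as in the final display of the proof of Theorem \ref{theo:RMHD:convex}: Cauchy--Schwarz applied to the four-vectors $(v_1,v_2,v_3,W^{-1})$ and a suitable companion produces a lower bound of order $p(1-|\vec v|)/(1+|\vec v|)>0$, using $|\vec v|<1$, $p>0$ and $\Gamma/(\Gamma-1)\ge 2$. For the electromagnetic remainder I would seek a sum-of-squares presentation mirroring $\widetilde{\mathcal F}_0$ in the proof of Theorem \ref{theo:RMHD:convex}: the point is that the added correction $\pm(v_i^* p_m^*-B_i(\vec v^*\cdot\vec B^*))$ is exactly what cancels the asymmetric cross terms generated by $\vec F_i(\vec U)\cdot\vec n^*$, leaving an expression bounded below by
\[
\tfrac12(1-|\vec v^*|^2)|\vec B-\vec B^*|^2 + \tfrac12\bigl(|\vec v-\vec v^*|^2|\vec B|^2 - ((\vec v-\vec v^*)\cdot\vec B)^2\bigr) + \tfrac12\bigl(\vec v\cdot(\vec B-\vec B^*)\bigr)^2 \ge 0,
\]
each non-negative by Cauchy--Schwarz.

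The hardest step will be the regrouping of this electromagnetic remainder. The convective term $v_i\vec m$ couples nonlinearly with the magnetic stress $-B_i(W^{-2}\vec B+(\vec v\cdot\vec B)\vec v)$ and with the inward normal's magnetic component $-(1-|\vec v^*|^2)\vec B^*-(\vec v^*\cdot\vec B^*)\vec v^*$, producing cross terms whose cancellation is not manifest; only the carefully chosen correction $v_i^* p_m^* - B_i(\vec v^*\cdot\vec B^*)$---which itself reads like a component of the Poynting flux in starred variables and plays the role of the ``discrete divergence-free'' correction at the continuous level---restores the symmetry needed for a sum-of-squares decomposition. Identifying and verifying this decomposition is the \emph{constructive inequality} advertised in the preamble and is where most of the technical bookkeeping will lie.
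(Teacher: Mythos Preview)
Your outline matches the paper's architecture: reduce to the electromagnetic remainder by peeling off the $\rho$- and $p$-dependent parts, then show that remainder is nonnegative. The paper does exactly this, computing $\partial\mathcal H/\partial\rho\ge 0$ and $\partial\mathcal H/\partial p>0$ (the latter via an estimate $H_p^\pm>0$ that is already more delicate than the $\theta=0$ bound you cite), and then setting $\mathcal H_0:=\mathcal H|_{\rho=p=0}$. Your reduction to the endpoints $\theta=\pm1$ by affinity is valid and is a tactical simplification the paper does not use.

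The gap is in the electromagnetic step. The three-term sum-of-squares you write down is the $\theta=0$ bound from Theorem~\ref{theo:RMHD:convex}; it does not depend on $\theta$ or on the index $i$, whereas the remainder $\mathcal H_0$ manifestly does (through $v_i\vec m$, the magnetic stress $-B_i(W^{-2}\vec B+(\vec v\cdot\vec B)\vec v)$, and the correction $\pm(v_i^*p_m^*-B_i(\vec v^*\cdot\vec B^*))$). So that specific decomposition cannot be right as stated, and ``the added correction cancels the asymmetric cross terms'' is not what actually happens --- the correction is essential, but what it leaves behind is considerably more intricate than the $\theta=0$ form.

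What the paper does instead is write $\mathcal H_0=\tfrac12(\vec B,\vec B^*)\,\vec{\mathcal A}^H(\vec v,\vec v^*,\theta)\,(\vec B,\vec B^*)^\top$ as a quadratic form in the six variables $(\vec B,\vec B^*)$, with an explicit $6\times6$ symmetric matrix whose entries are polynomials in $(\vec v,\vec v^*,\theta)$. Positive semidefiniteness is then established by a congruence $\vec P\vec{\mathcal A}^H\vec P^\top=\mathrm{diag}\{(1+\theta v_1^*)^{-1}\vec{\mathcal B}^H,\vec{\mathcal C}^H\}$ with an explicit upper-triangular $\vec P$, followed by checking all principal minors of the $3\times3$ block $\vec{\mathcal B}^H$; the second-order minors factor through a single nonnegative quantity $\Xi$, and $\det\vec{\mathcal B}^H=0$ because $\mathrm{adj}(\vec{\mathcal B}^H)$ turns out to be rank one. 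This is the ``constructive inequality'' the paper advertises, and it is substantially more than bookkeeping: the choice of $\vec P$ and the factorizations of the minors are not visible from the $\theta=0$ case. Your endpoint reduction would set $1-\theta^2=0$ in several entries and may shorten this computation, but you would still need the quadratic-form viewpoint (or an equivalent $i$- and $\theta$-dependent sum of squares) rather than the $\widetilde{\mathcal F}_0$ template.
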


\begin{proof}
{\tt (i)}. First let  us prove the inequality \eqref{eq:RMHD:LLFsplit} for the case of $i=1$, i.e.
	\begin{equation}\label{eq:RMHD:LLFsplit-thz}
{\mathcal H} (\rho,p,\vec v, \vec B,\vec v^*,\vec B^*,\theta ) := \big( \vec U + \theta \vec F_1(\vec U) \big) \cdot \vec n^* + (1+\theta v_{1}^* ) p_{m}^* - \theta B_1 (\vec v^* \cdot \vec B^*)>0.
	\end{equation}
	Taking partial derivatives of ${\mathcal H}$ with respect to $\rho$ and $p$ respectively gives
\begin{align*}
\begin{split}
\frac{\partial {\mathcal H}}{\partial \rho } &= (1+\theta v_1)  W^2( 1-  \vec {  v} \cdot {\vec v^*} -  W^{-1} (W^{-1})^* ) \\
& \ge (1+\theta v_1)    W^2 \left( 1-  ( v_1, v_2,  v_3,  W^{-1}) \cdot (v_{1}^*,v_{2}^*,v_{3}^*,(W^{-1})^*)  \right) \\
& \ge (1+\theta v_1)    W^2 \left( 1-  \left|( v_1, v_2,  v_3,  W^{-1}) \right| \left|   (v_{1}^*,v_{2}^*,v_{3}^*,(W^{-1})^*)  \right| \right) =0,
\end{split}
\\
\begin{split}
\frac{\partial {\mathcal H}}{\partial p } &=
\frac{\Gamma}{\Gamma-1}  (1+\theta v_1) W^2 (1-\vec { v} \cdot \vec v^* ) -   (1+\theta v_1^*) \\
&  \ge 2(1+\theta v_1) W^2 (1-\vec { v} \cdot \vec v^* ) -   (1+\theta v_1^*) \\
&   \ge \mathop {\min } \left\{   H_p^+, H_p^-   \right\} > 0,
\end{split}
\end{align*}
where
\begin{equation*}
\begin{split}
H_p^\pm  :=& 2(1 \pm v_1) W^2 (1-\vec { v} \cdot \vec v^* ) -   (1 \pm v_1^*)
\\[2mm]
=& 2(1 \pm v_1) W^2 - 1 - 2(1 \pm v_1) W^2 \left[ v_1^* \left( v_1 \pm \frac{1}{ 2(1 \pm v_1) W^2   }  \right)  + v_2^* v_2 + v_3^*  v_3 \right] \\[2mm]
\ge &  2(1 \pm v_1) W^2 - 1 - 2(1 \pm v_1) W^2   \left| \vec v^*\right| \sqrt{ \left( v_1 \pm \frac{1}{ 2(1 \pm v_1) W^2   }  \right)^2 + v_2^2 + v_3^2  } \\[2mm]
=& \big( 2(1 \pm v_1) W^2 - 1 \big) \big( 1- \left| \vec v^*\right| \big)
\ge  \big( 2(1 \pm v_1)/(1-v_1^2)  - 1 \big) \big( 1- \left| \vec v^*\right| \big)\\[2mm]
=& 2(1 \pm v_1)^2 \big( 1- \left| \vec v^*\right| \big) / (1-v_1^2) > 0.
\end{split}
\end{equation*}
Here we have used that $|\vec v|<1$ because of $\vec U\in\mathcal G$, and the Cauchy-Schwarz inequality.
Thus, together with $\rho>0,~p >0$, one has
	$$
	{\mathcal H} (\rho,p,\vec v, \vec B,\vec v^*,\vec B^*,\theta )  > {\mathcal H} (0,0,\vec v, \vec B,\vec v^*,\vec B^*,\theta ) =:{\mathcal H}_0 (\vec v, \vec B,\vec v^*,\vec B^*,\theta ).
	$$
The subsequent task is to show that ${\mathcal H}_0 \ge 0$. This
is equivalent to the positive semi-definiteness of a symmetric matrix $\vec {\mathcal A}^H (\vec v,\vec v^*,\theta)$ for any
$\theta \in [-1,1]$  and $\vec v,\vec v^*\in {\mathbb{R}^3}$ with $|\vec v|<1$ and $|\vec v^*|<1$, because  ${\mathcal H}_0 (\vec v, \vec B,\vec v^*,\vec B^*,\theta )$ can be reformulated into a quadratic form of $(\vec B,\vec B^*)$, i.e.
	$$
	{\mathcal H}_0 (\vec v, \vec B,\vec v^*,\vec B^*,\theta ) = \frac{1}{2} (\vec B,\vec B^*) \vec {\mathcal A}^H (\vec v,\vec v^*,\theta)  (\vec B,\vec B^*)^{\top}.
	$$
Here the diagonal  and the upper triangular elements of the symmetric matrix $\vec {\mathcal A}^H = \big[{\mathcal A}^H_{jk} (\vec v,\vec v^*,\theta) \big]_{6\times 6}$ are
\begin{align*}
	&
	{\mathcal A}^H_{11} = {2(1 -v_2^* {v_2}-v_3^* {v_3}) + (1 - \theta v_1^*)(v_2^2 + v_3^2 - 1)},\\
	&
	{\mathcal A}^H_{12} ={{v_1}v_2^* + {v_2}v_1^* - {v_1}{v_2} + \theta (v_3^*{v_2}{v_3} - v_2^*v_3^2 + v_2^* - {v_2} + v_1^*{v_1}{v_2})},\\
	&
	{\mathcal A}^H_{13} = {{v_1}v_3^* + {v_3}v_1^* - {v_1}{v_3} + \theta (v_2^*{v_2}{v_3} - v_3^*v_2^2 + v_3^* - {v_3} + v_1^*{v_1}{v_3})},\\
	&
	{\mathcal A}^H_{14}=\theta(v_{1}^{*}v_{2}v_{2}^{*}+v_{1}^{*}v_{3}v_{3}^{*}-v_{1}^{*})+{v_{2}^{*}}^2+{v_{3}^{*}}^2-1,\\
	&
	{\mathcal A}_{15}^{H}=\theta(v_{2}^{*}v_{3}v_{3}^{*}-v_{2}(v_{3}v_{3}^{*}+v_{1}v_{1}^{*}-1)-v_{2}^{*})-v_{1}^{*}v_{2}^{*},\\
	&
	{\mathcal A}_{16}^{H}=\theta(v_{3}^{*}v_{2}v_{2}^{*}-v_{3}(v_{1}v_{1}^{*}+v_{2}v_{2}^{*}-1)-v_{3}^{*})-v_{1}^{*}v_{3}^{*},\\
	&	{\mathcal A}_{22}^{H}=\theta(-v_{1}^{*}v_{1}^{2}-2v_{1}v_{3}v_{3}^{*}+2v_{1}+v_{1}^{*}v_{3}^{2}-v_{1}^{*})+v_{1}^{2}-2v_{1}v_{1}^{*}+v_{3}^{2}-2v_{3}v_{3}^{*}+1,\\
	&
	{\mathcal A}_{23}^{H}=\theta(v_{3}^{*}v_{1}v_{2}+v_{2}^{*}v_{1}v_{3}-v_{1}^{*}v_{2}v_{3})+v_{2}^{*}v_{3}^{*}-(v_{2}-v_{2}^{*})(v_{3}-v_{3}^{*}),\\
	&
	{\mathcal A}_{24}^{H}=-(1+\theta v_{1})v_{1}^{*}v_{2}^{*},\ \ \ \
	{\mathcal A}_{25}^{H}=(1+\theta v_{1})( { v_{1}^{*}}^2+ {v_{3}^{*}}^2-1),\\
	&
	{\mathcal A}_{26}^{H}=-(1+\theta v_{1})v_{2}^{*}v_{3}^{*},\\
	&
	{\mathcal A}_{33}^{H}=\theta(-v_{1}^{*}v_{1}^{2}-2v_{1}v_{2}v_{2}^{*}+2v_{1}+v_{1}^{*}v_{2}^{2}-v_{1}^{*})+v_{1}^{2}-2v_{1}v_{1}^{*}+v_{2}^{2}-2v_{2}v_{2}^{*}+1,\\
	&
	{\mathcal A}_{34}^{H}=-(1+\theta v_{1})v_{1}^{*}v_{3}^{*},\ \ \ \
	{\mathcal A}_{35}^{H}=-(1+\theta v_{1})v_{2}^{*}v_{3}^{*},\\
	&
	{\mathcal A}_{36}^{H}=(1+\theta v_{1})({v_{1}^{*}}^2+{v_{2}^{*}}^2-1),\\
	&
	{\mathcal A}_{44}^{H}=-(1+\theta v_{1}^{*})({v_{2}^{*}}^2+{v_{3}^{*}}^2-1),\ \ \ \
	{\mathcal A}_{45}^{H}=(1+\theta v_{1}^{*})v_{1}^{*}v_{2}^{*},\\
	&
	{\mathcal A}_{46}^{H}=(1+\theta v_{1}^{*})v_{1}^{*}v_{3}^{*},\ \ \ \
	{\mathcal A}_{55}^{H}=-(1+\theta v_{1}^{*})({v_{1}^{*}}^2+{v_{3}^{*}}^2-1),\\
	&
	{\mathcal A}_{56}^{H}=(1+\theta v_{1}^{*})v_{2}^{*}v_{3}^{*},\ \ \ \
	{\mathcal A}_{66}^{H}=-(1+\theta v_{1}^{*})({v_{1}^{*}}^2+{v_{2}^{*}}^2-1).
 \end{align*}
If taking the upper triangular  matrix
	\[\vec P = \begin{pmatrix}
	~~1~~ & ~~0~~ & ~~0~~ & ~~1~~ & {\frac{{\theta (v_2^* - {v_2})}}{{1 + \theta v_1^*}}} & {\frac{{\theta (v_3^* - {v_3})}}{{1 + \theta v_1^*}}}  \\
	{} & 1 & 0 & 0 & {\frac{{1 + \theta {v_1}}}{{1 + \theta v_1^*}}} & 0  \\
	{} & {} & 1 & 0 & 0 & {\frac{{1 + \theta {v_1}}}{{1 + \theta v_1^*}}}  \\
	{} & {} & {} & 1 & 0 & 0  \\
	{} & {} & {} & {} & 1 & 0  \\
	{} & {} & {} & {} & {} & 1
	\end{pmatrix},\]
one has
	\[
	\vec P   \vec {\mathcal A}^H (\vec v,\vec v^*,\theta) \vec P^{\top} = {\rm diag} \left\{  \frac{1}{{1 + \theta v_1^*}} \vec {\mathcal B}^H (\vec v,\vec v^*,\theta) ,\vec {\mathcal C}^H (\vec v^*,\theta)  \right\},
	\]
	where $	\vec {\mathcal B}^H $ and $\vec {\mathcal C}^H$ are  two symmetric matrices respectively defined by
	\begin{align*}
	\vec {\mathcal B}^H  = 
	\begin{pmatrix}
	{{{\mathcal B}^H_{11}}} & {{{\mathcal B}^H_{12}}} & {{{\mathcal B}^H_{13}}}  \\
	{{{\mathcal B}^H_{21}}} & {{{\mathcal B}^H_{22}}} & {{{\mathcal B}^H_{23}}}  \\
	{{{\mathcal B}^H_{31}}} & {{{\mathcal B}^H_{32}}} & {{{\mathcal B}^H_{33}}}
	\end{pmatrix},
	\quad
	\vec {\mathcal C}^H  = (1 + \theta v_1^*)\left[ {\left( {1 - {{\left| {{\vec v^*}} \right|}^2}} \right) \vec I + {\vec v^*}^{\top}{\vec v^*}} \right],
	\end{align*}
with
\begin{align*}
 \begin{split}
	{\mathcal B}^H_{11} & = \left( {1,{v_2},{v_3}} \right)
	\begin{pmatrix}
	{(1 - {\theta ^2})({v{_2^*}^2} + {v{_3^*}^2})} & {( {\theta ^2}-1)v_2^*} & {( {\theta ^2}-1)v_3^*}  \\
	{( {\theta ^2}-1)v_2^*} & {1 - {\theta ^2} + {\theta ^2}v{{_3^*}^2}} & { - {\theta ^2}v_2^*v_3^*}  \\
	{({\theta ^2}-1)v_3^*} & { - {\theta ^2}v_2^*v_3^*} & {1 - {\theta ^2} + {\theta ^2}v{{_2^*}^2}}
	\end{pmatrix}
	{\left( {1,{v_2},{v_3}} \right)^{\top}} \\
	&= :\left( {1,{v_2},{v_3}} \right){ \vec {\hat {\mathcal B} }_{11}}{\left( {1,{v_2},{v_3}} \right)^{\top}},
\end{split}
	\\
 \begin{split}
	{\mathcal B}^H_{22} & = \left( {1,{v_1},{v_3}} \right)
	\begin{pmatrix}
	{(1 - {\theta ^2})v{{_1^*}^2} + v{{_3^*}^2}} & {({\theta ^2} - 1)v_1^* + \theta v{{_3^*}^2}} & { - (1 + \theta v_1^*)v_3^*}  \\
	{({\theta ^2} - 1)v_1^* + \theta v{{_3^*}^2}} & {1 - {\theta ^2} + {\theta ^2}v{{_3^*}^2}} & { - \theta (1 + \theta v_1^*)v_3^*}  \\
	{ - (1 + \theta v_1^*)v_3^*} & { - \theta (1 + \theta v_1^*)v_3^*} & {{{(1 + \theta v_1^*)}^2}}
	\end{pmatrix}
	{\left( {1,{v_1},{v_3}} \right)^{\top}} \\
	&= :\left( {1,{v_1},{v_3}} \right){ \vec {\hat {\mathcal B} }_{22}}{ \left( {1,{v_1},{v_3}} \right)^{\top}},
\end{split}
	\\
\begin{split}
	{\mathcal B}^H_{33} & = \left( {1,{v_1},{v_2}} \right)
	\begin{pmatrix}
	{(1 - {\theta ^2})v{{_1^*}^2} + v{{_2^*}^2}} & {({\theta ^2} - 1)v_1^* + \theta v{{_2^*}^2}} & { - (1 + \theta v_1^*)v_2^*}  \\
	{({\theta ^2} - 1)v_1^* + \theta v{{_2^*}^2}} & {1 - {\theta ^2} + {\theta ^2}v{{_2^*}^2}} & { - \theta (1 + \theta v_1^*)v_2^*}  \\
	{ - (1 + \theta v_1^*)v_2^*} & { - \theta (1 + \theta v_1^*)v_2^*} & {{{(1 + \theta v_1^*)}^2}}  \\
	\end{pmatrix}
	{\left( {1,{v_1},{v_2}} \right)^{\top}} \\
	&= :\left( {1,{v_1},{v_2}} \right){ \vec {\hat {\mathcal B} }_{33}}{ \left( {1,{v_1},{v_2}} \right)^{\top}},
\end{split}
	\\
	{\mathcal B}^H_{12} &= {\mathcal B}^H_{21} = (1,\vec v)
	\begin{pmatrix}
	{({\theta ^2} - 1)v_1^*v_2^*} & {(1 - {\theta ^2})v_2^*} & {(1 - {\theta ^2})v_1^* - \theta v{{_3^*}^2}} & {\theta v_2^*v_3^*}  \\
	{} & 0 & {{\theta ^2}(1 - v{{_3^*}^2}) - 1} & {{\theta ^2}v_2^*v_3^*}  \\
	{} & {} & 0 & {(1 + \theta v_1^*)\theta v_3^*}  \\
	{} & {} & {} & { - (1 + \theta v_1^*)\theta v_2^*}
	\end{pmatrix}
	{(1,\vec v)^{\top}},
	\\
	{\mathcal B}^H_{13}& = {\mathcal B}^H_{31} = (1,\vec v)
	\begin{pmatrix}
	{({\theta ^2} - 1)v_1^*v_3^*} & {(1 - {\theta ^2})v_3^*} & {\theta v_2^*v_3^*} & {(1 - {\theta ^2})v_1^* - \theta v{{_2^*}^2}}  \\
	{} & 0 & {{\theta ^2}v_2^*v_3^*} & {{\theta ^2}(1 - v{{_2^*}^2}) - 1}  \\
	{} & {} & { - (1 + \theta v_1^*)\theta v_3^*} & {(1 + \theta v_1^*)\theta v_2^*}  \\
	{} & {} & {} & 0  \\
	\end{pmatrix}
	{(1,\vec v)^{\top}},
	\\
	{\mathcal B}^H_{23} &={\mathcal B}^H_{32} =  - ({v_2} - v_2^* - \theta {v_1}v_2^* + \theta {v_2}v_1^*)({v_3} - v_3^* - \theta {v_1}v_3^* + \theta {v_3}v_1^*).
\end{align*}
Under the hypothesis, one has that $1 + \theta v_1^*\ge 1-|v_1^*| >0$
and thus the matrix $\vec {\mathcal C}^H (\vec v^*,\theta)$ is  positive definite. Therefore,
the subsequent task is to show the positive semi-definiteness of the symmetric matrix  $\vec {\mathcal B}^H$,
	or equivalently, the non-negativity of all principal minors of $\vec {\mathcal B}^H$.
	It is observed that {these minors} can be estimated through the quadratic forms of $(1,\vec v)^{\top}$.
	First check the first-order principal minors of $\vec {\mathcal B}^H$. If taking
	\[{\vec P_1} = \begin{pmatrix}
	1~ & ~{v_2^*}~ & ~{v_3^*}  \\
	{} & ~1~ & ~0 \\
	{} & {} & ~1
	\end{pmatrix},\]
	one has
	\[{\vec P_1}{ \vec {\hat{\mathcal B}_{11}}}{\vec P_1^{\top}} =
	\begin{pmatrix}   0 & 0 & 0  \\
	0 & {1 - {\theta ^2} + {\theta ^2}v{{_3^*}^2}} & { - {\theta ^2}v_2^*v_3^*}  \\
	0 & { - {\theta ^2}v_2^*v_3^*} & {1 - {\theta ^2} + {\theta ^2}v{{_2^*}^2}}
	\end{pmatrix}. 
	\]
Then using
\begin{equation*}
 \begin{split}
	&
	1 - {\theta ^2} + {\theta ^2}v{{_3^*}^2}  \ge 0,\quad 1 - {\theta ^2} + {\theta ^2}v{{_2^*}^2}  \ge 0,\\
	&\det \left( {\begin{array}{*{20}{c}}
		{1 - {\theta ^2} + {\theta ^2}v{{_3^*}^2}} & { - {\theta ^2}v_2^*v_3^*}  \\
		{ - {\theta ^2}v_2^*v_3^*} & {1 - {\theta ^2} + {\theta ^2}v{{_2^*}^2}}  \\
		\end{array}} \right) = (1 - {\theta ^2})\left[ {1 - {\theta ^2} + {\theta ^2}(v{{_2^*}^2} + v{{_3^*}^2})} \right] \ge 0,
 \end{split}
\end{equation*}
yields the positive semi-definiteness of
 the matrix ${ \vec {\hat{\mathcal B}_{11}}}$, which follows that ${\mathcal B}^H_{11} \ge 0$.
	Similarly, one has ${\mathcal B}^H_{22} \ge 0 $ and ${\mathcal B}^H_{33} \ge 0$.
	Next we consider the second-order principal minors of $\vec {\mathcal B}^H$. Some algebraic manipulations yield
\begin{align*}
	&
	\det\left( {\begin{array}{*{20}{c}}
		{{{\mathcal B}^H_{11}}} & {{{\mathcal B}^H_{12}}}  \\
		{{{\mathcal B}^H_{21}}} & {{{\mathcal B}^H_{22}}}
		\end{array}} \right) = (v_3 -v_3^*)^2 \Xi ,\quad
	\det\left( {\begin{array}{*{20}{c}}
		{{{\mathcal B}^H_{11}}} & {{{\mathcal B}^H_{13}}}  \\
		{{{\mathcal B}^H_{31}}} & {{{\mathcal B}^H_{33}}}
		\end{array}} \right) =(v_2 -v_2^*)^2 \Xi,\\
	&
	\det\left( {\begin{array}{*{20}{c}}
		{{{\mathcal B}^H_{22}}} & {{{\mathcal B}^H_{23}}}  \\
		{{{\mathcal B}^H_{32}}} & {{{\mathcal B}^H_{33}}}
		\end{array}} \right) = (v_1 -v_1^*)^2 \Xi,
\end{align*}
	where
	\begin{align*}
	\Xi = (1-\theta^2) {\vec z}_3 ^{\top}  {\rm diag}
	\left\{  (1-\theta^2)
	\begin{pmatrix}
	{v{{_1^*}^2}} & { - v_1^*}  \\
	{ - v_1^*} & 1
	\end{pmatrix},(1 + \theta v_1^*)^2 \vec I_2,
	\right\} {\vec z}_3 ,
	\end{align*}
	with
	\[\vec z = \left( {1,{v_1},{v_2} - \frac{{(1 + \theta {v_1})v_2^*}}{{1 + \theta v_1^*}},{v_3} - \frac{{(1 + \theta {v_1})v_3^*}}{{1 + \theta v_1^*}}} \right)^{\top}.\]
	It is not difficult to know that $\Xi \ge 0$ by noting the positive semi-definiteness of
	$$\begin{pmatrix}
	{v{{_1^*}^2}} & { - v_1^*}  \\
	{ - v_1^*} & 1
	\end{pmatrix}.$$
	Therefore  all three second-order principal minors of $\vec {\mathcal B}^H$ are non-negative.
	Finally we consider the third-order principal minor of $\vec {\mathcal B}^H$, i.e. $\det(\vec {\mathcal B}^H)$. Some algebraic manipulations yield
\begin{align*}
	&
	\det\left( {\begin{array}{*{20}{c}}
		{{{\mathcal B}^H_{21}}} & {{{\mathcal B}^H_{23}}}  \\
		{{{\mathcal B}^H_{31}}} & {{{\mathcal B}^H_{33}}}
		\end{array}} \right) = (v_1^* -v_1)(v_2 -v_2^*)   \Xi ,\quad
	\det\left( {\begin{array}{*{20}{c}}
		{{{\mathcal B}^H_{21}}} & {{{\mathcal B}^H_{22}}}  \\
		{{{\mathcal B}^H_{31}}} & {{{\mathcal B}^H_{32}}}
		\end{array}} \right) =(v_1 -v_1^*)(v_3 -v_3^*) \Xi,\\
	&
	\det\left( {\begin{array}{*{20}{c}}
		{{{\mathcal B}^H_{11}}} & {{{\mathcal B}^H_{13}}}  \\
		{{{\mathcal B}^H_{21}}} & {{{\mathcal B}^H_{23}}}
		\end{array}} \right) = (v_2^* -v_2)(v_3 -v_3^*) \Xi,
\end{align*}
	Based on those first-  and second-order principal minors 
	of the symmetric matrix $\vec {\mathcal B}^H$, one obtains
	the adjoint matrix of $\vec {\mathcal B}^H$
	\begin{align*}
    \begin{split}
	{\rm adj} \left( \vec {\mathcal B}^H \right) &= \Xi \begin{pmatrix}
	(v_1 -v_1^*)^2 & (v_1 -v_1^*)(v_2 -v_2^*) & (v_1 -v_1^*)(v_3 -v_3^*)  \\
	(v_1 -v_1^*)(v_2 -v_2^*) & (v_2 -v_2^*)^2 & (v_2 -v_2^*)(v_3 -v_3^*)  \\
	(v_1 -v_1^*)(v_3 -v_3^*) & (v_2 -v_2^*)(v_3 -v_3^*) & (v_3 -v_3^*)^2
	\end{pmatrix}
	\\
	&
	= \Xi  (\vec v - \vec v^*)^{\top} (\vec v - \vec v^*),
    \end{split}
	\end{align*}
	which is also a symmetric matrix of size 3 and has rank of at most one, such that
	 ${\rm adj} \left( \vec {\mathcal B}^H \right)$ and  $\vec {\mathcal B}^H$ are
	irreversible and  $\det(\vec {\mathcal B}^H)=0$.
	In conclusion, the matrix $\vec {\mathcal B}^H$ is positive semi-definite, and  the inequality
	\eqref{eq:RMHD:LLFsplit} for the case of $i=1$, i.e. \eqref{eq:RMHD:LLFsplit-thz}, does hold.
	
{\tt (ii)}. The inequality
\eqref{eq:RMHD:LLFsplit} for the case of $i=2$ or 3 can be verified by using \eqref{eq:RMHD:LLFsplit-thz} and the
 orthogonal invariance in Corollary \ref{lem:RMHD:zhengjiao}.

For the case of $i=2$, we introduce a symmetric matrix $\vec T = {\rm diag} \{ 1, \vec T_3, \vec T_3, 1\}$ with
the orthogonal matrix
	$$
	{\vec T_3} =  \begin{pmatrix}
	0~ & ~1~ & ~0  \\
	1~ & ~0~ & ~0 \\
	0~ & ~0~ & ~1
	\end{pmatrix}.
	$$
Regarding the conservative vector $\vec U$ as a vector function of the primitive variables $\vec V$,
	i.e. $\vec U(\vec V)$, then one has
	$\vec U(\vec T \vec V)=\vec T \vec U=:\tilde{ \vec U}$, $\vec F_1(\vec U(\vec T\vec V)) = \vec T \vec F_2 (\vec U)$,
	and
	$$\vec F_1 (\tilde{ \vec U} ) = \vec F_1 (\vec T\vec U )=  \vec F_1(\vec U(\vec T\vec V)) = \vec T \vec F_2 (\vec U).$$
	Thanks to Corollary \ref{lem:RMHD:zhengjiao}, one obtains $\tilde {\vec U} \in {\mathcal G}$.
Let $\tilde{\vec v}^* = \vec  v^* \vec T_3$ and $\tilde{\vec B}^* =   \vec B^* \vec T_3$, then
	$$|\vec {\tilde v}^*| = |\vec v|<1,\quad |\vec {\tilde B}^*| = |\vec B^*|, \quad  \vec {\tilde v}^* \cdot \vec {\tilde B}^* = \vec v^* \cdot \vec B^*. $$
	It follows from \eqref{eq:RMHD:vecns} and \eqref{eq:RMHD:vecns2} that $\tilde p_m^* =   p_m^*$ and $\tilde {\vec n}^* = \vec T {\vec n}^*$.
	Using the inequality  \eqref{eq:RMHD:LLFsplit-thz} with $\tilde{ \vec U} \in{\mathcal G},~\tilde {\vec v}^*$, and $\tilde {\vec B}^*$ gives
	\begin{equation*}
    \begin{split}
	0 & < \big( \tilde {\vec U} + \theta \vec F_1(\tilde { \vec U}) \big) \cdot \tilde {\vec n}^* +  \tilde p_{m}^* + \theta \big( \tilde v_1^* \tilde p_{m}^* - \tilde B_1 ( \tilde {\vec v}^* \cdot \tilde {\vec B}^*)\big) \\
	&
	= \big( \vec T {\vec U} + \theta  \vec T \vec F_2({ \vec U}) \big) \cdot  \left( \vec T {\vec n}^* \right) +   p_{m}^* + \theta \big(  v_2^*  p_{m}^* - B_2 (  {\vec v}^* \cdot  {\vec B}^*)\big)
	\\
	&
	= \big(  {\vec U} + \theta  \vec F_2({ \vec U}) \big) \cdot    {\vec n}^*  +   p_{m}^* + \theta \big(  v_2^*  p_{m}^* - B_2 (  {\vec v}^* \cdot  {\vec B}^*)\big),
    \end{split}
	\end{equation*}
	where the orthogonality of the matrix $\vec T$ has been used in the last equality.
	This verifies the inequality \eqref{eq:RMHD:LLFsplit} for the case of $i=2$.
	The case of $i=3$ can be  similarly derived  by taking the orthogonal matrix
	$$
	{\vec T_3} =  \begin{pmatrix}
	0~ & ~0~ & ~1  \\
	0~ & ~1~ & ~0  \\
	1~ & ~0~ & ~0
	\end{pmatrix}.
	$$
	The proof is completed.
	\end{proof}

\begin{remark}
	Thanks to to the second equivalent form of $\mathcal G$, Lemma \ref{lemma:2.8} tells us that
the inequality
	\begin{equation}\label{eq:RMHD:LLFsplitNO}
	\big( \vec U + \theta \vec F_i(\vec U) \big) \cdot \vec n^* +  p_{m}^* > 0,
	\end{equation}
does	not always hold for any  $\theta \in [-1,1]$ and $ {\vec B}^*, \vec v^* \in \mathbb{R}^3$ with $ |\vec v^*|<1$,
where $i\in \{1,2,3\}$.
Compared \eqref{eq:RMHD:LLFsplitNO} to the inequality  \eqref{eq:RMHD:LLFsplit},
 the third term at the left-hand side of \eqref{eq:RMHD:LLFsplit} is extremely technical and crucial in deriving
the generalized LxF splitting properties.
Although {this term} is not always positive or negative, it can be canceled out dexterously
with the help of the following ``discrete divergence-free'' condition \eqref{eq:descrite1DDIV} or \eqref{eq:descrite2DDIV},
see  the proofs of  following theorems. 


\end{remark}

Based on the above lemma,  we derive the following  generalized   LxF splitting properties.

\begin{theorem}[1D generalized LxF splitting]\label{theo:RMHD:LLFsplit1D}
If $\tilde {\vec U}=(\tilde D, \tilde{\vec m}, \tilde{\vec B}, \tilde{E})^{\top}\in {\mathcal G}$
and $\hat{\vec U}=(\hat D, \hat{\vec m}, \hat{\vec B}, \hat{E})^{\top} \in {\mathcal G}$ satisfy
1D ``discrete divergence-free'' condition
\begin{equation}\label{eq:descrite1DDIV}
\tilde {B_i} - \hat {B_i}=0,
\end{equation}
for a given $i\in \{1,2,3\}$, then for any  $\alpha \ge c=1$  it holds
\begin{equation}\label{eq:RMHD:LLFsplit1D}
\bar{\vec U}:=\frac{1}{2} \Big(
\tilde { \vec U } -  \alpha^{-1} { \vec F_i ( \tilde { \vec U})  }
+\hat{
\vec U} +  \alpha^{-1} { \vec F_i (\hat{\vec U})}
\Big)
\in {\mathcal G}.
\end{equation}
\end{theorem}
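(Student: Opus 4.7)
The plan is to apply the second equivalent definition of $\mathcal{G}$ from Theorem \ref{theo:RMHD:CYcondition:VecN}, which reduces the problem to verifying two conditions on $\bar{\vec U}$: the positivity of the first component $\bar D$, and the linear inequality $\bar{\vec U}\cdot\vec n^* + p_m^* > 0$ for every $\vec v^*,\vec B^*\in\mathbb{R}^3$ with $|\vec v^*|<1$. The first is easy: since $\tilde{\vec U},\hat{\vec U}\in\mathcal{G}$ we have $|\tilde v_i|,|\hat v_i|<1$, and the first component of $\vec F_i(\vec U)$ is $Dv_i$, so
\begin{equation*}
2\bar D = \tilde D(1-\alpha^{-1}\tilde v_i) + \hat D(1+\alpha^{-1}\hat v_i) > 0
\end{equation*}
because $\alpha\ge 1$ makes both factors strictly positive.

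The heart of the argument is the second condition. The idea is to invoke Lemma \ref{theo:RMHD:LLFsplit} twice, with the parameter $\theta$ set to $-\alpha^{-1}$ and $+\alpha^{-1}$ respectively (both lie in $[-1,1]$ since $\alpha\ge 1$). Applied to $\tilde{\vec U}$ with $\theta=-\alpha^{-1}$ and to $\hat{\vec U}$ with $\theta=+\alpha^{-1}$, for the same choice of $\vec v^*,\vec B^*$, Lemma \ref{theo:RMHD:LLFsplit} gives
\begin{align*}
\bigl(\tilde{\vec U}-\alpha^{-1}\vec F_i(\tilde{\vec U})\bigr)\cdot\vec n^* + p_m^* - \alpha^{-1}\bigl(v_i^* p_m^* - \tilde B_i(\vec v^*\cdot\vec B^*)\bigr) &> 0, \\
\bigl(\hat{\vec U}+\alpha^{-1}\vec F_i(\hat{\vec U})\bigr)\cdot\vec n^* + p_m^* + \alpha^{-1}\bigl(v_i^* p_m^* - \hat B_i(\vec v^*\cdot\vec B^*)\bigr) &> 0.
\end{align*}
Adding these two strict inequalities and collecting the extra terms from the right of Lemma \ref{theo:RMHD:LLFsplit}, the $v_i^* p_m^*$ contributions cancel identically, leaving the residue $\alpha^{-1}(\tilde B_i-\hat B_i)(\vec v^*\cdot\vec B^*)$. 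This is precisely where the ``discrete divergence-free'' hypothesis \eqref{eq:descrite1DDIV} enters: it forces this residue to vanish, so the sum simplifies to $2\bar{\vec U}\cdot\vec n^* + 2p_m^* > 0$, which is what we want.

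The main conceptual obstacle is not computational but recognizing the \emph{role} of the seemingly awkward third term $\theta(v_i^* p_m^* - B_i(\vec v^*\cdot\vec B^*))$ in Lemma \ref{theo:RMHD:LLFsplit}: its $B_i$-dependence is engineered so that with opposite signs of $\theta$ the $\vec v^*,\vec B^*$ dependencies match and the only surviving coupling is through $\tilde B_i - \hat B_i$. Once this is seen, the proof is essentially mechanical. Finally, since the resulting two inequalities $\bar D>0$ and $\bar{\vec U}\cdot\vec n^* + p_m^* > 0$ hold for arbitrary admissible $(\vec v^*,\vec B^*)$, Theorem \ref{theo:RMHD:CYcondition:VecN} yields $\bar{\vec U}\in\mathcal{G}_1=\mathcal{G}$, completing the proof.
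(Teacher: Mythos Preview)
Your proof is correct and follows essentially the same route as the paper: verify the first constraint $\bar D>0$ directly, then apply Lemma~\ref{theo:RMHD:LLFsplit} with $\theta=\mp\alpha^{-1}$ to $\tilde{\vec U}$ and $\hat{\vec U}$ respectively, add the two inequalities, and use the condition $\tilde B_i=\hat B_i$ to kill the residual term before invoking Theorem~\ref{theo:RMHD:CYcondition:VecN}. Your explicit commentary on the role of the ``extra'' term in Lemma~\ref{theo:RMHD:LLFsplit} is a nice addition but the argument itself is the paper's.
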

\begin{proof}
	It is obvious that  $$
\frac{1}{2} \left(  \tilde D ( 1 - \tilde v_i/\alpha ) +  \hat D ( 1 + \hat v_i / \alpha ) \right) >0,
$$
that is to say, the first component of $\bar{\vec U}$ satisfies
 the first constraint in ${\mathcal G}_1$, see Theorem \ref{theo:RMHD:CYcondition:VecN}.

Next, let us check the second constraint in ${\mathcal G}_1$.
For any ${\vec B}^*, \vec v^* \in \mathbb{R}^3$ with $  v^*<1$, using Lemma \ref{theo:RMHD:LLFsplit} gives
\begin{equation*}
\begin{split}
\bar{\vec U} \cdot \vec n^* + p_{m}^*
& =  \frac{1}{2} \Big( \left(
\tilde { \vec U } -  \alpha^{-1} { \vec F_i ( \tilde { \vec U})  }
\right)
\cdot \vec n^* + p_{m}^*\Big) +  \frac{1}{2} \Big( \left(
\hat{
	\vec U} +  \alpha^{-1} { \vec F_i (\hat{\vec U})} \right)
\cdot \vec n^* + p_{m}^*\Big)\\[2mm]
& \overset{\eqref{eq:RMHD:LLFsplit}}{>}
-  \frac{1}{2}\alpha^{-1} \Big(  \tilde B_i (\vec v^* \cdot \vec B^*) -  v_{i}^* p_{m}^* \Big)
+ \frac{1}{2}\alpha^{-1} \Big( \hat B_i (\vec v^* \cdot \vec B^*) -  v_{i}^* p_{m}^* \Big)
\overset{\eqref{eq:descrite1DDIV}}{=}
0,
\end{split}
\end{equation*}
where $\vec n^*$ and $p_{m}^*$ are defined in \eqref{eq:RMHD:vecns} and \eqref{eq:RMHD:vecns2}, respectively.
Using Theorem \ref{theo:RMHD:CYcondition:VecN}  completes the proof.
\end{proof}

\begin{theorem}[2D generalized LxF splitting]\label{theo:RMHD:LLFsplit2D}
If $\tilde{\vec U}^{i}$, $\hat{\vec U}^i$,
$\bar{\vec U}^i$,
$\breve{\vec U}^i
\in {\mathcal G}$ for $i=1,2,\cdots,L$ satisfy 2D  ``discrete divergence-free'' condition
\begin{equation}\label{eq:descrite2DDIV}
\frac{{\sum\limits_{i=1}^L {{\omega _i}({\tilde B_1}^i - {\hat B_1}^i)} }}{{\Delta x}}
+ \frac{{\sum\limits_{i=1}^L {{\omega _i}({\bar B_2}^i - {\breve B_2}^i)} }}{{\Delta y}}
=0,
\end{equation}
where $\Delta x, \Delta y >0$, and the sum of all positive numbers
$\left\{\omega _i \right\}_{i=1}^L$ is equal to 1,
then  for all $\alpha \ge c=1$ it holds
\begin{equation} \label{eq:RMHD:LLFsplit2D}
\begin{split}
\bar{\vec U}:=&\frac{1}{2 \left(  \frac{1}{\Delta x} + \frac{1}{\Delta y} \right)}
\sum\limits_{i=1}^L {{\omega _i}}
\bigg[
\frac{1}{\Delta x} \Big(
\tilde { \vec U }^i -  \alpha^{-1} { \vec F_1 ( \tilde { \vec U}^i)  }
+\hat{
\vec U}^i +  \alpha^{-1} { \vec F_1 (\hat{\vec U}^i)}
\Big)\\
&\quad \quad \quad \quad \quad
+
\frac{1}{\Delta y} \Big(
\bar { \vec U }^i -  \alpha^{-1} { \vec F_2 ( \bar { \vec U}^i)  }
+\breve{
\vec U}^i +  \alpha^{-1} { \vec F_2 (\breve{\vec U}^i)}
\Big)
\bigg]
\in {\mathcal G}.
\end{split}
\end{equation}
\end{theorem}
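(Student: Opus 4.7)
My plan is to mirror the proof of Theorem 2.4 (the 1D generalized LxF splitting) in Theorem \ref{theo:RMHD:LLFsplit1D}, but now the cancellation mechanism will involve both coordinate directions at once and must exploit the full 2D ``discrete divergence-free'' condition \eqref{eq:descrite2DDIV}. To conclude $\bar{\vec U}\in {\mathcal G}$ I would invoke the second equivalent definition (Theorem \ref{theo:RMHD:CYcondition:VecN}), so the task reduces to checking (i) $\bar D>0$ and (ii) $\bar{\vec U}\cdot {\vec n^*} + p_{m}^*>0$ for every ${\vec v^*},{\vec B^*}\in\mathbb R^3$ with $|\vec v^*|<1$.

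For (i), each $D$-component of the eight conservative vectors appearing inside the sum contributes a term of the form $\tilde D^i(1\mp \alpha^{-1}\tilde v_1^i)$, $\hat D^i(1\pm\alpha^{-1}\hat v_1^i)$, etc., which are strictly positive because $\alpha\geq 1>|v|$ on $\mathcal G$ and the weights $\omega_i/\Delta x$, $\omega_i/\Delta y$ are positive. This is routine.

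The heart of the argument is (ii). For fixed $\vec v^*,\vec B^*$ I would apply Lemma \ref{theo:RMHD:LLFsplit} to each of the four states $\tilde{\vec U}^i,\hat{\vec U}^i,\bar{\vec U}^i,\breve{\vec U}^i$ with the appropriate sign of $\theta=\mp\alpha^{-1}$ and the appropriate direction $i\in\{1,2\}$. Adding the $\tilde{\vec U}^i$ and $\hat{\vec U}^i$ pair the $v_1^* p_m^*$ contributions cancel, leaving
\begin{equation*}
\bigl(\tilde{\vec U}^i-\alpha^{-1}\vec F_1(\tilde{\vec U}^i)\bigr)\cdot\vec n^* + p_m^* + \bigl(\hat{\vec U}^i+\alpha^{-1}\vec F_1(\hat{\vec U}^i)\bigr)\cdot\vec n^* + p_m^* > -\alpha^{-1}(\tilde B_1^i-\hat B_1^i)(\vec v^*\cdot\vec B^*),
\end{equation*}
and analogously for the $\bar{\vec U}^i,\breve{\vec U}^i$ pair in the $x_2$-direction, producing the residual $-\alpha^{-1}(\bar B_2^i-\breve B_2^i)(\vec v^*\cdot\vec B^*)$. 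Then I would multiply the two strict inequalities by $\omega_i/\Delta x$ and $\omega_i/\Delta y$ respectively, sum over $i=1,\dots,L$, and divide by $2(1/\Delta x+1/\Delta y)$.

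The main obstacle is ensuring that the strict residual terms assemble into exactly the combination controlled by \eqref{eq:descrite2DDIV}. After summation the right-hand side becomes
\begin{equation*}
-\frac{\alpha^{-1}(\vec v^*\cdot\vec B^*)}{2(1/\Delta x+1/\Delta y)}\left(\frac{\sum_i\omega_i(\tilde B_1^i-\hat B_1^i)}{\Delta x}+\frac{\sum_i\omega_i(\bar B_2^i-\breve B_2^i)}{\Delta y}\right),
\end{equation*}
which vanishes by \eqref{eq:descrite2DDIV}, irrespective of the sign of $\vec v^*\cdot\vec B^*$; meanwhile the left-hand side equals $2(1/\Delta x+1/\Delta y)^{-1}\cdot 2(1/\Delta x+1/\Delta y)(\bar{\vec U}\cdot\vec n^*+p_m^*)$ (the factor $2$ comes from the two $p_m^*$ contributions per pair, with the normalization absorbed into the definition of $\bar{\vec U}$). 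Because each of the $4L$ individual inequalities from Lemma \ref{theo:RMHD:LLFsplit} is strict and the weights are positive, the summed inequality remains strict, giving $\bar{\vec U}\cdot\vec n^*+p_m^*>0$. Combining (i) and (ii), Theorem \ref{theo:RMHD:CYcondition:VecN} concludes $\bar{\vec U}\in{\mathcal G}_1={\mathcal G}$. The delicate point I will have to verify carefully is the bookkeeping of the normalization constant $2(1/\Delta x+1/\Delta y)$ so that the left-hand side after summation is indeed a positive multiple of $\bar{\vec U}\cdot\vec n^*+p_m^*$; otherwise the argument is a direct 2D extension of the 1D case.
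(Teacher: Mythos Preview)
Your proposal is correct and follows essentially the same approach as the paper: verify the two constraints of ${\mathcal G}_1$ via Theorem \ref{theo:RMHD:CYcondition:VecN}, applying Lemma \ref{theo:RMHD:LLFsplit} to each of the $4L$ states and using the discrete divergence-free condition \eqref{eq:descrite2DDIV} to annihilate the assembled residual $-\alpha^{-1}(\vec v^*\cdot\vec B^*)\bigl(\cdots\bigr)$. The only difference is organizational: the paper expands $\bar{\vec U}\cdot\vec n^*+p_m^*$ in a single display and applies \eqref{eq:RMHD:LLFsplit} term by term, whereas you first pair the $x_1$- and $x_2$-direction contributions to exhibit the $v_k^*p_m^*$ cancellation within each pair before summing; both routes yield the same residual and the normalization bookkeeping works out exactly as you anticipate.
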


\begin{proof}
The first component of $\bar{\vec U}$ satisfies
	 the first constraint in ${\mathcal G}_1$, i.e.
\begin{align*}
\begin{split}
&  \frac{1}{2 \left(  \frac{1}{\Delta x} + \frac{1}{\Delta y} \right)}
\sum\limits_{i=1}^L {{\omega _i}}
\bigg[
\frac{1}{\Delta x} \Big(
\tilde { D }^i (1 -  \alpha^{-1} { {\tilde v_1}^i  } )
+\hat{
	D }^i (1 +  \alpha^{-1} {{\hat v_1}^i} )
\Big)\\
& \quad \quad \quad \quad \quad \quad
+
\frac{1}{\Delta y} \Big(
\bar { D }^i ( 1 -  \alpha^{-1} { { \bar v_2}^i  } )
+\breve{
	D}^i (1 +  \alpha^{-1} {{\breve v_2}^i)}
\Big)
\bigg] >0.
\end{split}
\end{align*}

For any ${\vec B}^*,\vec v^* \in \mathbb{R}^3$ with $|\vec v^*|<1$, utilizing Lemma \ref{theo:RMHD:LLFsplit} and
\eqref{eq:descrite2DDIV} gives
\begin{equation*}
\begin{split}
&\bar{\vec U} \cdot \vec n^* + p_{m}^* \\
& =
\frac{1}{2 \left(  \frac{1}{\Delta x} + \frac{1}{\Delta y} \right)}
\sum\limits_{i=1}^L  {{\omega _i}}
\bigg[
\frac{1}{\Delta x} \bigg( \Big(
\tilde { \vec U }^i -  \alpha^{-1} { \vec F_1 ( \tilde { \vec U}^i)  } \Big) \cdot \vec n^* + p_{m}^*
+\Big( \hat{
	\vec U}^i +  \alpha^{-1} { \vec F_1 (\hat{\vec U}^i)}
\Big) \cdot \vec n^* + p_{m}^* \bigg) \\[2mm]
&\quad \quad \quad
+
\frac{1}{\Delta y} \bigg( \Big(
\bar { \vec U }^i -  \alpha^{-1} { \vec F_2 ( \bar { \vec U}^i)  } \Big) \cdot \vec n^* + p_{m}^*
+ \Big( \breve{
	\vec U}^i +  \alpha^{-1} { \vec F_2 (\breve{\vec U}^i)}
\Big) \cdot \vec n^* + p_{m}^* \bigg)
\bigg]\\[2mm]
& \overset{\eqref{eq:RMHD:LLFsplit}}{>}
\frac{1}{2 \left(  \frac{1}{\Delta x} + \frac{1}{\Delta y} \right)}
\sum\limits_{i=1}^L  {{\omega _i}}
\bigg[
\frac{1}{\Delta x} \bigg(
-\alpha^{-1} \Big( \tilde B_1^i (\vec v^* \cdot \vec B^*) -  v_{1}^* p_{m}^* \Big)
+ \alpha^{-1} \Big( \hat B_1^i (\vec v^* \cdot \vec B^*) -  v_{1}^* p_{m}^* \Big)  \bigg) \\[2mm]
&\quad \quad \quad
+
\frac{1}{\Delta y} \bigg(
-\alpha^{-1} \Big( \bar B_2^i (\vec v^* \cdot \vec B^*) -  v_{2}^* p_{m}^* \Big)
+ \alpha^{-1} \Big( \breve B_2^i (\vec v^* \cdot \vec B^*) -  v_{2}^* p_{m}^* \Big)  \bigg)
\bigg]
\\[2mm]
& = - \frac{ \vec v^* \cdot \vec B^*  }{2 \alpha \left(  \frac{1}{\Delta x} + \frac{1}{\Delta y} \right)}
\sum\limits_{i=1}^L   {{\omega _i}} \left( \frac{{{\tilde B_1}^i - {\hat B_1}^i} }{{\Delta x}} + \frac{ {\bar B_2}^i - {\breve B_2}^i} {{\Delta y}} \right) \overset{\eqref{eq:descrite2DDIV}}{=} 0. \nonumber
\end{split}
\end{equation*}
Thus {$\bar{\vec U}$} also satisfies the second constraint in  ${\mathcal G}_1$. Using Theorem \ref{theo:RMHD:CYcondition:VecN} completes the proof.
\end{proof}




\begin{theorem}[3D generalized LxF splitting]\label{theo:RMHD:LLFsplit3D}
If $\tilde {
		\vec U}^{i}$, $\hat{\vec U}^i$,
	$\bar{\vec U}^i$,
	$\breve{\vec U}^i$, $\bar { \bar{\vec U} }^i$,
	$\breve{ \breve{\vec U} }^i
	\in {\mathcal G}$ for $i=1,2,\cdots, L$, and they satisfy the 3D ``discrete divergence-free'' condition
	\begin{equation*}
	\frac{{\sum\limits_{i=1}^L {{\omega _i}({\tilde B_1}^i - {\hat B_1}^i)} }}{{\Delta x}} + \frac{{\sum\limits_{i=1}^L {{\omega _i}({\bar B_2}^i - {\breve B_2}^i)} }}{{\Delta y}}
	+ \frac{{\sum\limits_{i=1}^L {{\omega _i} \left( \bar {\bar {B_3^i} } - \breve {\breve {B_3^i} } \right)} }}{{\Delta z}}
	=0,
	\end{equation*}
where $\Delta x, \Delta y, \Delta z >0$, and the sum of all positive numbers
$\left\{\omega _i \right\}_{i=1}^L$ is equal to 1,
	then for any  $\alpha \ge c=1$ it holds
	\begin{equation*}
    \begin{split}
	&\frac{1}{2 \left(  \frac{1}{\Delta x} + \frac{1}{\Delta y} + \frac{1}{\Delta z} \right)}
	\sum\limits_{i=1}^L {{\omega _i}}
	\bigg[
	\frac{1}{\Delta x} \Big(
	\tilde { \vec U }^i -  \alpha^{-1} { \vec F_1 ( \tilde { \vec U}^i)  }
	+\hat{
		\vec U}^i +  \alpha^{-1} { \vec F_1 (\hat{\vec U}^i)}
	\Big)\\
	&\quad \quad \quad \quad \quad
	+
	\frac{1}{\Delta y} \Big(
	\bar { \vec U }^i -  \alpha^{-1} { \vec F_2 ( \bar { \vec U}^i)  }
	+\breve{
		\vec U}^i +  \alpha^{-1} { \vec F_2 (\breve{\vec U}^i)}
	\Big)
	\\
	&\quad \quad \quad \quad \quad
	+
	\frac{1}{\Delta z} \Big(
	\bar{\bar { \vec U }}^i -  \alpha^{-1} { \vec F_3 \big( \bar{\bar { \vec U}}^i \big)  }
	+\breve {\breve{
			\vec U}}^i +  \alpha^{-1} { \vec F_3 \big( \breve {\breve{\vec U}}^i\big)}
	\Big)
	\bigg]
	\in {\mathcal G}.
    \end{split}
	\end{equation*}
\end{theorem}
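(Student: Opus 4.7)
The plan is to mimic the proof of the 2D generalized LxF splitting (Theorem 2.5) essentially line by line, using Lemma 2.9 (the constructive inequality) together with Theorem 2.7 (the second equivalent definition of $\mathcal G$), and then invoke the 3D discrete divergence-free hypothesis to kill the residual magnetic terms. Since $\mathcal G = \mathcal G_1$, it suffices to verify the two defining constraints of $\mathcal G_1$ for the claimed convex combination $\bar{\vec U}$: positivity of its first component and positivity of $\bar{\vec U}\cdot\vec n^* + p_m^*$ for every $\vec v^*,\vec B^*\in\mathbb R^3$ with $|\vec v^*|<1$.

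First I would handle the density component. Writing out the first component of $\bar{\vec U}$ yields a positively weighted sum of six terms of the form $\tilde D^i(1-\alpha^{-1}\tilde v_1^i)$, $\hat D^i(1+\alpha^{-1}\hat v_1^i)$, and analogously for the $v_2$ and $v_3$ slots. Since each state lies in $\mathcal G$ we have $\tilde D^i,\hat D^i,\ldots > 0$ and $|\tilde v_j^i|\le|\vec{\tilde v}^i|<1\le\alpha$, so every summand is strictly positive; the first constraint of $\mathcal G_1$ follows immediately.

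For the key inequality, I would fix arbitrary $\vec v^*,\vec B^*$ with $|\vec v^*|<1$ and apply Lemma 2.9 to each of the six summands (with the appropriate sign of $\theta=\pm\alpha^{-1}\in[-1,1]$ and with $i\in\{1,2,3\}$ according to which directional flux is being split). Summing over all $i$ with weights $\omega_i$ and the direction weights $\tfrac{1}{\Delta x},\tfrac{1}{\Delta y},\tfrac{1}{\Delta z}$, and noting that the $p_m^*$ terms pair up to reconstitute exactly $p_m^*$ in front of $\bar{\vec U}\cdot\vec n^*$, we obtain
\begin{equation*}
\bar{\vec U}\cdot\vec n^* + p_m^* > -\frac{\vec v^*\cdot\vec B^*}{2\alpha\bigl(\tfrac{1}{\Delta x}+\tfrac{1}{\Delta y}+\tfrac{1}{\Delta z}\bigr)} \sum_{i=1}^{L}\omega_i\left(\frac{\tilde B_1^i-\hat B_1^i}{\Delta x}+\frac{\bar B_2^i-\breve B_2^i}{\Delta y}+\frac{\bar{\bar{B_3^i}}-\breve{\breve{B_3^i}}}{\Delta z}\right),
\end{equation*}
where the $v_j^* p_m^*$ contributions from Lemma 2.9 cancel in pairs of opposite sign and only the magnetic field flux remainders survive. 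By the 3D discrete divergence-free hypothesis, the right-hand side is exactly zero, so $\bar{\vec U}\cdot\vec n^* + p_m^*>0$. An appeal to Theorem 2.7 then concludes $\bar{\vec U}\in\mathcal G$.

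There is no genuine mathematical obstacle beyond what has already been overcome in the 2D case: the heavy lifting is entirely done by Lemma 2.9 (which is directional and dimension-agnostic) and Theorem 2.7. The only thing that needs care is the bookkeeping — tracking the six signed contributions per $i$, confirming that the sign conventions of $\theta=\pm\alpha^{-1}$ match the $\pm\alpha^{-1}\vec F_j$ terms in $\bar{\vec U}$, and verifying that the $v_j^* p_m^*$ pieces cancel while the $B_j(\vec v^*\cdot\vec B^*)$ pieces assemble into precisely the discrete divergence expression that the hypothesis annihilates. This is the step most prone to sign or indexing slips, but it is purely mechanical.
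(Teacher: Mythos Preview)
Your proposal is correct and is exactly the approach the paper intends: the paper's own proof of this theorem reads in full ``The proof is similar to that of Theorem \ref{theo:RMHD:LLFsplit2D} and omitted here,'' and your write-up faithfully carries out that 3D bookkeeping via Lemma \ref{theo:RMHD:LLFsplit} and the second equivalent definition. (Minor note: the second equivalent definition is Theorem \ref{theo:RMHD:CYcondition:VecN}, i.e.\ Theorem 2.3 in the paper's numbering, not 2.7.)
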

\begin{proof}
	The proof is similar to that of Theorem \ref{theo:RMHD:LLFsplit2D} and omitted here.
	\end{proof}

\begin{remark}
	Because the convex combination $\bar{\vec U}$ in the above
	generalized LxF splitting properties depends on
	several	states, it is very difficult to directly check whether $\bar{\vec U}$ belongs to the set $\mathcal G$.
	It is subtly and fortunately overcame by  using the inequality \eqref{eq:RMHD:LLFsplit} in Lemma \ref{theo:RMHD:LLFsplit} and  the ``discrete divergence-free'' condition, which is
	an approximation to \eqref{eq:2D:BxBy0}. For example,
	the ``discrete divergence-free'' condition \eqref{eq:descrite2DDIV}  can be derived
	by  using some quadrature rule for the integrals at the left hand side of
	\begin{equation} \label{eq:div000}
    \begin{split}
	& \frac{1}{{\Delta x}} \left( \frac{1}{{\Delta y}}\int_{y_0 }^{y_0  + \Delta y} { \big( B_1 (x_0  + \Delta x,y) - B_1 (x_0 ,y) \big) dy} \right)  \\
	&+ \frac{1}{{\Delta y}} \left( \frac{1}{{\Delta x}}\int_{x_0 }^{x_0  + \Delta x} {  \big( B_2 (x,y_0  + \Delta y)-B_2 (x,y_0 )\big)   dx} \right)   \\
	& = \frac{1}{{\Delta x\Delta y}}\int_{I} {\left( {\frac{{\partial B_1 }}{{\partial x}} + \frac{{\partial B_2 }}{{\partial y}}} \right)dxdy}=0,
	\end{split}
    \end{equation}
	where $(x,y)=(x_1,x_2)$ and $I := [x_0 ,x_0  + \Delta x] \times [y_0 ,y_0  + \Delta y] $.
\end{remark}

The above generalized LxF splitting properties 
are important tools in developing and analyzing the PCP numerical schemes on uniform meshes
if the numerical flux is taken as the LxF type flux \eqref{eq:LFflux}.
Moreover, it is easy to extend them on non-uniform or unstructured meshes.
For example,  the following	theorem  shows 
an extension to the  case of 2D arbitrarily convex polygon mesh.

\begin{theorem}\label{theo:RMHD:LLFsplit2Dus} 
If for $i=1,2,\cdots,L$ and $j=1,2,\cdots,J$, ${\vec U}^{ij}\in {\mathcal G}$ and satisfy 2D  ``discrete divergence-free'' condition over
an $J$-sided convex polygon
\begin{equation}\label{eq:descrite2DDIVus}
\sum\limits_{j = 1}^J {\left[ {\sum\limits_{i = 1}^L {\omega _i \left( { B_1^{ij} {\mathcal N}_1^j  + B_2^{ij} {\mathcal N}_2^j  } \right)} } \right]} \ell_j = 0,
\end{equation}
where $\ell_j>0$ and $\left( {\mathcal N}_1^j, {\mathcal N}_2^j \right)$ are the length and the unit outward normal vector of the $j$-th edge of the polygon, respectively, and the sum of all positive numbers
$\left\{\omega _i \right\}_{i=1}^L$ is equal to 1,
then  for all $\alpha \ge c=1$ it holds
\begin{align*}
\bar{\vec U}: = \frac{1}{{\sum\limits_{j = 1}^J {\ell_j } }}\sum\limits_{j = 1}^J {\left[ {\sum\limits_{i = 1}^L {\omega _i \bigg( {{\vec U}^{ij}  - \alpha ^{ - 1} \Big(\vec F_1 (\vec U^{ij} ) {\mathcal N}_1^j  + \vec F_2 (\vec U^{ij} ) {\mathcal N}_2^j \Big)} \bigg)} } \right]} \ell_j  \in {\mathcal G}.
\end{align*}
\end{theorem}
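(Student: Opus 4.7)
The plan is to follow the structure of Theorem~\ref{theo:RMHD:LLFsplit2D} almost verbatim, the only two new ingredients being (a) an ``anisotropic'' version of the constructive inequality of Lemma~\ref{theo:RMHD:LLFsplit} that handles a general unit edge-normal $(\mathcal N_1^j,\mathcal N_2^j)$ rather than one of the coordinate directions, and (b) the elementary geometric identity $\sum_{j=1}^J \ell_j\mathcal N_k^j=0$ ($k=1,2$) that holds for the outward normals of every closed convex polygon. With these in hand, the discrete divergence-free condition \eqref{eq:descrite2DDIVus} will kill the leftover ``source'' terms and Theorem~\ref{theo:RMHD:CYcondition:VecN} will finish the job.

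First I would upgrade Lemma~\ref{theo:RMHD:LLFsplit} to the following rotated form: for any $\vec U\in\mathcal G$, any unit vector $(\mathcal N_1,\mathcal N_2)\in\mathbb R^2$, any $\theta\in[-1,1]$, and any $\vec v^*,\vec B^*\in\mathbb R^3$ with $|\vec v^*|<1$,
\begin{equation*}
\bigl(\vec U+\theta(\mathcal N_1\vec F_1(\vec U)+\mathcal N_2\vec F_2(\vec U))\bigr)\cdot\vec n^*+p_m^*
+\theta\bigl[(\mathcal N_1v_1^*+\mathcal N_2v_2^*)p_m^*-(\mathcal N_1B_1+\mathcal N_2B_2)(\vec v^*\cdot\vec B^*)\bigr]>0.
\end{equation*}
This is obtained from the $i=1$ case of Lemma~\ref{theo:RMHD:LLFsplit} by the same orthogonal-rotation trick used in part (ii) of its proof: pick $\vec T_3\in SO(3)$ whose first row is $(\mathcal N_1,\mathcal N_2,0)$, set $\vec T={\rm diag}\{1,\vec T_3,\vec T_3,1\}$, and apply Lemma~\ref{theo:RMHD:LLFsplit} to $\vec T\vec U\in\mathcal G$ (Corollary~\ref{lem:RMHD:zhengjiao}) with rotated parameters $\vec T_3^{\top}\vec v^*$, $\vec T_3^{\top}\vec B^*$. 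The orthogonality of $\vec T_3$ preserves $p_m^*$, $\vec v^*\cdot\vec B^*$ and $|\vec v^*|$, and converts $\vec F_1(\vec T\vec U)$ into $\vec T(\mathcal N_1\vec F_1(\vec U)+\mathcal N_2\vec F_2(\vec U))$, so after taking inner product with $\vec T\vec n^*$ one recovers the displayed inequality.

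Next, positivity of the first component of $\bar{\vec U}$ is immediate: each $D^{ij}(1-\alpha^{-1}(\mathcal N_1^jv_1^{ij}+\mathcal N_2^jv_2^{ij}))>0$ because $|\mathcal N_1^jv_1^{ij}+\mathcal N_2^jv_2^{ij}|\le|\vec v^{ij}|<1\le\alpha$ by Cauchy--Schwarz and $\vec U^{ij}\in\mathcal G$. For the second constraint of $\mathcal G_1$ I would fix arbitrary $\vec v^*,\vec B^*$ with $|\vec v^*|<1$, apply the rotated inequality above with $\theta=-\alpha^{-1}\in[-1,0]$ to each $\vec U^{ij}$ in the $(\mathcal N_1^j,\mathcal N_2^j)$-direction, multiply by the positive weight $\omega_i\ell_j/\sum_{j'}\ell_{j'}$, and sum, obtaining
\begin{equation*}
\bar{\vec U}\cdot\vec n^*+p_m^* \;>\; \frac{-\alpha^{-1}}{\sum_j\ell_j}\left\{p_m^*\sum_{j,i}\omega_i\ell_j(\mathcal N_1^jv_1^*+\mathcal N_2^jv_2^*)-(\vec v^*\cdot\vec B^*)\sum_{j,i}\omega_i\ell_j(\mathcal N_1^jB_1^{ij}+\mathcal N_2^jB_2^{ij})\right\}.
\end{equation*}
The first braced sum equals $v_k^*\sum_j\ell_j\mathcal N_k^j=0$ (using $\sum_i\omega_i=1$ and the closed-polygon identity), and the second braced sum is exactly the left-hand side of \eqref{eq:descrite2DDIVus}, hence zero. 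Therefore $\bar{\vec U}\cdot\vec n^*+p_m^*>0$ for all admissible $(\vec v^*,\vec B^*)$, and Theorem~\ref{theo:RMHD:CYcondition:VecN} gives $\bar{\vec U}\in\mathcal G$.

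The only mildly delicate step is Step~1, namely verifying that the orthogonal-rotation argument from part (ii) of the proof of Lemma~\ref{theo:RMHD:LLFsplit} survives when $(\mathcal N_1,\mathcal N_2,0)$ is an arbitrary unit vector rather than a permutation of $(1,0,0)$. This is essentially bookkeeping, since what one really uses are only the orthogonal invariants $p_m^*$, $\vec v^*\cdot\vec B^*$, and $|\vec v^*|$, together with the transformation rule $\vec F_1(\vec T\vec U)=\vec T\sum_k(\vec T_3)_{1k}\vec F_k(\vec U)$ inherited from the rotational covariance of the RMHD flux. Once this anisotropic inequality is in place, the geometric identity for closed polygons and the divergence-free hypothesis do the rest in exact analogy with Theorem~\ref{theo:RMHD:LLFsplit2D}.
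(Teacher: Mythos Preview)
Your proposal is correct and follows essentially the same approach as the paper: both use the rotational invariance of the RMHD flux to reduce the normal-direction combination $\mathcal N_1^j\vec F_1+\mathcal N_2^j\vec F_2$ to the $i=1$ case of Lemma~\ref{theo:RMHD:LLFsplit}, then sum the resulting inequalities and cancel the leftover terms using the discrete divergence-free condition \eqref{eq:descrite2DDIVus} together with the closed-polygon identity $\sum_j\ell_j\mathcal N_k^j=0$. The only organizational difference is that you isolate the rotated inequality as a standalone lemma before summing, whereas the paper performs the rotation inline for each edge $j$ (see \eqref{eq:wkl000}); the mathematical content is identical.
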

\begin{proof}
The rotational invariance property of the 2D RMHD equations \eqref{eq:RMHD1D} yields
$$
\vec F_1 (\vec U^{ij} ) {\mathcal N}_1^j  + \vec F_2 (\vec U^{ij} ) {\mathcal N}_2^j = \vec T^{-1}_j \vec F_1(\vec T_j {\vec U}^{ij}),
$$
where $\vec T_j:={\rm diag} \left\{1,\vec T_{3,j},\vec T_{3,j},1 \right\}$ with the rotational matrix $\vec T_{3,j}$ defined by
	$$
	{\vec T}_{3,j} :=  \begin{pmatrix}
	{\mathcal N}_1^j~ & ~{\mathcal N}_2^j~ & ~0  \\
	-{\mathcal N}_2^j~ & ~{\mathcal N}_1^j~ & ~0 \\
	0~ & ~0~ & ~1
	\end{pmatrix}.
	$$
For each $j$ and any $\vec B^*,\vec v^*\in {\mathbb{R}}^3$ with $|\vec v^*|<1$, let $\hat {\vec v}^*= {\vec v}^* \vec T_{3,j}$ and $\hat {\vec B}^*= {\vec B}^* \vec T_{3,j}$, one has $|\vec {\hat v}^*| = |\vec v|<1,~\vec {\hat v}^* \cdot \vec {\hat B}^* = \vec v^* \cdot \vec B^*,~\hat p_m^* =   p_m^*$, and $\hat {\vec n}^* = \vec T_{j} {\vec n}^*$. Utilizing Lemma \ref{theo:RMHD:LLFsplit} for $\vec T_j \vec U^{ij}$, $\hat {\vec v}^*$,  and $\hat {\vec B}^*$ gives
\begin{equation}\label{eq:wkl000}
\begin{split}
0 &< \left( \vec T_j \vec U^{ij} - \alpha^{-1}  \vec F_1(\vec T_j {\vec U}^{ij}) \right) \cdot \hat{\vec n}^* + \hat p_m^* - \alpha^{-1} \big( \hat v_1^* \hat p_m^* - \left( B_1^{ij} {\mathcal N}_1^j  + B_2^{ij} {\mathcal N}_2^j \right) (\hat {\vec v}^* \cdot \hat {\vec B}^* ) \big) \\
& = \left(  \vec U^{ij} - \alpha^{-1} \vec T_j^{-1}  \vec F_1(\vec T_j {\vec U}^{ij}) \right) \cdot {\vec n}^* +  p_m^* \\
&\qquad  - \alpha^{-1} \Big(  \big(v_1^* {\mathcal N}_1^j  + v_2^{*} {\mathcal N}_2^j  \big)  p_m^* - \big( B_1^{ij} {\mathcal N}_1^j  + B_2^{ij} {\mathcal N}_2^j \big) ( {\vec v}^* \cdot  {\vec B}^* ) \Big),
\end{split}
\end{equation}
where the orthogonality of $\vec T_j$ has been used. Hence, one has
\begin{equation*}
\begin{split}
&\bar{\vec U} \cdot \vec n^* + p_{m}^* \\
& = \frac{1}{{\sum\limits_{j = 1}^J {\ell_j } }}\sum\limits_{j = 1}^J {\left[ {\sum\limits_{i = 1}^L {\omega _i \Big( \big( {{\vec U}^{ij}  - \alpha ^{ - 1} \vec T_j^{-1}  \vec F_1(\vec T_j {\vec U}^{ij})  } \big) \cdot  \vec n^* + p_{m}^*   \Big)    } } \right]} \ell_j\\
& \overset{\eqref{eq:wkl000}} {>}
\frac{1}{\alpha{\sum\limits_{j = 1}^J {\ell_j } }}\sum\limits_{j = 1}^J {\left[ {\sum\limits_{i = 1}^L {\omega _i \Big(  \big(v_1^* {\mathcal N}_1^j  + v_2^{*} {\mathcal N}_2^j  \big)  p_m^* - \big( B_1^{ij} {\mathcal N}_1^j  + B_2^{ij} {\mathcal N}_2^j \big) ( {\vec v}^* \cdot  {\vec B}^* )  \Big) } } \right]} \ell_j\\
& \overset{\eqref{eq:descrite2DDIVus}} {=}   \frac{p_m^*}{\alpha{\sum\limits_{j = 1}^J {\ell_j } }}\sum\limits_{j = 1}^J {\left[ {\sum\limits_{i = 1}^L {\omega _i   \left(v_1^* {\mathcal N}_1^j  + v_2^{*} {\mathcal N}_2^j  \right)    } } \right]} \ell_j
=  \frac{p_m^*}{\alpha{\sum\limits_{j = 1}^J {\ell_j } }}\sum\limits_{j = 1}^J    \left(v_1^* {\mathcal N}_1^j  + v_2^{*} {\mathcal N}_2^j  \right)  \ell_j =0,
\end{split}
\end{equation*}
which implies that $\bar{\vec U}$ satisfies the second constraint in  ${\mathcal G}_1$.
On the other hand,  $\bar{\vec U}$  satisfies the first constraint in ${\mathcal G}_1$  because
\begin{equation*}
\begin{split}
 & \frac{1}{{\sum\limits_{j = 1}^J {\ell_j } }}\sum\limits_{j = 1}^J {\left[ {\sum\limits_{i = 1}^L {\omega _i D^{ij} \Big( { 1 - \alpha ^{ - 1} \big( v_1^{ij} {\mathcal N}_1^j  + v_2^{ij} {\mathcal N}_2^j \big)} \Big)} } \right]} \ell_j  \\
 &\ge
  \frac{1}{{\sum\limits_{j = 1}^J {\ell_j } }}\sum\limits_{j = 1}^J {\left[ {\sum\limits_{i = 1}^L {\omega _i D^{ij} \Big(  1 - \alpha ^{ - 1} \sqrt{ \big( v_1^{ij}\big)^2 + \big( v_2^{ij}\big)^2 }    \Big)} } \right]} \ell_j  >0.
\end{split}
\end{equation*}
Thus, the proof is completed by using Theorem  \ref{theo:RMHD:CYcondition:VecN}.
\end{proof}

\section{Physical-constraints-preserving schemes}\label{sec:app}
This section applies the previous theoretical results on the admissible state set
$\mathcal G$ to  develope  PCP numerical schemes for the 1D and 2D
special RMHD equations \eqref{eq:RMHD1D}.

\subsection{1D PCP schemes}\label{sec:1Dpcp}

For the sake of convenience, this subsection will use the symbol $x$
to replace the independent variable $x_1$ in \eqref{eq:RMHD1D}.
Assume that the spatial domain is divided into a uniform mesh with a constant spatial step-size $\Delta x$
and the $j$-th cell $I_j=(x_{j-\frac{1}{2}},x_{j+\frac{1}{2}})$,
and the time interval is also divided into the (non-uniform) grid $\{t_0=0, t_{n+1}=t_n+\Delta t_{n}, n\geq 0\}$
with the time step size $\Delta t_{n}$ determined by the CFL type condition.
Let $\overline {\vec U}_j^n $ be the numerical approximation to the cell average value
of the exact solution $\vec U(x,t)$ over the cell $I_j$ at $t=t_n$.
Our aim is to seek numerical schemes of the 1D RMHD equations \eqref{eq:RMHD1D},
whose solution  $\overline {\vec U}_j^n$ belongs to the set ${\mathcal G}$ if $\overline {\vec U}_j^0\in {\mathcal G}$.

\subsubsection{First-order accurate schemes}

Consider the first-order accurate LxF type scheme
\begin{equation}\label{eq:1DRMHD:LFscheme}
\overline {\vec U}_j^{n+1} = \overline {\vec U}_j^n - \frac{\Delta t_n}{\Delta x} \Big( \hat {\vec F}_1 ( \overline {\vec U}_j^n ,\overline {\vec U}_{j+1}^n) - \hat {\vec F}_1 ( \overline {\vec U}_{j-1}^n, \overline {\vec U}_j^n   \Big) ,
\end{equation}
where the numerical flux $\hat {\vec F}_1$ is defined by
\begin{equation}\label{eq:LFflux}
\hat{\vec F}_i(\vec U^-,\vec U^+) = \frac{1}{2} \Big( \vec F_i(\vec U^-) + \vec F_i(\vec U^+) - \varrho_i (\vec U^+ - \vec U^-) \Big), \quad i=1,2,3.
\end{equation}
Here $\varrho_i$ is an appropriate upper bound of the spectral radius of the Jacobian matrix $\partial \vec F_i(\vec U)/\partial \vec U$ and may be taken as $\varrho_i=c=1$.

Thanks to the generalized LxF splitting property shown in Theorem \ref{theo:RMHD:LLFsplit1D},  one can prove that the scheme
\eqref{eq:1DRMHD:LFscheme} is PCP under a CFL type condition.

\begin{theorem}\label{theo:1DRMHD:LFscheme}
	If  $\overline {\vec U}_j^0 \in{\mathcal G}$ and $\overline B_{1,j}^0 = B_{\rm const}$ for all $j$,
	then $\overline {\vec U}_j^n$, calculated by using \eqref{eq:1DRMHD:LFscheme} under the CFL type condition
\begin{equation}\label{eq:CFL:LF}
0< \Delta t_n \le  \Delta x/c,
\end{equation}
belongs to ${\mathcal G}$ and satisfies $\overline B_{1,j}^n = B_{\rm const} $ for all $j$
and $n\in {\mathbb{N}}$, where $c=1$ is the speed of light.
\end{theorem}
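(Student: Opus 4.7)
The proof will proceed by induction on the time level $n$, leveraging the three main ingredients already established: the 1D generalized LxF splitting (Theorem \ref{theo:RMHD:LLFsplit1D}), the convexity of $\mathcal G$ (Theorem \ref{theo:RMHD:convex}), and the structure of the physical flux $\vec F_1$. The base case $n=0$ is the hypothesis of the theorem, so the work is entirely in the inductive step: assuming $\overline {\vec U}_j^n\in\mathcal G$ and $\overline B_{1,j}^n=B_{\rm const}$ for all $j$, I must establish the same two properties at level $n+1$.

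The first task is to propagate the discrete constancy of $B_1$. I would observe that the $B_1$-component of the physical flux is $v_1 B_1 - B_1 v_1 \equiv 0$, so the $B_1$-component of the LxF numerical flux \eqref{eq:LFflux} collapses to the pure dissipation term $-\tfrac{1}{2}\varrho_1(B_1^+-B_1^-)$. Under the inductive hypothesis this vanishes between every pair of neighbors, hence $\overline B_{1,j}^{n+1}=\overline B_{1,j}^n=B_{\rm const}$. In particular, any two neighboring cells satisfy the 1D discrete divergence-free condition \eqref{eq:descrite1DDIV} with $i=1$.

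The second and main task is to show $\overline {\vec U}_j^{n+1}\in\mathcal G$. The plan is to expand \eqref{eq:1DRMHD:LFscheme} using \eqref{eq:LFflux} and reorganize it as the convex combination
\begin{equation*}
\overline{\vec U}_j^{n+1}
= (1-\lambda\alpha)\,\overline{\vec U}_j^n
+ \lambda\alpha\cdot\frac{1}{2}\Big(\overline{\vec U}_{j+1}^n - \alpha^{-1}\vec F_1(\overline{\vec U}_{j+1}^n) + \overline{\vec U}_{j-1}^n + \alpha^{-1}\vec F_1(\overline{\vec U}_{j-1}^n)\Big),
\end{equation*}
with $\lambda=\Delta t_n/\Delta x$ and $\alpha=\varrho_1=c=1$. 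The CFL condition \eqref{eq:CFL:LF} gives $1-\lambda\alpha\ge 0$, so the two weights are nonnegative and sum to one. Since $\overline B_{1,j+1}^n = \overline B_{1,j-1}^n = B_{\rm const}$, Theorem \ref{theo:RMHD:LLFsplit1D} applies with $\tilde{\vec U}=\overline{\vec U}_{j+1}^n$ and $\hat{\vec U}=\overline{\vec U}_{j-1}^n$, putting the bracketed state into $\mathcal G$. Combined with $\overline{\vec U}_j^n\in\mathcal G$ and the convexity of $\mathcal G$ (Theorem \ref{theo:RMHD:convex}), this yields $\overline{\vec U}_j^{n+1}\in\mathcal G$, closing the induction.

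There is no serious obstacle here because the heavy lifting — the generalized LxF splitting inequality and the convexity of the admissible set — has already been done. The only subtlety worth highlighting is the bookkeeping that identifies the LxF update with a genuine convex combination of the current cell value and a single ``splitting state'' to which Theorem \ref{theo:RMHD:LLFsplit1D} directly applies; this is what makes the 1D discrete divergence-free condition reduce to the trivial statement $\overline B_{1,j+1}^n - \overline B_{1,j-1}^n = 0$ guaranteed by the inductive hypothesis.
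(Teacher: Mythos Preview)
Your proposal is correct and follows essentially the same approach as the paper: induction on $n$, propagation of $\overline B_{1,j}^n=B_{\rm const}$ via the vanishing $B_1$-flux, then the convex decomposition $(1-\lambda)\overline{\vec U}_j^n+\lambda\,\Xi$ with $\Xi\in\mathcal G$ by Theorem \ref{theo:RMHD:LLFsplit1D} and the conclusion by convexity.
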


\begin{proof}
Here the induction argument is used for the time level number $n$.
It is obvious that the conclusion holds for $n=0$ because of the hypothesis on  the initial data.
Now assume that $\overline {\vec U}_j^n\in {\mathcal G}$ with $\overline B_{1,j}^n = B_{\rm const} $ for all $j$,
and check whether the conclusion holds for $n+1$.
Thanks to the numerical flux in \eqref{eq:LFflux}, the fifth equation in \eqref{eq:1DRMHD:LFscheme} gives
\begin{equation*}
\begin{split}
\overline B_{1,j}^{n+1} &= \overline B_{1,j}^{n} - \frac{\Delta t_n}{2\Delta x} \Big(  2 \overline B_{1,j}^{n} - \overline B_{1,j+1}^{n} - \overline B_{1,j-1}^{n} \Big)\\
& = B_{\rm const} - \frac{\Delta t_n}{2\Delta x} \Big(  2 B_{\rm const} - B_{\rm const} - B_{\rm const} \Big) = B_{\rm const},
\end{split}
\end{equation*}
for all $j$. If substituting \eqref{eq:LFflux} into \eqref{eq:1DRMHD:LFscheme},
 one can rewrite \eqref{eq:1DRMHD:LFscheme} in the following form
 \begin{equation*}
 \begin{split}
\overline {\vec U}_j^{n+1} &= (1-\lambda) \overline {\vec U}_j^n
+ \frac{\lambda}{2 }\Big( \overline {\vec U}_{j+1}^n -\vec F_1\left( \overline {\vec U}_{j+1}^n\right) +
                         \overline {\vec U}_{j-1}^n +\vec F_1\left( \overline {\vec U}_{j-1}^n\right) \Big)
\\
& =: (1-\lambda) \overline {\vec U}_j^n  + \lambda \Xi,
\end{split}
\end{equation*}
where  $\lambda := \Delta t_n / \Delta x \in (0,1]$ due to \eqref{eq:CFL:LF}.  With the induction hypothesis and Theorem \ref{theo:RMHD:LLFsplit1D}, one has $\Xi \in {\mathcal G}$.
The convexity of $\mathcal G$ further yields $\overline {\vec U}_j^{n+1} \in {\mathcal G}$.
The proof is completed.
\end{proof}

\subsubsection{High-order accurate  schemes}\label{sec:High1D}

This subsection discusses the high-order accurate PCP schemes for the 1D RMHD equations \eqref{eq:RMHD1D}.

Let us consider the  high-order {(spatially)} accurate scheme
for the 1D RMHD equations \eqref{eq:RMHD1D}
\begin{equation}\label{eq:1DRMHD:cellaverage}
\overline {\vec U}_j^{n+1} = \overline {\vec U}_j^{n} - \frac{\Delta t_n}{\Delta x}
 \left(  \hat {\vec F}_1 (\vec U_{j+ \frac{1}{2}}^-,\vec U_{j+ \frac{1}{2}}^+ )
  - \hat {\vec F}_1 (\vec U_{j- \frac{1}{2}}^-,\vec U_{j-\frac{1}{2}}^+)
  \right) ,
\end{equation}
where the numerical flux $\hat {\vec F}_1$ is  defined by \eqref{eq:LFflux}.
Eq. \eqref{eq:1DRMHD:cellaverage} may be derived from high-order accurate finite volume schemes  or the discrete evolution equation for the cell average value $\{\overline {\vec U}_j^n\}$ in the discontinuous Galerkin (DG) methods.
The quantities  $\vec U_{j + \frac{1}{2}}^-$ and $\vec U_{j + \frac{1}{2}}^+$ are the $(K+1)$th-order  accurate approximations of the point values $\vec U\big( x_{j + \frac{1}{2}} ,t_n \big)$ within the cells $I_j$ and $I_{j+1}$ respectively, and given by
$$
\vec U_{j + \frac{1}{2}}^- = \vec U_j^n \big(x_{j + \frac{1}{2}}-0 \big), \quad \vec U_{j + \frac{1}{2}}^+ = \vec U_{j+1}^n \big( x_{j + \frac{1}{2}}+0 \big),
$$
where the polynomial vector function ${\vec U}_j^n(x)$ is
with the cell average value of $\overline {\vec U}_j^n$, approximating $\vec U(x,t_n)$ within the cell $I_j$, and either reconstructed in the finite volume methods from $\{\overline {\vec U}_j^n\}$ or directly evolved in the DG methods with degree $K \ge 1$.
The evolution equations for the high-order ``moments'' of ${\vec U}_j(x)$ in the DG methods is omitted because  we are only concerned with  the PCP property of the numerical schemes here.

Generally, the solution $\overline {\vec U}_j^{n+1}$ of the high-order accurate scheme \eqref{eq:1DRMHD:cellaverage} may not belong to
${\mathcal G}$
even if $\overline {\vec U}_j^{n} \in {\mathcal G}$ for all $j$.
Thus if the scheme \eqref{eq:1DRMHD:cellaverage} is used to solve some ultra-relativistic problems with low density or pressure, or very large velocity, it may  break down after some time steps due to the nonphysical numerical solutions generated by \eqref{eq:1DRMHD:cellaverage}.
To cure such defect, the positivity-preserving limiters  devised
in \cite{Xing2010,zhang2010,zhang2010b} 
will be  extended to our RMHD case and   ${\vec U}_j(x)$ is limited as $\tilde{\vec U}_j(x)$ such that the values of $\tilde{\vec U}_j(x)$ at some critical points in $I_j$ belong to $\mathcal G$ .
Let $\{ \hat x_j^\alpha \}_{\alpha=1} ^ L$ be the Gauss-Lobatto nodes transformed into the interval $I_j$, and $\{\hat \omega_\alpha\}_{\alpha=1} ^ L$ be  associated Gaussian  quadrature weights satisfying $\sum \limits_{\alpha=1}^L \hat\omega_\alpha = 1$. Here we take $2L-3\ge K$ in order that the algebraic precision of corresponding quadrature  is at least $K$. In particular, one can take $L$ as the smallest integer not less than  $\frac{K+3}{2}$.

\begin{theorem} \label{thm:PCP:1DRMHD}
If  the polynomial vector ${\vec U}^n_j(x)=:( D_j(x), \vec m_j( x),\vec B_j( x), E_j( x) )^{\top}$ satisfy:
\\
{\rm (i)}. $B_{1,j}(x)=B_{\rm const} $ for any $x\in I_j$ and all $j$, and
\\
{\rm (ii)}. $\vec U_j^n ( \hat x_j^\alpha ) \in {\mathcal G}$ for all $j$ and $\alpha=1,2,\cdots,L$,
\\
then under the CFL type condition
\begin{equation}\label{eq:CFL:1DRMHD}
0<\Delta t_n \le \hat \omega_1 \Delta x,
\end{equation}
it holds that  $\overline{\vec U}_j^{n+1} \in {\mathcal G}$ for the numerical
scheme \eqref{eq:1DRMHD:cellaverage}.
\end{theorem}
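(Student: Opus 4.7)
The plan is a Zhang--Shu--type convex decomposition: rewrite $\overline{\vec U}_j^{n+1}$ as a convex combination of Gauss--Lobatto point values of $\vec U_j^n$ together with two 1D generalized LxF-splitting terms, so that Theorem~\ref{theo:RMHD:convex} (convexity of $\mathcal G$) and Theorem~\ref{theo:RMHD:LLFsplit1D} (1D generalized LxF splitting) close the argument. The ``discrete divergence-free'' condition required by Theorem~\ref{theo:RMHD:LLFsplit1D} will be supplied by hypothesis~(i), which forces $B_1$ to be identically $B_{\rm const}$ in every cell.

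First I would use the $L$-point Gauss--Lobatto rule (exact for polynomials of degree $2L-3\ge K$), together with $\hat x_j^1=x_{j-1/2}$, $\hat x_j^L=x_{j+1/2}$, and $\hat\omega_1=\hat\omega_L$, to write
\[
\overline{\vec U}_j^n \;=\; \hat\omega_1\vec U_{j-1/2}^{+} + \hat\omega_L\vec U_{j+1/2}^{-} + \sum_{\alpha=2}^{L-1}\hat\omega_\alpha\,\vec U_j^n(\hat x_j^\alpha).
\]
Substituting this into \eqref{eq:1DRMHD:cellaverage}, expanding the LxF flux \eqref{eq:LFflux} with $\varrho_1=1$, and regrouping so that every $+\vec F_1$ contribution is paired against a $-\vec F_1$ contribution on a state with matching $B_1$ (this is the crucial step), with $\lambda:=\Delta t_n/\Delta x$ one should arrive at
\begin{align*}
\overline{\vec U}_j^{n+1}
&= \sum_{\alpha=2}^{L-1}\hat\omega_\alpha\,\vec U_j^n(\hat x_j^\alpha) + (\hat\omega_1-\lambda)\,\vec U_{j-1/2}^{+} + (\hat\omega_L-\lambda)\,\vec U_{j+1/2}^{-} \\
&\quad + \lambda\cdot\tfrac{1}{2}\bigl[\vec U_{j+1/2}^{-} - \vec F_1(\vec U_{j+1/2}^{-}) + \vec U_{j-1/2}^{+} + \vec F_1(\vec U_{j-1/2}^{+})\bigr] \\
&\quad + \lambda\cdot\tfrac{1}{2}\bigl[\vec U_{j+1/2}^{+} - \vec F_1(\vec U_{j+1/2}^{+}) + \vec U_{j-1/2}^{-} + \vec F_1(\vec U_{j-1/2}^{-})\bigr].
\end{align*}
A direct tally shows the five weights are nonnegative under the CFL bound \eqref{eq:CFL:1DRMHD} (which gives $\hat\omega_1-\lambda\ge 0$), and they sum to $1$.

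It then suffices to verify that each of the five constituents lies in $\mathcal G$. The three pure-state constituents are the interior Gauss--Lobatto values $\vec U_j^n(\hat x_j^\alpha)$ and the two endpoint values $\vec U_{j-1/2}^{+}=\vec U_j^n(\hat x_j^1)$ and $\vec U_{j+1/2}^{-}=\vec U_j^n(\hat x_j^L)$, all in $\mathcal G$ by hypothesis~(ii). For the first bracket I would apply Theorem~\ref{theo:RMHD:LLFsplit1D} with $i=1$, $\alpha=1$, $\tilde{\vec U}=\vec U_{j+1/2}^{-}$ and $\hat{\vec U}=\vec U_{j-1/2}^{+}$: both states live in cell $I_j$, so hypothesis~(i) gives $B_1(\vec U_{j+1/2}^{-})=B_1(\vec U_{j-1/2}^{+})=B_{\rm const}$, verifying the discrete divergence-free condition. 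For the second bracket I would take $\tilde{\vec U}=\vec U_{j+1/2}^{+}$ (from cell $I_{j+1}$) and $\hat{\vec U}=\vec U_{j-1/2}^{-}$ (from cell $I_{j-1}$); hypothesis~(i) applied to those neighboring cells again gives $B_1=B_{\rm const}$ on both. The convexity of $\mathcal G$ (Theorem~\ref{theo:RMHD:convex}) then delivers $\overline{\vec U}_j^{n+1}\in\mathcal G$.

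The main obstacle is precisely the regrouping: a naive per-interface grouping deposits both $-\vec F_1(\vec U_{j+1/2}^{-})$ and $-\vec F_1(\vec U_{j+1/2}^{+})$ into the same bracket, and no application of the generalized LxF-splitting identity can absorb a bracket with two ``$-\vec F_1$'' signs. One must swap the $\tfrac{\lambda}{2}\vec U_{j\pm 1/2}^{\mp}$ shares between the pure-state and flux groups so that $+\vec F_1(\vec U_{j-1/2}^{+})$ pairs with $-\vec F_1(\vec U_{j+1/2}^{-})$ (both inside $I_j$) and $+\vec F_1(\vec U_{j-1/2}^{-})$ pairs with $-\vec F_1(\vec U_{j+1/2}^{+})$ (straddling the two neighbors). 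In each pair hypothesis~(i) supplies exactly the matching first magnetic component required by Theorem~\ref{theo:RMHD:LLFsplit1D}; once this regrouping is fixed, the remainder is routine coefficient arithmetic.
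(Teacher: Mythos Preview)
Your proposal is correct and follows essentially the same Zhang--Shu convex decomposition as the paper, invoking Theorem~\ref{theo:RMHD:LLFsplit1D} for the two flux brackets and convexity of $\mathcal G$ for the combination. The only (harmless) difference is bookkeeping: you keep two separate pure-state terms $(\hat\omega_1-\lambda)\vec U_{j-1/2}^{+}$ and $(\hat\omega_L-\lambda)\vec U_{j+1/2}^{-}$ and apply Theorem~\ref{theo:RMHD:LLFsplit1D} at $\alpha=1$ to both brackets, whereas the paper absorbs all of the endpoint weight into the interior bracket and applies Theorem~\ref{theo:RMHD:LLFsplit1D} at the scaled value $\alpha=2\hat\omega_1/\lambda-1\ge 1$; both regroupings are valid under the same CFL condition \eqref{eq:CFL:1DRMHD}.
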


\begin{proof}
The exactness of the Gauss-Lobatto quadrature rule with $L$ nodes for the polynomials of degree $K$ yields
$$
\overline{\vec U}_j^n = \frac{1}{\Delta x} \int_{I_j} \vec U_j^n (x) dx = \sum \limits_{\alpha=1}^L \hat \omega_\alpha \vec U_j^n (\hat x_j^\alpha ).
$$
Because $\hat \omega_1 = \hat \omega_L$, one has
\begin{align} \nonumber
& \overline{\vec U}_j^{n+1}   = \sum \limits_{\alpha=1}^L \hat \omega_\alpha \vec U_j^n ( \hat x_j^\alpha )
-\lambda \left( \hat {\vec F}_1 (\vec U_{j+ \frac{1}{2}}^-,\vec U_{j+ \frac{1}{2}}^+ ) -
 \hat {\vec F}_1 (\vec U_{j- \frac{1}{2}}^-,\vec U_{j-\frac{1}{2}}^+)  \right) \\ \nonumber
 \begin{split}
& = \sum \limits_{\alpha=2}^{L-1} \hat \omega_\alpha \vec U_j ^n ( \hat x_j^\alpha ) + \frac{\lambda}{2} \left( \vec U_{j+\frac{1}{2}}^+ - \vec F_1 ( \vec U_{j+\frac{1}{2}}^+ )
+ \vec U_{j-\frac{1}{2}}^- + \vec F_1 ( \vec U_{j-\frac{1}{2}}^- ) \right)\\ \nonumber
& \quad
 + \hat \omega_1 \vec U_{j-\frac{1}{2}}^+
+ \hat \omega_L \vec U_{j+ \frac{1}{2}}^-
- \frac{\lambda}{2} \left(   \vec U_{j+\frac{1}{2}}^- + \vec F_1 ( \vec U_{j+\frac{1}{2}}^- )
+ \vec U_{j-\frac{1}{2}}^+ - \vec F_1 ( \vec U_{j-\frac{1}{2}}^+ ) \right)
\end{split}
\\ \nonumber
\begin{split}
& = \sum \limits_{\alpha=2}^{L-1} \hat \omega_\alpha \vec U_j^n ( \hat x_j^\alpha ) + \frac{\lambda}{2} \left( \vec U_{j+\frac{1}{2}}^+ - \vec F_1 ( \vec U_{j+\frac{1}{2}}^+ )
+ \vec U_{j-\frac{1}{2}}^- + \vec F_1 ( \vec U_{j-\frac{1}{2}}^- ) \right)\\ \nonumber
& \quad + \left(\hat \omega_1 -\frac{\lambda}{2} \right)
\left[ \vec U_{j-\frac{1}{2}}^+ - \left(  \frac{2 \hat\omega_1}{\lambda} -1 \right)^{-1} \vec F_1 ( \vec U_{j-\frac{1}{2}}^+  )
+  \vec U_{j+\frac{1}{2}}^- + \left(  \frac{2 \hat\omega_1}{\lambda} -1 \right)^{-1} \vec F_1 ( \vec U_{j+\frac{1}{2}}^- )
\right]
\end{split}
\\ \label{eq:1DRMHD:convexsplit}
& =: \sum \limits_{\alpha=2}^{L-1} \hat \omega_\alpha \vec U_j^n ( \hat x_j^\alpha ) + \lambda \Xi_1 + \left(\hat \omega_1 + \hat \omega_L -\lambda \right)  \Xi_2,
\end{align}
where $\lambda = \Delta t_n/\Delta x \in (0,\hat \omega_1 ]$, and
\begin{align*}
& \Xi_1 := \frac{1}{2} \left( \vec U_{j+\frac{1}{2}}^+ - \vec F_1 ( \vec U_{j+\frac{1}{2}}^+ )
+ \vec U_{j-\frac{1}{2}}^- + \vec F_1 ( \vec U_{j-\frac{1}{2}}^- ) \right),\\
& \Xi_2 := \frac{1}{2} \left[ \vec U_{j-\frac{1}{2}}^+ - \left(  \frac{2 \hat\omega_1}{\lambda} -1 \right)^{-1} \vec F_1 ( \vec U_{j-\frac{1}{2}}^+)
+  \vec U_{j+\frac{1}{2}}^- + \left(  \frac{2 \hat\omega_1}{\lambda} -1 \right)^{-1} \vec F_1 ( \vec U_{j+\frac{1}{2}}^- )
\right].
\end{align*}
Since $B_{1,j}(x)$ is a constant and $2 \hat\omega_1/\lambda -1 \ge 1$,
the generalized LxF property  in Theorem \ref{theo:RMHD:LLFsplit1D} tell us that $\Xi_1,\Xi_2 \in {\mathcal G}$.
Using  $\hat \omega_1 + \hat \omega_L -\lambda>0$,
\eqref{eq:1DRMHD:convexsplit}, and the convexity of $\mathcal G$
gives $\overline{\vec U}_j^{n+1} \in {\mathcal G} $.
The proof is completed.
\end{proof}

Theorem \ref{thm:PCP:1DRMHD} gives two sufficient
 conditions on
the approximate function ${\vec U}^n_j(x)$
for that the   scheme \eqref{eq:1DRMHD:cellaverage} is PCP.
The first condition is easily ensured in practice since
the flux for $B_1$ is zero and the divergence-free condition \eqref{eq:2D:BxBy0} in the case of $d=1$ implies that $B_1$ is always a constant for the exact solution to \eqref{eq:RMHD1D}.
To meet the second condition, we need  a PCP limiting procedure, in which
${\vec U}^n_j(x)$ is limited as $\tilde{\vec U}_j(x)$ satisfying $\tilde{\vec U}_j ( \hat x_j^\alpha ) \in {\mathcal G} $.

To avoid the effect of the rounding error, we introduce a sufficiently small positive number\footnote{In practice, $\epsilon$ can be chosen as $10^{-13}$, and certainly it may be different for three constraints in \eqref{set-G-epsilon}. However, for the extreme problems with $E\gg1$,
  $\epsilon=10^{-13}\overline E_j^n$  is a good choice for the last constraint.} $\epsilon$ such that $ \overline {\vec U}_j^n \in {\mathcal G}_\epsilon$, where
\begin{align} \label{set-G-epsilon}
{\mathcal G}_\epsilon = \left\{   \vec U=(D,\vec m,\vec B,E)^{\top} \Big|  D\ge\epsilon,~q(\vec U)\ge\epsilon,~{\Psi}_\epsilon(\vec U) \ge 0\right\},
\end{align}
with
$$
{\Psi}_\epsilon(\vec U) : = {\Psi}(\vec U_\epsilon),\quad \vec U_\epsilon : = \big(D,\vec m,\vec B,E-\epsilon\big)^{\top}.
$$
%
%
%
Then the {\tt 1D PCP limiting procedure} is divided into the following three steps.

\noindent
{\bf Step (i)}: Enforce the positivity of $D(\vec U)$. Let $D_{\min} = \min \limits_{x \in {\mathcal S}_j}^{} D_j ( x )$,
where ${\mathcal S}_j:=\{\hat{x}_j^\alpha\}_{\alpha=1}^L$.
 If $D_{\min} < \epsilon$,
then  $D_j ( x )$ is limited as
$$
\hat D_j(x) = \theta_1 \big( D_j(x) - \overline D_j^n \big) + \overline D_j^n,
$$
where $\theta_1 = (\overline D_j^n - \epsilon)/ ( \overline D_j^n - D_{\min} ) $. Otherwise, take $\hat D_j(x) =  D_j(x)$ and $\theta_1 = 1$.
Denote $\hat {\vec U}_j(x) :=  ( \hat D_j(x), \vec m_j(x),\vec B_j(x), E_j(x) )^{\top}$.

\noindent
{\bf Step (ii)}: Enforce the positivity of $q(\vec U)$. Let
$q_{\min} = \min \limits_{x \in {\mathcal S}_j}^{} q(\hat {\vec U}_j ( x ))$. If $q_{\min} < \epsilon$, then  $\hat {\vec U}_j ( x )$ is limited as
$$
\breve {\vec U}_j(x) = \Big(  \theta_2 \big( \hat {D}_j (x) - \overline {D}_j^n \big) + \overline {D}_j^n,
\theta_2 \big( \hat {\vec m}_j (x) - \overline {\vec m}_j^n \big) + \overline {\vec m}_j^n,
 \hat {\vec B}_j (x) ,
\theta_2 \big( \hat {E}_j (x) - \overline {E}_j^n \big) + \overline {E}_j^n \Big)^\top,
$$
where $\theta_2 = (q(\overline {\vec U}_j^n) - \epsilon)/ ( q(\overline {\vec U}_j^n) - q_{\min} ) $. Otherwise, set $\breve {\vec U}_j(x) =  \hat {\vec U}_j(x)$ and $\theta_2 =1$.

\noindent
{\bf Step (iii)}: Enforce the positivity of ${\Psi}(\vec U)$.
For each $x \in {\mathcal S}_j$, if ${\Psi}_\epsilon ( \breve {\vec U}_j(x) ) < 0$, then define $\tilde \theta(x)$ by solving the nonlinear equation
\begin{equation}\label{eq:limiterEq}
{\Psi}_\epsilon \Big( (1- \tilde  \theta) \overline{\vec U}_j^n + \tilde  \theta \breve {\vec U}_j(x) \Big) =0, \quad  \tilde  \theta \in [0,1).
\end{equation}
Otherwise, 
set $\tilde  \theta(x)=1$. Let $\theta_3 = \min \limits_{x\in {\mathcal S}_j} \{\tilde  \theta(x)\}$  and
\begin{equation}\label{eq:PCPpolynomial}
\tilde {\vec U}_j(x) = \theta_3 \big( \breve {\vec U}_j (x) - \overline {\vec U}_j^n \big) + \overline {\vec U}_j^n.
\end{equation}

\begin{remark}
For some  high-order finite volume methods,
it only needs to reconstruct
the limiting values $\vec U_{j+\frac{1}{2}}^\pm$,
instead of the  polynomial vector
${\vec U}_j(x)$. For this case,  due to  the proof of Theorem \ref{thm:PCP:1DRMHD},
it is sufficient that the limiting values satisfy
$$
\vec U_{j-\frac{1}{2}}^+,~\vec U_{j+\frac{1}{2}}^-,~\frac{\overline {\vec U}_j^n -  \hat \omega_1 \vec U_{j-\frac{1}{2}}^+
- \hat \omega_L \vec U_{j+ \frac{1}{2}}^- }{1-2 \hat \omega_1}  \in {\mathcal G},
$$
for all $j$. Similar to the discussions in Section 5 of \cite{zhang2011b},
the previous PCP limiting procedure can be easily revised to meet such condition.
\end{remark}

If replacing ${\vec U}_{j+\frac12}^\pm$ in \eqref{eq:1DRMHD:cellaverage} respectively by
$$
{ \tilde {\vec U}}_{j + \frac{1}{2}}^- = { \tilde {\vec U}}_j \big(x_{j + \frac{1}{2}} \big), \quad { \tilde {\vec U}}_{j + \frac{1}{2}}^+ = { \tilde {\vec U}}_{j+1} \big( x_{j + \frac{1}{2}} \big),
$$
 then the resulting scheme is PCP under the CFL type condition \eqref{eq:CFL:1DRMHD}, according to the conclusion (ii) of the coming Lemma \ref{lam:limiter}. The above PCP limiter satisfies
$$
   \overline {\vec U}_j^n = \frac{1}{\Delta x} \int_{I_j} {\vec U}_j( x)  dx = \frac{1}{\Delta x} \int_{I_j} \hat{\vec U}_j( x)  dx
   = \frac{1}{\Delta x} \int_{I_j} \breve{\vec U}_j( x)  dx = \frac{1}{\Delta x} \int_{I_j} \tilde{\vec U}_j( x)  dx,
$$
and  that $\tilde {B}_{1,j}(x) $ remains constant for any $x\in I_j$ and  $j$ if $B_{1,j}(x) $ is  constant for any $x\in I_j$ and  $j$.
It also preserves the accuracy for smooth solutions, similar to \cite{zhang2010b}.
 The  scheme \eqref{eq:1DRMHD:cellaverage} is  only first-order accurate in time. To achieve high-order accurate PCP scheme in time and space, one can replace the forward Euler time discretization in \eqref{eq:1DRMHD:cellaverage} with high-order accurate strong stability preserving (SSP)  methods \cite{Gottlieb2009}.  For example, utilizing the third-order accurate SSP Runge-Kutta method obtains
 \begin{align} \label{eq:RK1} \begin{aligned}
 & \overline {\vec U}^ *_j   = \overline {\vec U}^n_j  + \Delta t_n {\mathcal L}(  \tilde {\vec U}_j(x);j ), \\
 & \overline {\vec U}^{ *  * }_j  = \frac{3}{4} \overline {\vec U}^n_j  + \frac{1}{4}\Big( \overline {\vec U}^ *_j
  + \Delta t_n {\mathcal L}( \tilde {\vec U}^ *_j(x);j  )\Big), \\
 & \overline {\vec U}^{n+1}_j  = \frac{1}{3} \overline {\vec U}^n_j  + \frac{2}{3}\Big( \overline {\vec U}^{ *  * }_j
 + \Delta t_n {\mathcal L}( \tilde {\vec U}^{ *  * }_j(x);j)\Big),
 \end{aligned}\end{align}
 where $\tilde {\vec U}_j(x)$, $\tilde {\vec U}^ *_j(x)$, and $\tilde {\vec U}^ {**}_j(x)$ denote the PCP limited versions of the reconstructed or evolved polynomial vector function at each Runge-Kutta stage, and
 $$
 {\mathcal L}(  \vec U_j(x);j ) :=  - \frac{1}{\Delta x}
  \left(  \hat {\vec F}_1 (\vec U_{j+ \frac{1}{2}}^-,\vec U_{j+ \frac{1}{2}}^+ )
   - \hat {\vec F}_1 (\vec U_{j- \frac{1}{2}}^-,\vec U_{j-\frac{1}{2}}^+)
   \right).
 $$
 Since such SSP method is a convex combination of the forward Euler method, the resulting high-order scheme is still PCP under the CFL type condition \eqref{eq:CFL:1DRMHD} by the convexity of $\mathcal G$.
Moreover, similar to \cite{WuTang2016}, the PCP schemes hold a discrete $L^1$-type stability for the solution components $\tilde D_j(x)$, $\tilde {\vec m}_j(x)$ and $\tilde E_j(x)$. It is worth noting  that
the set ${\mathcal G}_\epsilon$ in \eqref{set-G-epsilon} is  convex  thanks to the convexity of ${\mathcal G}_0$ so that  the solution to \eqref{eq:limiterEq} is the unique.
This allows that one can use some root-finding methods such as the bisection method to numerically solve \eqref{eq:limiterEq}.
Moreover, one can show ${\mathcal G}_\epsilon \subset {\mathcal G}_0$ and $\mathop {\lim }\limits_{\epsilon  \to 0^ +  }  {\mathcal G}_\epsilon = \overline {\mathcal G}_0$.

\begin{lemma}\label{lam:limiter}
{\rm (i)} ${\mathcal G}_\epsilon \subset {\mathcal G}_0$.
{\rm (ii)} If $ \overline {\vec U}_j^n \in {\mathcal G}_\epsilon$, then
  $\tilde {\vec U}_j(x)$ defined in \eqref{eq:PCPpolynomial} belongs to $ {\mathcal G}_\epsilon$ for all $x \in {\mathcal S}_j$.
\end{lemma}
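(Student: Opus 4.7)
The plan for (i) is to leverage the second equivalent characterization of Theorem \ref{theo:RMHD:CYcondition:VecN}, which replaces the nonlinear constraint $\Psi(\vec U)>0$ by the infinite family of affine constraints $\vec U \cdot \vec n^* + p_m^* > 0$ parametrized by $(\vec v^*, \vec B^*)$. The key observation is that $\vec U = \vec U_\epsilon + (0,\vec 0,\vec 0,\epsilon)^\top$ and that the eighth entry of $\vec n^*$ in \eqref{eq:RMHD:vecns} is identically $1$. From $\vec U \in \mathcal G_\epsilon$ I would first read off that $\vec U_\epsilon$ satisfies the non-strict versions $D(\vec U_\epsilon) \ge \epsilon > 0$, $q(\vec U_\epsilon) = q(\vec U) - \epsilon \ge 0$, and $\Psi(\vec U_\epsilon) = \Psi_\epsilon(\vec U) \ge 0$, so $\vec U_\epsilon$ lies in the closure $\overline{\mathcal G} = \overline{\mathcal G}_1$ (using that $(0,\vec 0,\vec 0,1)$ is an inward-pointing direction at every boundary point with $D>0$, as exhibited in the proof of Theorem \ref{theo:RMHD:convex}); passing to the limit in Theorem \ref{theo:RMHD:CYcondition:VecN} then yields $\vec U_\epsilon \cdot \vec n^* + p_m^* \ge 0$ for every admissible $(\vec v^*, \vec B^*)$. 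Adding the $E$-shift gives
\begin{equation*}
\vec U \cdot \vec n^* + p_m^* = \vec U_\epsilon \cdot \vec n^* + p_m^* + \epsilon \ge \epsilon > 0,
\end{equation*}
and since $D(\vec U) \ge \epsilon > 0$, Theorem \ref{theo:RMHD:CYcondition:VecN} delivers $\vec U \in \mathcal G_1 = \mathcal G_0$.

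For (ii), the crucial auxiliary fact will be the convexity of $\mathcal G_\epsilon$ itself, which I expect to come essentially for free from the affine picture of part (i): $\mathcal G_\epsilon$ is the intersection of the half-space $\{D \ge \epsilon\}$ with the family of closed half-spaces $\{\vec U \cdot \vec n^* + p_m^* \ge \epsilon\}$ over admissible $(\vec v^*, \vec B^*)$, hence convex (alternatively, this follows directly from linearity of $D$, concavity of $q$, and the affine characterization of $\Psi_\epsilon$). With convexity in hand, I would verify the three limiter steps in turn. Step (i) is the standard scalar positivity limiter, whose algebraic choice of $\theta_1$ forces $\hat D_j(x) \ge \epsilon$ at every node of $\mathcal S_j$ while preserving the cell average $\overline D_j^n$. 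Step (ii) exploits the concavity of $q = E - \sqrt{D^2 + |\vec m|^2}$ in $(D,\vec m,E)$: since $\breve{\vec U}_j(x)$ differs from a convex combination only in the untouched $\vec B$-slot, on which $q$ does not depend, one obtains for each $x \in \mathcal S_j$
\begin{equation*}
q\bigl(\breve{\vec U}_j(x)\bigr) \ge \theta_2\, q\bigl(\hat{\vec U}_j(x)\bigr) + (1-\theta_2)\, q\bigl(\overline{\vec U}_j^n\bigr) \ge \theta_2\, q_{\min} + (1-\theta_2)\, q\bigl(\overline{\vec U}_j^n\bigr) = \epsilon
\end{equation*}
by the choice of $\theta_2$, while $\breve D_j(x) \ge \epsilon$ survives as a convex combination of two values $\ge \epsilon$.

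Step (iii) is where the convexity of $\mathcal G_\epsilon$ does the essential work, and this will be the main obstacle. For each $x \in \mathcal S_j$, the segment $\theta \mapsto (1-\theta)\overline{\vec U}_j^n + \theta \breve{\vec U}_j(x)$ begins in $\mathcal G_\epsilon$ at $\theta = 0$ (by hypothesis on $\overline{\vec U}_j^n$); by convexity and closedness of $\mathcal G_\epsilon$, the admissible parameter set forms a closed interval $[0, \theta^*(x)]$. Because the $D \ge \epsilon$ and $q \ge \epsilon$ constraints persist along the whole segment (linearity of $D$ and concavity of $q$, exactly as in Step (ii) applied to this new convex combination), the first constraint that can be violated as $\theta$ grows is $\Psi_\epsilon \ge 0$, so $\theta^*(x)$ coincides with the unique $\tilde\theta(x)$ determined by \eqref{eq:limiterEq}. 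Taking $\theta_3 = \min_x \tilde\theta(x) \le \tilde\theta(x)$ keeps the parameter in $[0, \theta^*(x)]$ for every $x \in \mathcal S_j$, and convexity then yields $\tilde{\vec U}_j(x) \in \mathcal G_\epsilon$ at every node. The only genuine difficulty in the whole plan is securing the convexity upgrade from $\mathcal G$ to $\mathcal G_\epsilon$; once that is pinned down via Theorem \ref{theo:RMHD:CYcondition:VecN}, the remaining bookkeeping is routine.
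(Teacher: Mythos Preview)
Your argument is essentially correct but takes a different route for part (i) than the paper. The paper establishes $\Psi(\vec U)>0$ by a direct computation of $\partial\Psi/\partial E$, showing it is strictly positive on $\mathcal G_2$ so that $\Psi(\vec U)>\Psi(\vec U_\epsilon)\ge 0$. Your approach instead passes through the affine characterization of Theorem~\ref{theo:RMHD:CYcondition:VecN}: since the eighth component of $\vec n^*$ is $1$, the $E$-shift by $\epsilon$ lifts every affine constraint by exactly $\epsilon$, turning $\ge 0$ into $\ge\epsilon>0$. This is cleaner conceptually and avoids the somewhat long $\partial\Psi/\partial E$ calculation, but it does rely on the closure statement $\{D>0,\,q\ge 0,\,\Psi\ge 0\}\subseteq\overline{\mathcal G_0}$, which you justify only in outline. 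That inclusion is indeed true (your inward-direction remark via the last component of $\vec n$ in \eqref{eq:RMHD:vecn} handles the $\Psi=0$ part of the boundary, and the $q=0$ part forces $\vec B=0$ where the RHD reduction applies), but a complete write-up should spell this out. Also, you only argue $\mathcal G_\epsilon\subseteq\mathcal G_0$; the paper records the \emph{strict} inclusion by exhibiting $(\epsilon/2,\vec 0,\vec 0,\epsilon)^\top\in\mathcal G_0\setminus\mathcal G_\epsilon$, which you omit.

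For part (ii) your argument and the paper's coincide in substance: both check that Steps~(i)--(ii) secure $\hat D_j\ge\epsilon$ and $q(\breve{\vec U}_j)\ge\epsilon$ via linearity and concavity, and both rely on convexity of $\mathcal G_\epsilon$ (stated by the paper just before the lemma) to pass from $\tilde\theta(x)$ to the global $\theta_3=\min_x\tilde\theta(x)$ in Step~(iii). Your framing through the half-space description $\{\vec U\cdot\vec n^*+p_m^*\ge\epsilon\}$ makes the convexity of $\mathcal G_\epsilon$ transparent, whereas the paper simply invokes the convexity of $\mathcal G_0$; either way the remaining verification is the same bookkeeping.
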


\begin{proof}
(i) Let us first  prove  ${\mathcal G}_\epsilon \subset {\mathcal G}_0$. For any $\vec U=(D,\vec m,\vec B,E)^{\top} \in {\mathcal G}_\epsilon$, one has
$D\ge \epsilon >0, q(\vec U)\ge \epsilon >0$, and ${\Psi}(\vec U_\epsilon)\ge0$ with $\vec U_\epsilon : = \big(D,\vec m,\vec B,E-\epsilon\big)^{\top}$.
Taking partial derivative of ${\Psi}(\vec U)$ with respect to $E$ gives
\begin{equation*}
\begin{split}
\frac{\partial {\Psi} } {\partial E} &= \left( \frac{\partial \Phi(\vec U) }{\partial E} +2 \right) \sqrt{  \Phi(\vec U) + |\vec B|^2-E }
+ \frac{ \Phi(\vec U) - 2(|\vec B|^2 - E)}{2 \sqrt{ \Phi(\vec U)+ |\vec B|^2 - E }} \left(  \frac{\partial \Phi(\vec U) }{\partial E} - 1 \right)
\\
& = \frac{3}{2 \sqrt{ \Phi(\vec U) + |\vec B|^2 - E }} \left(   \Phi(\vec U)    \frac{\partial \Phi(\vec U) }{\partial E} +  \Phi(\vec U) + 2 (|\vec B|^2 - E)  \right) \\
& = \frac{3}{2 \sqrt{ \Phi(\vec U) + |\vec B|^2 - E }} \left(   \Phi(\vec U)    \frac{ 4 E - |\vec B|^2 } { \Phi(\vec U) }
+  \Phi(\vec U) + 2 (|\vec B|^2 - E)  \right)
\\
& = \frac{3 ( 2E + |\vec B|^2 + \Phi(\vec U)  ) }{2 \sqrt{ \Phi(\vec U) + |\vec B|^2 - E }} > 0 ,
\end{split}
\end{equation*}
for any $\vec U\in {\mathcal G}_2$. This implies $ {\Psi}(\vec U) > {\Psi}(\vec U_\epsilon) \ge 0$, and concludes that $\vec U \in {\mathcal G}_0$. Therefore
${\mathcal G}_\epsilon \subseteq {\mathcal G}_0$.
Because $\left( \frac{\epsilon}{2},\vec 0,\vec 0, \epsilon\right)^{\top}$ belongs to ${\mathcal G}_0$, but it does not in ${\mathcal G}_\epsilon$,
one has ${\mathcal G}_\epsilon \subset {\mathcal G}_0$.

(ii) Next we prove $\tilde{\vec U}_j ( x ) \in {\mathcal G}_\epsilon$ for any $x \in {\mathcal S}_j$. The above PCP limiting {procedure} yields
$$
\hat D_j (x) \ge \epsilon,\quad q\big( \breve{ \vec U}_j(x) \big) \ge \epsilon, \quad {\Psi}_\epsilon \big( \tilde { \vec U}_j(x) \big) \ge 0,
$$
for   $x\in {\mathcal S}_j$. For any $x\in {\mathcal S}_j$, one has
\begin{align*}
\begin{split}
\tilde D_j(x)  &= \theta_3 \left( \breve D_j( x) - \overline D_j^n \right) +  \overline D_j^n = \theta_2\theta_3 \left( \hat D_j( x) - \overline D_j^n \right) +  \overline D_j^n\\
& \ge \theta_2\theta_3 \left( \epsilon - \overline D_j^n \right) +  \overline D_j^n \ge \epsilon,
\end{split}
\end{align*}
by noting $\theta_2,\theta_3 \in[0,1]$.
Similarly, making use of the concavity of $q(\vec U)$ gives
\begin{align*}
\begin{split}
q\big( \tilde { \vec U}_j(x) \big) & = q\Big(  \theta_3  \breve {\vec U}_j (x) + (1-\theta_3) \overline {\vec U}_j^n \Big)
 \ge  \theta_3 q\Big(  \breve {\vec U}_j (x) \Big) + (1-\theta_3) q \Big( \overline {\vec U}_j^n \Big)\\
 & \ge \theta_3 \epsilon + (1-\theta_3) \epsilon = \epsilon.
\end{split}
\end{align*}
The proof is completed.
\end{proof}

\subsection{2D PCP schemes}\label{sec:2Dpcp}

For the sake of convenience, this subsection will use
the symbols $(x,y)$ to replace the independent variables $(x_1,x_2)$ in \eqref{eq:RMHD1D}.
Assume that the spatial domain is divided into a uniform rectangular mesh with cells $\left\{I_{ij}=\left(x_{i-\frac{1}{2}},x_{i+\frac{1}{2}}\right)\times
\left(y_{j-\frac{1}{2}},y_{j+\frac{1}{2}}\right) \right\}$ and the spatial step-sizes $\Delta x$ and $\Delta y$ in $x$ and $y$ directions respectively,
and
the time interval is also divided into the (non-uniform) mesh $\{t_0=0, t_{n+1}=t_n+\Delta t_{n}, n\geq 0\}$
with the time step size $\Delta t_{n}$ determined by the CFL type condition.
Let $\overline {\vec U}_{ij}^n $ be the numerical approximation to the cell average value of the exact solution $\vec U(x,y,t)$ over $I_{ij}$ at $t=t_n$.
Our aim is to seek  numerical schemes of the 2D RMHD equations \eqref{eq:RMHD1D},
whose solution  $\overline {\vec U}_{ij}^n$ {stays at} ${\mathcal G}$ if $\overline {\vec U}_{ij}^0\in {\mathcal G}$.

\subsubsection{First-order accurate  schemes} \label{sec:2D:FirstOrder}
Consider the first-order accurate LxF type scheme
\begin{equation} \label{eq:2DRMHD:LFscheme}
\begin{split}
\overline {\vec U}_{ij}^{n+1} = \overline {\vec U}_{ij}^n &- \frac{\Delta t_n}{\Delta x} \Big( \hat {\vec F}_1 ( \overline {\vec U}_{ij}^n ,\overline {\vec U}_{i+1,j}^n) - \hat {\vec F}_1 ( \overline {\vec U}_{i-1,j}^n, \overline {\vec U}_{ij}^n   \Big) ,\\
&- \frac{\Delta t_n}{\Delta y} \Big( \hat {\vec F}_2 ( \overline {\vec U}_{ij}^n ,\overline {\vec U}_{i,j+1}^n) - \hat {\vec F}_2 ( \overline {\vec U}_{i,j-1}^n, \overline {\vec U}_{ij}^n   \Big)
\end{split}
\end{equation}
where the $x$- and $y$-directional numerical fluxes $\hat {\vec F}_1$ and $\hat {\vec F}_2$ are   defined as \eqref{eq:LFflux}.
If $\overline {\vec U}_{ij}^n$  belongs to $\mathcal G$ for all $i,j$,
but the magnetic field $\overline{\vec B}_{ij}^n$ is not  divergence-free
in the discrete sense,
then the solution $\overline {\vec U}_{ij}^{n+1}$ of \eqref{eq:2DRMHD:LFscheme} may not belong to $\mathcal G$, see Example \ref{Counterexample}.
It means that the scheme \eqref{eq:2DRMHD:LFscheme} is not PCP in general
when the divergence of magnetic field is nonzero.

\begin{example}\label{Counterexample} 
 For any $\epsilon>0$, take  the primitive variable vectors  $\hat{\vec V}=(\epsilon,0.5,0,0,0,0,0,\epsilon)^{\top}$ and $\tilde{\vec V}=(\epsilon,0.5,0,0,1,0,0,\epsilon)^{\top}$,
 and let $\hat{\vec U}=\hat{\vec U}(\hat{\vec V})$ and $\tilde{\vec U}=\tilde{\vec U}(\tilde{\vec V})$ be corresponding conservative  vectors.
If taking $\vec U_{i+1,j}^n = \tilde{\vec U} \in{\mathcal G}$ and $\vec U_{ij}^n = \vec U_{i,j\pm1}^n = \vec U_{i-1,j}^n = \hat{\vec U} \in{\mathcal G} $, then
substituting them into \eqref{eq:2DRMHD:LFscheme} gives
$$
\overline {\vec U}_{ij}^{n+1} (\epsilon) =
 \left( \frac{2\sqrt{3}}{3}\epsilon,
\left(\frac{4}{3}+\frac{2}{3(\Gamma-1)} \right) \epsilon + \frac{\Delta t_n }{4\Delta x} ,0,0, \frac{\Delta t_n }{2\Delta x}  , 0,0,
\left( \frac{5}{3}+\frac{4}{3(\Gamma-1)} \right) \epsilon + \frac{\Delta t_n }{4\Delta x} \right)^{\top} .
$$
Because of the continuity of $\tilde q\left( \overline {\vec U}_{ij}^{n+1} (\epsilon)  \right)$ with respect to $\epsilon$, one has
$$
\mathop {\lim }\limits_{\epsilon \to 0^+ }  \tilde q\left( \overline {\vec U}_{ij}^{n+1} (\epsilon)  \right)
=  \tilde q\left( \overline {\vec U}_{ij}^{n+1} (0)  \right)
= 27 \left( \frac{\Delta t_n }{4\Delta x}  \right)^7 \left( \frac{2\Delta t_n }{\Delta x} +1  \right)^2 \left( \frac{\Delta t_n }{\Delta x} -4 \right) <0,
$$
for any time step-size $\Delta t_n$ satisfying the linear stability condition $\frac{\Delta t_n}{\Delta x} + \frac{\Delta t_n}{\Delta y}\le 1$. The locally sign-preserving property for continuous functions implies that there is a small positive number $\epsilon_0$ such that  $ \tilde q\left( \overline {\vec U}_{ij}^{n+1} (\epsilon_0)  \right) <0 $.  Hence  $ \overline {\vec U}_{ij}^{n+1} (\epsilon_0)  \notin {\mathcal G}$ thanks to Remark \ref{rem:rmhd:equal}.
\end{example}

The above example shows clearly that it is necessary for a PCP RMHD code to preserve the discrete divergence-free condition,
 and the locally divergence-free condition of magnetic field within the cell can not ensure the PCP property even for a first-order accurate scheme.
The  divergence-free MHD code is  very important in the MHDs, see e.g. \cite{Brackbill1980,Evans1988,Toth2000} etc.
 The nonzero divergence of the numerical magnetic
field may lead to the generation of non-physical wave or the negative pressure or density \cite{Brackbill1980,Rossmanith2006}.
Although  some works, e.g. \cite{Balsara2012,cheng,Christlieb,Miyoshi}, have discussed
the positivity-preserving schemes for the non-relativistic MHD equations,
up to now no any multi-dimensional MHD numerical scheme is rigorously proved to be PCP in theory.

If the scheme \eqref{eq:2DRMHD:LFscheme} satisfies
a discrete divergence-free condition, then one can use
the generalized LxF splitting property in Theorem \ref{theo:RMHD:LLFsplit2D} to
prove
that the scheme \eqref{eq:2DRMHD:LFscheme} is PCP.

\begin{theorem} \label{theo:2DDivB:LFscheme}
 The solution $\overline {\vec U}_j^n$ of  \eqref{eq:2DRMHD:LFscheme} satisfies
the discrete divergence-free condition
\begin{equation}\label{eq:DisDivB}
\mbox{\rm div} _{ij} \overline {\vec B}^n := \frac{ \left( \overline  B_1\right)_{i+1,j}^n - \left( \overline  B_1 \right)_{i-1,j}^n } {2\Delta x} + \frac{ \left( \overline  B_2 \right)_{i,j+1}^n - \left( \overline B_2 \right)_{i,j-1}^n } {2\Delta y} = 0,
\end{equation}
for all $n \in \mathbb{N} $ and $j$, if \eqref{eq:DisDivB} holds for the discrete initial data $\{\overline {\vec U}_j^0\}$.
\end{theorem}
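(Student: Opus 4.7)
The plan is induction on the time level $n$, with the base case supplied by the hypothesis on the discrete initial data. For the inductive step, assuming $\mbox{\rm div}_{i'j'}\overline{\vec B}^n=0$ at every grid point $(i',j')$, the strategy is to isolate the evolution equations for $\overline{B}_1$ and $\overline{B}_2$ from \eqref{eq:2DRMHD:LFscheme}, apply the centered divergence operator appearing in \eqref{eq:DisDivB}, and exploit both the antisymmetry of the magnetic entries of the fluxes and the commutativity of the underlying one-dimensional central-difference operators.

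The key structural observation is that the magnetic-field block of $\vec F_i$, namely $v_i\vec B - B_i\vec v$, has vanishing $B_i$-entry, while the off-diagonal entries satisfy $(\vec F_1)_{B_2}=-(\vec F_2)_{B_1}=:G$ with $G=v_1B_2-B_1v_2$ (the primitive variables being recovered from $\vec U$ through \eqref{eq:RMHD:getv}). Substituting into \eqref{eq:LFflux} and carrying out the bookkeeping reduces the $B_1$ and $B_2$ components of \eqref{eq:2DRMHD:LFscheme} to the clean ``central-advection plus central-dissipation'' form
\begin{align*}
\overline{(B_1)}_{ij}^{n+1} &= \overline{(B_1)}_{ij}^n + \Delta t_n\, D_y \overline{G}^n_{ij} + \tfrac{\varrho_1 \Delta t_n}{2\Delta x}\Delta_x^2 \overline{(B_1)}_{ij}^n + \tfrac{\varrho_2 \Delta t_n}{2\Delta y}\Delta_y^2 \overline{(B_1)}_{ij}^n,\\
\overline{(B_2)}_{ij}^{n+1} &= \overline{(B_2)}_{ij}^n - \Delta t_n\, D_x \overline{G}^n_{ij} + \tfrac{\varrho_1 \Delta t_n}{2\Delta x}\Delta_x^2 \overline{(B_2)}_{ij}^n + \tfrac{\varrho_2 \Delta t_n}{2\Delta y}\Delta_y^2 \overline{(B_2)}_{ij}^n,
\end{align*}
where $D_x,D_y$ denote the centered difference operators in \eqref{eq:DisDivB} and $\Delta_x^2,\Delta_y^2$ the standard second central differences in each coordinate direction. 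Applying $D_x$ to the first identity and $D_y$ to the second and summing, the advective pieces merge into $\Delta t_n(D_xD_y-D_yD_x)\overline{G}^n_{ij}\equiv 0$, and the dissipative pieces, after commuting the single-index shift operators past one another, collapse into a linear combination of $\Delta_x^2$ and $\Delta_y^2$ applied to $\mbox{\rm div}\,\overline{\vec B}^n$ at $(i,j)$, which vanishes by the inductive hypothesis at the neighbouring grid points $(i\pm1,j)$ and $(i,j\pm1)$. This yields $\mbox{\rm div}_{ij}\overline{\vec B}^{n+1}=\mbox{\rm div}_{ij}\overline{\vec B}^n=0$ and closes the induction.

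The only real obstacle I anticipate is the bookkeeping reducing the LxF flux contributions to the antisymmetric form above, which is straightforward but tedious. Once that rewriting is done, the conclusion follows as a purely algebraic consequence of the flux antisymmetry and the pairwise commutativity of one-dimensional central differences on a rectangular grid, with no deeper property of the RMHD system being invoked.
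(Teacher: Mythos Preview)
Your proposal is correct and follows essentially the same route as the paper. The paper's proof also proceeds by induction, isolates the $B_1$ and $B_2$ components of \eqref{eq:2DRMHD:LFscheme} using the same flux antisymmetry $(\vec F_1)_{B_2}=-(\vec F_2)_{B_1}$ (their $\Omega_{ij}$ is your $G_{ij}$), applies the discrete divergence operator, and observes that the advective contributions cancel while the dissipative contributions regroup into a linear combination of $\mbox{div}_{i'j'}\overline{\vec B}^n$ at $(i,j)$ and its four neighbours; the only differences are cosmetic (you use operator notation and keep $\varrho_1,\varrho_2$ general, whereas the paper writes everything out explicitly with $\varrho_1=\varrho_2=c=1$).
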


\begin{proof}
It is proved by  the induction argument  for the time level number $n$.
The conclusion is true for $n=0$ due to the
hypothesis.
Now assume that \eqref{eq:DisDivB} holds for a non-negative integer $n$ and all $j$, and then  check whether the conclusion holds
for $n+1$.
Using \eqref{eq:LFflux}  and noting that the fifth component of $\vec F_1$ and the sixth component of  $\vec F_2$ are zero, one can rewrite
the fifth and sixth equations in \eqref{eq:2DRMHD:LFscheme} as
\begin{align} \label{eq:BxE}
\begin{split}
 \left( \overline B_1\right)  _{i,j}^{n+1} & = \left( 1 - \lambda  \right)  \left(\overline B_1\right)  _{i,j}^{n}
+ \frac{\Delta t_n}{2\Delta x} \left(
\left(\overline B_1\right)  _{i+1,j}^{n}  + \left(\overline B_1\right)  _{i-1,j}^{n}
        \right)  \\
        &
\quad
+ \frac{\Delta t_n}{2\Delta y} \left(
\left(\overline B_1\right)  _{i,j+1}^{n}     +  \left(\overline B_1\right)  _{i,j-1}^{n}
        \right)
+  \frac{  \Delta t_n}{2\Delta y} \Big(
  \Omega_{i,j+1}   - \Omega_{i,j-1}     \Big) ,
\end{split}
\\
\begin{split}
 \left( \overline B_2\right)  _{i,j}^{n+1} & = \left( 1 - \lambda \right)  \left(\overline B_2\right)  _{i,j}^{n}
+ \frac{\Delta t_n}{2\Delta x} \left(
\left(\overline B_2\right)  _{i+1,j}^{n}  + \left(\overline B_2\right)  _{i-1,j}^{n}
        \right)  \\
        &
\quad
+ \frac{\Delta t_n}{2\Delta y} \left(
\left(\overline B_2\right)  _{i,j+1}^{n}     +  \left(\overline B_2\right)  _{i,j-1}^{n}
        \right)
+  \frac{  \Delta t_n}{2\Delta x} \Big(
 - \Omega_{i+1,j}   + \Omega_{i-1,j}     \Big) ,
\end{split}\label{eq:ByE}
\end{align}
where $ \Omega_{ij} $ denotes the sixth component of $\vec F_1 \left( \overline {\vec U}_{ij}^{n} \right)$, and the fact that $ \Omega_{ij} $
is equal to the opposite number of the fifth component of $\vec F_2 \left( \overline {\vec U}_{ij}^{n} \right)$ has been used.
Since the operator $\mbox{div}_{ij}$ in \eqref{eq:DisDivB} is linear, using \eqref{eq:BxE} and \eqref{eq:ByE} gives
\begin{equation} \label{eq:DivB:n+1}
\begin{split}
\mbox{div}_{ij} \overline {\vec B}^{n+1}  =& \left( 1 - \lambda  \right)  \mbox{div}_{ij} \overline {\vec B}^{n}
+ \frac{\Delta t_n}{2\Delta x} \left(
\mbox{div}_{i+1,j} \overline {\vec B}^{n}  +  \mbox{div}_{i-1,j} \overline {\vec B}^{n}
        \right)
        \\  
&
+ \frac{\Delta t_n}{2\Delta y} \left(
\mbox{div}_{i,j+1} \overline {\vec B}^{n}
 +  \mbox{div}_{i,j-1} \overline {\vec B}^{n}
        \right)
        \\  
&  +  \frac{  \Delta t_n}{2\Delta x\Delta y} \bigg[ \Big(
  \Omega_{i+1,j+1}   - \Omega_{i+1,j-1}     \Big)
  - \Big(
  \Omega_{i-1,j+1}   - \Omega_{i-1,j-1}     \Big)
   \bigg]
\\  
&
+\frac{  \Delta t_n}{2\Delta x \Delta y} \bigg[ \Big(
 - \Omega_{i+1,j+1}   + \Omega_{i-1,j+1}
  \Big)
  - \Big(
 - \Omega_{i+1,j-1}   + \Omega_{i-1,j-1}
  \Big)
   \bigg]\\
 =& \left( 1 - \lambda  \right)  \mbox{div}_{ij} \overline {\vec B}^{n}
+ \frac{\Delta t_n}{2\Delta x} \left(
\mbox{div}_{i+1,j} \overline {\vec B}^{n}  +  \mbox{div}_{i-1,j} \overline {\vec B}^{n}
        \right)
\\
&+ \frac{\Delta t_n}{2\Delta y} \left(
\mbox{div}_{i,j+1} \overline {\vec B}^{n}     +  \mbox{div}_{i,j-1} \overline {\vec B}^{n}
        \right)
 = 0,
 \end{split}
\end{equation}
where the induction hypothesis has used in the last {equal} sign.  Hence \eqref{eq:DisDivB} holds
for all $n \in \mathbb{N} $ and $j$.
\end{proof}

\begin{theorem} \label{theo:2DRMHD:LFscheme}
If  $\overline {\vec U}_{ij}^n =: \left(\overline  D_{ij}^n, \overline { \vec m}_{ij}^n, \overline  {\vec B}_{ij}^n, \overline  E _{ij}^n  \right)^{\top} \in {\mathcal G}$ satisfies the discrete divergence-free condition
\eqref{eq:DisDivB}
for all $i$ and $j$, then under the CFL type condition
\begin{equation}\label{eq:CFL:LF2D}
0< \frac{ c \Delta t_n}{\Delta x} + \frac{ c \Delta t_n}{\Delta y}  \le  1,
\end{equation}
the solution $ \overline {\vec U}_{ij}^{n+1}$ of \eqref{eq:2DRMHD:LFscheme} belongs to ${\mathcal G}$ for all $i$ and $j$, where $c=1$ is the speed of light.
\end{theorem}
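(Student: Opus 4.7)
The plan is to rewrite the updated cell average $\overline{\vec U}_{ij}^{n+1}$ as a convex combination of $\overline{\vec U}_{ij}^n$ and a single state that has exactly the form appearing in the 2D generalized LxF splitting property (Theorem \ref{theo:RMHD:LLFsplit2D}). Once the decomposition is in place, the PCP property will follow by applying Theorem \ref{theo:RMHD:LLFsplit2D} to conclude the auxiliary state lies in $\mathcal G$, and then invoking the convexity of $\mathcal G$ proved in Theorem \ref{theo:RMHD:convex}.

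Concretely, set $\lambda_x := \Delta t_n/\Delta x$ and $\lambda_y := \Delta t_n/\Delta y$, so that the CFL condition \eqref{eq:CFL:LF2D} with $c=1=\varrho_1=\varrho_2=:\alpha$ reads $\lambda_x+\lambda_y\le 1$. Substituting the definition \eqref{eq:LFflux} of the LxF numerical fluxes into \eqref{eq:2DRMHD:LFscheme} and collecting terms, I will verify that the scheme can be recast as
\begin{equation*}
\overline{\vec U}_{ij}^{n+1} = (1-\lambda_x-\lambda_y)\,\overline{\vec U}_{ij}^n + (\lambda_x+\lambda_y)\,\Xi_{ij},
\end{equation*}
where
\begin{equation*}
\Xi_{ij} := \frac{1}{2\bigl(\tfrac{1}{\Delta x}+\tfrac{1}{\Delta y}\bigr)}\Bigl[\tfrac{1}{\Delta x}\bigl(\overline{\vec U}_{i+1,j}^n-\alpha^{-1}\vec F_1(\overline{\vec U}_{i+1,j}^n)+\overline{\vec U}_{i-1,j}^n+\alpha^{-1}\vec F_1(\overline{\vec U}_{i-1,j}^n)\bigr)
\end{equation*}
\begin{equation*}
+\tfrac{1}{\Delta y}\bigl(\overline{\vec U}_{i,j+1}^n-\alpha^{-1}\vec F_2(\overline{\vec U}_{i,j+1}^n)+\overline{\vec U}_{i,j-1}^n+\alpha^{-1}\vec F_2(\overline{\vec U}_{i,j-1}^n)\bigr)\Bigr].
\end{equation*}
This is precisely the special case $L=1$, $\omega_1=1$ of the combination appearing in Theorem \ref{theo:RMHD:LLFsplit2D}, with the choices $\tilde{\vec U}^1=\overline{\vec U}_{i+1,j}^n$, $\hat{\vec U}^1=\overline{\vec U}_{i-1,j}^n$, $\bar{\vec U}^1=\overline{\vec U}_{i,j+1}^n$, $\breve{\vec U}^1=\overline{\vec U}_{i,j-1}^n$, all of which lie in $\mathcal G$ by hypothesis.

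The crucial step, which is also the main (though not very difficult) obstacle, is to verify that the discrete divergence-free condition \eqref{eq:descrite2DDIV} required by Theorem \ref{theo:RMHD:LLFsplit2D} is met. Under the above identification it reduces to
\begin{equation*}
\frac{(\overline B_1)_{i+1,j}^n-(\overline B_1)_{i-1,j}^n}{\Delta x}+\frac{(\overline B_2)_{i,j+1}^n-(\overline B_2)_{i,j-1}^n}{\Delta y}=2\,\mathrm{div}_{ij}\overline{\vec B}^n,
\end{equation*}
which vanishes by the hypothesis \eqref{eq:DisDivB}. Theorem \ref{theo:RMHD:LLFsplit2D} then yields $\Xi_{ij}\in\mathcal G$.

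Finally, because $\lambda_x+\lambda_y\le 1$ under \eqref{eq:CFL:LF2D}, the update $\overline{\vec U}_{ij}^{n+1}$ is a genuine convex combination of the two admissible states $\overline{\vec U}_{ij}^n$ and $\Xi_{ij}$. Invoking the convexity of $\mathcal G$ established in Theorem \ref{theo:RMHD:convex} concludes $\overline{\vec U}_{ij}^{n+1}\in\mathcal G$, completing the proof. The only really nontrivial ingredient is the generalized LxF splitting itself; once it is available, the present argument is essentially a direct algebraic rearrangement followed by a convexity step, and the discrete divergence-free hypothesis plays precisely the role foreshadowed in Section \ref{sec:GLFs}: it cancels the magnetic cross-terms that would otherwise spoil positivity, as already exhibited by the counterexample (Example \ref{Counterexample}).
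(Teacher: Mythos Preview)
Your proposal is correct and follows essentially the same approach as the paper: rewrite the update as a convex combination $(1-\lambda)\overline{\vec U}_{ij}^n + \lambda\,\Xi$ with $\lambda=\lambda_x+\lambda_y$, recognize $\Xi$ as the $L=1$ instance of Theorem~\ref{theo:RMHD:LLFsplit2D}, verify that its divergence-free hypothesis is exactly $2\,\mathrm{div}_{ij}\overline{\vec B}^n=0$, and conclude via convexity. The paper's proof is the same argument with identical ingredients.
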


\begin{proof}
Substituting  \eqref{eq:LFflux} into \eqref{eq:2DRMHD:LFscheme} gives
 \begin{equation*}
 \begin{split}
\overline {\vec U}_{ij}^{n+1} & =
\frac{\lambda}{ 2\left( \frac{1}{\Delta x} + \frac{1}{\Delta y} \right) }
\bigg[
\frac{1}{\Delta x}
\Big( \overline {\vec U}_{i+1,j}^n -\vec F_1\left( \overline {\vec U}_{i+1,j}^n\right) +
                         \overline {\vec U}_{i-1,j}^n +\vec F_1\left( \overline {\vec U}_{i-1,j}^n\right) \Big)\\
& \quad +  \frac{1}{\Delta y}\Big( \overline {\vec U}_{i,j+1}^n -\vec F_2\left( \overline {\vec U}_{i,j+1}^n\right) +
                         \overline {\vec U}_{i,j-1}^n +\vec F_2\left( \overline {\vec U}_{i,j-1}^n\right) \Big) \bigg]
                         + (1-\lambda) \overline {\vec U}_j^n
\\
& =:    \lambda \Xi + (1-\lambda) \overline {\vec U}_{ij}^n,
\end{split}
\end{equation*}
where $\lambda := \Delta t_n \left( \frac{1}{\Delta x} + \frac{1}{\Delta y} \right) \in (0,1]$ due to \eqref{eq:CFL:LF2D}.  Using the condition \eqref{eq:DisDivB} and
Theorem \ref{theo:RMHD:LLFsplit2D} gives $\Xi \in {\mathcal G}$.
The convexity of $\mathcal G$ further yields $\overline {\vec U}_{ij}^{n+1} \in {\mathcal G}$. The proof is completed.
\end{proof}


Let us discuss how to get the discrete initial data which are admissible, i.e.  $\overline {\vec U}_{ij}^0 \in{\mathcal G}$, and satisfy the condition \eqref{eq:DisDivB} for all $j$.
After giving initial data $(\rho,\vec v,\vec B,p)(x,y,0)$,
calculate the cell average values of the initial primitive variables $(\rho,\vec v,\vec B,p)$ by
\begin{equation}\label{eq:intitialRHO}
\Big( \overline \rho_{ij}^0, \overline {\vec v}_{ij}^0, \left(\overline B_3\right)_{ij}^0, \overline p_{ij}^0 \Big)
= \frac{1}{\Delta x \Delta y} \iint_{I_{ij}} \big( \rho,\vec v, B_3,p \big) (x,y,0)~ dx dy,
\end{equation}
and
\begin{equation}\label{eq:initialBxBy}
\left( \overline B_1 \right)_{ij}^0 = \frac{1}{2 \Delta y} \int_{ y_{j-1} }^{ y_{j+1 } }  B_1(x_i,y,0)~ dy,
\quad
\left( \overline B_2 \right)_{ij}^0 = \frac{1}{2 \Delta x} \int_{ x_{i-1} }^{ x_{i+1 } }  B_2(x,y_j,0)~  dx,
\end{equation}
for  each $i$ and $j$, then $\overline {\vec U}_{ij}^0=  {\vec U}(\overline {\vec V}_{ij}^0)$
 belongs to ${\mathcal G}$ and satisfies the condition \eqref{eq:DisDivB} for all $j$.
In fact, one has $\overline \rho_{ij}^0>0$, $\overline p_{ij}^0 > 0 $, and
\begin{equation*}
\begin{split}
\left|  \overline {\vec v}_{ij}^0 \right|^2
& =  \sum\limits_{k = 1}^3
\left(
\iint_{I_{ij}}  \frac{1}{\Delta x \Delta y}  \cdot { v_k} (x,y,0) ~dx dy \right)^2\\
 &
 \le
 \sum\limits_{k = 1}^3
 \left(
 \iint_{I_{ij}}
\left(  \frac{1}{\Delta x \Delta y} \right)^2  dx dy \right)
\left(
 \iint_{I_{ij}}
{ v_k^2}  (x,y,0)~  dx dy \right)
\\
&
=
 \frac{1}{\Delta x \Delta y}
\left(
 \iint_{I_{ij}}
v^2 (x,y,0)~  dx dy \right) < 1,
\end{split}
\end{equation*}
where the Cauchy-Schwarz inequality {has} been used in the penultimate inequality. Moreover, with \eqref{eq:initialBxBy}, it holds
\begin{equation*}
\begin{split}
\mbox{div}_{ij} \overline{ \vec B}^0 & =
 \frac{1}{2 \Delta x } \left(
 \frac{1}{2 \Delta y} \int_{ y_{j-1} }^{ y_{j+1 } }  B_1(x_{i+1},y,0) dy
 -  \frac{1}{2 \Delta y} \int_{ y_{j-1} }^{ y_{j+1 } }  B_1(x_{i-1},y,0) dy
 \right)
 \\
 & \quad +
  \frac{1}{2 \Delta y } \left(
 \frac{1}{2 \Delta x} \int_{ x_{i-1} }^{ x_{i+1 } }  B_2(x,y_{j+1},0)  dx
- \frac{1}{2 \Delta x} \int_{ x_{i-1} }^{ x_{i+1 } }  B_2(x,y_{j-1},0)  dx
 \right)
 \\
 & =   \frac{1}{4\Delta x \Delta y } \int_{ x_{i-1} }^{ x_{i+1 } }   \int_{ y_{j-1} }^{ y_{j+1 } }
 \left( \frac{\partial B_1}{\partial x} + \frac{\partial B_2}{\partial y}
  \right) ~dxdy= 0 ,
\end{split}
\end{equation*}
where the divergence theorem and \eqref{eq:2D:BxBy0} for $t=0$ has been used.
 In practical computations, the integrals in \eqref{eq:intitialRHO} and \eqref{eq:initialBxBy} can be approximately calculated by some numerical quadratures so that the condition \eqref{eq:DisDivB} may not hold exactly due to the numerical error.
 Fortunately, the discrete divergence error
$ {\mathcal E}_{\infty}^n : = \max_{ij} \left\{\left| \mbox{div}_{ij} \overline {\vec B}^{n} \right| \right\}$ 
  does not grow with $n$
 under the condition \eqref{eq:CFL:LF2D}, because
  using \eqref{eq:DivB:n+1} and the triangular inequality gives
$$
{\mathcal E}_{\infty}^{n+1} \le {\mathcal E}_{\infty}^n.
$$

\subsubsection{High-order accurate  schemes}\label{sec:High2D}

This subsection discusses the high-order accurate PCP schemes for the 2D RMHD equations \eqref{eq:RMHD1D}.

Assume that  the approximate solution $\vec U_{ij}^n (x,y)$ at time $t=t_n$ within the cell $I_{ij}$ is either reconstructed in the finite volume methods
from the cell average values $\{\overline {\vec U}_{ij}^n\}$ or evolved in the DG methods. The function $\vec U_{ij}^n (x,y)$ is a vector of the polynomial of degree $K$,  and its cell average value over the cell $I_{ij}$ is equal to $\overline {\vec U}_{ij}^{n}$.
Moreover, let
$\vec U_{i\pm \frac{1}{2},j}^\mp (y)$  
and  $\vec U_{i,j\pm \frac{1}{2}}^\mp (x)$
denote the traces of $\vec U_{ij}^n(x,y)$ on the four edges
$
\big\{x_{i\pm \frac{1}{2}} \big\}\times
\big(y_{j-\frac{1}{2}},y_{j+\frac{1}{2}}\big)
$
and $
\big(x_{i-\frac{1}{2}} , x_{i+\frac{1}{2}} \big)\times
\big\{y_{j\pm \frac{1}{2}}\big\}
$
of the cell $I_{ij}$ respectively.

For the 2D RMHD equations \eqref{eq:RMHD1D},
the finite volume scheme or discrete equation for the cell average value in the DG method  may be given by
\begin{equation}\label{eq:2DRMHD:cellaverage}
\begin{split}
\overline {\vec U}_{ij}^{n+1} = \overline {\vec U}_{ij}^{n}
& - \frac{\Delta t_n}{\Delta x} \sum\limits_{\beta =1}^{Q}  \omega_\beta \bigg(
  \vec {\hat F}_1( \vec U^-_{i+\frac{1}{2},\beta}, \vec U^+_{i+\frac{1}{2},\beta} ) -
  \vec {\hat F}_1( \vec U^-_{i-\frac{1}{2},\beta}, \vec U^+_{i-\frac{1}{2},\beta})
  \bigg)  \\
&-  \frac{ \Delta t_n }{\Delta y} \sum\limits_{\alpha =1}^{Q} \omega_\alpha \bigg(
 \vec {\hat F}_2( \vec U^-_{\alpha,j+\frac{1}{2}} , \vec U^+_{\alpha,j+\frac{1}{2}}   ) -
 \vec {\hat F}_2( \vec U^-_{\alpha,j-\frac{1}{2}} , \vec U^+_{\alpha,j-\frac{1}{2}}  )
\bigg),
\end{split}
\end{equation}
which is an approximation of
the equation
\begin{align*} \nonumber
\overline {\vec U}_{ij}^{n+1} & = \overline {\vec U}_{ij}^{n}
 - \frac{\Delta t_n}{\Delta x\Delta y}
\int_{y_{j-\frac{1}{2}}}^{y_{j+\frac{1}{2}}}
  \vec {\hat F}_1( \vec U^-_{i+\frac{1}{2},j}(y), \vec U^+_{i+\frac{1}{2},j}(y) ) -
  \vec {\hat F}_1( \vec U^-_{i-\frac{1}{2},j}(y), \vec U^+_{i-\frac{1}{2},j}(y)) ~dy
  \\
&\quad\quad \ \ -  \frac{ \Delta t_n }{ \Delta x  \Delta y}
\int_{x_{i-\frac{1}{2}}}^{x_{i+\frac{1}{2}}}
 \vec {\hat F}_2( \vec U^-_{i,j+\frac{1}{2}} (x), \vec U^+_{i,j+\frac{1}{2}} (x)) -
 \vec {\hat F}_2( \vec U^-_{i,j-\frac{1}{2}} (x), \vec U^+_{i,j-\frac{1}{2}}  (x))~ dx,
\end{align*}
 by using
the Gaussian quadrature for each integral with $Q$ nodes
and the weights $\{ \omega_\alpha\}_{\alpha=1} ^ Q$ satisfying $\sum \limits_{\alpha=1}^Q \omega_\alpha = 1$.
In \eqref{eq:2DRMHD:cellaverage},
$\hat {\vec F}_1$ and  $\hat {\vec F}_2$ denote
 the numerical fluxes in $x$- and $y$-directions  respectively,
 and are taken as the LxF flux defined in \eqref{eq:LFflux}.
Moreover, as shown schematically in Fig. \ref{fig:limit}, the limiting values $ \vec U^\pm_{i+\frac{1}{2},\beta}= \vec U^\pm_{i+\frac{1}{2},j} \big( y_j^\beta \big)  $ and $ \vec U^\pm_{\alpha,j+\frac{1}{2}}= \vec U^\pm_{ i,j+\frac{1}{2}} \big( x_i^{\alpha} \big) $,
where $\{x_i^\alpha\}_{\alpha=1}^{Q}$ and $\{y_j^\alpha \} _{\alpha=1}^{Q}$  denote the Gaussian nodes transformed into the intervals
 $\left[x_{i-\frac{1}{2}},x_{i+\frac{1}{2}} \right]$ and $\left[y_{j-\frac{1}{2}},y_{j+\frac{1}{2}} \right]$, respectively.
For the accuracy requirement,  $Q$ should satisfy:
$Q \ge K+1$ for a $\mathbb{P}^K$-based DG method \cite{Cockburn0},
or $Q \ge (K+1)/2$ for a $(K+1)$-th order accurate finite volume scheme.

  \begin{figure}[htbp]
    \centering
    \includegraphics[height=0.4\textwidth]{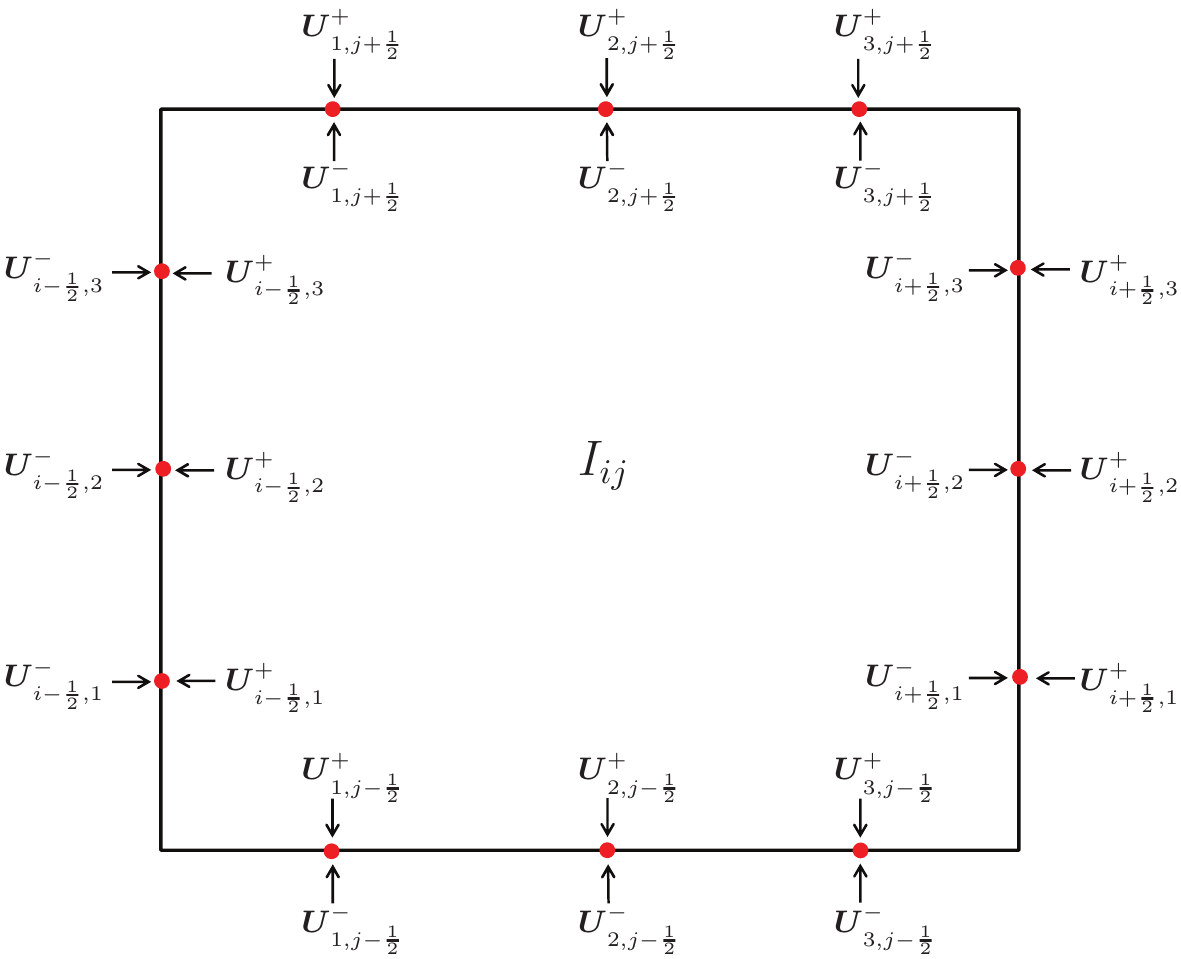}
    \caption{The limiting values at $Q$ Gaussian nodes on  four edges of the cell $I_{ij}$ with $Q=3$.}
    \label{fig:limit}
  \end{figure}

Let $\{ \hat x_i^\alpha \}_{\alpha=1} ^ L$ and $\{ \hat y_j^\alpha \}_{\alpha=1} ^ L$ be the Gauss-Lobatto nodes transformed into the intervals
 $\left[x_{i-\frac{1}{2}},x_{i+\frac{1}{2}} \right]$ and $\left[y_{j-\frac{1}{2}},y_{j+\frac{1}{2}} \right]$ respectively, and
$ \{\hat \omega_\alpha\}_{\alpha=1} ^ L$ be  associated weights satisfying $\sum \limits_{\alpha=1}^L \hat\omega_\alpha = 1$.
Here  $L\ge (K+3)/2$ such that the algebraic precision degree of corresponding quadrature is at least $K$.
Similar to Theorem \ref{thm:PCP:1DRMHD} for the 1D case,
we have the following sufficient conditions for that
the high-order accurate scheme \eqref{eq:2DRMHD:cellaverage} is PCP.

\begin{theorem} \label{thm:PCP:2DRMHD}
If   ${\vec U}_{ij}^n(x,y)=:\big( D_{ij}(x,y), \vec m_{ij}( x,y),\vec B_{ij}( x,y), E_{ij}( x,y) \big)^{\top}$ satisfy:
\\
{\rm (i)}. the discrete divergence-free conditions
\begin{align}\label{eq:DivB:cst1}
&
\mbox{\rm div} _{ij}^{\mbox{\tiny \rm in}} {\vec B} \triangleq \frac{1}{\Delta x} \sum \limits_{\beta=1}^{Q} \omega_\beta \Big(  (B_1)_{i+\frac{1}{2},\beta}^-
 - (B_1)_{i-\frac{1}{2},\beta} ^+  \Big) + \frac{1}{\Delta y} \sum \limits_{\beta=1}^{Q} \omega_\beta \Big(  (B_2)_{\beta,j+\frac{1}{2}}^-
 - (B_2)_{\beta,j-\frac{1}{2}} ^+  \Big) = 0,
 \\ &
\mbox{\rm div} _{ij}^{\mbox{\tiny \rm out}} {\vec B} \triangleq \frac{1}{\Delta x} \sum \limits_{\beta=1}^{Q} \omega_\beta \Big(  ( B_1)_{i+\frac{1}{2},\beta}^+
 - ( B_1)_{i-\frac{1}{2},\beta} ^-  \Big) + \frac{1}{\Delta y} \sum \limits_{\beta=1}^{Q} \omega_\beta \Big(  ( B_2)_{\beta,j+\frac{1}{2}}^+
 - ( B_2)_{\beta,j-\frac{1}{2}} ^-  \Big)  = 0,
\label{eq:DivB:cst2}
\end{align}
for all $i$ and $j$, and
\\
{\rm (ii)}. $\vec U_{ij}^n ( \hat x_i^\alpha,  y_j^\beta ), \vec U_{ij}^n ( x_i^\beta,  \hat y_j^\alpha ) \in {\mathcal G}$, for all $i,j,\alpha,\beta$,
\\
then under the CFL type condition
\begin{equation}\label{eq:CFL:2DRMHD}
0< \frac{\Delta t_n}{\Delta x} + \frac{\Delta t_n}{\Delta y}  \le \hat \omega_1,
\end{equation}
the solution  $\overline{\vec U}_j^{n+1}$ of the scheme \eqref{eq:2DRMHD:cellaverage} belongs to  ${\mathcal G}$.
\end{theorem}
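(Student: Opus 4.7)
The plan is to extend the one-dimensional argument of Theorem \ref{thm:PCP:1DRMHD} to two dimensions by exploiting tensor-product Gauss-Lobatto/Gauss quadratures in both coordinate directions and then invoking the 2D generalized LxF splitting (Theorem \ref{theo:RMHD:LLFsplit2D}) twice, once for ``interior-trace'' and once for ``exterior-trace'' quartets, under the two discrete divergence-free conditions supplied in hypothesis (i). Since $\vec U_{ij}^n(x,y)$ is a polynomial of degree $K$ and both quadratures are chosen so that $2L-3\ge K$ and $2Q-1\ge K$, the cell average admits two equivalent exact representations:
$$
\overline{\vec U}_{ij}^n = \sum_{\alpha=1}^{L}\sum_{\beta=1}^{Q}\hat\omega_\alpha\omega_\beta\,\vec U_{ij}^n(\hat x_i^\alpha,y_j^\beta) = \sum_{\alpha=1}^{Q}\sum_{\beta=1}^{L}\omega_\alpha\hat\omega_\beta\,\vec U_{ij}^n(x_i^\alpha,\hat y_j^\beta).
$$

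Let $\lambda_1:=\Delta t_n/\Delta x$, $\lambda_2:=\Delta t_n/\Delta y$, $\lambda:=\lambda_1+\lambda_2\le\hat\omega_1$. First I would split $\overline{\vec U}_{ij}^n=\tfrac{\lambda_1}{\lambda}\overline{\vec U}_{ij}^n+\tfrac{\lambda_2}{\lambda}\overline{\vec U}_{ij}^n$, substitute the first representation into the $\lambda_1$-share (paired with the $x$-flux difference in \eqref{eq:2DRMHD:cellaverage}) and the second into the $\lambda_2$-share (paired with the $y$-flux difference), and then isolate the boundary Gauss-Lobatto nodes $(\alpha=1,L)$ from the interior ones $(2\le\alpha\le L-1)$. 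The boundary evaluations recover exactly the one-sided limiting traces, e.g.\ $\vec U_{ij}^n(\hat x_i^1,y_j^\beta)=\vec U^+_{i-1/2,\beta}$ and $\vec U_{ij}^n(\hat x_i^L,y_j^\beta)=\vec U^-_{i+1/2,\beta}$, while the interior evaluations are immediately in $\mathcal G$ by hypothesis (ii).

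The next step is to expand the LxF flux in the form $\hat{\vec F}_k(\vec U^-,\vec U^+)=\tfrac12\{(\vec U^-+\vec F_k(\vec U^-))-(\vec U^+-\vec F_k(\vec U^+))\}$ (with $\alpha=c=1$) and regroup the boundary-plus-flux terms by state. Using $\hat\omega_1=\hat\omega_L$ and introducing the key parameter $\alpha^*:=2\hat\omega_1/\lambda-1\ge 1$, this reshuffle should collapse the combination into two instances of Theorem \ref{theo:RMHD:LLFsplit2D}: an \emph{interior-trace} instance with quartet $(\tilde{\vec U}^\beta,\hat{\vec U}^\beta,\bar{\vec U}^\beta,\breve{\vec U}^\beta)=(\vec U^-_{i+1/2,\beta},\vec U^+_{i-1/2,\beta},\vec U^-_{\beta,j+1/2},\vec U^+_{\beta,j-1/2})$, parameter $\alpha^*$, weights $\omega_\beta$, and total scale $2\hat\omega_1-\lambda$; and an \emph{exterior-trace} instance with quartet $(\vec U^+_{i+1/2,\beta},\vec U^-_{i-1/2,\beta},\vec U^+_{\beta,j+1/2},\vec U^-_{\beta,j-1/2})$, parameter $1$, weights $\omega_\beta$, and total scale $\lambda$. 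The discrete divergence-free hypotheses required by Theorem \ref{theo:RMHD:LLFsplit2D} in these two instances read exactly \eqref{eq:DivB:cst1} and \eqref{eq:DivB:cst2} respectively, which clarifies why both constraints are listed in hypothesis (i).

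Putting everything together, $\overline{\vec U}_{ij}^{n+1}$ is a convex combination (coefficients $(1-2\hat\omega_1)+(2\hat\omega_1-\lambda)+\lambda=1$, each non-negative since $\lambda\le\hat\omega_1$) of: the interior mixed-quadrature nodal values (in $\mathcal G$ by hypothesis (ii)), the interior-trace combination (in $\mathcal G$ by Theorem \ref{theo:RMHD:LLFsplit2D} and \eqref{eq:DivB:cst1}), and the exterior-trace combination (in $\mathcal G$ by Theorem \ref{theo:RMHD:LLFsplit2D} and \eqref{eq:DivB:cst2}). The convexity of $\mathcal G$ (Theorem \ref{theo:RMHD:convex}) will then close the argument. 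The main obstacle will be the meticulous sign- and coefficient-bookkeeping while reshuffling the eight per-$\beta$ boundary traces (four in each direction) appearing in the flux differences, and pinpointing the parameter $\alpha^*=2\hat\omega_1/\lambda-1$ with its correct subluminal bound $\alpha^*\ge c=1$; it is exactly through this bound that the CFL condition \eqref{eq:CFL:2DRMHD} enters the argument.
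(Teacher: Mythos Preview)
Your proposal is correct and follows essentially the same route as the paper: the same tensor-product quadrature representations, the same $\lambda_1/\lambda$–$\lambda_2/\lambda$ split of $\overline{\vec U}_{ij}^n$, the same separation of boundary from interior Gauss--Lobatto nodes, and the same decomposition $\overline{\vec U}_{ij}^{n+1}=(1-2\hat\omega_1)\Xi_1+(2\hat\omega_1-\lambda)\Xi_3+\lambda\,\Xi_2$ with parameter $\theta=\alpha^*=2\hat\omega_1/\lambda-1$ on the interior-trace piece (invoking \eqref{eq:DivB:cst1}) and $\alpha=1$ on the exterior-trace piece (invoking \eqref{eq:DivB:cst2}). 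Your identification of which discrete divergence-free condition pairs with which quartet and how the CFL bound enters through $\alpha^*\ge 1$ is exactly as in the paper.
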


\begin{proof}
The exactness of the Gauss-Lobatto quadrature rule with $L$ nodes and the Gauss quadrature rule with $Q$ nodes for the polynomials of degree $K$ yields
\begin{equation} \label{eq:2D:Gauss1}
\begin{split}
\overline{\vec U}_{ij}^n &= \frac{1}{\Delta x \Delta y} \iint_{I_{ij}} {\vec U}_{ij}^n (x,y)~ dx dy
= \frac{1}{\Delta x} \int_{x_{i-\frac{1}{2}}}^{x_{i+\frac{1}{2}}} \left( \sum \limits_{\beta = 1}^{Q} \omega_\beta \vec U_{ij}^n\left(x,y_j^\beta\right)  \right) dx
\\ 
&=  \sum \limits_{\beta = 1}^{Q} \omega_\beta \left( \frac{1}{\Delta x} \int_{x_{i-\frac{1}{2}}}^{x_{i+\frac{1}{2}}}   \vec U_{ij}^n\left(x,y_j^\beta\right) dx  \right)
= \sum \limits_{\beta = 1}^{Q} \omega_\beta \left( \sum \limits_{\alpha = 1}^{L} \hat \omega_\alpha  \vec U_{ij}^n\left(\hat x_i^\alpha,y_j^\beta\right)  \right)\\
&
=  \sum \limits_{\alpha = 1}^{L} \sum \limits_{\beta = 1}^{Q}  \hat \omega_\alpha \omega_\beta  \vec U_{ij}^n\left(\hat x_i^\alpha,y_j^\beta\right) .
\end{split}
\end{equation}
Similarly, one has
\begin{equation}\label{eq:2D:Gauss2}
\overline{\vec U}_{ij}^n = \sum \limits_{\alpha = 1}^{L} \sum \limits_{\beta = 1}^{Q}  \hat \omega_\alpha \omega_\beta  \vec U_{ij}^n\left( x_i^\beta,\hat y_j^\alpha \right).
\end{equation}
By combining \eqref{eq:2D:Gauss1} and   \eqref{eq:2D:Gauss2}, and using $ \vec U_{ij}^n\left(\hat x_i^1,y_j^\beta\right) = \vec U_{i-\frac{1}{2},\beta}^+$,
$\vec U_{ij}^n\left(\hat x_i^L,y_j^\beta\right) = \vec U_{i+\frac{1}{2},\beta}^-$,
$\vec U_{ij}^n\left( x_i^\beta,\hat y_j^1 \right) = \vec U_{\beta,j-\frac{1}{2}}^+$,
$\vec U_{ij}^n\left( x_i^\beta,\hat y_j^L \right) = \vec U_{\beta,j+\frac{1}{2}}^-
 $, and $\hat \omega_1 = \hat \omega_L$, one has
\begin{align*}
\overline{\vec U}_{ij}^n &= \frac{\lambda_x}{\lambda_x+\lambda_y} \overline{\vec U}_{ij}^n + \frac{\lambda_y}{\lambda_x+\lambda_y} \overline{\vec U}_{ij}^n
\\
&= \frac{\lambda_x}{\lambda_x+\lambda_y}  \sum \limits_{\alpha = 1}^{L} \sum \limits_{\beta = 1}^{Q}  \hat \omega_\alpha \omega_\beta  \vec U_{ij}^n\left(\hat x_i^\alpha,y_j^\beta\right) + \frac{\lambda_y}{\lambda_x+\lambda_y} \sum \limits_{\alpha = 1}^{L} \sum \limits_{\beta = 1}^{Q}  \hat \omega_\alpha \omega_\beta  \vec U_{ij}^n\left( x_i^\beta,\hat y_j^\alpha \right) \\
\begin{split}
&= \frac{\lambda_x}{\lambda_x+\lambda_y}  \sum \limits_{\alpha = 2}^{L-1} \sum \limits_{\beta = 1}^{Q}  \hat \omega_\alpha \omega_\beta  \vec U_{ij}^n\left(\hat x_i^\alpha,y_j^\beta\right) + \frac{\lambda_y}{\lambda_x+\lambda_y} \sum \limits_{\alpha = 2}^{L-1} \sum \limits_{\beta = 1}^{Q}  \hat \omega_\alpha \omega_\beta  \vec U_{ij}^n\left( x_i^\beta,\hat y_j^\alpha \right) \\
&\quad + \frac{\lambda_x \hat \omega_1}{\lambda_x+\lambda_y} \sum \limits_{\beta = 1}^{Q}  \omega_\beta \left( \vec U_{i-\frac{1}{2},\beta}^+ +
\vec U_{i+\frac{1}{2},\beta}^- \right)
+ \frac{\lambda_y \hat \omega_1}{\lambda_x+\lambda_y} \sum \limits_{\beta = 1}^{Q}  \omega_\beta \left(  \vec U_{\beta,j-\frac{1}{2}}^+ +
\vec U_{\beta,j+\frac{1}{2}}^- \right) ,
\end{split}
\end{align*}
 where $\lambda_x := \Delta t_n / \Delta x$ and $\lambda_y := \Delta t_n / \Delta y $. Substituting the above identity and  \eqref{eq:LFflux} into \eqref{eq:2DRMHD:cellaverage} gives
\begin{align} \nonumber
\begin{split}
\overline{\vec U}_{ij}^{n+1}
& = \frac{\lambda_x}{\lambda_x+\lambda_y}  \sum \limits_{\alpha = 2}^{L-1} \sum \limits_{\beta = 1}^{Q}  \hat \omega_\alpha \omega_\beta  \vec U_{ij}^n\left(\hat x_i^\alpha,y_j^\beta\right) + \frac{\lambda_y}{\lambda_x+\lambda_y} \sum \limits_{\alpha = 2}^{L-1} \sum \limits_{\beta = 1}^{Q}  \hat \omega_\alpha \omega_\beta  \vec U_{ij}^n\left( x_i^\beta,\hat y_j^\alpha \right) \\ \nonumber
&\quad + \frac{\lambda_x \hat \omega_1}{\lambda_x+\lambda_y} \sum \limits_{\beta = 1}^{Q}  \omega_\beta \left( \vec U_{i-\frac{1}{2},\beta}^+ +
\vec U_{i+\frac{1}{2},\beta}^- \right)
+ \frac{\lambda_y \hat \omega_1}{\lambda_x+\lambda_y} \sum \limits_{\beta = 1}^{Q}  \omega_\beta \left(  \vec U_{\beta,j-\frac{1}{2}}^+ +
\vec U_{\beta,j+\frac{1}{2}}^- \right) \\ \nonumber
&\quad
-\lambda_x \sum\limits_{\beta =1}^{Q}  \omega_\beta \bigg(
 \vec {\hat F}_1( \vec U^-_{i+\frac{1}{2},\beta}, { \vec U}^+_{i+\frac{1}{2},\beta} ) -
 \vec {\hat F}_1( {\vec U}^-_{i-\frac{1}{2},\beta}, \vec U^+_{i-\frac{1}{2},\beta})    \bigg)  \\ \nonumber
&\quad  -  \lambda_y \sum\limits_{\beta =1}^{Q} \omega_\beta \bigg(       \vec {\hat F}_2 ( \vec U^-_{\beta,j+\frac{1}{2}} ,{ \vec U}^+_{\beta,j+\frac{1}{2}}   )
- \vec {\hat F}_2( {\vec U}^-_{\beta,j-\frac{1}{2}} , \vec U^+_{\beta,j-\frac{1}{2}}  )
\bigg)
\end{split}
\\  
 \label{eq:2DRMHD:split:proof}
& = (1-2\hat \omega_1) \Xi_1 + \left(\lambda_x + \lambda_y \right) \Xi_2 +  \big( 2\hat \omega_1 - \left(\lambda_x + \lambda_y \right) \big)   \Xi_3,
\end{align}
with
\begin{align*}
\Xi_1
& := \sum \limits_{\alpha = 2}^{L-1} \frac{\hat \omega_\alpha}{1-2\hat \omega_1} \left[
\frac{\lambda_x}{\lambda_x+\lambda_y}  \sum \limits_{\beta = 1}^{Q} \omega_\beta  \vec U_{ij}\left(\hat x_i^\alpha,y_j^\beta\right) + \frac{\lambda_y}{\lambda_x+\lambda_y}  \sum \limits_{\beta = 1}^{Q} \omega_\beta  \vec U_{ij}\left( x_i^\beta,\hat y_j^\alpha \right)
\right],
\\ \nonumber
\begin{split}
\Xi_2
& :=  \frac{1}{2 \left(  \frac{1}{\Delta x} + \frac{1}{\Delta y} \right)}
\sum\limits_{\beta=1}^Q {{\omega _\beta}}
\bigg[
\frac{1}{\Delta x} \bigg(
{ \vec U }_{i+\frac{1}{2},\beta}^+ - \vec F_1 \left(   { \vec U }_{i+\frac{1}{2},\beta}^+ \right)
+  { \vec U }_{i-\frac{1}{2},\beta}^- + \vec F_1 \left(   { \vec U }_{i-\frac{1}{2},\beta}^- \right)
\bigg)\\ \nonumber
&\quad 
+
\frac{1}{\Delta y} \bigg(
 { \vec U }_{\beta,j+\frac{1}{2}}^+ - \vec F_2 \left(  { \vec U }_{\beta,j+\frac{1}{2}}^+ \right)
+  { \vec U }_{\beta,j-\frac{1}{2}}^- + \vec F_2 \left(  { \vec U }_{\beta,j-\frac{1}{2}}^- \right)
\bigg)
\bigg],
\end{split}
 \\ \nonumber
 \begin{split}
 \Xi_3
 & := \frac{1}{2 \left(  \frac{1}{\Delta x} + \frac{1}{\Delta y} \right)}
\sum\limits_{\beta=1}^Q {{\omega _\beta}}
\bigg[
\frac{1}{\Delta x} \bigg(
 { \vec U }_{i+\frac{1}{2},\beta}^- - \theta^{-1} \vec F_1 \left(   { \vec U }_{i+\frac{1}{2},\beta}^- \right)
+  { \vec U }_{i-\frac{1}{2},\beta}^+ + \theta^{-1} \vec F_1 \left(   { \vec U }_{i-\frac{1}{2},\beta}^+ \right)
\bigg)\\ \nonumber
&\quad 
+
\frac{1}{\Delta y} \bigg(
 { \vec U }_{\beta,j+\frac{1}{2}}^- - \theta^{-1} \vec F_2 \left(   { \vec U }_{\beta,j+\frac{1}{2}}^- \right)
+  { \vec U }_{\beta,j-\frac{1}{2}}^+ + \theta^{-1} \vec F_2 \left(   { \vec U }_{\beta,j-\frac{1}{2}}^+ \right)
\bigg)
\bigg],
\end{split}
\end{align*}
where $\theta :=\frac{2\hat\omega_1}{\lambda_x+\lambda_y} - 1 \ge 1$.
Thanks to the convexity of $\mathcal G$ and the condition (ii), $\Xi_1 \in {\mathcal G}$.
With ${\vec U}_{i\pm \frac{1}{2},\beta}^\mp , {\vec U}_{\beta,j\pm \frac{1}{2}}^\mp \in {\mathcal G}$, $\theta\ge 1$, and the condition \eqref{eq:DivB:cst1},
one has $\Xi_3 \in{\mathcal G}$ by the generalized LxF splitting property in Theorem \ref{theo:RMHD:LLFsplit2D}.
Similarly, utilizing the condition \eqref{eq:DivB:cst2} gives  $\Xi_2 \in{\mathcal G}$.
Thus using \eqref{eq:2DRMHD:split:proof} and the convexity of $\mathcal G$ yields $ \overline{\vec U}_{ij}^{n+1} \in {\mathcal G}$, and completes the proof.
\end{proof}

\begin{remark}
Both  \eqref{eq:DivB:cst1} or \eqref{eq:DivB:cst2} in the condition (i)
are approximating \eqref{eq:div000} by replacing
$x_0$ and $y_0$ with $x_{i-\frac12}$ and $y_{j-\frac12}$ respectively.
\end{remark}

\begin{remark}
For some high-order finite volume methods,
it only needs to reconstruct
the limiting values $\vec U_{i-\frac{1}{2},\beta}^+$, $\vec U_{i+\frac{1}{2},\beta}^-$, $\vec U_{\beta,j-\frac{1}{2}}^+$,
$\vec U_{\beta,j+\frac{1}{2}}^-$,
instead of the  polynomial vector
${\vec U}_{ij}^n(x,y)$. In this case, the condition {\rm(ii)} in Theorem \ref{thm:PCP:2DRMHD} can
be replaced with the following condition
\begin{equation*}
\begin{split}
&\vec U_{i-\frac{1}{2},\beta}^+, \vec U_{i+\frac{1}{2},\beta}^-, \vec U_{\beta,j-\frac{1}{2}}^+,
\vec U_{\beta,j+\frac{1}{2}}^- \in {\mathcal G},\quad \beta=1,2,\cdots,Q,\\
&
\Xi_1 = \frac{1}{1-2\hat \omega_1} \Bigg(  \overline{\vec U}_{ij}^n - \hat \omega_1\sum \limits_{\beta = 1}^{Q}  \frac{ \omega_\beta }{ {\lambda_x+\lambda_y}}
\bigg(   \lambda_x \Big( \vec U_{i-\frac{1}{2},\beta}^+ +
\vec U_{i+\frac{1}{2},\beta}^-  \Big) + \lambda_y \Big( \vec U_{\beta,j-\frac{1}{2}}^+ +
\vec U_{\beta,j+\frac{1}{2}}^- \Big)   \bigg) \Bigg) \in {\mathcal G},
\end{split}
\end{equation*}
for all $i$ and $j$.
\end{remark}

It is worth emphasizing the above discussions can be extended to non-uniform  or unstructured meshes by using Theorem \ref{theo:RMHD:LLFsplit2Dus}.
Theorem \ref{thm:PCP:2DRMHD} provides
two sufficient conditions (i) and (ii) on the function $\vec U_{ij}^n(x,y)$ reconstructed in the finite volume method or evolved in the DG method
in order to ensure that the  numerical schemes \eqref{eq:2DRMHD:cellaverage} is PCP.
The condition (ii) can be easily met by using the PCP limiter similar to that in Section \ref{sec:High1D}, but
Eqs. \eqref{eq:DivB:cst1} and \eqref{eq:DivB:cst2} in the condition (i) are two constraints on the discrete divergence.
By using the {\em divergence theorem}, it can be seen that the discrete divergence-free condition \eqref{eq:DivB:cst1} may be met if the
 the reconstructed or evolved polynomial vector $(B_1,B_2)_{ij}(x,y)$ is locally divergence-free,
 see e.g. \cite{Li2005,ZhaoTang2016}. The locally divergence-free property
 of $(B_1,B_2)_{ij}(x,y)$ is  not destroyed in the PCP limiting procedure
  since the PCP limiter modifies the vectors $\vec U_{ij}^n(x,y)$ only with a simple scaling.
The condition \eqref{eq:DivB:cst2}  is necessary for a PCP numerical scheme for the RMHD equations,
{see} Example \ref{Counterexample}, where the magnetic vector satisfies
\eqref{eq:DivB:cst1} and the condition {(ii)}.  Numerical results in Section \ref{sec:examples} will further demonstrate
the importance of condition \eqref{eq:DivB:cst2}.
However, it is not easy to meet the condition \eqref{eq:DivB:cst2} because \eqref{eq:DivB:cst2}
 depends on the limiting values of  the magnetic field calculated from the neighboring cells $I_{i\pm1,j}$ and $I_{i,j\pm1}$
 of  $I_{ij}$.
If the polynomials $(B_1,B_2)_{ij}(x,y)$ are globally or exactly divergence-free,
in other words, it is locally divergence-free in $I_{ij}$ with
normal magnetic component continuous across the cell interface,
then \eqref{eq:DivB:cst1} and \eqref{eq:DivB:cst2} are satisfied.
But the PCP limiter with local scaling may destroy the globally or exactly divergence-free property of $(B_1,B_2)_{ij}(x,y)$. Hence, it is nontrivial and still open to design a limiting procedure for the polynomial vector $\vec U_{ij}^n(x,y)$  satisfying two sufficient conditions in Theorem \ref{thm:PCP:2DRMHD} at the same time.

\begin{remark}\label{remark:diveonpcp}
 As the mesh is refined, it can be weakened that violating the condition \eqref{eq:DivB:cst2} impacts on the PCP property,  if the reconstructed or evolved polynomial vector $(B_1,B_2)_{ij}(x,y)$ is locally divergence-free, i.e. $\mbox{\rm div} _{ij}^{\mbox{\tiny \rm in}} {\vec B}=0$.
 In fact, the proof of Theorem \ref{thm:PCP:2DRMHD} shows that the condition \eqref{eq:DivB:cst2} is only related to  $\Xi_2 \in {\mathcal G}$. If assuming that $(B_1,B_2)_{ij}(x,y)$ approximates the exact solution $(B_1,B_2)(x,y,t_n)$ with at least first order, then the continuity of $B_1(x,y,t_n)$ across the edge $\big\{x_{i + \frac{1}{2}} \big\}\times \big(y_{j-\frac{1}{2}},y_{j+\frac{1}{2}}\big)$ implies
$$
(B_1)_{i+\frac{1}{2},\beta}^- = B_1\big(x_{i + \frac{1}{2}},y_j^\beta,t_n\big) + {\mathcal O} (\Delta),\quad
(B_1)_{i+\frac{1}{2},\beta}^+ = B_1 \big(x_{i + \frac{1}{2}},y_j^\beta,t_n \big) + {\mathcal O} (\Delta),
$$
where $\Delta = \min\{\Delta x,\Delta y\}$. Similarly, one has
$$
(B_2)_{\beta,j+\frac{1}{2}}^- = B_2\big( x_i^\beta, y_{j + \frac{1}{2}},t_n\big) + {\mathcal O} (\Delta),\quad
(B_2)_{\beta,j+\frac{1}{2}}^- = B_2\big( x_i^\beta, y_{j + \frac{1}{2}},t_n\big) + {\mathcal O} (\Delta).
$$
It follows that $\mbox{\rm div} _{ij}^{\mbox{\tiny \rm out}} {\vec B}= \mbox{\rm div} _{ij}^{\mbox{\tiny \rm in}} {\vec B} + {\mathcal O}(1)={\mathcal O}(1)$, so that $\Xi_2$ may not belong to ${\mathcal G}$.
However, $\Xi_2$ is very close to ${\mathcal G}$ in the   sense of that the first component of $\Xi_2$ is positive, and for any ${\vec B}^*,\vec v^* \in \mathbb{R}^3$ with $|\vec v^*|<1$, it holds
$$
\Xi_2 \cdot \vec n^* + p_{m}^* \ge
- \frac{ \vec v^* \cdot \vec B^*  }{2 \left(  \frac{1}{\Delta x} + \frac{1}{\Delta y} \right)} \Big( \mbox{\rm div} _{ij}^{\mbox{\tiny \rm out}} {\vec B} \Big)
= -{\mathcal O} (\Delta),
$$
whose derivation is similar to that of Theorem \ref{theo:RMHD:LLFsplit2D}. Therefore, as $\Delta$ approaches $0$, the convex combination in \eqref{eq:2DRMHD:split:proof} becomes more possibly in ${\mathcal G}$.

\end{remark}

\section{Numerical experiments}\label{sec:examples}

This section conducts numerical experiments on several 1D and 2D challenging RMHD problems
with either large Lorentz factors, strong discontinuities, low plasma-beta $\beta:=p/p_m$, or low rest-mass density or pressure,
to demonstrate our theoretical analyses and the performance of the proposed PCP  limiter.
Without loss of generality, we take the (third-order accurate) ${\mathbb{P}}^2$-based, locally divergence-free DG methods \cite{ZhaoTang2016}, together with
the third-order SSP Runge-Kutta time disretization \eqref{eq:RK1}, as
our base schemes.
According to the analysis in Section \ref{sec:High1D}, the 1D base scheme with the proposed PCP limiter results in a PCP DG scheme.
As discussed in Section \ref{sec:High2D}, the 2D base scheme with such limiter may not be PCP in general, because the discrete divergence-free condition \eqref{eq:DivB:cst2} in Theorem \ref{thm:PCP:2DRMHD} is not strictly satisfied even though the locally divergence-free property can ensure the condition \eqref{eq:DivB:cst1}.
However, it will be shown in the following that the PCP limiter can still improve the robustness of   2D DG method.
To meet the conditions \eqref{eq:CFL:1DRMHD} and \eqref{eq:CFL:2DRMHD}, the time step-sizes in 1D and 2D will be taken as $0.15\Delta x$ and $0.15\left({1}/{\Delta x}+{1}/{\Delta y}\right)^{-1}$, respectively.
Unless otherwise stated,
all the computations are restricted to the equation of state \eqref{eq:EOS} with the adiabatic index $\Gamma = 5/3$.

\begin{example}[Smooth problems] \label{example1D2Dsmooth}\rm
A 1D and a 2D smooth problems are respectively solved within the domain $[0,1]^d$ on the uniform meshes of $N^d$ cells  to test the accuracy of the ${\mathbb{P}}^2$-based DG methods with the proposed PCP limiter.

The 1D problem describes Alfv\'en waves propagating periodically with large velocity of 0.99 and low pressure, and has the exact solution
$$\vec V(x,t)=\big( 1,0, v_2, v_3, 1,
 \kappa v_2,  \kappa v_3, 10^{-2} \big)^{\top}, \quad (x,t)\in [0,1] \times {\mathbb{R}}^+,
$$
where $v_2=0.99 \sin(2\pi \big(x+t/\kappa)\big)$, $v_3 = 0.99 \cos(2\pi \big(x+t/\kappa)\big)$, and $\kappa = \sqrt{1+\rho h W^2}$. While the 2D problem's exact solution is given by
$$\vec V(x,y,t)=\big(1+0.99999999 \sin \big(2 \pi (x+y)\big), 0.9, 0.2, 0, 1, 1, 1, 10^{-2} \big)^{\top}, \quad (x,y,t)\in [0,1]^2 \times {\mathbb{R}}^+,
$$
which describes a RMHD sine wave fast propagating with low density and pressure.

Fig. \ref{fig:smooth} displays the  $l^1$- and $l^2$-errors at $t=1$ and corresponding orders
obtained  by using the proposed DG methods, respectively.
The results show that the theoretical orders are obtained in both 1D and 2D cases, and
the PCP limiting procedure does not destroy the accuracy.

\begin{figure}[htbp]
  \centering
    \subfigure[Errors in $v_2$ for 1D smooth problem]
    {\includegraphics[width=0.48\textwidth]{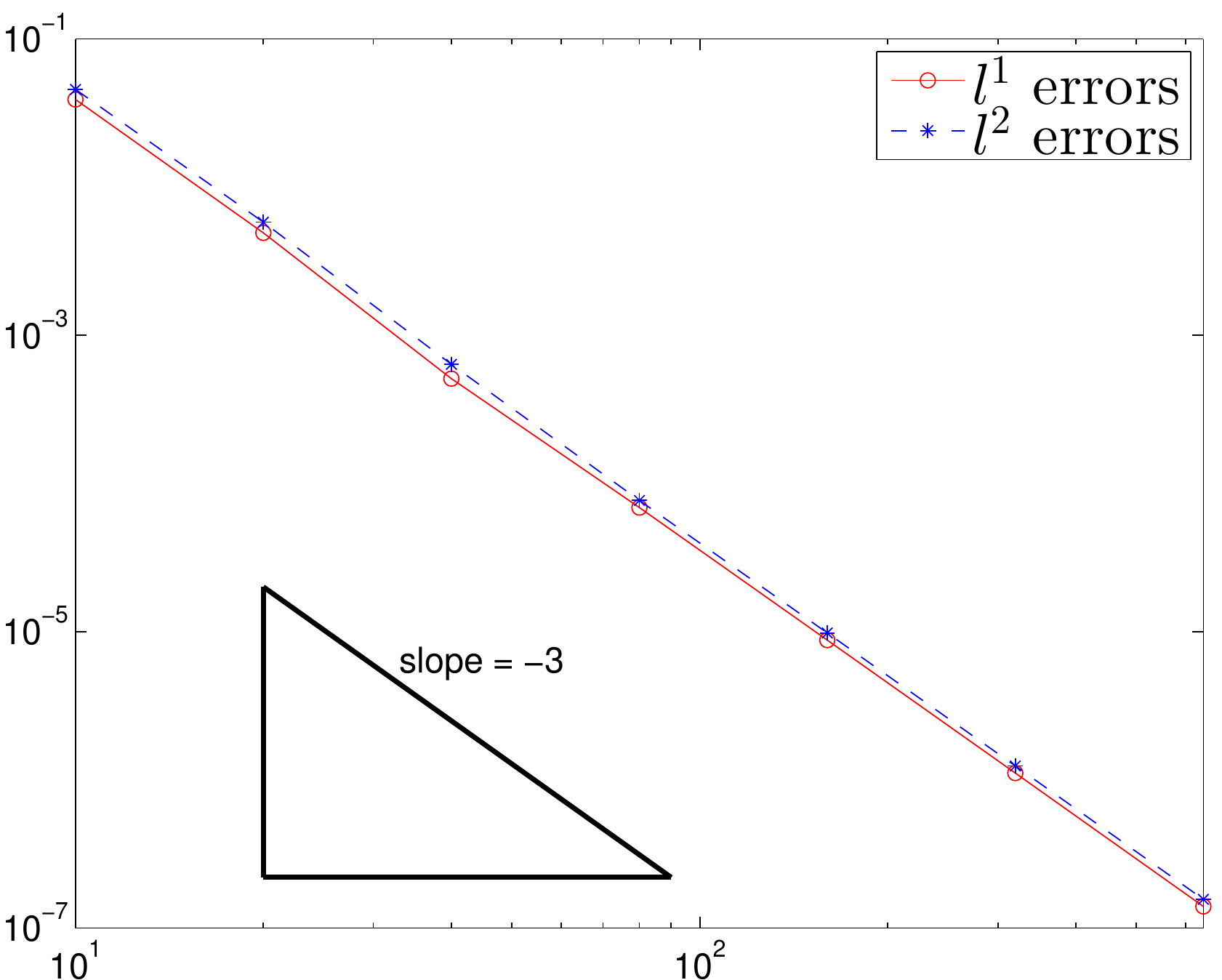}}
  \subfigure[Errors in $\rho$ for 2D smooth problem]
  {\includegraphics[width=0.48\textwidth]{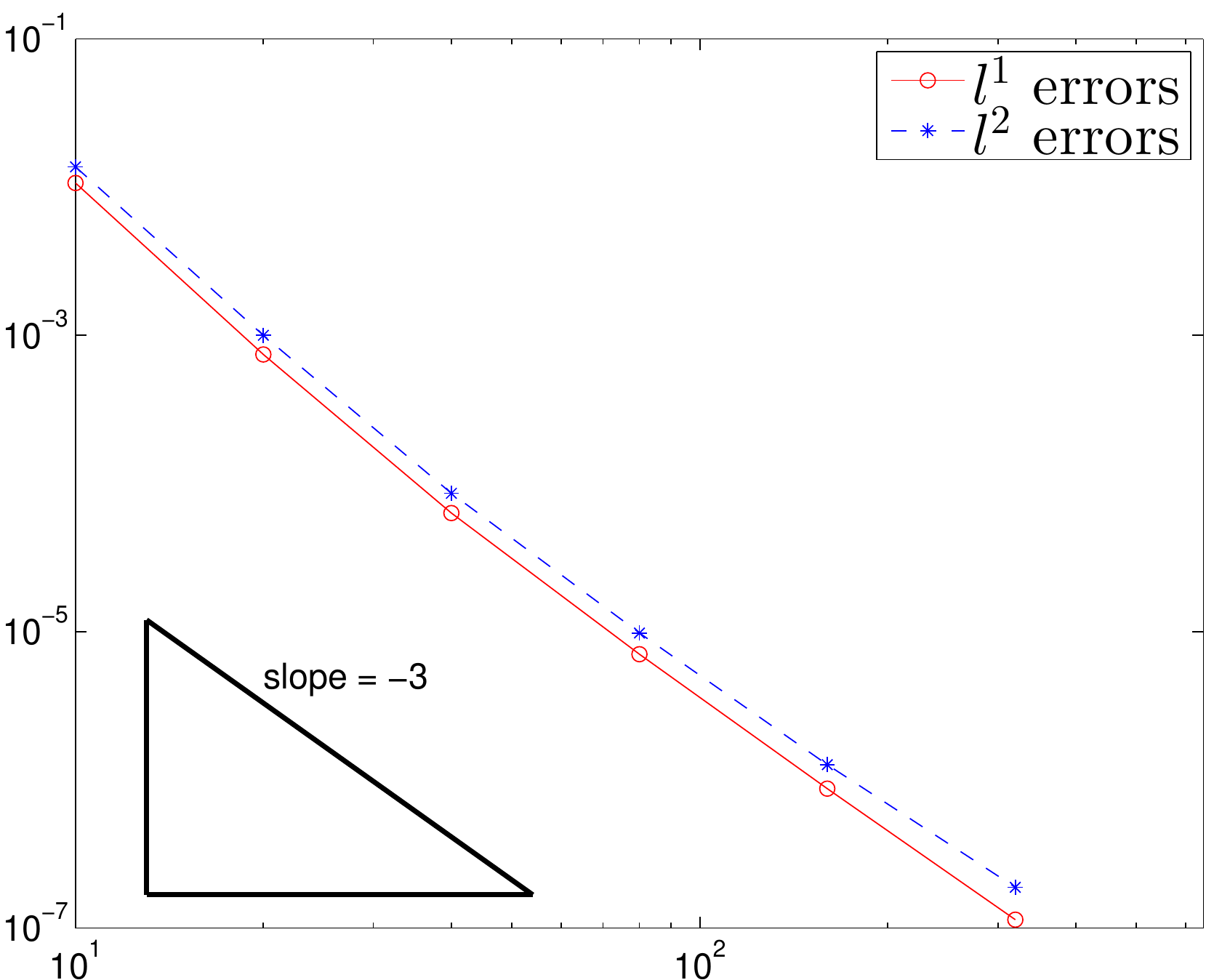}}
  \caption{\small Example \ref{example1D2Dsmooth}: Numerical $l^1$- and $l^2$-errors at $t=1$. The horizontal axis represents the value of $N$.
 }
  \label{fig:smooth}
\end{figure}

\end{example}

To verify the capability of  DG methods with PCP limiter in resolving 1D and 2D ultra-relativistic wave configurations,
three 1D Riemann problems, a 2D rotor problem, a 2D shock and cloud interaction problem, and
 several 2D blast problems will be solved. For those problems, before using  the PCP limiting procedure,
the WENO limiter \cite{Qiu2005,ZhaoTang2016} will be implemented  with the aid of the local
characteristic decomposition \cite{Anton2010} to enhance the numerical stability of high-order DG methods in resolving
the strong discontinuities as well as their interactions.
 Different from \cite{ZhaoTang2016},  the improved WENO proposed in \cite{Borges2008} and the ``trouble'' cell indicator in \cite{Krivodonova} are used here.

\begin{example}[1D Riemann problems] \label{example1DRiemanns}\rm
This example verifies the robustness and effectiveness of the PCP DG method by simulating three 1D Riemann problems (RP), whose initial data comprise two different constant states separated by the initial discontinuity at $x=0$, see Table \ref{tab:riemann1d}. The computational domain is $[-0.5, 0.5]$ and divided into $1000$ uniform cells.

\begin{table}[htbp]
  \centering 
    \caption{\small Initial data of the three 1D RPs in Example \ref{example1DRiemanns}.
  }
\begin{tabular}{c|c|cccccccc}
\hline
\multicolumn{2}{c|}{} & $\quad \rho \quad$   & $\quad v_1 \quad$ &  $\quad v_2 \quad$   &  $\quad v_3 \quad$  &  $\quad B_1 \quad$  & $\quad B_2 \quad$  &  $\quad B_3 \quad$  &  $\quad p \quad$  \\
\hline
\hline
\multirow{2}{32pt}{RP I}
&
left state  & 1 & 0  & 0
 & 0 & 5 & 26 & 26 & 30 \\
\cline{2-10}
& right state & 1 & 0  & 0
 & 0 & 5 & 0.7 & 0.7 & 1  \\
\hline
\multirow{2}{32pt}{RP II}
&
left state  & 1 & 0  & 0
 & 0 & 10 & 7 & 7 & $10^4$ \\
\cline{2-10}
& right state & 1 & 0  & 0
 & 0 & 10 & 0.7 & 0.7 & $10^{-8}$  \\
\hline
\multirow{2}{32pt}{RP III}
&
left state  & 1 & 0.99999  & 0
 & 0 & 100 & 70 & 70 & $0.1$ \\
\cline{2-10}
& right state & 1 & -0.99999  & 0
 & 0 & 100 & -70 & -70 & $0.1$ \\
\hline
\end{tabular}\label{tab:riemann1d}
\end{table}

\begin{figure}[htbp]
  \centering
  {\includegraphics[width=0.48\textwidth]{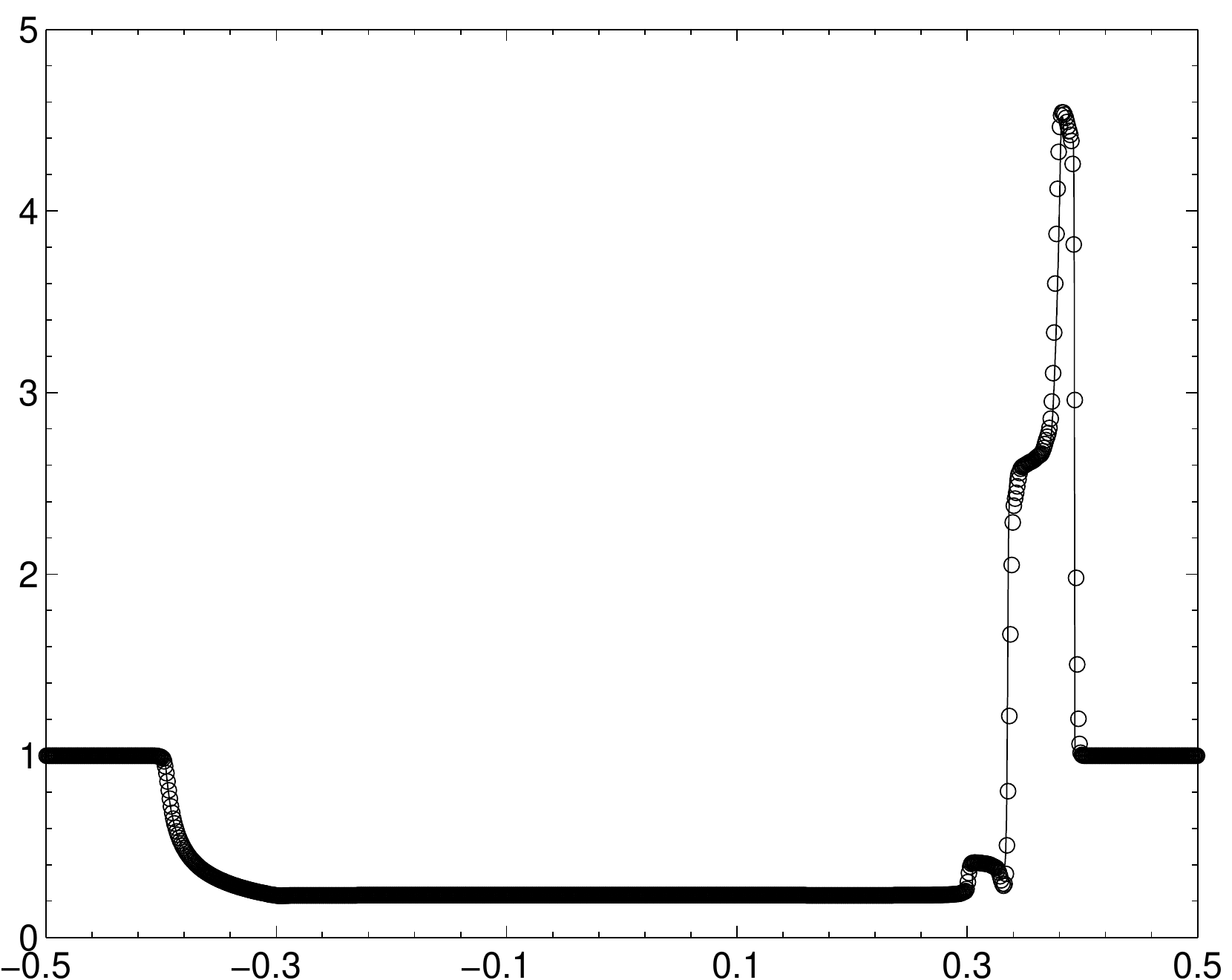}}
  {\includegraphics[width=0.48\textwidth]{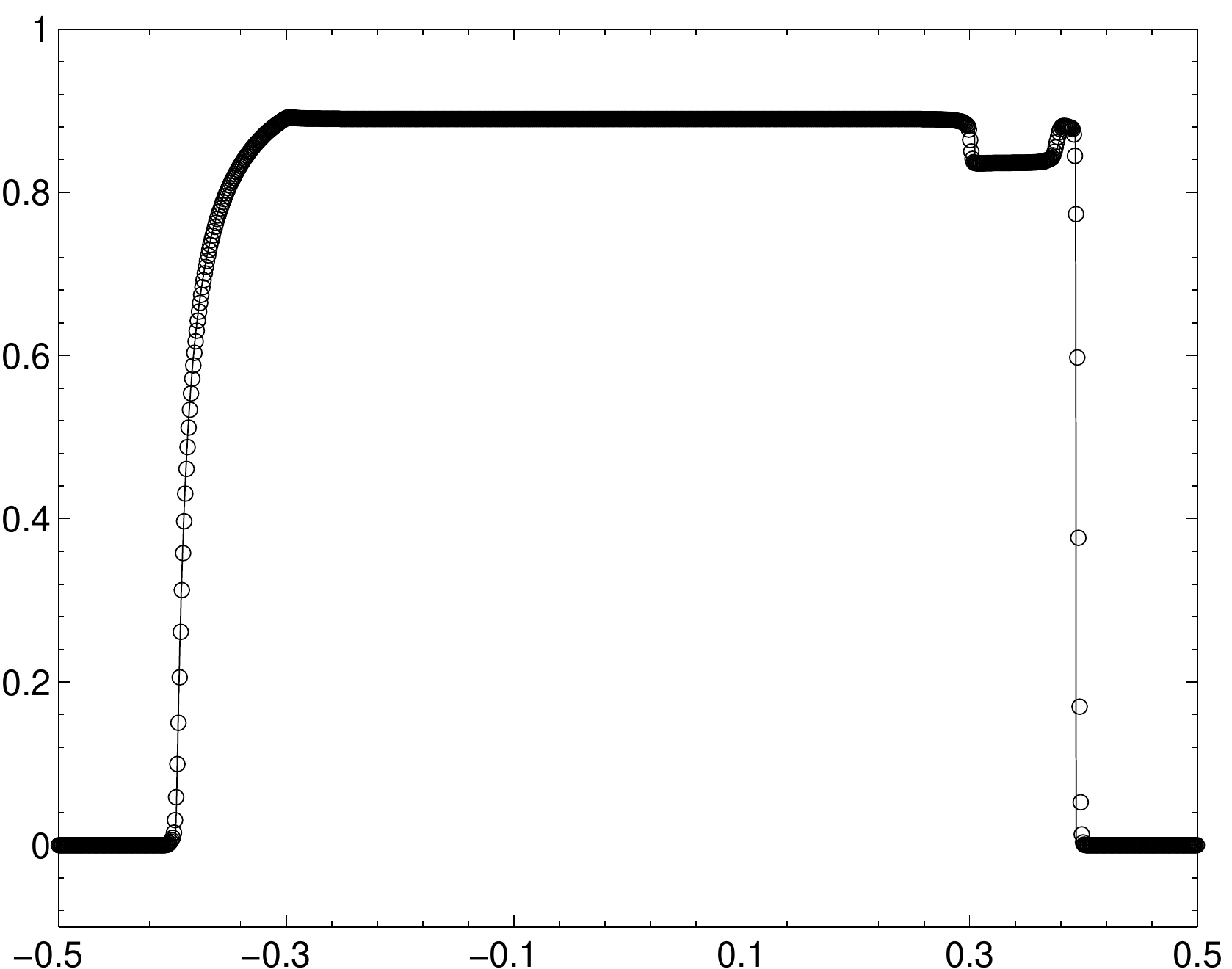}}
  {\includegraphics[width=0.48\textwidth]{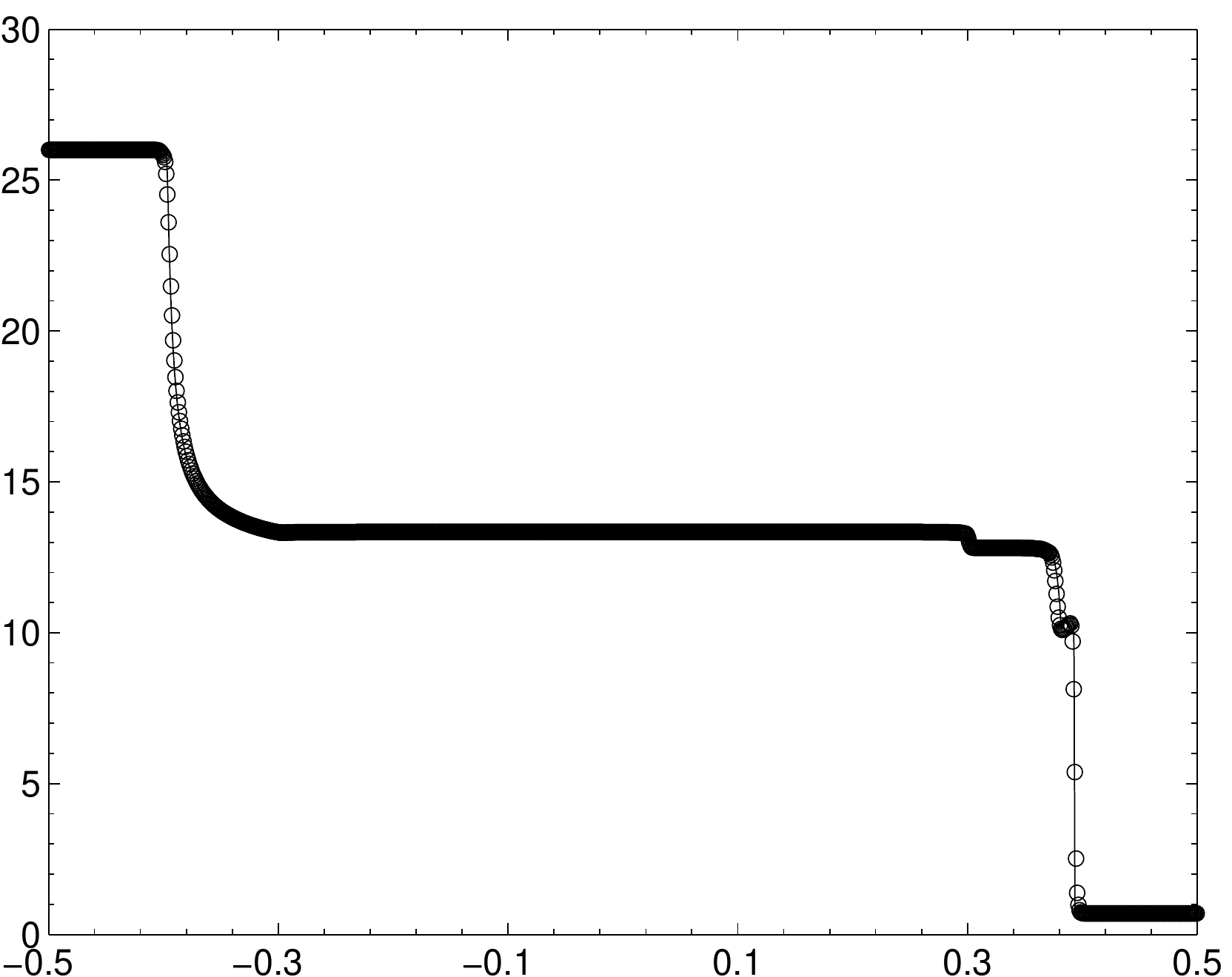}}
  {\includegraphics[width=0.48\textwidth]{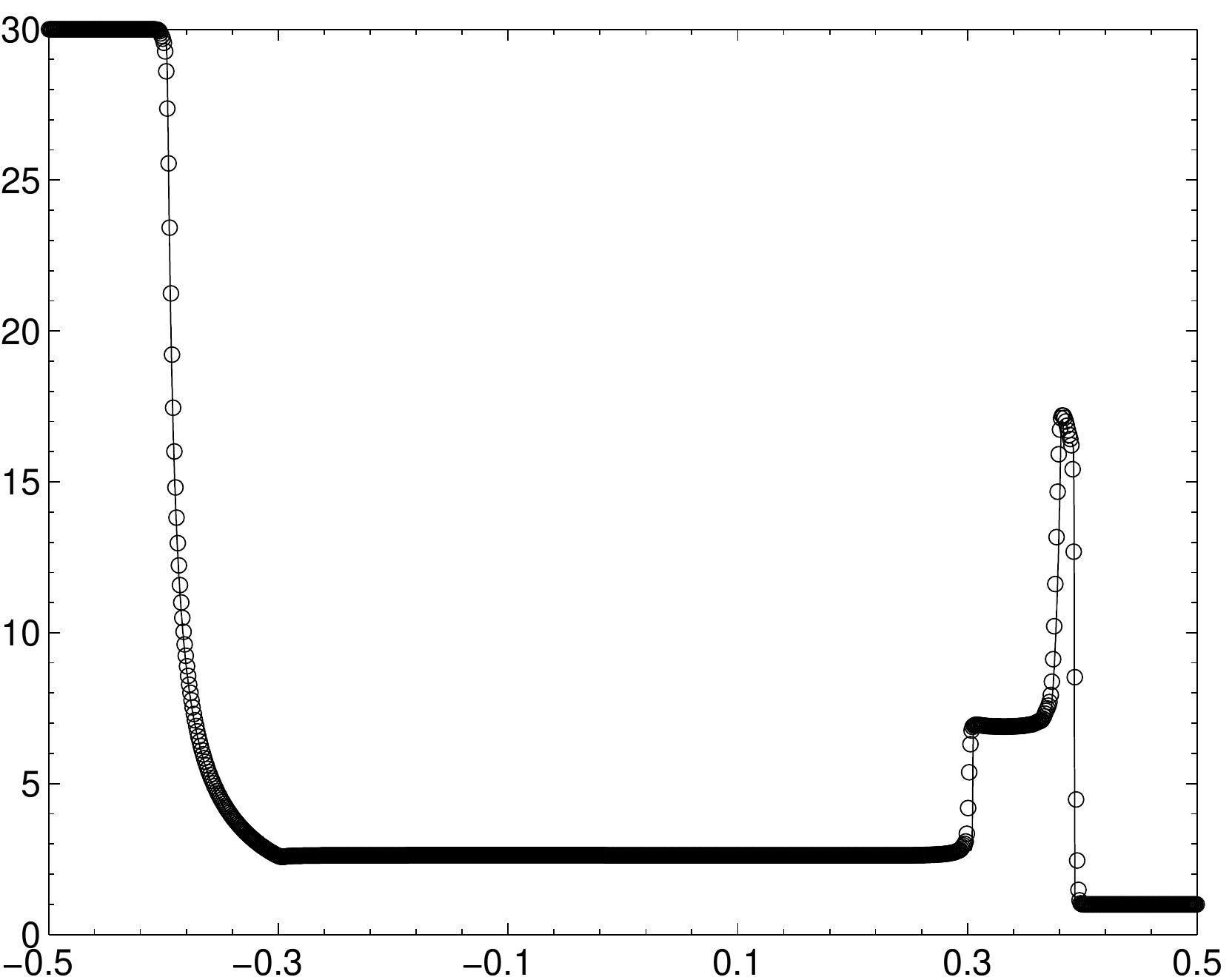}}
  \caption{\small RP I in Example \ref{example1DRiemanns}: The density $\rho$ (top left),
  	velocity $v_1$ (top right), magnetic-field $B_2$ (bottom left), and pressure $p$ (bottom right) at $t=0.4$ obtained by the PCP DG method. The solid lines denote the reference solutions.}
  \label{fig:RPI}
\end{figure}

\begin{figure}[htbp]
  \centering
  {\includegraphics[width=0.48\textwidth]{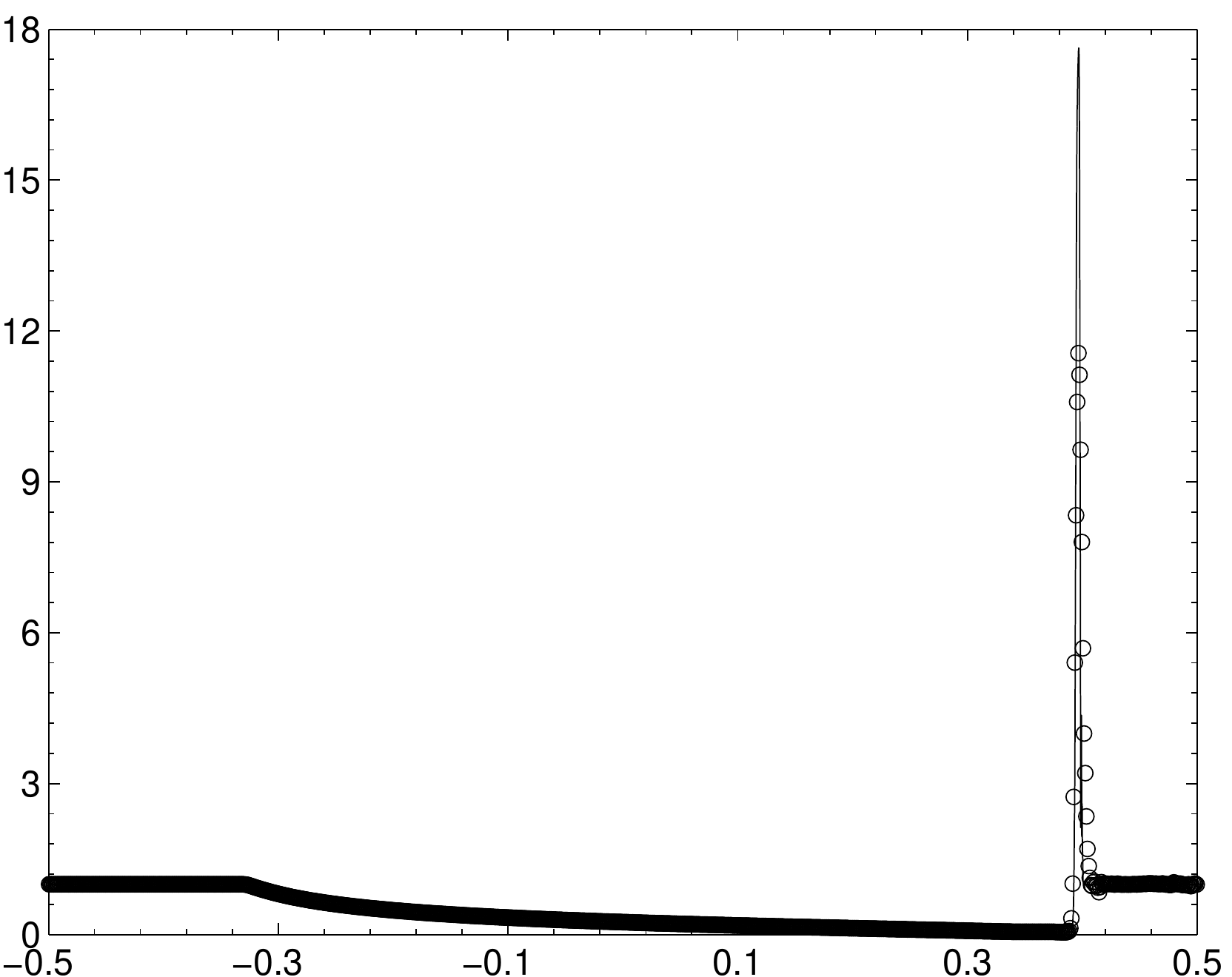}}
  {\includegraphics[width=0.48\textwidth]{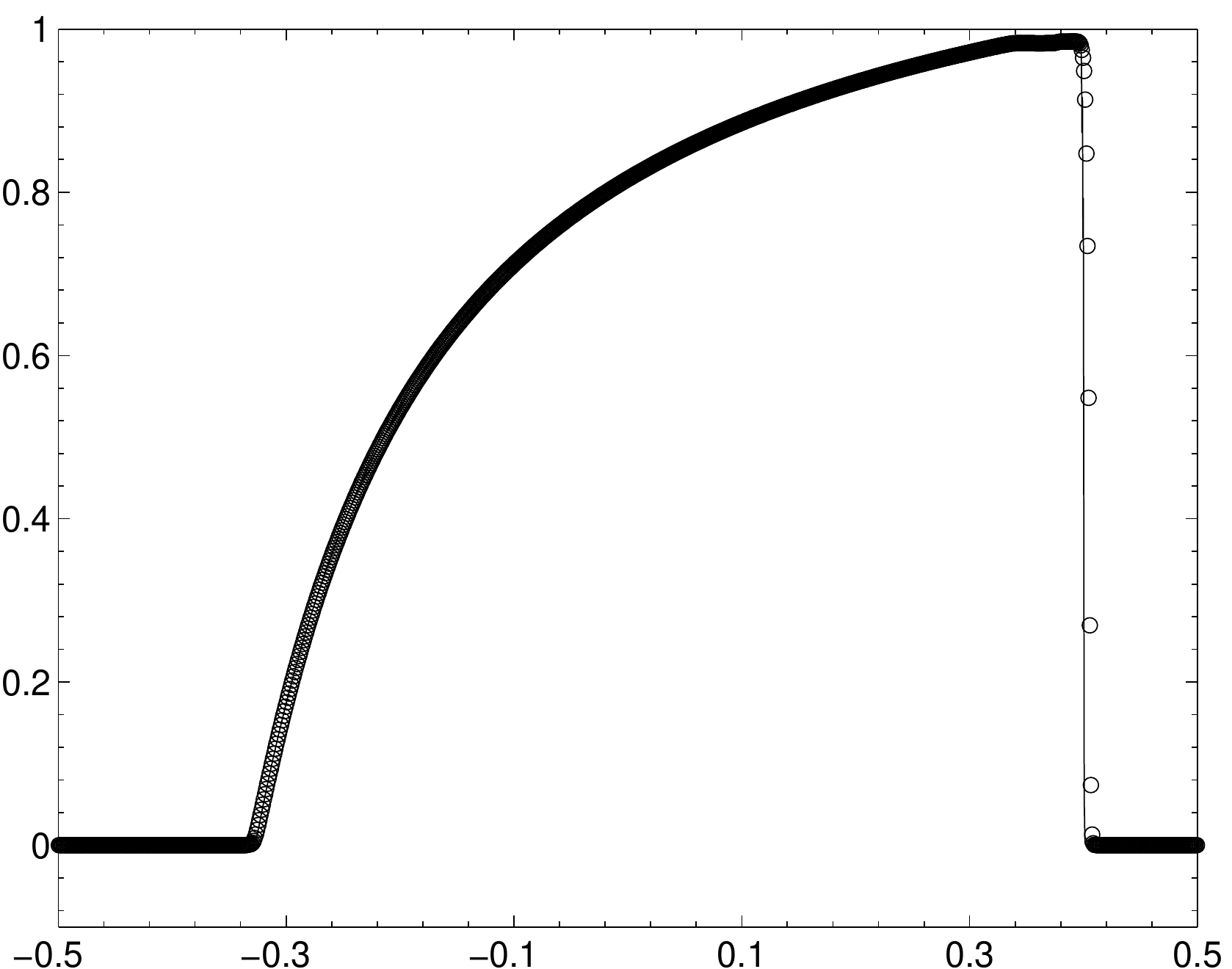}}
  {\includegraphics[width=0.48\textwidth]{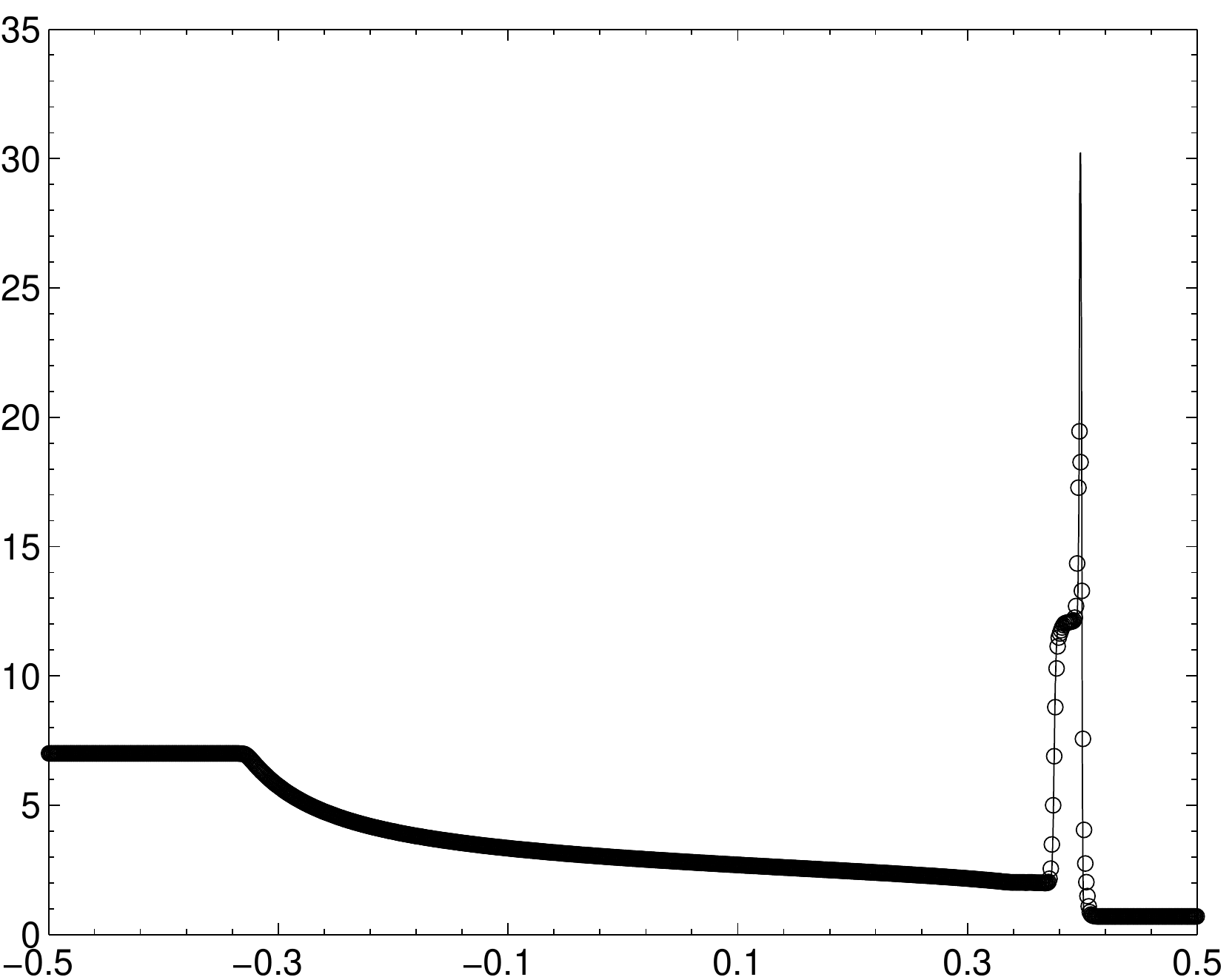}}
  {\includegraphics[width=0.48\textwidth]{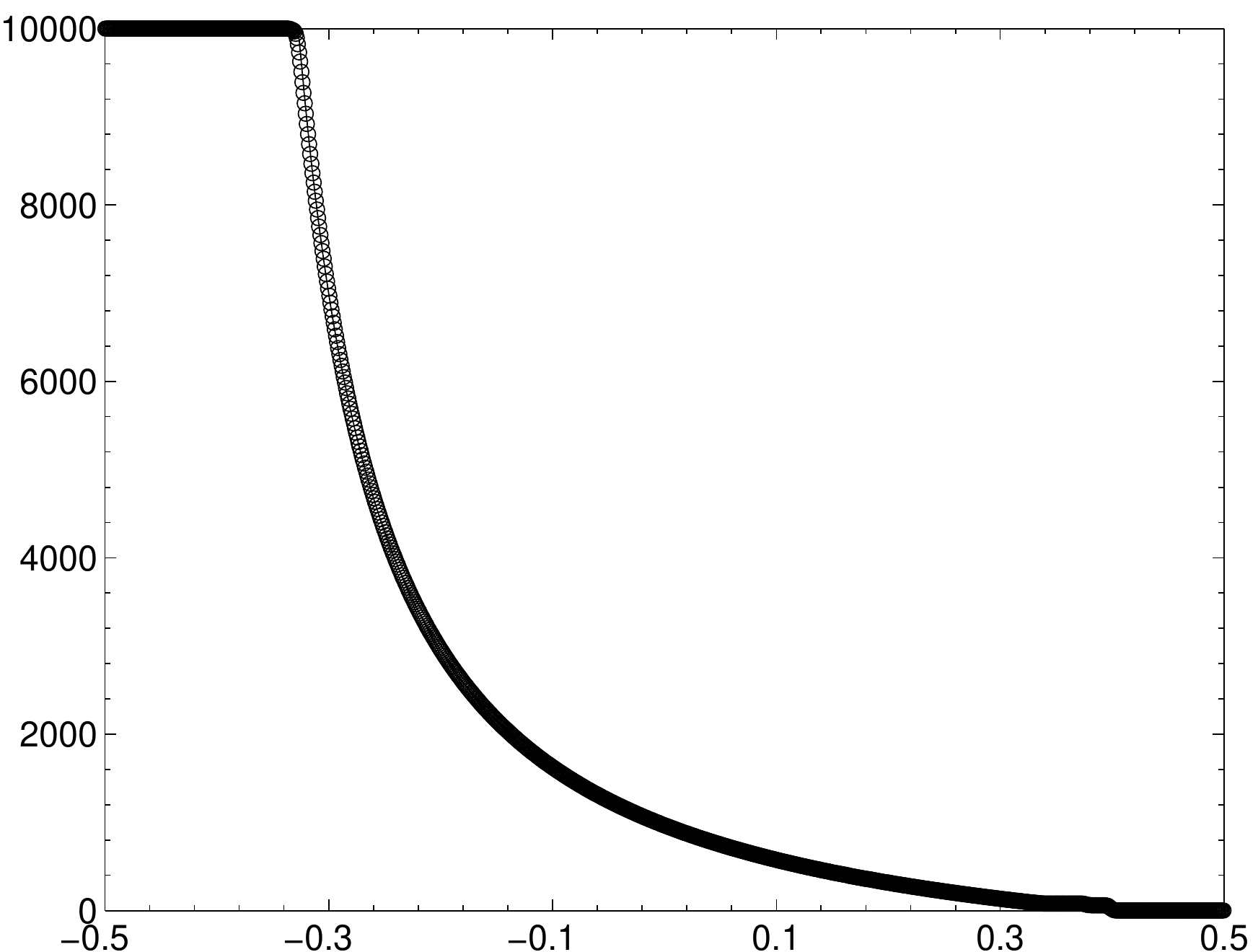}}
  \caption{\small Same as Fig. \ref{fig:RPI} except for RP II.}
  \label{fig:RPII}
\end{figure}

The first two Riemann problems  are  similar to but more ultra than those   1D blast wave problems in   \cite{Balsara2001,Giacomazzo2006}.
Specifically, the stronger magnetic field ($| \vec B | \approx 37.108$) appears in the left state of the first problem, while a very strong initial jump in pressure ($\Delta p :=|p_R-p_L|/p_R \approx 10^{12}$) and extremely low gas pressure (the minimum plasma-beta $\beta:=p/p_m \approx 1.98 \times 10^{-10}$) in the second problem.
The numerical results of those problem at $t=0.4$ obtained by using the  ${\mathbb{P}}^2$-based PCP DG method are displayed by symbols ``{$\circ$}'' in Figs. \ref{fig:RPI} and \ref{fig:RPII} respectively, where and hereafter the solid lines denote the reference solutions obtained by a second-order MUSCL scheme with PCP limiter over the uniform mesh of $20000$ cells.
It is seen that the PCP DG method exhibits good resolution and strong robustness, and the results agree well with the reference ones.
Without employing the PCP limiting procedure, the high-order accurate DG methods will break down quickly
within few time steps due to nonphysical numerical solutions.

\begin{figure}[htbp]
  \centering
  {\includegraphics[width=0.48\textwidth]{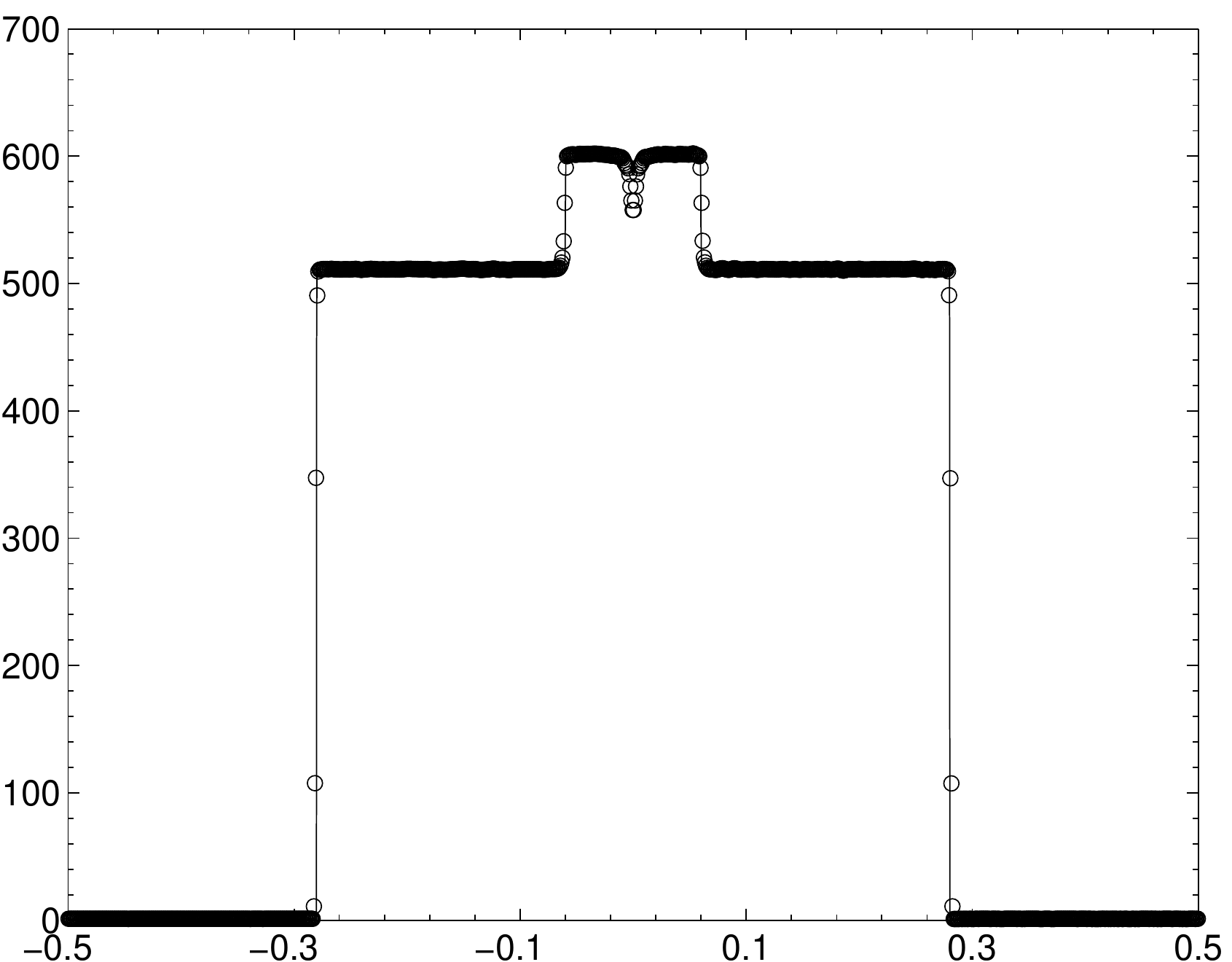}}
  {\includegraphics[width=0.48\textwidth]{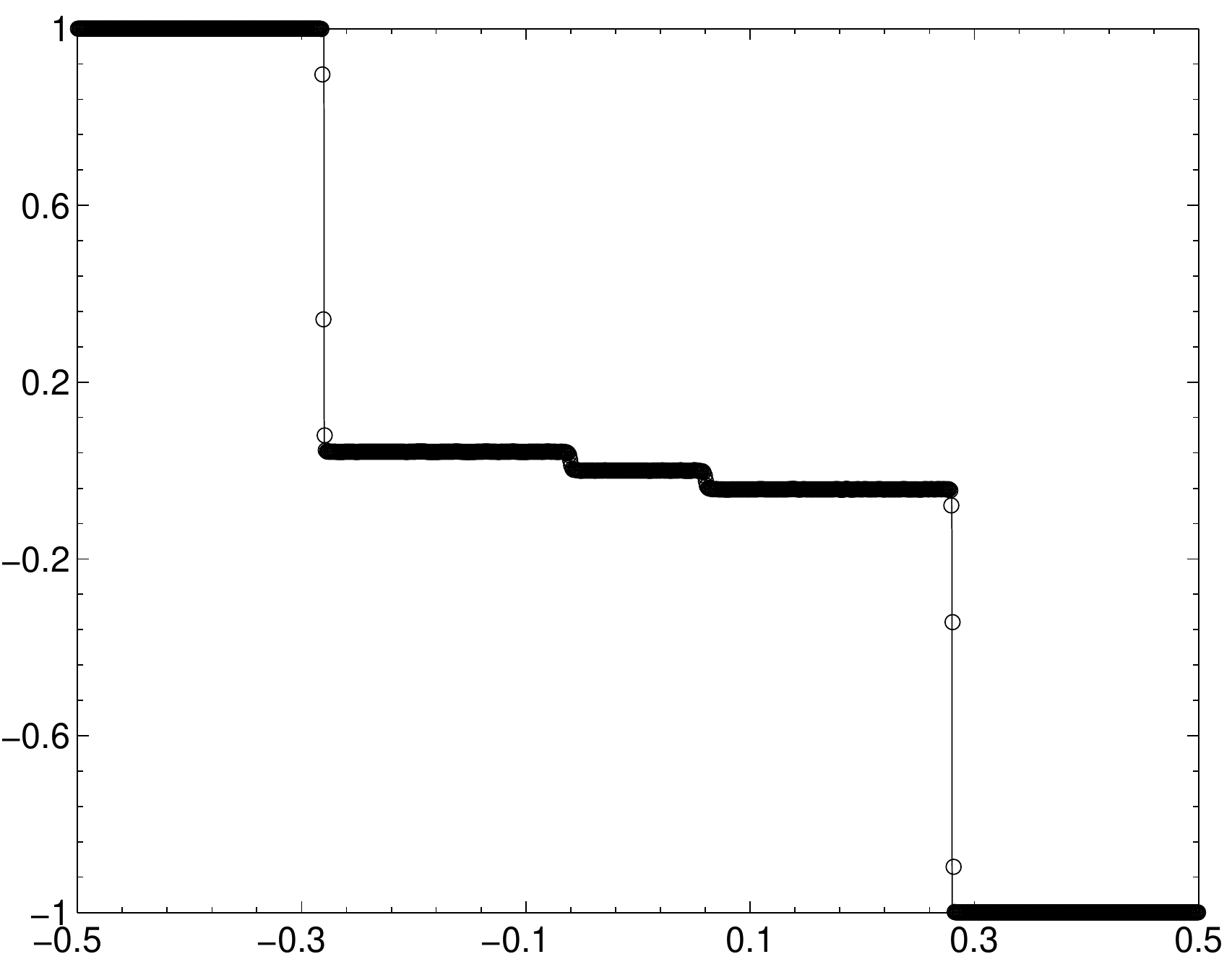}}
  {\includegraphics[width=0.48\textwidth]{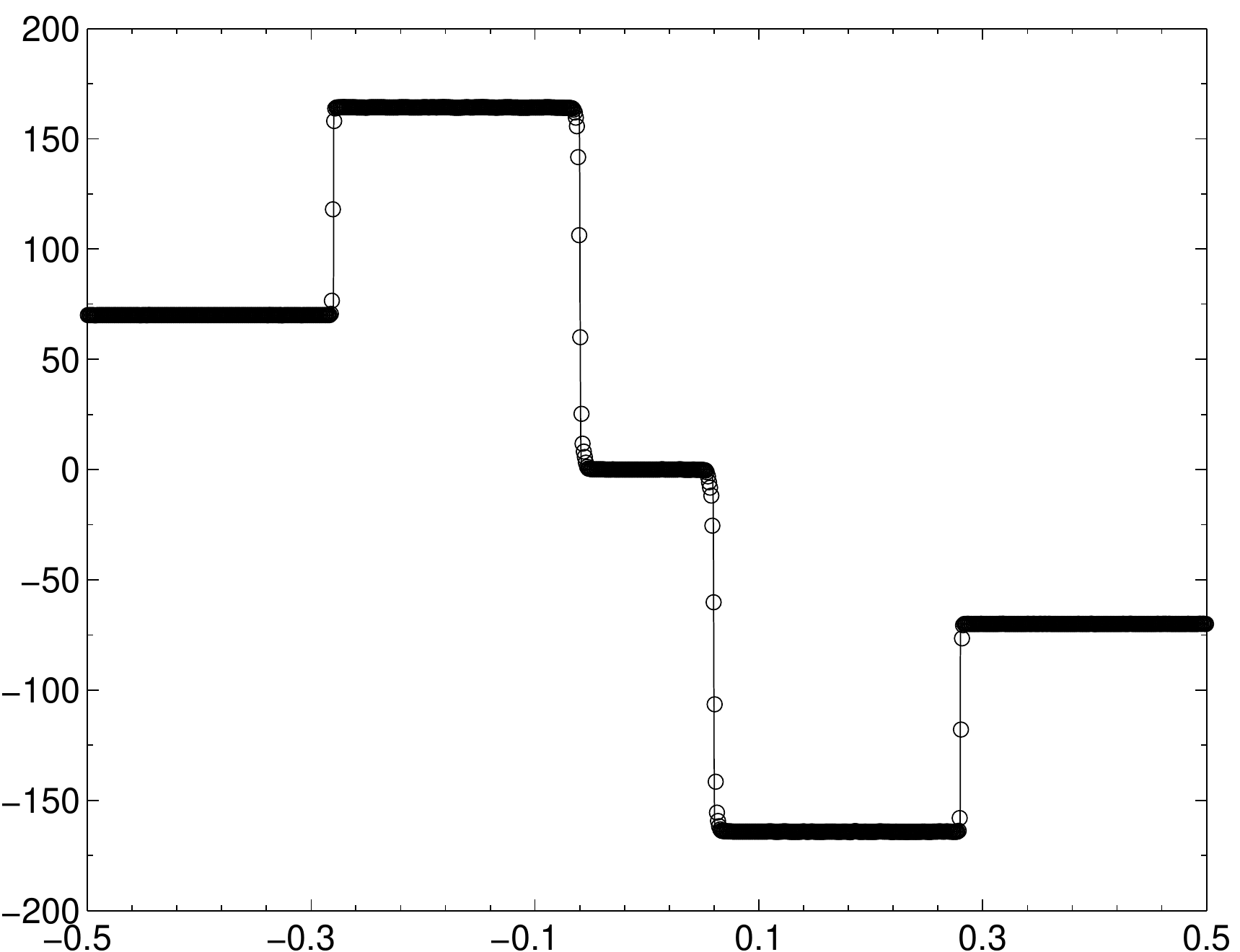}}
  {\includegraphics[width=0.48\textwidth]{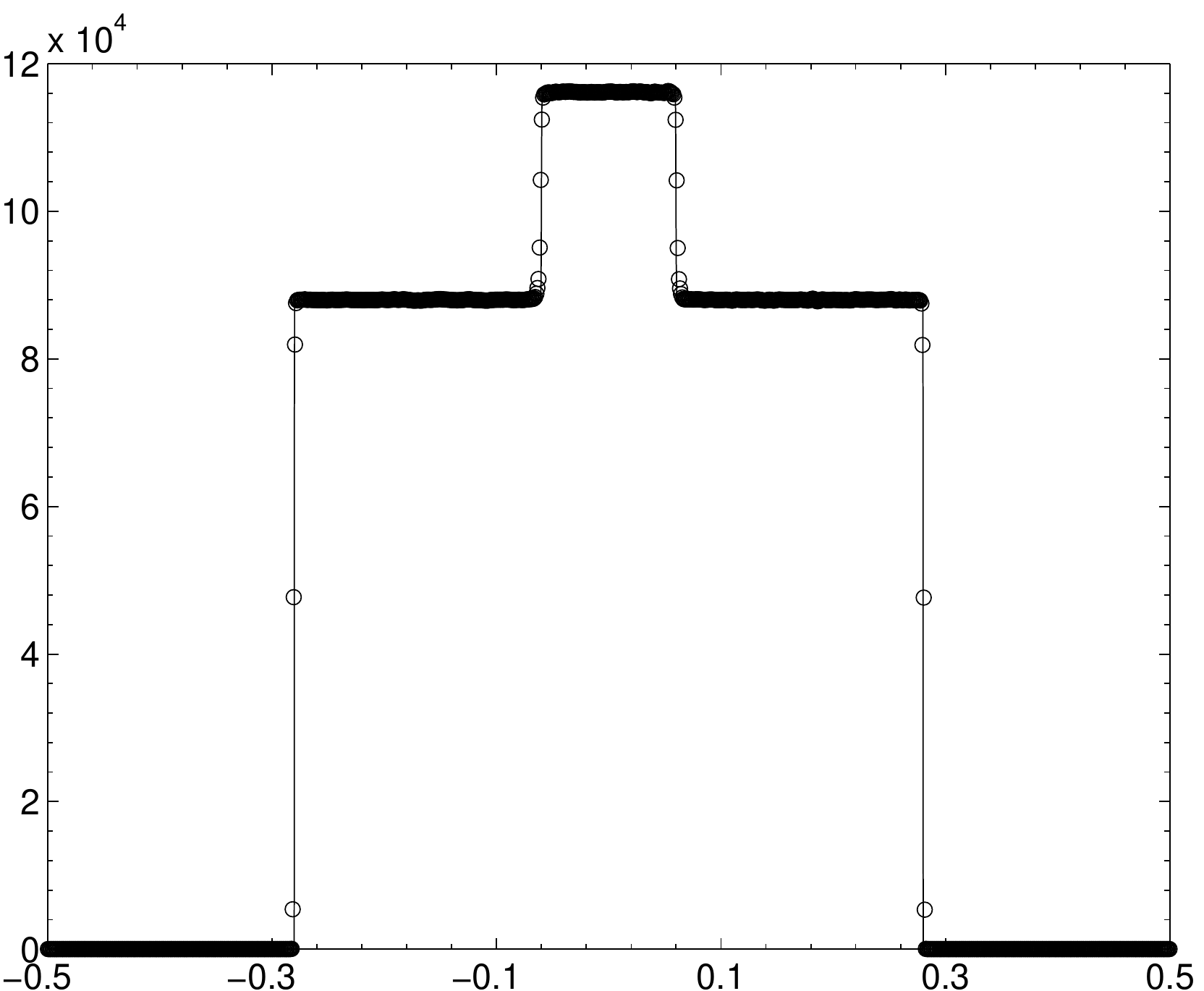}}
  \caption{\small Same as Fig. \ref{fig:RPI} except for RP III.}
  \label{fig:RPIII}
\end{figure}

The third Riemann problem describes the strong collision between two high-speed RMHD flows with a Lorentz factor of about 223.61.
As a result,  it is a very strongly relativistic test problem.
Fig. \ref{fig:RPIII} gives the numerical results at $t=0.4$ obtained by using the ${\mathbb{P}}^2$-based PCP DG method.
As the time increases, we see that two fast and two slow reflected shock waves are produced, and a very high pressure region appears between
the two slow shock waves. Those shock waves are well resolved robustly,
even though there exists the well-known wall-heating type phenomenon around $x=0$, which is often observed
in the literatures e.g. \cite{Balsara2001,HeTang2012RMHD}. It is worth mentioning that the ${\mathbb{P}}^2$-based DG method
fails in the first time step if the PCP limiting procedure is not employed.

\end{example}

\begin{example}[Rotor problem] \label{example2DRo}\rm
This is a benchmark test problem \cite{Zanna:2003} extended from the classical MHD rotor problem and
 has been widely simulated in the literatures.
It is set up on a unit domain $[-0.5,0.5]^2$ with outflow boundary conditions.
Initially, the gas pressure and magnetic field are uniform;
 there is a high-density disk centered at coordinate origin with radius of 0.1,
rotating in anti-clockwise direction at a speed close to $c$; the ambient fluid is homogeneous for $r>0.115$, and the fluid density and velocity
are linearly with respect to $r \in [0.1,0.115]$, where $r=\sqrt{x^2+y^2}$. Specifically, the initial data are
$$
\vec V(x,y,0)=
\begin{cases}
\big(10,-\alpha y,\alpha x,0,1,0,0,1\big)^\top,&   r<0.1,\\
\big(1+9 \delta,-\alpha y \delta/r, \alpha x \delta /r,0,1,0,0,1 \big)^\top,\quad &0.1 \le r \le 0.115, \\
\big(1,0, 0,0,1,0,0,1 \big)^\top,&   r> 0.115,
\end{cases}
$$
with $\delta:=(0.115-r)/0.015$.

\begin{figure}[htbp]
  \centering
  {\includegraphics[width=0.48\textwidth]{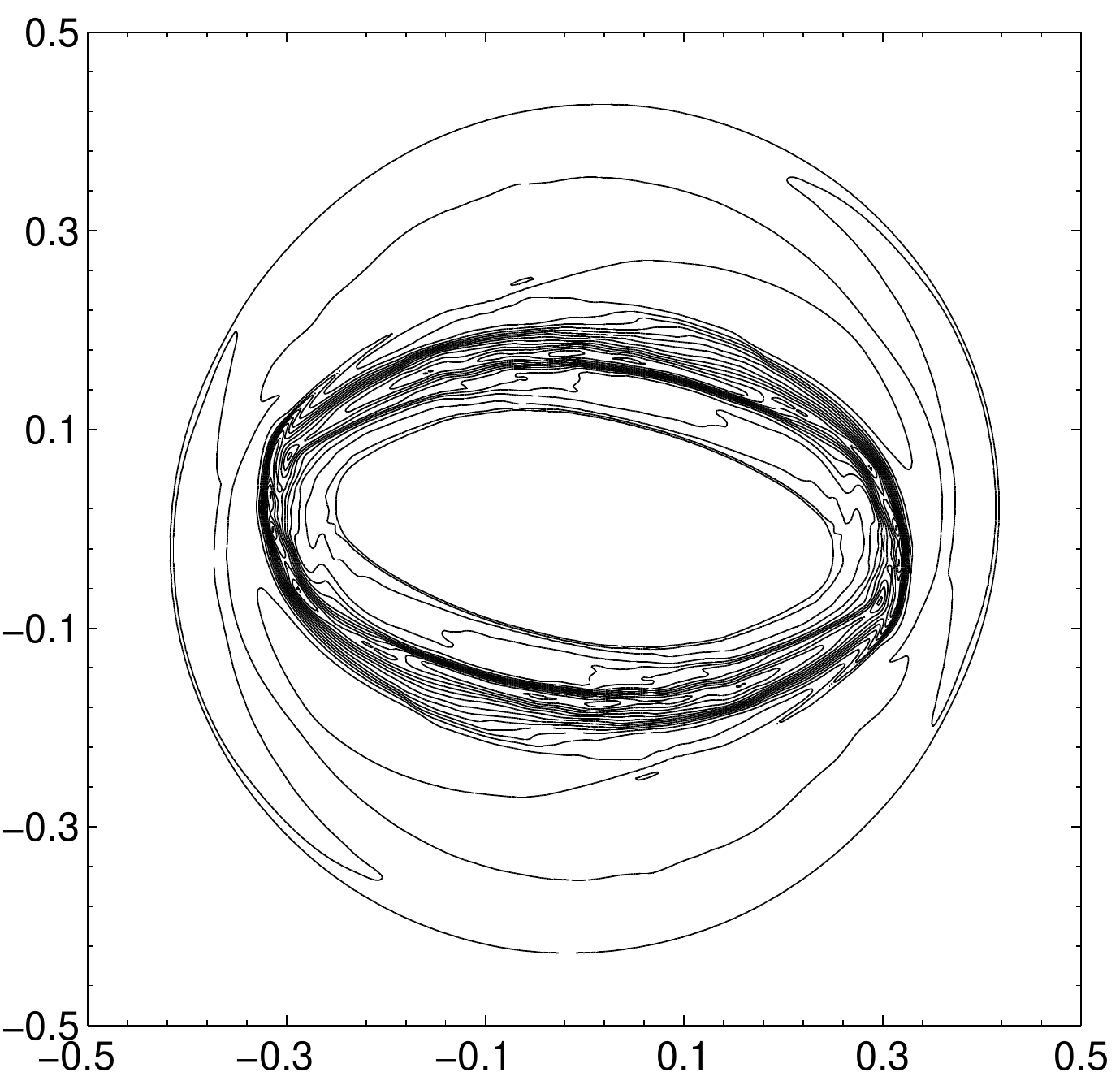}}
  {\includegraphics[width=0.48\textwidth]{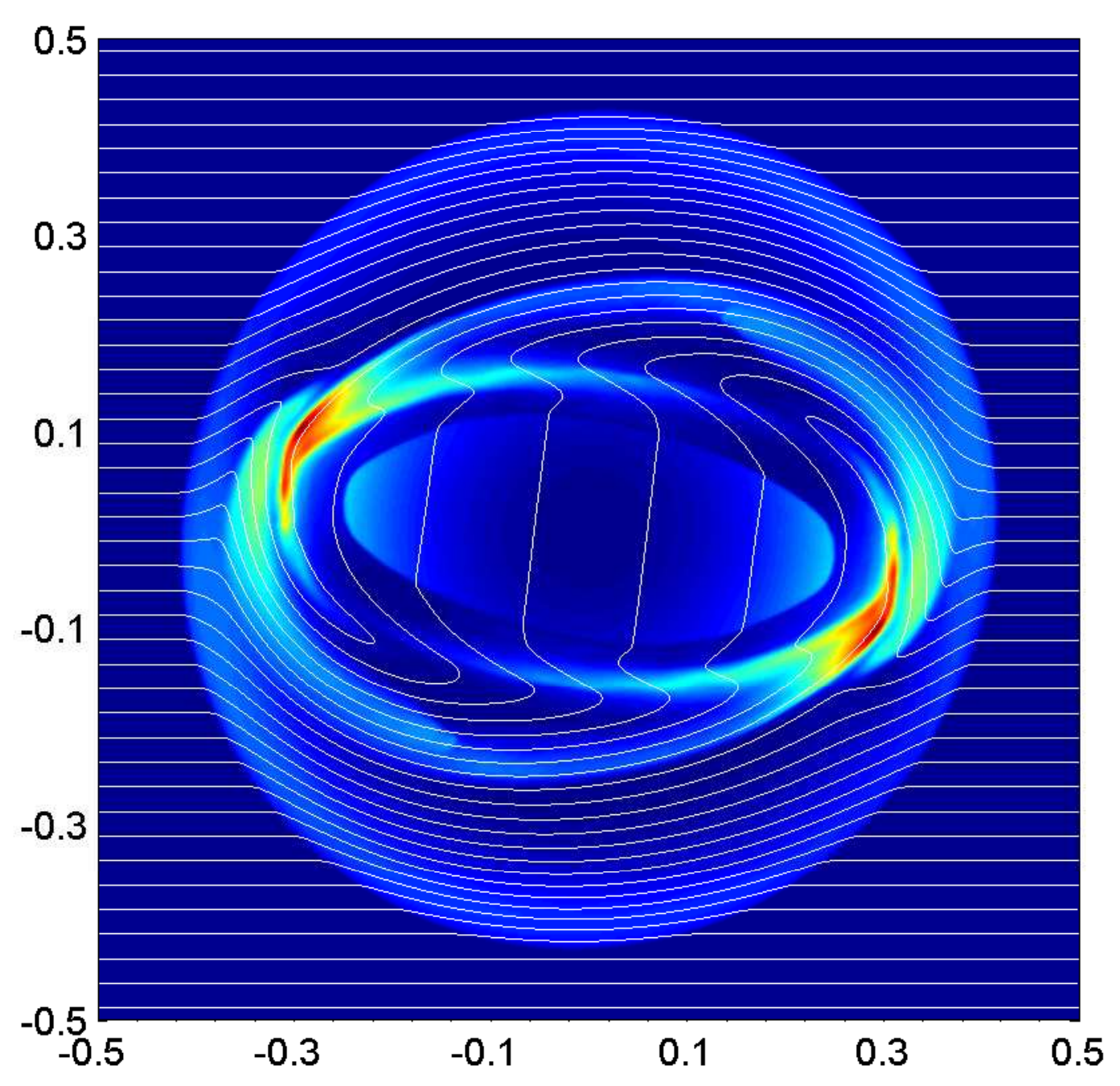}}
  \caption{\small Example \ref{example2DRo}: the contour plot of   rest-mass density (left), and the schlieren image of Lorentz factor with magnetic lines  (right) at $t=0.4$.}
  \label{fig:Ro}
\end{figure}

The parameter $\alpha$ is first taken as 9.95, which corresponds to a maximal initial Lorentz factor of about 10.01.
Fig. \ref{fig:Ro} shows the contour plot of   rest-mass density and the schlieren image of Lorentz factor at $t=0.4$ obtained by using the ${\mathbb{P}}^2$-based DG method with the PCP limiter over the uniform mesh of $400\times 400$ cells. The results agree well with those in the works \cite{HeTang2012RMHD,ZhaoTang2016}.
As one can see that winding magnetic field lines are formed and
decelerates  the disk speed. The central magnetic field
lines are rotated almost $90^\circ$ at $t=0.4$. The initial high density at the center
is swept away completely and a oblong-shaped shell is formed.
Our PCP method can work successfully for more ultra cases with larger $\alpha=9.99$ and $9.999$ (corresponding maximal initial Lorentz factors are of about 22.37 and 70.71). However, if the PCP limiting procedure is not employed, the high-order accurate DG code breaks down.
\end{example}

\begin{example}[Shock and cloud interaction problem] \label{example2DSC}\rm
This problem describes the disruption of a high density cloud by a strong shock wave. The setup is the same as that in \cite{HeTang2012RMHD}.
Different from the setup in \cite{MignoneHLLCRMHD}, the magnetic field is not orthogonal to the slab plane so that the
magnetic divergence-free treatment has to be imposed.
The computational domain is $[-0.2,1.2]\times [0,1]$, with the left boundary specified as
inflow condition and the others as outflow conditions. Initially, a shock wave moves to the right   from $x=0.05$,
with the left and right states  ${\vec V}_L = (3.86859,0.68,0,0,0,0.84981,-0.84981,1.25115)^\top$ and ${\vec V}_R = (1,0,0,0,0,0.16106,0.16106,0.05)^\top$, respectively.
There exists a rest circular cloud centred at the point $(0.25,0.5)$ with radius 0.15.  The cloud has the same states to the surrounding fluid except for a higher density 30.

\begin{figure}[htbp]
  \centering
  {\includegraphics[width=0.48\textwidth]{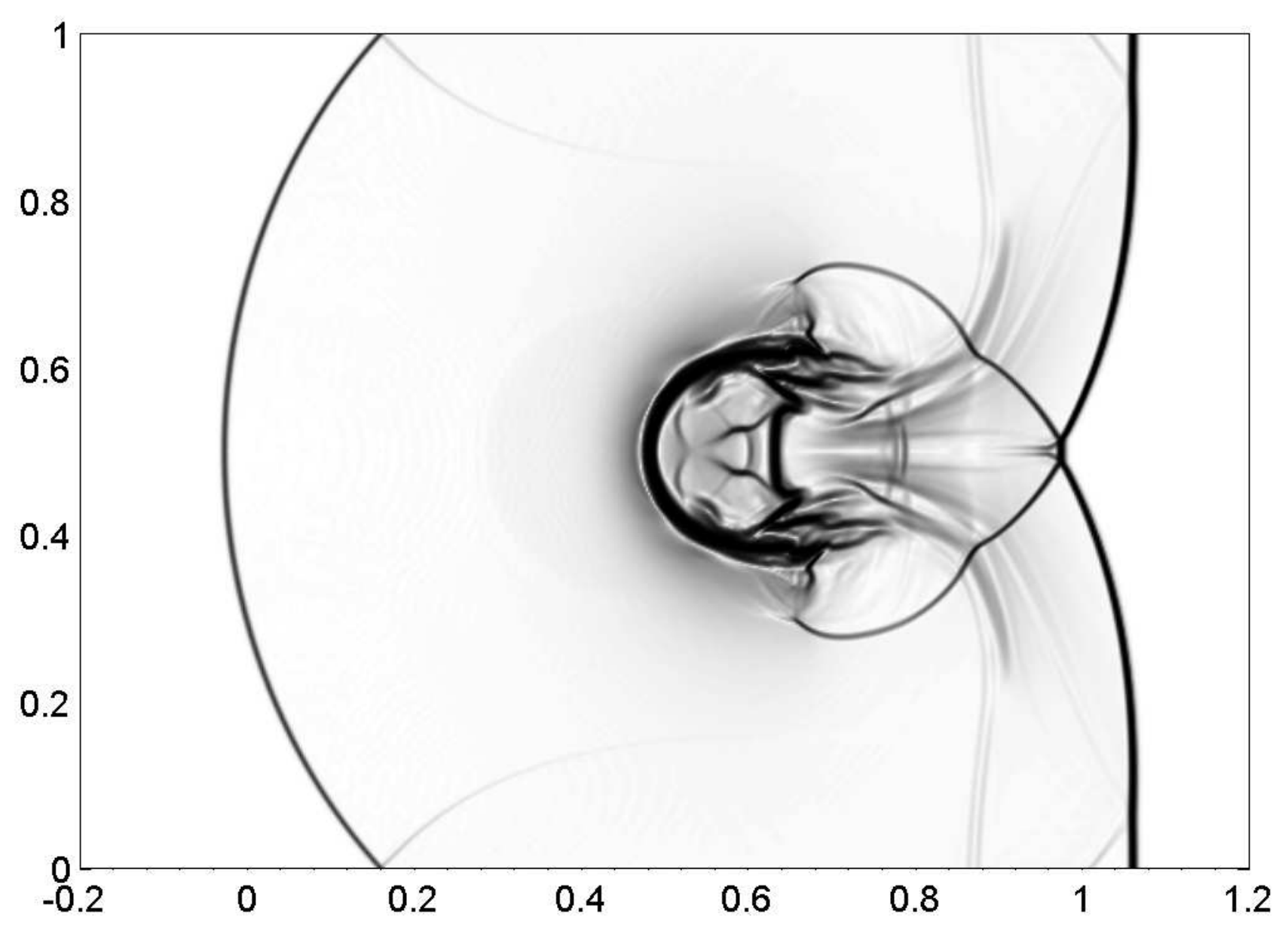}}
  {\includegraphics[width=0.48\textwidth]{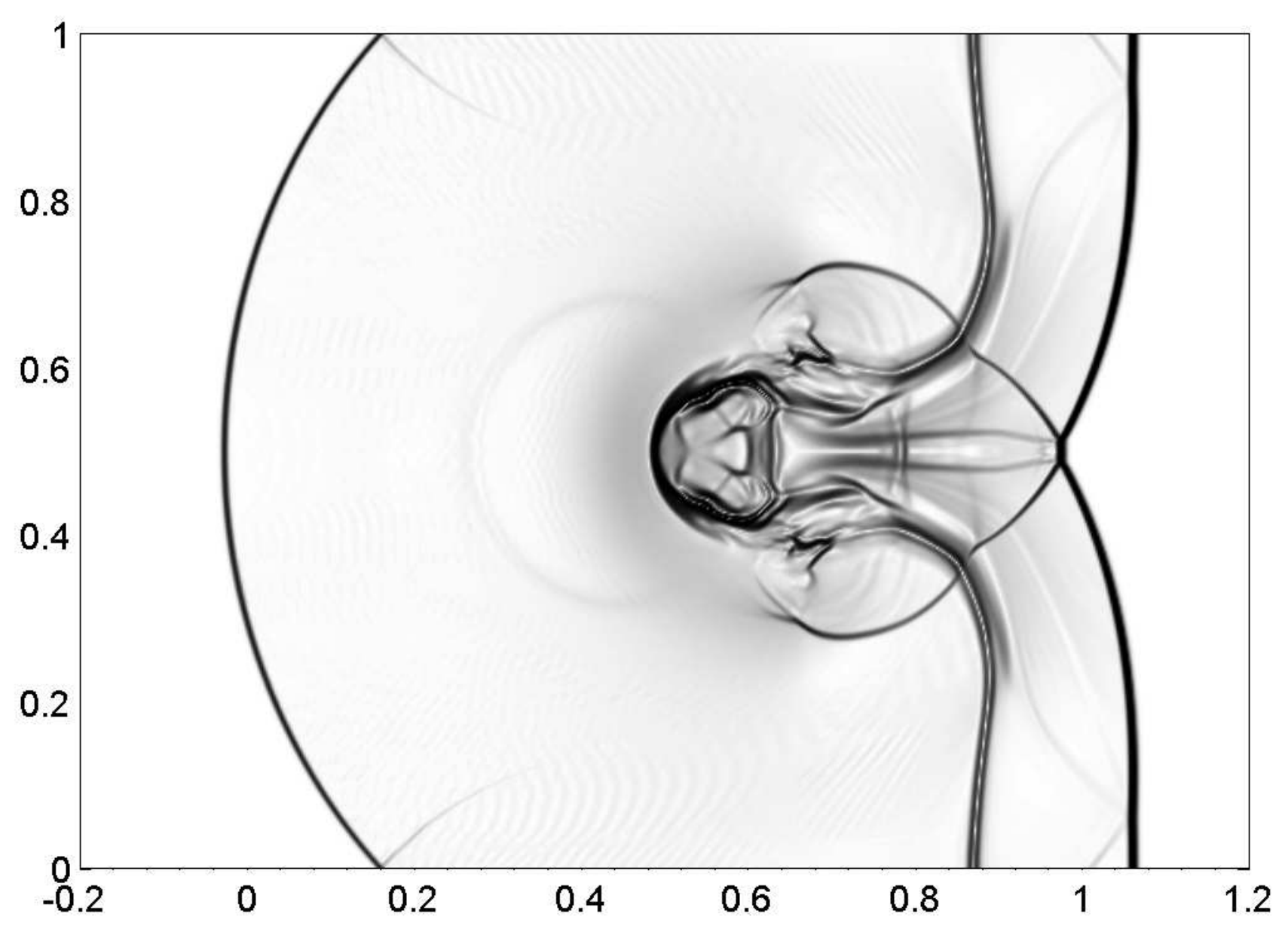}}
  \caption{\small Example \ref{example2DSC}: the schlieren images of rest-mass density logarithm (left) and magnetic pressure logarithm (right) at $t=1.2$.}
  \label{fig:SC}
\end{figure}

Fig. \ref{fig:SC} displays the schlieren images of rest-mass density logarithm $\ln \rho$ and magnetic pressure logarithm $\ln p_m$ at $t=1.2$ obtained by using the ${\mathbb{P}}^2$-based DG method with the PCP limiter over the uniform mesh of $560\times 400$ cells. One can see that the discontinuities are captured with high resolution, and the results agree well with those in \cite{HeTang2012RMHD}. In this test, it is also necessary to use the PCP limiting procedure for the successful performance of
high-order accurate DG methods. The ${\mathbb{P}}^2$-based DG method
without the PCP limiter will fail at $t\approx0.05$ due to inadmissible numerical solutions.

\end{example}

\begin{example}[Blast problems] \label{example2DBL}\rm

Blast problem has become a standard test for 2D RMHD numerical schemes.
If the low gas pressure, strong magnetic field or low plasma-beta $\beta:=p/p_m$ is involved,
then simulating those ultra RMHD blast problems becomes very challenging \cite{Marti2015}.
Several different setups have been used in the literature, see e.g. \cite{GodunovRMHD,MignoneHLLCRMHD,Zanna:2007,Zanotti2015,Marti2015}.
Our setups are similar to that in \cite{MignoneHLLCRMHD,Zanna:2007,BalsaraKim2016,Zanotti2015}. Initially, the computational domain $[-6,6]^2$ is filled with a homogeneous gas at rest with adiabatic index $\Gamma=\frac43$.
The explosion zone ($r<0.8$) has a density of $10^{-2}$ and a pressure of $1$, while the ambient medium ($r>1$) has
a density of $10^{-4}$ and a pressure of $p_a=5\times 10^{-4}$, where $r=\sqrt{x^2+y^2}$.
A linear taper is applied to the density and pressure for $r\in[0.8,1]$. The magnetic field is initialized in the $x$-direction as $B_a$.
As $B_a$ is set larger, the initial ambient magnetization becomes higher ($\beta_a:=p_a/p_m$ becomes lower) and this test becomes more challenging. In the literature \cite{MignoneHLLCRMHD,Zanna:2007,BalsaraKim2016}, $B_a$ is usually specified as 0.1, which corresponds to a moderate magnetized case ($\beta_a=0.1$).
A more strongly magnetized case with $B_a=0.5$ is tested in \cite{Zanotti2015}, corresponding to a lower plasma-beta $\beta_a=4\times 10^{-3}$.
Most existing methods  in literature need some artificial treatments for the strongly magnetized case, see e.g. \cite{GodunovRMHD,MignoneHLLCRMHD}.
It is reported in \cite{Zanna:2007} that the RMHD code {\tt ECHO} is not able
to run this test with $B_a>0.1$ if no ad hoc numerical strategy is introduced.

\begin{figure}[htbp]
  \centering
  {\includegraphics[width=0.48\textwidth]{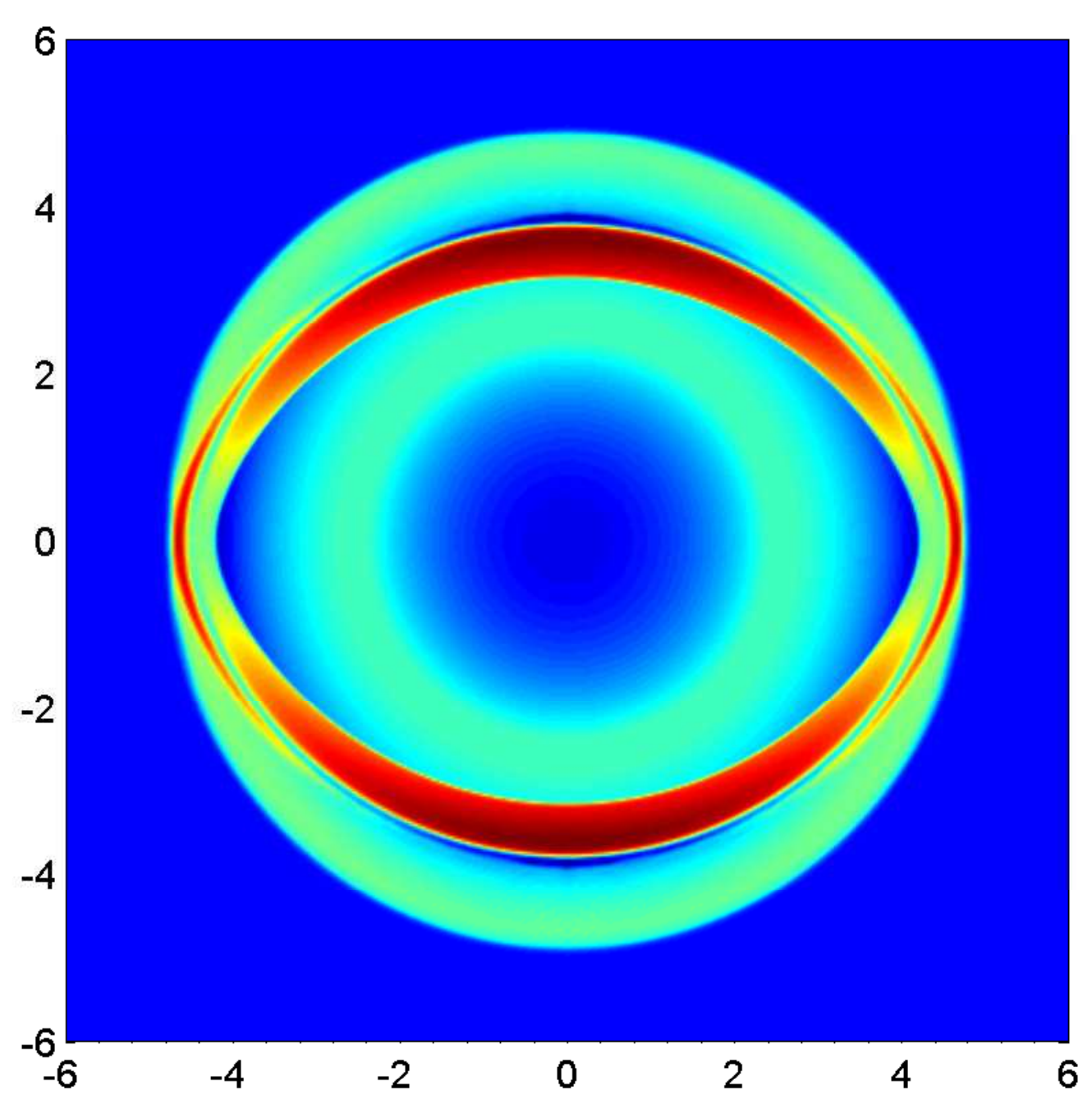}}
  {\includegraphics[width=0.48\textwidth]{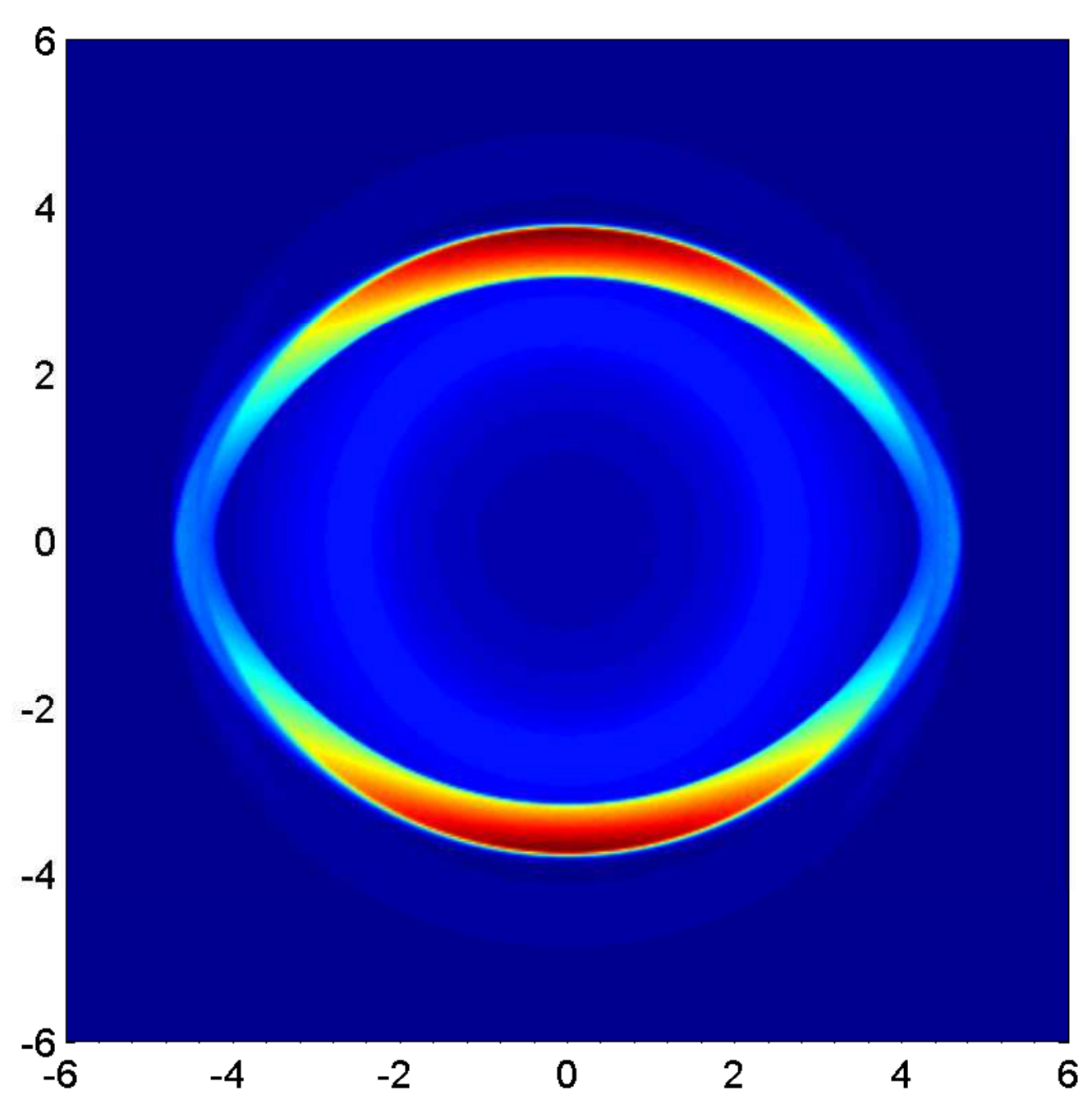}}
  {\includegraphics[width=0.48\textwidth]{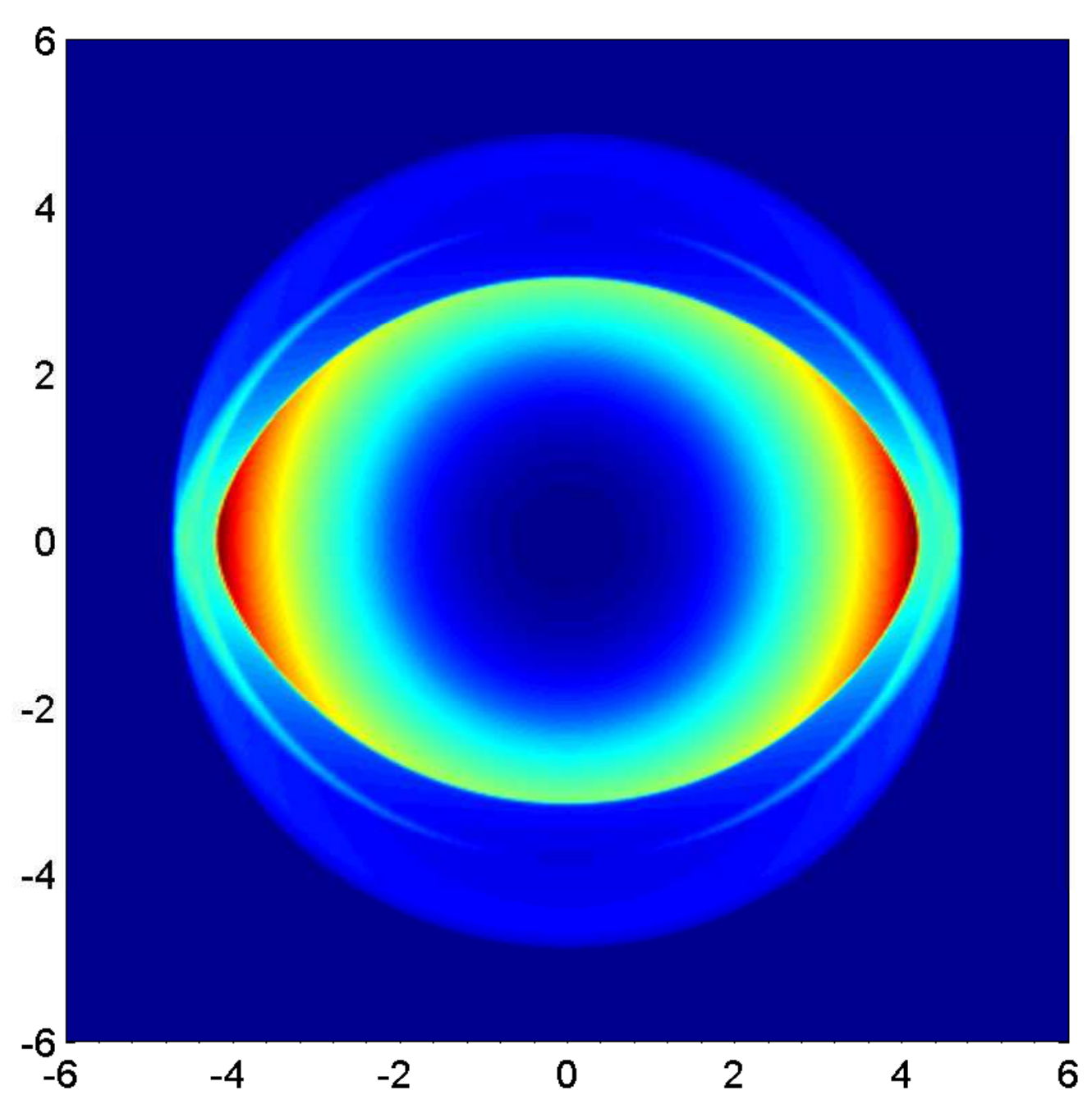}}
  {\includegraphics[width=0.48\textwidth]{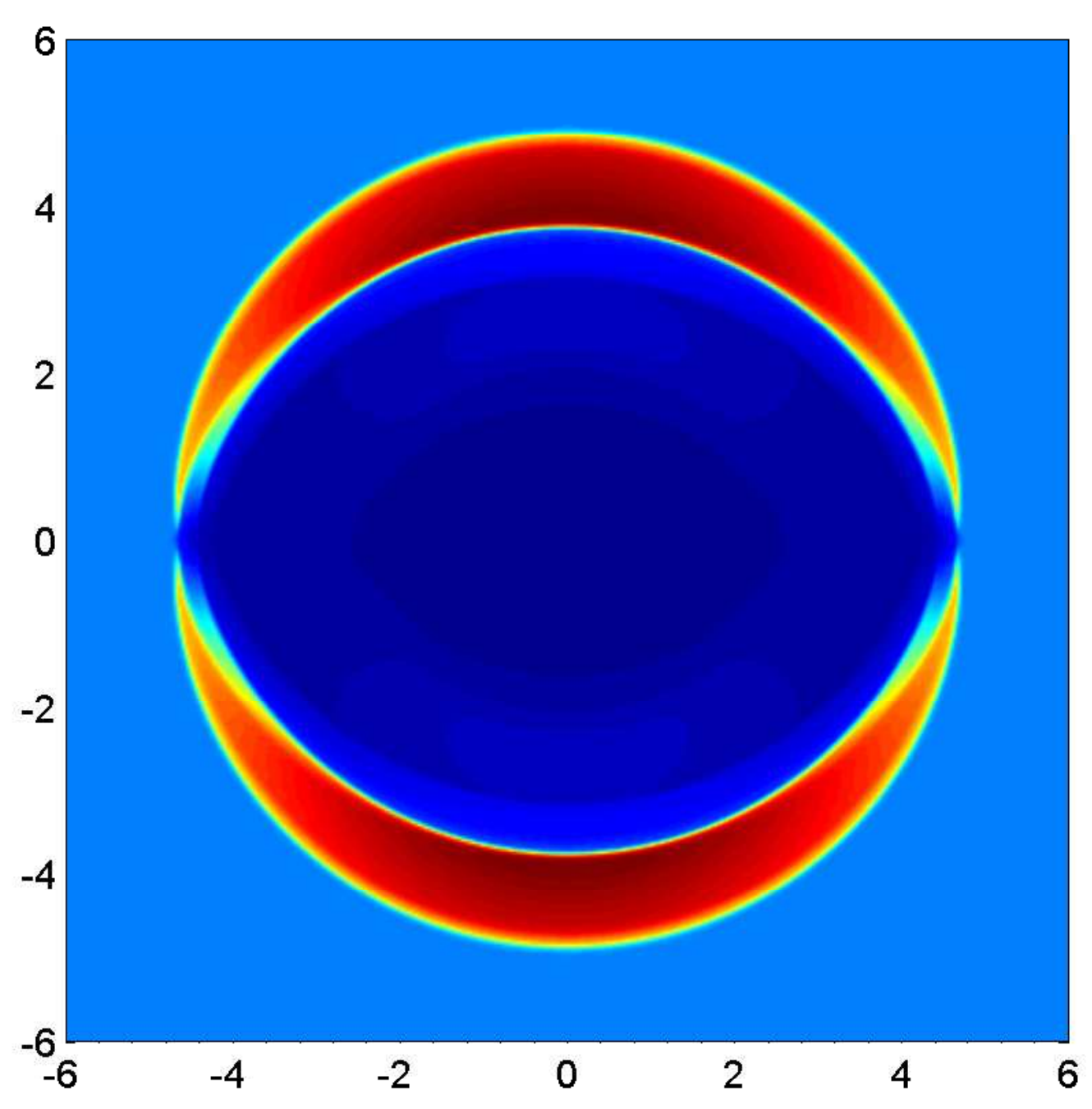}}
  \caption{\small Example \ref{example2DBL} with $B_a=0.1$: the schlieren images of rest-mass density logarithm (top-left), gas pressure (top-right),
  Lorentz factor (bottom-left) and magnetic field strength (bottom-right) at $t=4$.}
  \label{fig:BL}
\end{figure}

\begin{figure}[htbp]
  \centering
  {\includegraphics[width=0.48\textwidth]{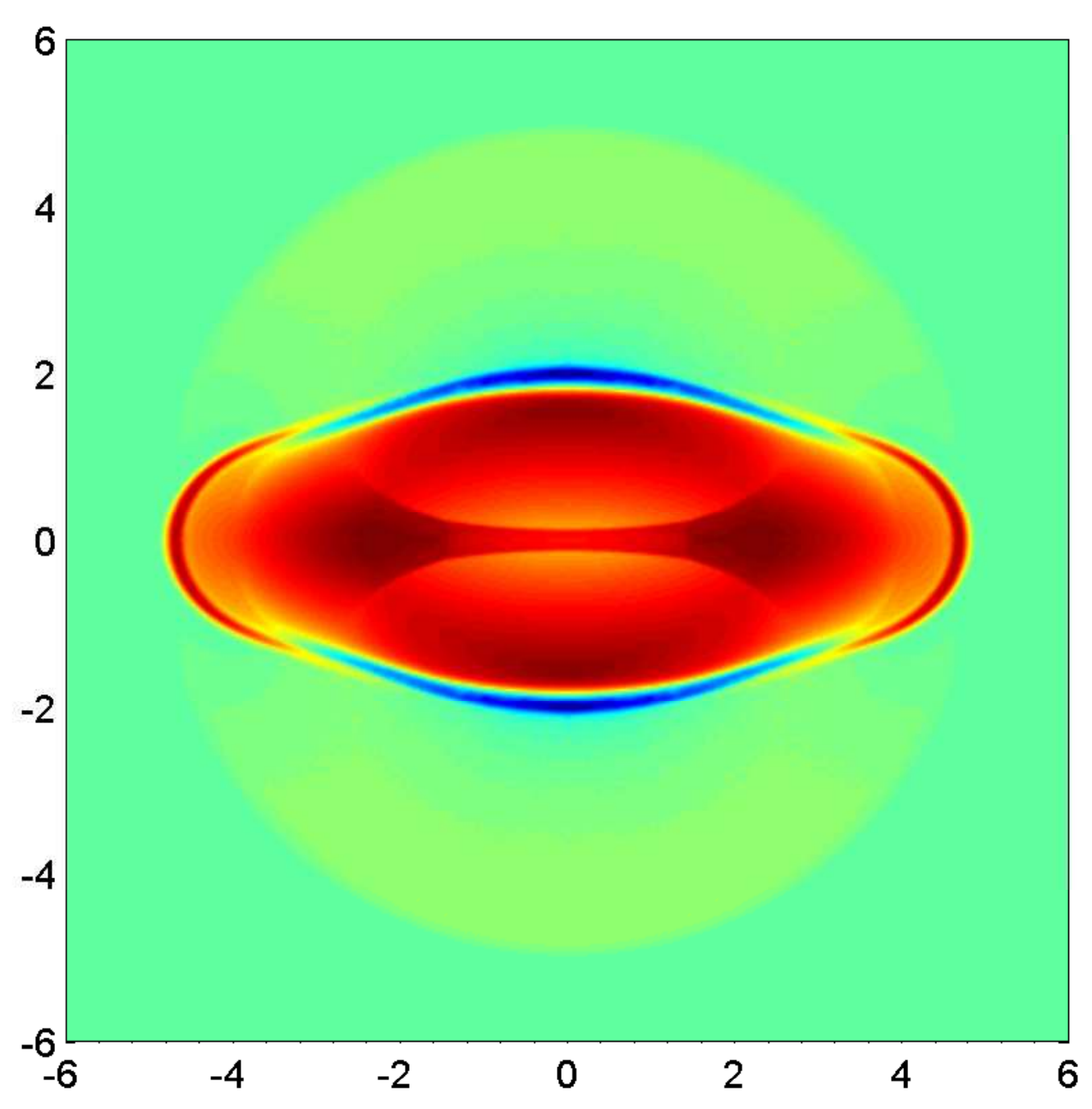}}
  {\includegraphics[width=0.48\textwidth]{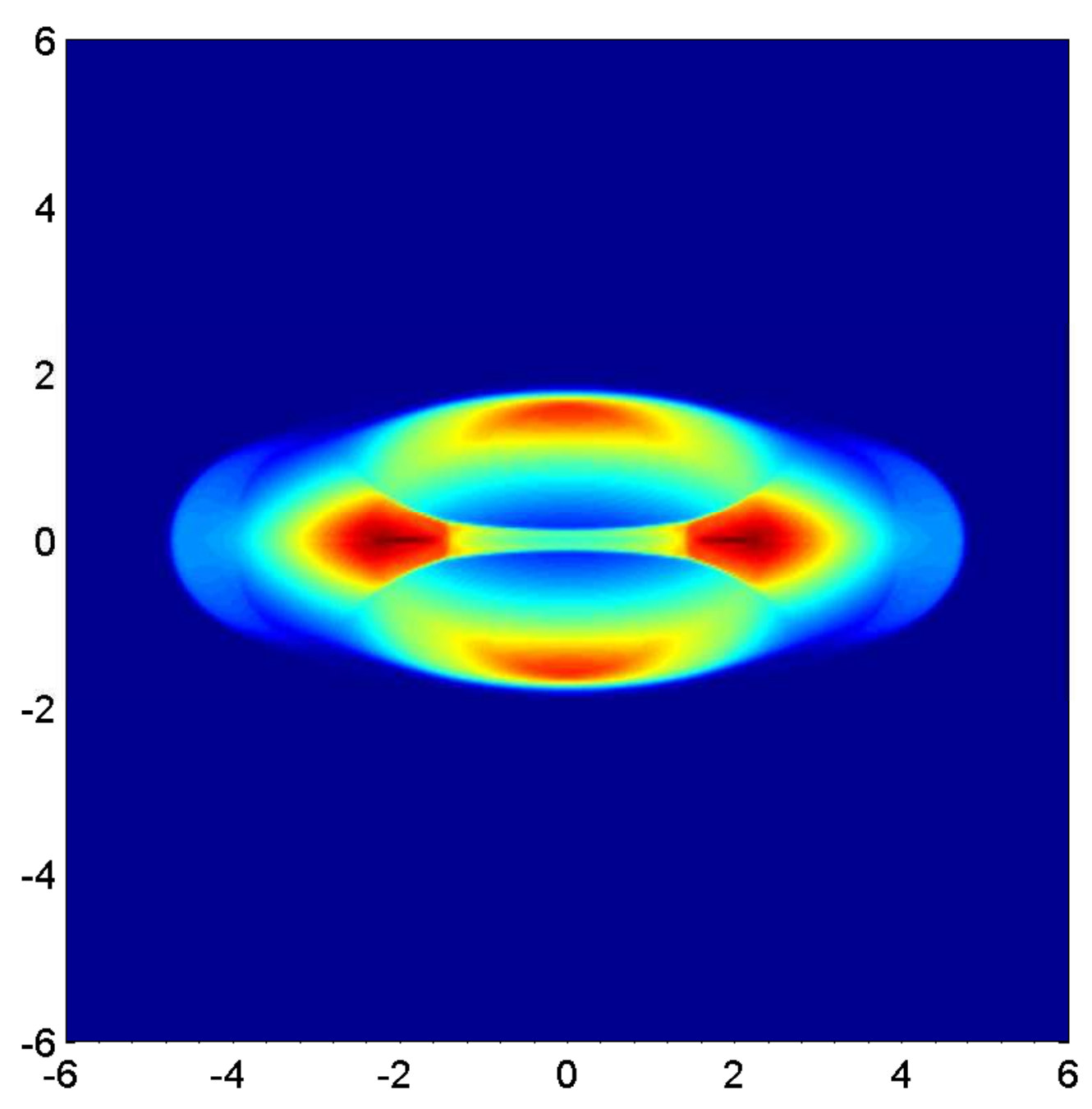}}
  {\includegraphics[width=0.48\textwidth]{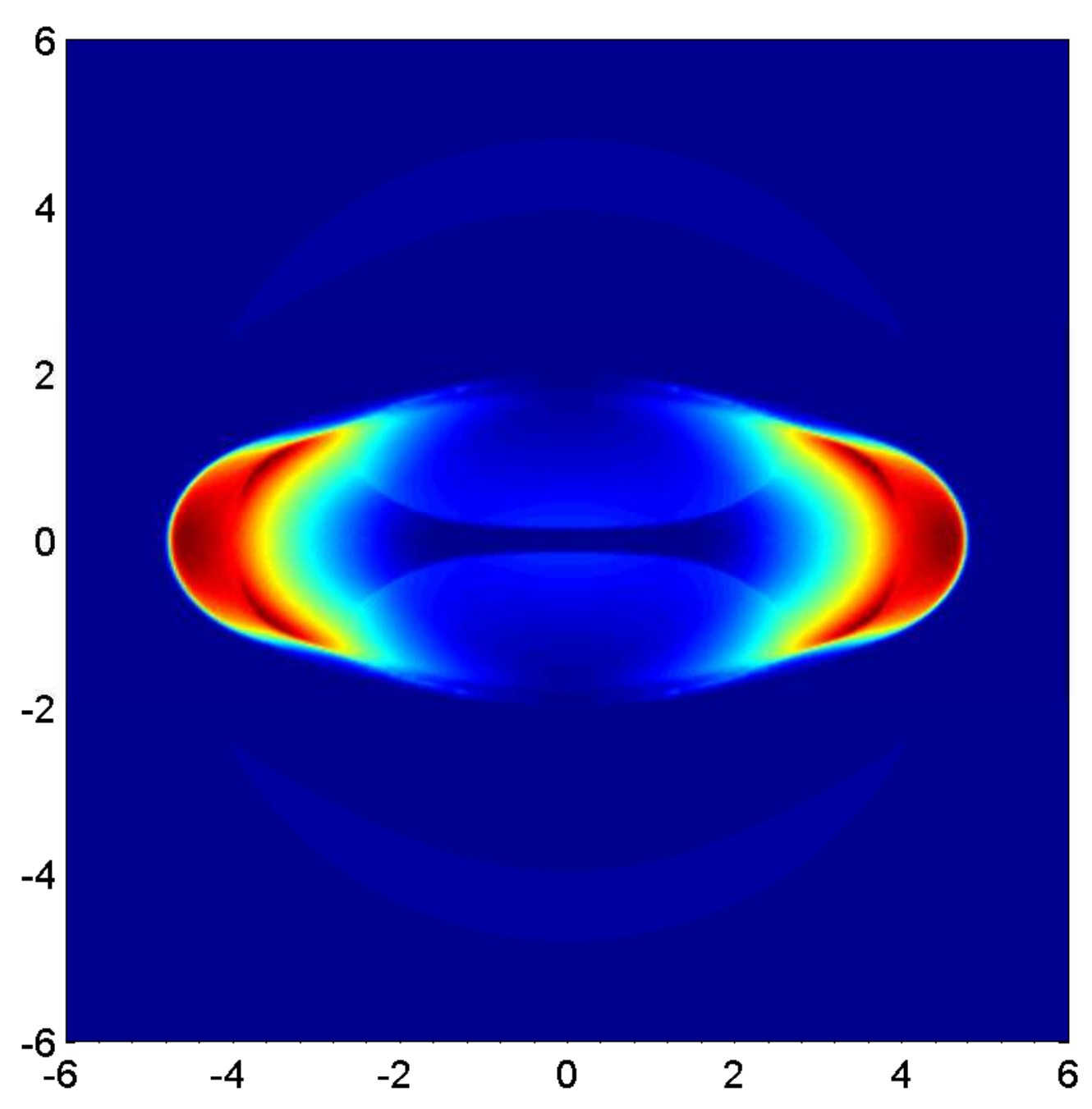}}
  {\includegraphics[width=0.48\textwidth]{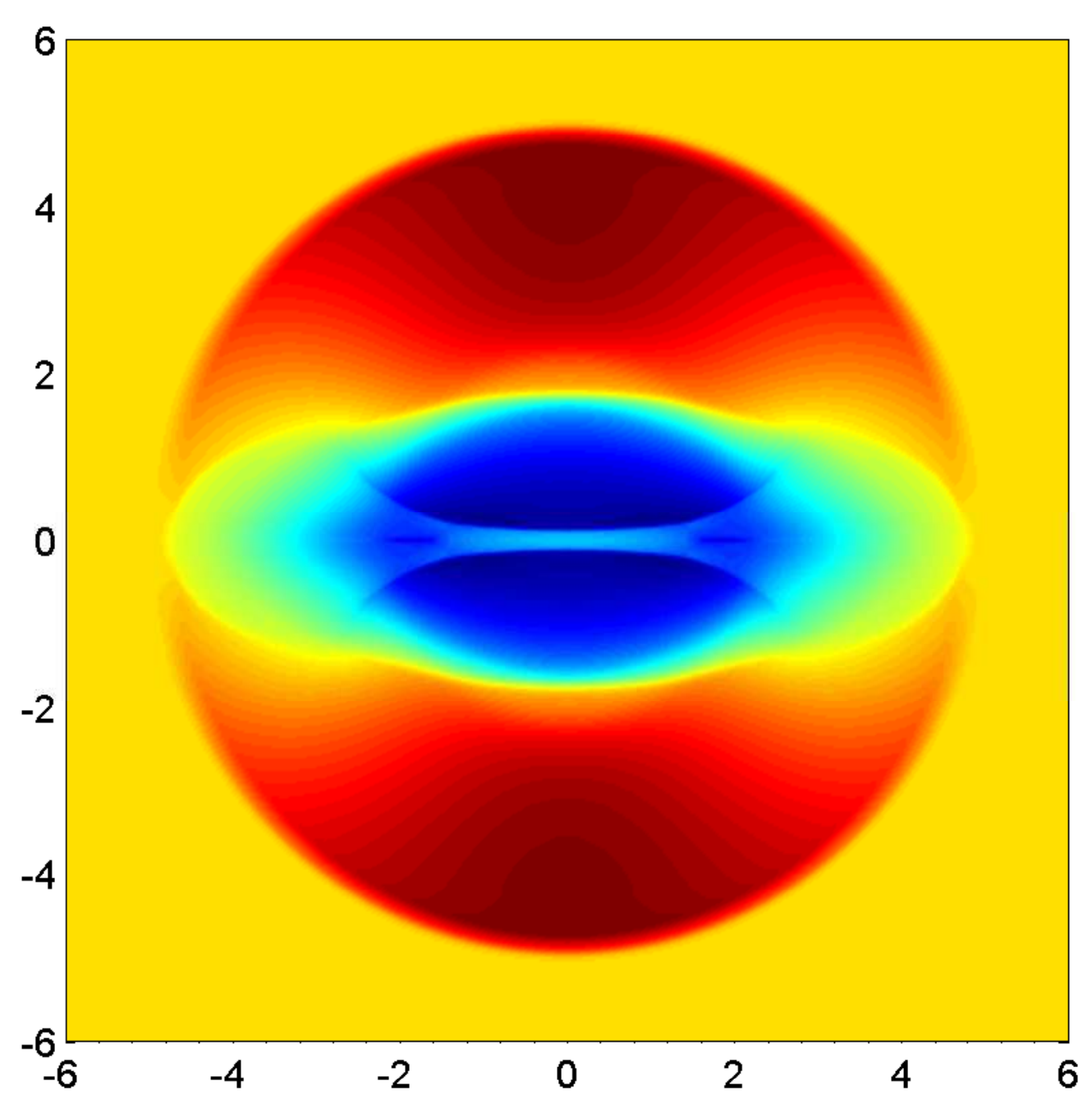}}
  \caption{\small Same as Fig. \ref{fig:BL} except for $B_a=0.5$.}
  \label{fig:BL2}
\end{figure}

\begin{figure}[htbp]
  \centering
  {\includegraphics[width=0.48\textwidth]{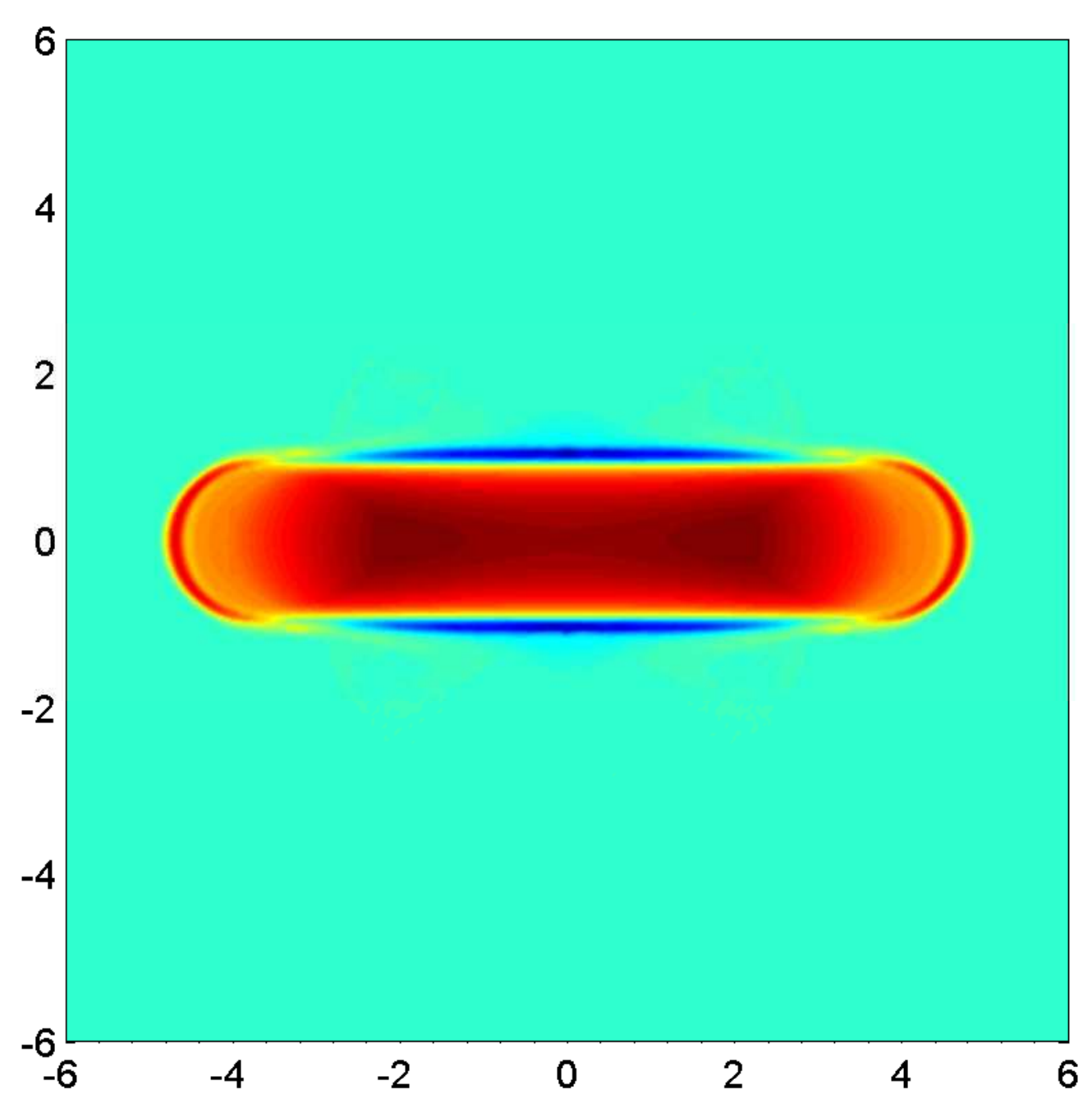}}
  {\includegraphics[width=0.48\textwidth]{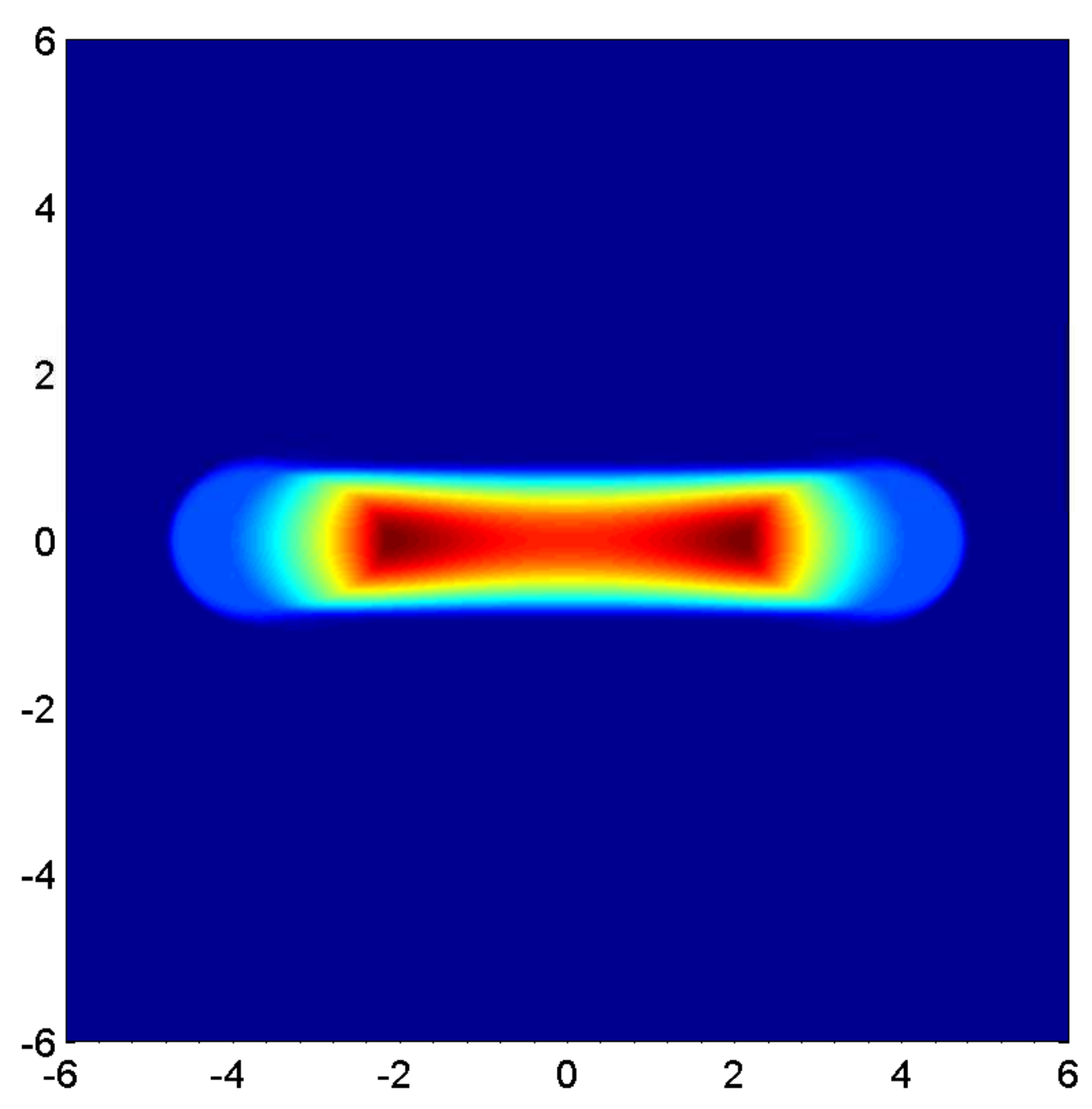}}
  {\includegraphics[width=0.48\textwidth]{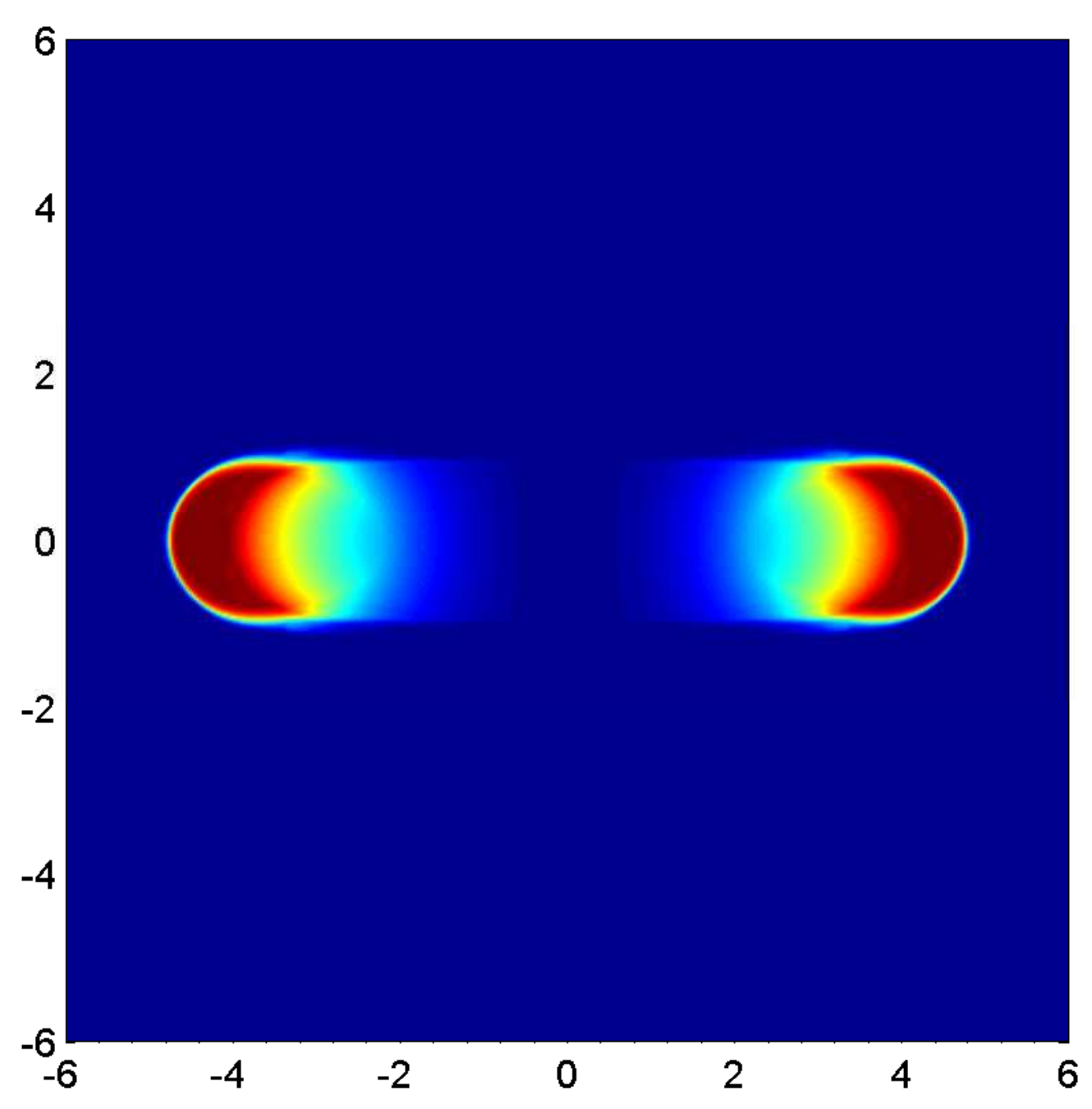}}
  {\includegraphics[width=0.48\textwidth]{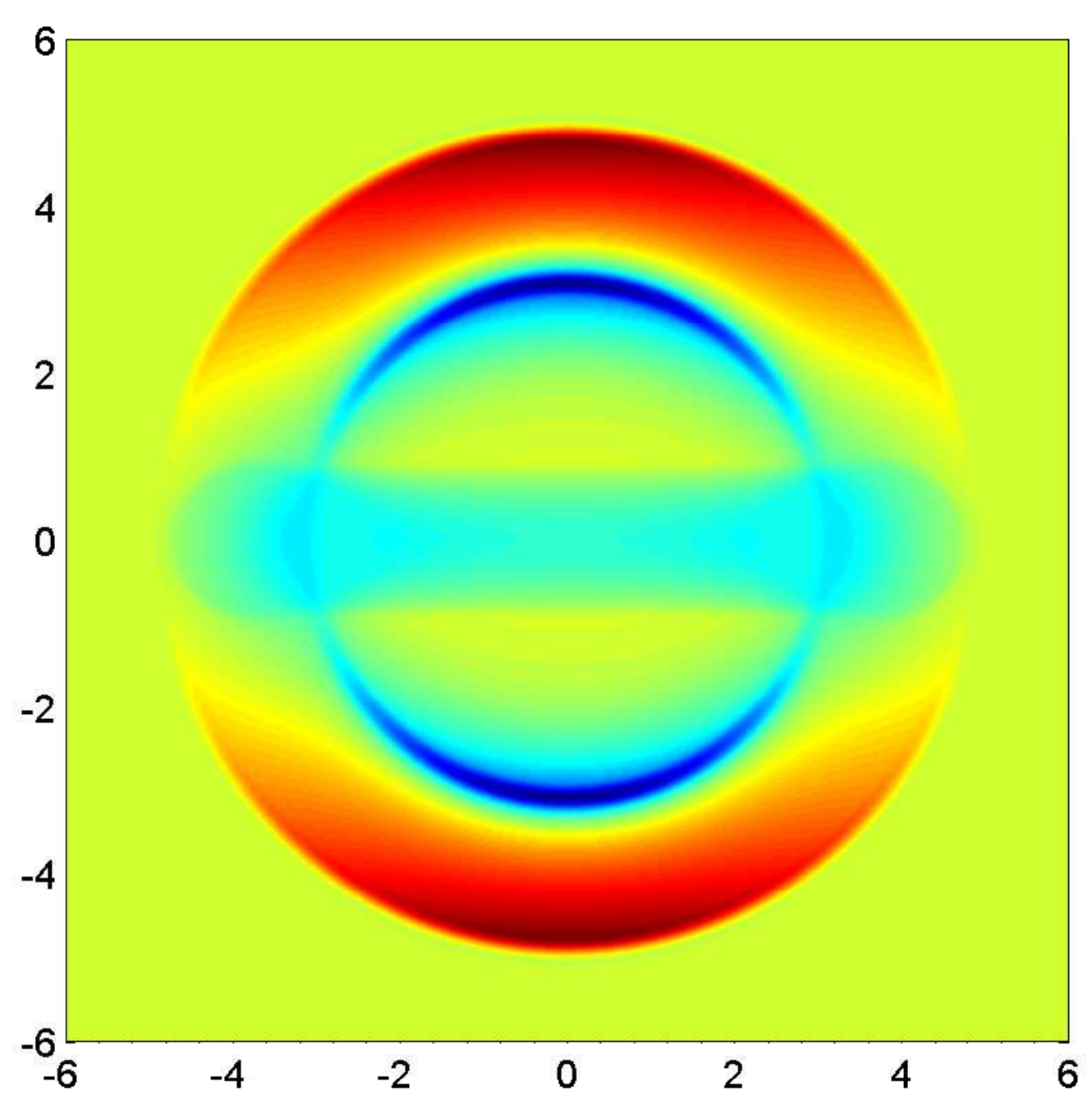}}
  \caption{\small Same as Fig. \ref{fig:BL} except for $B_a=20$.}
  \label{fig:BL3}
\end{figure}

Our numerical results of this test at $t=4$ are shown in Fig. \ref{fig:BL} for the moderately magnetized case with $B_a=0.1$,
in Fig. \ref{fig:BL2} for the relatively strongly magnetized case with $B_a=0.5$, and in Fig. \ref{fig:BL3} for the strongly magnetized case with $B_a=20$ (corresponding $\beta_a=2.5\times10^{-6}$). All of them are obtained by using the ${\mathbb{P}}^2$-based DG method with the PCP limiter over the uniform mesh of $400\times 400$ cells.
During those simulations, the present method exhibits very good robustness without any artificial treatment.
For the first two cases, our results agree quite well with those reported in \cite{Zanotti2015,BalsaraKim2016}.
From Fig. \ref{fig:BL}, it is observed that the wave pattern of the configuration is composed by two main waves,
an external fast   and a reverse shock waves. The former is almost circular, while the latter is somewhat elliptic.
The magnetic field is essentially confined between them, while the inner region is almost devoid of magnetization.
In the case of $B_a=0.5$, the external circular fast shock  is clearly visible in the rest-mass density and in the magnetic field, but very weak.
When $B_a$ is increased to 20, the external circular fast shock becomes much weaker and is only visible in the magnetic field in Fig. \ref{fig:BL3}.
As the magnetization is increased, the blast wave is confined to propagate along the magnetic field lines, creating a structure elongated in the $x$-direction.

\begin{figure}[htbp]
  \centering
  {\includegraphics[width=0.48\textwidth]{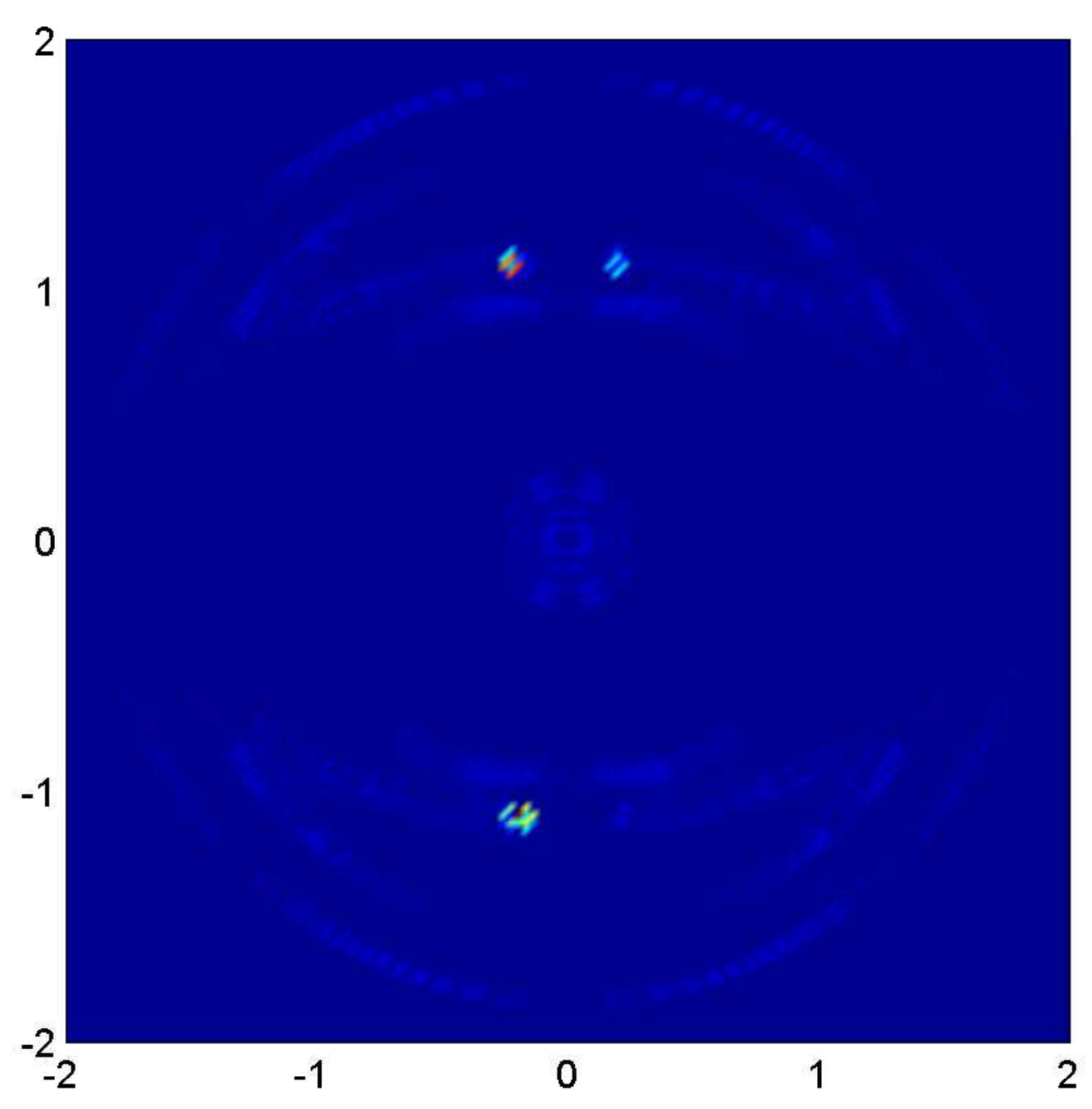}}
  \caption{\small Example \ref{example2DBL} with $B_a=100$: close-up of the schlieren image of $\left|\mbox{\rm div} _{ij}^{\mbox{\tiny \rm out}} {\vec B}\right|$ at $t=0.783$ on the uniform mesh of $400\times400$ cells.}
  \label{fig:BL4diverr}
\end{figure}

To investigate the importance of discrete divergence-free condition \eqref{eq:DivB:cst2}   in Theorem \ref{thm:PCP:2DRMHD}, we now try to test a much lower plasma-beta case $\beta_a=10^{-7}$ (i.e. $B_a=100$) on the   mesh of $400\times 400$ cells. For such extreme case, our method breaks down at $t\approx0.783$.  This failure results from the computed inadmissible cell averages of  conservative variables,   detected in the three cells centered at points $(-0.225,1.095)$, $(-0.165,1.095)$ and $(-0.165,-1.095)$, respectively.  Fig. \ref{fig:BL4diverr} displays the schlieren image of
$\left|\mbox{\rm div} _{ij}^{\mbox{\tiny \rm out}} {\vec B}\right|$ at the moment of failure.
It clearly shows the subregions with large values of $\left|\mbox{\rm div} _{ij}^{\mbox{\tiny \rm out}} {\vec B}\right|$, where the condition \eqref{eq:DivB:cst2} is violated most seriously,
and the three detected cells are exactly located in those  subregions.
This further demonstrates that the condition \eqref{eq:DivB:cst2} is really crucial in achieving
completely PCP schemes in 2D.
As mentioned in Remark \ref{remark:diveonpcp}, for the purpose of numerical simulation,
it is possible to weaken the impact of  violating \eqref{eq:DivB:cst2} by refining the mesh.
By numerical experiments, we find that our method can work successfully on a refined mesh of $600 \times 600$ cells for the case of $B_a=100$ and  a extremely strongly magnetized case with $B_a=1500$ ($\beta_a \approx 4.444 \times 10^{-10}$). The flow structures in those two cases are similar to the case of $B_a=20$ and omitted here. To our best knowledge, the 2D blast test with so low plasma-beta is rarely considered in the literature.

%

\end{example}

\section{Conclusions}\label{sec:con}

The paper studied mathematical properties of the admissible state set $\mathcal G$
defined in \eqref{eq:RMHD:definitionG} of the relativistic magnetohydrodynamical (RMHD) equations \eqref{eq:RMHD1D}.
In comparison with the non-relativistic and relativistic hydrodynamical cases (with the zero magnetic field),  the difficulties
{mainly came} from the extremely strong nonlinearities,  no explicit formulas of the primitive variables and the flux vectors with respect to the conservative variables, and the solenoidal magnetic field.
To overcome those difficulties,
 the first equivalent form of  $\mathcal G$ with {explicit} constraints on the conservative vector was first
 skillfully discovered with the aid of polynomial root properties,  and
 followed by the scaling invariance.
 The convexity of $\mathcal G$ was proved by utilizing the semi-positive definiteness of the second fundamental form of a hypersurface, and then the second equivalent form of  $\mathcal G$
and  the orthogonal invariance were obtained.
It was verified that the Lax-Friedrichs (LxF) splitting property did not {hold} in general when the magnetic field
was nonzero. However,  by combining
the  convex combination of some  LxF  splitting terms with a ``discrete divergence-free'' condition for the magnetic
field, the  generalized    LxF  splitting properties were subtly discovered with a constructive inequality and some pivotal techniques.
This revealed in theory for the first time  the close connection between the ``discrete divergence-free'' condition and the physical-constraints-preserving (PCP) property of numerical schemes.

The above mathematical properties were footstone of studying
PCP numerical schemes for RMHDs.
Based on the resulting theoretical results,
several 1D and 2D PCP schemes were studied for the first time.
In the 1D case, a first-order accurate LxF   type scheme was
first proved to be PCP under the CFL condition. Then,
the high-order accurate 1D PCP schemes were proposed via a PCP limiter,
which was designed by using the {first} equivalent form of $\mathcal G$.
In the 2D case,
 the  ``discrete divergence-free'' condition and  PCP property were analyzed for {a first-order accurate LxF type} scheme, and
 followed by two sufficient conditions for high-order accurate PCP schemes.
 Several numerical experiments were conducted to demonstrate the theoretical analyses and the performance of  numerical schemes as well as
the importance of discrete divergence-free condition in achieving genuinely PCP scheme in 2D.
The studies on the PCP schemes may be easily extended to the three-dimensional case
by Theorem \ref{theo:RMHD:LLFsplit3D},
 the non-uniform or unstructured meshes by Theorem  \ref{theo:RMHD:LLFsplit2Dus},
and the general EOS case by the similar discussions in \cite{WuTang2016}.

\section*{Acknowledgements}

This work was partially supported by
the National Natural Science Foundation
of China (Nos.  91330205 \& 11421101).

\end{spacing}

\end{document}